    \DeclareMathAlphabet{\mathbrush}{T1}{pbsi}{xl}{n}
\theoremstyle{plain}
    \newtheorem{theorem}{Theorem}
    \newtheorem{lemma}{Lemma}[section]
    \newtheorem{proposition}[lemma]{Proposition}
\theoremstyle{definition}
    \newtheorem{definition}[lemma]{Definition}
    \newtheorem{question}{Question}
\theoremstyle{remark}
    \newtheorem{observation}[lemma]{Observation}
\theoremstyle{plain}
\DeclareMathOperator{\id}{id}
\DeclareMathOperator{\sdom}{sdom}
\NewDocumentCommand{\hgtenc}{m}{\abs{#1}^{\dagger}}
\newcommand*{\modr}[1][]{\symbf{\mathfrak{m}}_{#1}}
\newcommand*{\modrinv}[1][]{\symbf{\mathfrak{m}}^{-1}_{#1}}
\NewDocumentCommand{\encstr}{O{}O{}}{\symbf{\mathfrak{e}}^{#2}_{#1}}
\newcommand*{\encinv}{\symbf{\mathfrak{e}}^{-1}}
\let\card=\hgt
\def\mydash{---~}
\NewDocumentCommand{\agree}{sO{}}{\mathrel{\cong_{#2}\IfBooleanT{#1}{^*}}} 
\NewDocumentCommand{\sagree}{sot0}{\mathrel{\cong^{\IfBooleanTF{#1}{*\IfValueT{#2}{, #2}\IfBooleanT{#3}{, 0} }{\IfValueT{#2}{#2}\IfBooleanT{#3}{0}}}_{\Box}}}
\NewDocumentCommand{\nsagree}{sot0}{\mathrel{\ncong^{\IfBooleanTF{#1}{*\IfValueT{#2}{, #2}\IfBooleanT{#3}{, 0} }{\IfValueT{#2}{#2}\IfBooleanT{#3}{0}}}_{\Box}}}
\NewDocumentCommand{\boxeq}{}{\mathrel{=_{\Box}}}
\NewDocumentCommand{\nboxeq}{}{\mathrel{\neq_{\Box}}}
\NewDocumentCommand{\ED}{O{f, S}D(){\e,\i}}{\Psi_{#2}\left(#1\right)}
\newcommand*{\ddist}{\symbf{\mathfrak{d}}}
\NewDocumentCommand{\Icond}{so}{P_\mathbb{I}\IfBooleanT{#1}{^{*}}\IfValueT{#2}{^{*}\!\left(#2\right)}}
\NewDocumentCommand{\Igeq}{o}{\mathrel{\leq_{\mathbb{I}}\IfValueT{#1}{^{#1}}}}
\NewDocumentCommand{\Ileq}{o}{\mathrel{\geq_{\mathbb{I}}\IfValueT{#1}{^{#1}}}}
\NewDocumentCommand{\nIgeq}{o}{\mathrel{\not\leq_{\mathbb{I}}\IfValueT{#1}{^{#1}}}}
\NewDocumentCommand{\nIleq}{o}{\mathrel{\not\geq_{\mathbb{I}}\IfValueT{#1}{^{#1}}}}
\NewDocumentCommand{\Pcond}{}{\mathbb{P}}
\NewDocumentCommand{\Pgeq}{o}{\mathrel{\leq_{\Pcond}\IfValueT{#1}{^{#1}}}}
\NewDocumentCommand{\nPgeq}{o}{\mathrel{\not\leq_{\Pcond}\IfValueT{#1}{^{#1}}}}
\NewDocumentCommand{\Pleq}{o}{\mathrel{\geq_{\Pcond}\IfValueT{#1}{^{#1}}}}
\NewDocumentCommand{\Qcond}{}{\mathbb{Q}}
\NewDocumentCommand{\nQgeq}{o}{\mathrel{\not\leq_{\Qcond}\IfValueT{#1}{^{#1}}}}
\NewDocumentCommand{\nQleq}{o}{\mathrel{\not\geq_{\Qcond}\IfValueT{#1}{^{#1}}}}
\NewDocumentCommand{\Qgeq}{o}{\mathrel{\leq_{\Qcond}\IfValueT{#1}{^{#1}}}}
\NewDocumentCommand{\Qleq}{o}{\mathrel{\geq_{\Qcond}\IfValueT{#1}{^{#1}}}}
\NewDocumentCommand{\deninv}{mo}{\lambda\@ifmtarg{#1}{}{\left(#1\IfValueT{#2}{; #2}\right)}}
\NewDocumentCommand{\smarthat}{ O{1mu} m }{%
    \mathrlap{\widehat{#2}}\phantom{\widehat{#2}}\mkern3mu
}
\let\hat=\smarthat
\let\wwedge=\doublewedge
\begin{document}

\title{Comparing Notions of Dense Computability On \( \baire \) and \( \cantor \)}
\keywords{computability theory, coarse degrees, dense computability, asymptotic density, robust information coding}

\author{Peter M. Gerdes}
\address{Indiana University, Bloomington}
\email{gerdes@invariant.org}

\subjclass[2020]{Primary 03D30}

\date{}
\begin{abstract}
A relatively new topic in computability theory is the study of notions of computation that are robust against mistakes on some kind of small set.  However, despite the recent popularity of this topic relatively foundational questions about the notions of reducibility involved still persist.  In this paper, we examine two notions of robust information coding, effective dense reducibility and coarse reducibility and answer the question posed in  \cite{Astor2019Dense}: whether the degrees of functions under these reductions are the same as the degrees of sets.  Despite the surface similarity of these two reducibilities we show that every uniform coarse degree contains a set but that this fails even for the non-uniform effective dense degrees.  We then further distinguish these two notions by showing that whether \( g \) is coarsely reducible to \( f \) is an arithmetic property of \( f \) and \( g \) while for non-uniform effective dense reducibility it is a \( \piin{1} \) complete property.  To prove these results we introduce notions of forcing that allow us to build generic effective dense and coarse descriptions which may be of use in further exploration of these topics \mydash including the open questions we pose in the final section.
\end{abstract}

\maketitle
                     
\section{Introduction}




Notions of computability that are robust against `mistakes' on some kind of small set  have been studied in a number of recent papers.  Many of these notions make equal sense when applied to functions as when applied to sets.  While recent interest in the topic of approximating a set on most values (in the sense of density \( 1 \)) can be largely traced back to \cite{Jr2012Generic}; in \cite{Dzhafarov2017Notions} the focus was shifted to the broader context of trying to understand when being mostly right about one object allows you to be mostly right about another and the concept of (uniform and non-uniform) coarse degrees (implicit in \cite{Jr2012Generic} and also \cite{Terwijn1998Computability}) was explicitly defined.  This paradigm was further developed in  \cite{Astor2019Dense} who defined (uniform and non-uniform) effective dense degrees.  However, despite these notions drawing significant interest in recent years a number of relatively basic questions remain unanswered. 

For  many of these notions it is unknown whether considering functions as well as sets expands the class of degrees, i.e., whether or not every equivalence class contains a set.  This paper answers that question, posed as question 7.2 in \cite{Astor2019Dense}, for the uniform and non-uniform versions of both coarse and effective dense reducibilities.  We also prove results characterizing the complexity of coarse and effective dense reducibilities.  In both cases we show that, despite the surface similarity of the two notions, coarse and effective dense reductions have extremely different behavior.    

We briefly preview the theorems we will prove with \( \NCleq, \UCleq, \NEDleq, \UEDleq \) denoting  coarse and effective dense reducibility respectively (non-uniform versions listed first).

\newtheorem*{thm:coarse:repeat}{Theorem \ref{thm:coarse}}
\begin{thm:coarse:repeat}
There are computable functionals \( \Gamma\maps{\baire}{\cantor}  \) and \(  \hat{\Gamma}\maps{\cantor}{\baire} \) such that for all \( f \in \baire \), \( \Gamma \) is a uniform coarse reduction of \( \Gamma(f) \) to \( f \), \(  \hat{\Gamma} \circ \Gamma = \id  \)  and \( \hat{\Gamma} \) is a uniform coarse reduction of \( f \) to \( \Gamma(f) \). 
\end{thm:coarse:repeat}

Since every uniform coarse reduction is also a non-uniform coarse reduction this shows that coarse degrees contain sets in the most uniform manner one could hope for.  In contrast, we show that most (i.e., a co-meager set) of functions aren't even non-uniformly equivalent to a set \( X \).

\newtheorem*{thm:effective-dense:repeat}{Theorem \ref{thm:effective-dense}}
\begin{thm:effective-dense:repeat}
If  \( f \in \baire \) is \( 3 \)-generic then there is no set \( X \) such that \( f \NEDequiv X \).  
\end{thm:effective-dense:repeat}

This clearly entails the corresponding result for uniform effective dense reducibility.  We also further distinguish these two notions of reducibility with two results about their complexity.  Interestingly, the non-uniform versions of these two reductions differ in complexity by almost as much as one could imagine as the following two theorems illustrate.

\newtheorem*{thm:ned-piii-complete:repeat}{Theorem \ref{thm:ned-piii-complete}}
\begin{thm:ned-piii-complete:repeat}
\( f \NEDgeq Z \) is  \( \piin{1} \) and there is a computable functional taking \( \alpha \in \omega, X \subset \omega \) and producing \( \deltazn(X){2} \)  indexes for \( f \in \baire, Z  \in \cantor \) such that \( f \nUEDgeq Z \) and \( f \NEDgeq Z \) iff \( \alpha \in \kleeneO[X] \).  Indeed, there is a strictly monotonic computable function \( l_k \) such that there is no computable functional \( \Gamma \) satisfying \[
\forall(S \subset \omega)\left(\forall(k)\left[\udensity[l_k](S) \leq 2^{-k} \right] \implies \Gamma(f^{\Box S}) \in (\omega \union \set{\Box})^{\omega} \land   \Gamma(f^{\Box S}) \sagree0 Z  \right)
\]  
\end{thm:ned-piii-complete:repeat}

This theorem asserts that the relation \( f \NEDgeq Z \) is \( \piin{1} \) complete as a relation of reals \mydash not merely as a property of the indexes for \( f, Z \) \mydash since given a \( \piin{1} \) relation \( P(X) \) we can compute \( \alpha \) such that \( P(X) \iff \alpha \in \kleeneO[X]  \) and therefore there is a computable functional which takes \( X \) and an index for \( P \)  and returns \( \deltazn(X){2}  \) indexes for \( f, Z \) such that \( P(X) \iff f \NEDgeq Z  \).   It also demonstrates that there is a very strong sense in which a non-uniform effective dense reduction can be non-uniform \mydash this will contrast with our results about coarse reducibility.  We hypothesize, but do not prove, that the relation \( f \NEDequiv g \) (non-uniformly effective dense equivalent) is also  \( \piin{1} \) complete.  This result is in sharp contrast to our result for non-uniform coarse reducibility.  

\newtheorem*{thm:nonu-coarse-complexity:repeat}{Theorem \ref{thm:nonu-coarse-complexity}}
\begin{thm:nonu-coarse-complexity:repeat}
There are \( \sigmazn{4} \) formulas \( \psi_U, \psi_N \) such that \[
\forall(f, g \in \baire)\left(f \NCgeq g \iff \psi_N(f, g) \land f \UCgeq g \iff \psi_U(f, g) \right)
\] 
\end{thm:nonu-coarse-complexity:repeat}

This shows that the question of coarse equivalence is a \( \deltazn{5} \) property.  We will prove this result by showing (roughly) that \( g \) is non-uniformly coarsely reducible to \( f \) iff there is some condition which forces that every sufficiently generic (in a sense to be defined) coarse description of \( f \) computes a coarse description of \( g \).  In turn, this will show that non-uniform coarse reductions are only non-uniform as a result of uncertainty about when a coarse description gets close enough to the object being described.  This contrasts with the result above about non-uniform effective dense reducibility.  Taken together these results show radical differences between non-uniform coarse reducibility and non-uniform effective dense reducibility and somewhat more limited differences for the uniform reductions.  The extent to which these differences persist when we restrict attention to the sets \mydash perhaps effective dense reductions are much better behaved when restricted to \( \cantor \) \mydash is left as one of several questions we pose in \S\ref{sec:future}.

\section{Definitions}

\subsection{Standard Notation}

We largely adopt standard notation which we briefly summarize.  The set of non-negative integers is \( \omega \) and we identify \( n \in \omega \) with its set of predecessors.  We denote the set of functions from \( Y \) to \( X \) by\footnote{To avoid ambiguity we assume that the superscript is never itself a set of functions, i.e., \( 2^{2^r} \) denotes the set of functions from \( \set{0, 1, \ldots, 2^{r} - 1} \) to \( \set{0,1} \)} \( X^Y \) with the convention that \( \omega^n \) implies \( n > 0 \).  Set difference is denoted by \( X \setminus Y \), complement by \( \setcmp{X} \) and restriction by \( X\restr{l} \).  We abbreviate \( X\restr{\set{y}{y \geq l}} \) by \( X\restr{\geq l} \).  We define \( f \symdiff g = \set{x}{f(x)\conv \neq g(x)\conv} \) (\textbf{note} the requirement that both functions be defined).  The image of \( X \) under \( f \) is denoted \( f[X] \) though we will overload this notation in certain clear circumstances.  To avoid excess parentheses we adopt the convention that whenever \( q(z) \) is a function we denote \( q(z)(x) \) by \( q(z;x) \)

We write elements of \( \bstrs \) and \( \wstrs \) (referred to as strings) like \( \str{x_0, x_1, \ldots, x_{n-1}} \) with \( \estr \) denoting the empty string.  We call elements of \( \baire \) or \( \cantor \) (viewed as a subset of \( \baire \)) reals or infinite strings  and identify sets with their characteristic functions \mydash both for subsets of \( \omega \) and subsets of \( n \) \mydash and functions with their graphs.  Unless otherwise indicated, capital letters range over \( \cantor \).  For strings and reals we denote that \( \sigma \) is (non-strictly) extended by \( \tau \) by \( \sigma \subfun \tau \),     incompatibility by \( \incompat \), compatibility by \( \compat \) and use \( \lh{\sigma} \) for the length of  \( \sigma \).   Concatenation is written as \( \sigma\concat \tau \), the longest proper initial segment of \( \sigma \) is denoted \( \sigma^{-} \), the longest common initial segment of \( \sigma \) and \( \tau \) is denoted by \( \sigma \meet \tau \) and the string consisting of  \( k \) copies of \( m \) is written as \( \str{m}^k \).   We extend the notation \( \subfun \) and \( \compat, \incompat \) to elements in \( (\omega \union \set{\diverge})^{< \omega} \) (finite partial functions from \( \omega \) to \( \omega \)) in the obvious way, e.g., \( \rho \subfun \rho' \) iff \( \forall(x \in \dom \rho)\left(\rho'(x)\conv = \rho(x)\right) \).

The \( i \)-th computable functional is \( \recfnl{i}{}{}\conv   \), we denote convergence by \( \recfnl{i}{X}{y}\conv   \) convergence in \( s \) steps by \( \recfnl{i}{X}{y}\conv[s]   \)  and divergence by \( \recfnl{i}{X}{y}\diverge   \).  We denote totality and non-totality by \( \recfnl{i}{X}{}\conv   \) and \( \recfnl{i}{X}{}\diverge   \). We use subscripts to denote stagewise approximations and superscripts for relativization and  identify \( \recfnl[s]{i}{X}{} \) with the longest string \( \sigma  \) with \(  \recfnl{i}{X}{y}\conv[s] = \sigma(y)   \). We assume that if \(  \recfnl{i}{X}{y}\conv[s] \) then \( y < s \) and the use of that computation \mydash denoted  \( \use{\recfnl{i}{X}{y}} \) \mydash is less than \( s \) as well.  We write \( f \cequiv g \) to mean \( \dom f = \dom g \) and \( \forall(x \in \dom f)\left(f(x) = g(x)\right) \).  The \( e \)-th set r.e.  in \( X \) is \( \REset(X){e} \).  We write \( \recset(X){e} \) to suggestively abbreviate the \( e \)-th \( 0-1 \) valued functional with the implication it represents an \( X \)-computable set.  We denote that \( x \) enters (leaves) \( Y \) at stage \( s + 1 \), i.e.,   \( x \in Y_{s+1}\setminus Y_s \) (\( x \in Y_s\setminus Y_{s+1} \)), by \( x \entersat{s+1} Y \) (\( x \leavesat{s+1} Y \) ) .         

We use \( \murec{x}{\phi(x)} \) to denote the least \( x \) satisfying \( \phi \).   We say \( \phi \) holds for almost all \( x \), denoted  \( \forall*(x)\phi(x)  \), iff  \( \lnot \existsinf(x)\lnot\phi(x)  \) and generally use stars to denote a claim holds with finitely many exceptions, e.g., \( A \subset* B \). 

We assume the existence of a standard coding function for \( \wstrs \) specify a pairing function in \S\ref{ssec:defs:density} with the code for \( (x,y) \) written  \( \pair{x}{y} \).   We will generally elide the distinction between objects and their codes using corner quotes to indicate the code  meant when there is a danger of ambiguity.  We  write \( \setcol{X}{n} = \set{y}{\pair{n}{y} \in X} \) for the set coded by the \( n \)-th column of \( X \), \( \setcol{X}{= n} = \set{y}{ y \in X \land \exists(x)\left(y = \pair{x}{n} \right)} \) and similarly for \( \setcol{X}{\leq n} \).  


\subsection{Less Common Definitions}  

We also adopt some less common notation.  First, we extend our identification of sets with their characteristic functions to finite strings, e.g., \( \sigma \in 2^n \) will be identified with \( \set{x}{\sigma(x) = 1}  \) and we apply this identification more aggressively than is common, e.g., writing \( \sigma \subset X \).  As a result, using the same symbol for the cardinality of a set as the length of a string would create ambiguity.  Therefore, we write  \( \card{X} \) to denote the cardinality of \( X \).  

We will sometimes want to replace the initial segment of a string (infinite or finite) so we define \( \sigma \triangleright f \) to be the string defined by
\[
(\sigma \triangleright f)(x) = \begin{cases}
                                    \sigma(x) & \text{if } \sigma(x)\conv \\
                                    f(x) & \text{otherwise}
                                \end{cases}
\] 
It will also be helpful to define a notion of agreement modulo a given set.  To this end we define
\[
f \agree[S] g \iff \dom f = \dom g \land \forall(y \in \dom f)\left(y \in S \lor f(y) = g(y) \right)
\]
We adopt this particular characterization to ensure that for finite strings \( \sigma \agree[S] \tau \) implies \( \lh{\sigma} = \lh{\tau} \).



\subsection{Density}\label{ssec:defs:density}


 We define (for \( X \subset \omega \))
\begin{equation*}
\udensity[a][b](X) \eqdef \udensity[{[a,b]}] \qquad \udensity[L] \eqdef \sup_{l \in L} \frac{X\restr{l}}{l} \qquad \udensity[a](X) \eqdef \udensity[a][\infty](X) \qquad \udensity(X) \eqdef \lim_{n \to \infty} \udensity[n](X) 
\end{equation*}
We call \( \udensity[l][l](X)  \) the density of \( X \) below \( l \) and \( \udensity(X) \) the upper density of \( X \).  We define the lower density \( \ldensity(X) \) similarly but with \( \inf \) in place of \( \sup \) and the density of  \( X \) to be \( \ldensity(X) = \udensity(X) \) when those two values agree.  We observe the following basic properties of density.
\begin{observation}\label{obs:density}
For all \( X, Y \subset \omega \), \( \udensity(X)\conv, \ldensity(Y)\conv \in [0,1] \) and   
\begin{align*}
&\udensity(X) = 0 \iff \density(X) = 0 \iff \ldensity(\setcmp{X}) = 1 \iff \density(\setcmp{X}) = 1\\
&\udensity(X \union Y) = 0 \iff \udensity(X) = 0 \land \udensity(Y) = 0 \\
&\density(Y) = 1 \implies \density(X \isect Y) = \density(X)
\end{align*}
\end{observation}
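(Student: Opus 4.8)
The plan is to reduce every clause to elementary manipulations of the finite ratios $\frac{X\restr{l}}{l}$ with $l \geq 1$ (the degenerate value at $l = 0$ never enters, since the limits defining $\udensity$ and $\ldensity$ only see cofinitely many $l$). First I would observe that $\udensity[n](X) = \sup_{l \geq n}\frac{X\restr{l}}{l}$ is non-increasing in $n$ while $\ldensity[n](X) = \inf_{l \geq n}\frac{X\restr{l}}{l}$ is non-decreasing, with every term in $[0,1]$ because $\card{X\restr{l}} \leq l$; hence both limits exist (equal to $\inf_n\udensity[n](X)$ and $\sup_n\ldensity[n](X)$) and lie in $[0,1]$, which is the first assertion. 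An easy interleaving of the two monotone sequences gives $\ldensity(X) \leq \udensity(X)$, so that $\density(X) = 0$ holds exactly when $\udensity(X) = 0$ and $\density(X) = 1$ holds exactly when $\ldensity(X) = 1$; I will use these reductions freely.

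For the four-way equivalence the only input is the complement identity $\frac{\setcmp{X}\restr{l}}{l} = 1 - \frac{X\restr{l}}{l}$, which turns a $\sup$ over $l \geq n$ into $1$ minus the corresponding $\inf$ and conversely; passing to the limit yields $\udensity(\setcmp{X}) = 1 - \ldensity(X)$ and $\ldensity(\setcmp{X}) = 1 - \udensity(X)$. Then $\udensity(X) = 0 \iff \ldensity(\setcmp{X}) = 1 \iff \density(\setcmp{X}) = 1$ (the last step by the reduction above), and $\udensity(X) = 0 \iff \density(X) = 0$ by the same reduction, which is exactly the stated chain.

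For the union clause I would use monotonicity, $A \subseteq B \implies \frac{A\restr{l}}{l} \leq \frac{B\restr{l}}{l} \implies \udensity(A) \leq \udensity(B)$, for the forward direction ($\udensity(X), \udensity(Y) \leq \udensity(X \union Y) = 0$), and subadditivity $\frac{(X \union Y)\restr{l}}{l} \leq \frac{X\restr{l}}{l} + \frac{Y\restr{l}}{l}$ together with $\sup_{l \geq n}(a_l + b_l) \leq \sup_{l \geq n}a_l + \sup_{l \geq n}b_l$ to get $\udensity[n](X \union Y) \leq \udensity[n](X) + \udensity[n](Y)$, hence the reverse direction in the limit. For the intersection clause, $\density(Y) = 1$ is equivalent to $\udensity(\setcmp{Y}) = 0$ (from $\udensity(\setcmp{Y}) = 1 - \ldensity(Y)$ and the reduction). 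Write $X$ as the disjoint union $(X \isect Y) \union (X \setminus Y)$, so $\frac{X\restr{l}}{l} = \frac{(X \isect Y)\restr{l}}{l} + \frac{(X \setminus Y)\restr{l}}{l}$ with the last summand $\leq \frac{\setcmp{Y}\restr{l}}{l}$. Monotonicity gives $\udensity(X \isect Y) \leq \udensity(X)$ and $\ldensity(X \isect Y) \leq \ldensity(X)$; rearranging to $\frac{(X \isect Y)\restr{l}}{l} \geq \frac{X\restr{l}}{l} - \frac{\setcmp{Y}\restr{l}}{l}$ and applying $\sup_{l \geq n}$ resp.\ $\inf_{l \geq n}$ (via $\sup(a_l - b_l) \geq \sup a_l - \sup b_l$ and $\inf(a_l - b_l) \geq \inf a_l - \sup b_l$) yields, in the limit, $\udensity(X \isect Y) \geq \udensity(X) - \udensity(\setcmp{Y}) = \udensity(X)$ and $\ldensity(X \isect Y) \geq \ldensity(X) - \udensity(\setcmp{Y}) = \ldensity(X)$. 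Hence $\udensity(X \isect Y) = \udensity(X)$ and $\ldensity(X \isect Y) = \ldensity(X)$, so $\density(X \isect Y)$ is defined exactly when $\density(X)$ is, with the same value.

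I expect essentially no real obstacle here: it is pure bookkeeping. The only point requiring attention is that $\sup$, $\inf$ and $\lim$ are not additive, so one must invoke the correct one-sided estimate at each step — an upper bound on $\udensity$ from the sup of a sum being at most the sum of the sups, a lower bound on $\udensity$ or $\ldensity$ from the sup (resp.\ inf) of a difference being at least the difference of the sups (resp.\ the inf minus the sup) — and to keep $l = 0$ out of every $\sup$ and $\inf$.
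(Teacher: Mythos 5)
The paper leaves Observation~\ref{obs:density} unproved, treating it as a routine fact; your elementary verification via finite ratios, complementation, subadditivity, and the one-sided $\sup$/$\inf$ estimates is correct and is exactly the kind of argument the authors implicitly appeal to. In particular your final clause correctly establishes the stronger claim that $\udensity(X \isect Y) = \udensity(X)$ and $\ldensity(X \isect Y) = \ldensity(X)$ whenever $\density(Y) = 1$, which subsumes the stated conditional equality of densities.
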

Thus the subsets of \( \omega \) with density \( 0 \) form an ideal which we denote by \( \mathbb{I} \) and those with density \( 1 \) form a filter.  


It will be helpful for our pairing function to ensure that every column has a well-defined positive density.  Specifically, we define our pairing function inductively as follows. 
\begin{equation}\label{eq:pairing}
\pair{0}{n} = 2n \qquad \pair{i+1}{n} = 2\pair{i}{n} + 1 
\end{equation}
We make the following observation about our pairing function which follows directly from the definition and by manipulation of limits.

\begin{observation}\label{obs:pairing}
 The pairing function satisfies \( \pair{0}{0} = 0 \), \( \pair{x}{y} \) is monotonic in both \( x \) and \( y \) and if \( S_i = \set{\pair{i}{x}}{x \in S}  \) then, for any \( i \), \( \density(S_i) = \frac{1}{2^{i+1}}\density(S) \)  and  \( \udensity(S_i) = \frac{1}{2^{i+1}}\udensity(S) \).  
\end{observation}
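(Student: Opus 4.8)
The plan is to extract a closed form for the pairing function by a one-line induction, read off the three elementary assertions, and then settle the density claims by comparing the counting function of \( S_i \) with that of \( S \). First I would prove, by induction on \( i \), that \( \pair{i}{n} = 2^{i+1}n + (2^{i} - 1) \): the base case is exactly the defining clause \( \pair{0}{n} = 2n \), and the inductive step is immediate from \( \pair{i+1}{n} = 2\pair{i}{n} + 1 \). Granting this, \( \pair{0}{0} = 0 \) is immediate; monotonicity in the second argument is immediate since for fixed \( i \) the map \( n \mapsto 2^{i+1}n + 2^{i} - 1 \) is strictly increasing; and monotonicity in the first argument follows from \( \pair{i+1}{n} = 2\pair{i}{n} + 1 > \pair{i}{n} \) (using \( \pair{i}{n} \geq 0 \)) together with transitivity.

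For the density claims, fix \( i \) and \( S \). By the closed form \( S_i = \set{2^{i+1}x + (2^{i}-1)}{x \in S} \), so for every \( l \) an element \( 2^{i+1}x + 2^{i} - 1 \) of \( S_i \) lies below \( l \) exactly when \( x < (l - 2^{i} + 1)/2^{i+1} \); hence, writing \( m(l) = \lceil (l - 2^{i} + 1)/2^{i+1} \rceil \) (which is \( \geq 0 \) once \( l \geq 2^{i} - 1 \), the finitely many smaller \( l \) being irrelevant to any limit), we get \( \card{S_i \isect l} = \card{S \isect m(l)} \). The map \( l \mapsto m(l) \) is non-decreasing, tends to \( \infty \), increases by at most \( 1 \) at each step and so is eventually surjective onto \( \omega \), and satisfies \( 2^{i+1}m(l) - C \leq l \leq 2^{i+1}m(l) + C \) for a constant \( C \) depending only on \( i \). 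Dividing the counting identity by \( l \) and applying this two-sided bound yields
\[
\frac{\card{S \isect m(l)}}{2^{i+1}m(l) + C} \;\leq\; \frac{\card{S_i \isect l}}{l} \;\leq\; \frac{\card{S \isect m(l)}}{2^{i+1}m(l) - C} .
\]

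Finally I would take \( \limsup_{l \to \infty} \) across this inequality. Since \( m(l) \to \infty \) and \( l \mapsto m(l) \) is eventually surjective, the \( \limsup \) over \( l \) of each outer expression equals its \( \limsup \) over \( m \); and because \( \card{S \isect m}/m \) is bounded while the additive constant \( C \) perturbs the denominator by a factor tending to \( 1 \), that \( \limsup \) over \( m \) is \( \frac{1}{2^{i+1}}\udensity(S) \), where I use \( \udensity(X) = \limsup_{l} \card{X \isect l}/l \) straight from the definition. The squeeze then gives \( \udensity(S_i) = \frac{1}{2^{i+1}}\udensity(S) \); rerunning the identical argument with \( \liminf \) in place of \( \limsup \) (equivalently with \( \inf \) in place of \( \sup \) in the definition of \( \ldensity \)) gives \( \ldensity(S_i) = \frac{1}{2^{i+1}}\ldensity(S) \), and in particular if \( \density(S) \) exists then \( \density(S_i) \) exists and equals \( \frac{1}{2^{i+1}}\density(S) \). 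The only thing demanding care — and it is genuinely routine, the "manipulation of limits" already flagged above — is the bookkeeping that the ceiling correction and the constant \( C \) wash out in the limit and that \( l \mapsto m(l) \) is cofinal and eventually onto \( \omega \); there is no real obstacle.
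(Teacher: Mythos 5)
Your proof is correct. The paper offers no argument for this observation beyond the remark that it "follows directly from the definition and by manipulation of limits," and your derivation of the closed form \( \pair{i}{n} = 2^{i+1}n + 2^i - 1 \), the counting identity \( \card{S_i \isect l} = \card{S \isect m(l)} \), and the subsequent squeeze is exactly that manipulation made explicit.
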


\subsection{Coarse Reducibility}

We will define coarse reducibility in terms of coarse descriptions.

\begin{definition}\label{def:coarse-description}
Given  \( f \in \baire \) and a function \( f' \) with \( \dom f = \dom f' = \omega \) we say \( f' \) is a coarse description of \( f \) if  \( \udensity(f \symdiff f') = 0 \). 
\end{definition}

Using this notion we define coarse reducibility.


\begin{definition}\label{def:coarse-reductions}
\( g \in \baire \) is non-uniformly coarsely reducible to \( f \in \baire \), written  \( f \NCgeq g \), iff every coarse description of \( f \) computes a coarse description of \( g \). \( g \in \baire \) is uniformly coarsely reducible to \( f \in \baire \), written \( g \UCleq f \), just if  a single computable functional  maps every coarse description of \( f \) to a coarse description of \( g \).  Equivalences and strict inequalities are defined in the usual fashion.   
\end{definition}

As we identify sets with their characteristic functions (and think of \( \cantor \) as a subspace of \( \baire \)) this definition also defines coarse and uniform coarse reductions for sets.  It isn't hard to see that this definition agrees with one which only considers coarse descriptions in \( \cantor \) on \( \cantor \).

\subsection{Effective Dense Reducibility}

In \cite{Astor2019Dense} an interesting new notion of robust information coding is defined \mydash effective dense descriptions.  Informally speaking, an effective dense description of a function \( f \) is a function that agrees with \( f \) on a dense subset but is allowed to say ``dunno'' on a set of upper density \( 0 \).  This is a relatively strong notion of approximation since we know that on any \( x \) where an effective dense description of \( f \)  asserts that \( f(x) = y \) that really is true.   Following \cite{Astor2019Dense} we use \( \Box \) as a special symbol meaning ``dunno'' which we will formally\footnote{Even more formally, we can define \( \hat{f} \in \baire \) to be an effective dense description of \( f \in \baire \) just if \( \forall(x)\left(\hat{f}(x) = 0 \lor \hat{f}(x) - 1 = f(x)  \right) \) and \( \hat{f}(x) = 0 \) on a set of density \( 0 \).  However, it will be less confusing to abuse notation and say that \( \hat{f} \in (\omega \union \set{\Box})^{\omega} \) with the understanding that the transformation \( \hat{f}(x) - 1 \) is applied seamlessly in the background.} identify with \( -1 \)  to indicate the unsure answer.  To avoid the potential for confusion with \( \Box \)  we change the end proof symbol to \( \blacksquare \).    When talking about effective dense reducibility functions may be either from \( \omega \) to \( \omega \) or to \( \omega \union \set{\Box}{} \). When necessary, we will explicitly indicate which we mean.  Following \cite{Astor2019Dense} we define the strong domain of a function as follows.


\begin{definition}\label{def:strong-domain}
The strong domain of \( f \), denoted \( \sdom f \), is defined as follows
\[
\sdom f = \set{x}{f(x)\conv \land f(x) \in \omega}
\]
\end{definition}

We also introduce a notion of weak equality that accommodates \( \Box \).

\begin{definition}\label{def:box-equal}
\begin{align*}
x \boxeq y &\iffdef x = y \lor x = \Box \lor y = \Box \\
f \sagree g &\iffdef \forall(x \in \omega)\left(f(x)\conv \boxeq g(x)\conv \right) \\
f \sagree[\epsilon] g &\iffdef f \sagree g \land \udensity(\sdom f \union \sdom g) \leq \epsilon 
\end{align*}    
\end{definition}

In the last definition we assume that \( \tau \in (\omega \union \set{\Box})^{< \omega} \).  Consistent with our conventions we understand \( f \sagree* g \) and \( f \sagree*[\epsilon] g \) to mean the unstarred versions hold on a co-finite set.  We can now express the property of agreeing on strong domains of density \( 1 \). 

\begin{definition}\label{def:effective-dense-description}
\( g\maps{\omega}{\omega \union \set{\Box}} \) is an effective dense description of \( f \in \baire \)  if \( f \sagree0 g \).  
\end{definition}

Effective dense reducibility is now defined just as coarse reducibility replacing coarse description with effective dense description.  

\begin{definition}\label{def:effective-dense-reductions}
\( g \in \baire \) is non-uniformly effective densely reducible to \( f \in \baire \), written  \( f \NEDgeq g \), iff every effective dense description of \( f \) computes an effective dense description of \( g \). \( g \in \baire \) is uniformly effective densely reducible to \( f \in \baire \), written \( g \UCleq f \), just if there is a single computable functional  taking every effective dense description of \( f \) to an effective dense description of \( g \).  Equivalences and strict inequalities are defined in the usual fashion. 
\end{definition}

We adopt the following notation for constructing effective dense descriptions of a function. 


\begin{definition}\label{def:desc-mod-S}
For \( S \subset \omega \) and \( f \in (\omega \union \set{\Box})^{\omega} \) define 
\[ f^{\Box S} = \begin{cases}
                    f(x) & \text{if } x \nin S\\
                    \Box & \text{otherwise}
                \end{cases} \]   
where we understand \( \tau^{\Box \sigma} \) to have domain \( \dom \tau \isect \dom \sigma \).    
\end{definition}

While this obviously lets us build effective dense descriptions when \( S \in \mathbb{I} \) we observe that all effective dense descriptions are of this form.

\begin{observation}\label{obs:f-box-all-edd}
If \( f \in \baire \) then  \( \hat{f} \sagree0 f \) iff there is some \( S \in \mathbb{I} \) such that \( \hat{f} = f^{\Box S} \).   
\end{observation}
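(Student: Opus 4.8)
The plan is to establish both directions of the biconditional by directly unwinding Definitions~\ref{def:box-equal}, \ref{def:strong-domain} and \ref{def:desc-mod-S}, using the one structural fact that when \( f \in \baire \) the symbol \( \Box \) never occurs as a value of \( f \). This is precisely what forces the relation \( \boxeq \) to collapse to ordinary equality at any argument where \( \hat{f} \) outputs an element of \( \omega \), and it is the only place the hypothesis \( f \in \baire \) is needed.

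For the right-to-left direction I would start from \( S \in \mathbb{I} \) together with \( \hat{f} = f^{\Box S} \) and check that \( \hat{f} \) is an effective dense description of \( f \). Totality of \( \hat{f} \) as a map \( \omega \to \omega \union \set{\Box} \) is immediate from Definition~\ref{def:desc-mod-S} since \( f \) is total. For \( x \nin S \) we have \( \hat{f}(x) = f(x) \) and for \( x \in S \) we have \( \hat{f}(x) = \Box \); in either case \( \hat{f}(x) \boxeq f(x) \), so \( \hat{f} \sagree f \). Moreover the set \( \set{x}{\hat{f}(x) = \Box} \) is exactly \( S \) (here is where we use that \( f \) itself never takes the value \( \Box \)), and \( S \in \mathbb{I} \) has upper density \( 0 \), so \( \hat{f} \sagree0 f \).

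For the left-to-right direction I would, given \( \hat{f} \sagree0 f \), take \( S = \set{x}{\hat{f}(x) = \Box} = \setcmp{\sdom \hat{f}} \) as the witnessing set. The identity \( \hat{f} = f^{\Box S} \) is then checked pointwise: if \( x \nin S \) then \( \hat{f}(x) \in \omega \), so \( \hat{f}(x) \boxeq f(x) \) together with \( f(x) \in \omega \) yields \( \hat{f}(x) = f(x) = f^{\Box S}(x) \); and if \( x \in S \) then both \( \hat{f}(x) \) and \( f^{\Box S}(x) \) equal \( \Box \). Finally, the density clause of \( \hat{f} \sagree0 f \) forces the set of arguments on which \( \hat{f} \) fails to return an element of \( \omega \) to have upper density \( 0 \); since \( f \in \baire \) contributes no such arguments, that set is exactly \( S \), whence \( \udensity(S) = 0 \) and, by Observation~\ref{obs:density}, \( S \in \mathbb{I} \).

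I do not expect any genuine obstacle here; the statement is essentially a repackaging of the definitions. The only points that need a little care are keeping straight which set the density clause in \( \sagree0 \) actually constrains, and recording explicitly that the hypothesis \( f \in \baire \) — as opposed to allowing \( f \) to be partial or \( \Box \)-valued — is exactly what licenses the step from \( \hat{f}(x) \boxeq f(x) \) to \( \hat{f}(x) = f(x) \).
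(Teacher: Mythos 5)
Your proof is correct and takes essentially the same approach as the paper: both directions proceed by choosing $S = \hat{f}^{-1}(\Box) = \omega \setminus \sdom \hat{f}$ and unwinding the definitions of $\sagree0$ and $f^{\Box S}$ pointwise. The paper only writes out the left-to-right direction explicitly (treating the converse as immediate from the remarks preceding the observation), whereas you spell out both directions and also helpfully flag exactly where the hypothesis $f \in \baire$ is used to collapse $\boxeq$ to equality.
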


To check the only if, let  \( S = \hat{f}^{-1}(\Box) = \omega \setminus \sdom \hat{f} \) and note that \( S \)  has density \( 0 \) and \( \hat{f} = f^{\Box S}  \).  

\section{Effective Dense Complexity}\label{sec:effective-dense-complexity}

In this section, we establish that \( f \NEDgeq X \) has the maximal possible complexity by proving that the relation  \( f \NEDgeq Z \) is \( \piin{1} \) complete.  We prove this result before tackling the question of whether a function is effective dense equivalent to a set in \S\ref{sec:edd-no-sets} because the fact that this relation isn't hyperarithmetic \mydash more particularly, the lack of an easy proof  (if it is even true) that if \( f \NEDequiv X \) then \( f \NEDequiv Y \) for some \( Y \) hyperarithmetic in \( f \) \mydash will force us to use a more complicated strategy in  \S\ref{sec:edd-no-sets}.

There is a certain ambiguity in saying that the relation \( \NEDgeq \)  is \( \piin{1} \) complete. Does it mean that, relative to some notion of definability, the set of indexes such that \( f_i \NEDgeq Z_i \) is a complete \( \piin{1} \) relation on numbers or that we can reduce any \( \piin{1} \) relation on reals to a question of this form?  We prove that \( f \NEDgeq X \) is \( \piin{1} \) complete in both senses. 

\begin{theorem}\label{thm:ned-piii-complete}
\( f \NEDgeq Z \) is  \( \piin{1} \) and there is a computable functional taking \( \alpha \in \omega, X \subset \omega \) and producing \( \deltazn(X){2} \)  indexes for \( f \in \baire, Z  \in \cantor \) such that \( f \nUEDgeq Z \) and \( f \NEDgeq Z \) iff \( \alpha \in \kleeneO[X] \).  Indeed, there is a strictly monotonic computable function \( l_k \) such that there is no computable functional \( \Gamma \) satisfying \[
\forall(S \subset \omega)\left(\forall(k)\left[\udensity[l_k](S) \leq 2^{-k} \right] \implies \Gamma(f^{\Box S}) \in (\omega \union \set{\Box})^{\omega} \land   \Gamma(f^{\Box S}) \sagree0 Z  \right)
\]  
\end{theorem}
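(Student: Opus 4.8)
\emph{The plan.} This splits into an upper bound (the relation $f \NEDgeq Z$ is $\piin{1}$) and a matching completeness claim, the latter also delivering $f \nUEDgeq Z$ in the sharp ``indeed'' form. The completeness will code Kleene's $\kleeneO$ into the combinatorics of density-$0$ corruption, via a redundant encoding of a well-founded tree.

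\emph{Upper bound.} By Observation~\ref{obs:f-box-all-edd}, $f \NEDgeq Z$ asserts exactly that for every $S$ with $\udensity(S) = 0$ there is an index $i$ with $\recfnl{i}{f^{\Box S}}{}$ total and $\recfnl{i}{f^{\Box S}}{} \sagree0 Z$. Totality is $\Pi^0_2$ in the oracle; $\recfnl{i}{f^{\Box S}}{} \sagree Z$ is $\Pi^0_1$ in $f^{\Box S} \oplus Z$; and ``the set of $x$ with $\recfnl{i}{f^{\Box S}}{x} = \Box$ has upper density $0$'' is arithmetic in $f^{\Box S}$. Since $f^{\Box S} \leq_T f \oplus S$ and $\udensity(S) = 0$ is arithmetic in $S$, the matrix after $\forall S$ is arithmetic in $S, f, Z$, so the relation has the form $\forall S\,(\text{arithmetic})$, i.e.\ it is $\piin{1}$ — uniformly, hence $\piin{1}$ in $X$ for the indexed version, which together with the reduction below yields $\piin{1}$-completeness in both the indexed and the ``on reals'' senses.

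\emph{The reduction.} Fix $\alpha, X$. By the standard uniformly-$X$-computable representation of $\piin{1}(X)$ sets I compute from $\alpha$ an index for an $X$-computable tree $T = T^X_\alpha \subseteq \wstrs$ with $\alpha \in \kleeneO[X] \iff T$ well-founded. I then build $f$ and $Z$ with $\deltazn(X){2}$ indices, uniformly in $\alpha, X$, as follows. Fix the strictly increasing computable $l_0 < l_1 < \cdots$ of the statement, cut $\omega$ into blocks $B_k = [l_k, l_{k+1})$, and assign blocks — with a carefully calibrated amount of redundancy — to the nodes of $T$ and to the requirements $\mathcal{R}_e$ (``$\recfnl{e}{}{}$ is not a uniform effective dense reduction of $Z$ to $f$''). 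On a block coding $\sigma \in T$, $f$ records $\sigma$'s parent together with a ``current guess'' at the bits of $Z$ designated to $\sigma$; the guesses are laid down using $X'$ exactly as in the usual coding of $\kleeneO$ into a $\deltazn(X){2}$ object, so that $Z(m)$ is the outcome of an effective transfinite recursion over a finite subtree $F_m$ of $T$ designated to $m$, the $F_m$ chosen (largely) pairwise disjoint and so that any infinite branch of $T$ meets $F_m$ for density-$1$ many $m$. Finally $l_k$ is taken fast enough that any $S$ with $\udensity(S) = 0$ corrupts only a density-weighted-negligible set of blocks and that the diagonalization erasures respect $\udensity[l_k](S) \leq 2^{-k}$, yet slow enough (branch blocks laid out sparsely) that the blocks witnessing any single infinite branch form a density-$0$ set.

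\emph{The directions, and the obstacle.} If $T$ is well-founded, then from $f^{\Box S}$ with $\udensity(S) = 0$ one reconstructs, for density-$1$ many $m$, an intact copy of $F_m$ and its guesses and runs the recursion to emit $Z(m)$ — the reduction's dependence on \emph{which} copies survived is what forces non-uniformity — so $f \NEDgeq Z$. If $T$ is ill-founded, fix an infinite branch $P$; erasing the (density-$0$) set $S_P$ of blocks witnessing $P$ produces an apparent tree in which $P$ dies, so for density-$1$ many $m$ the recursion over the apparent $F_m$ returns a stale value — wrong (or undefined) for positively many $m$ — whence $\neg(f \NEDgeq Z)$. The unconditional non-uniformity, in the ``indeed'' form, is a separate diagonalization: for each $e$ the construction reserves a density-$0$ set $S_e$ obeying $\udensity[l_k](S_e) \leq 2^{-k}$ for all $k$, chosen to erase exactly the copies the bounded computation $\recfnl{e}{\cdot}{}$ would consult while computing density-$1$ many coordinates of $Z$, forcing $\recfnl{e}{f^{\Box S_e}}{}$ to be partial or to err on a positively-dense set; this is consistent with $f \NEDgeq Z$ precisely because that same $S_e$ is handled by a \emph{different} functional using the copies $\recfnl{e}{}{}$ skipped. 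I expect the real work to be the simultaneous density accounting: choosing $l_k$ and the placement of the redundant copies and of the $F_m$ so that (i) no density-$0$ erasure destroys recovery of density-$1$ many coordinates of $Z$ when $T$ is well-founded, (ii) an ill-founded $T$ nonetheless admits a density-$0$ erasure making its branch ``look dead'' and spoiling density-$1$ many coordinates, and (iii) every fixed $\recfnl{e}{}{}$ is still beatable by a density-$0$ erasure within the stated tolerance — all while $f$ and $Z$ remain genuinely $\deltazn(X){2}$. Threading (i)--(iii) against one another, rather than the quantifier count or the recursion itself, is the crux.
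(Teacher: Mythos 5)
Your $\piin{1}$ upper bound is correct and is essentially what the paper records in \S\ref{sec:effective-dense-complexity}. The gap is in the negative direction of your reduction. When $T$ is ill-founded you erase a density-$0$ set $S_P$ of blocks along a branch $P$ and conclude that ``the recursion over the apparent $F_m$ returns a stale value \ldots\ whence $\neg(f \NEDgeq Z)$.'' That shows that one particular computation (running the tree recursion on the visible part of $f$) fails; it does not show that \emph{no} index $i$ yields $\recfnl{i}{f^{\Box S_P}}{} \sagree0 Z$. A functional other than the naive recursion might reconstruct $Z(m)$ from the very redundant copies you built for the positive direction, or from structural side-channels in $f^{\Box S_P}$. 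Your separate ``indeed'' diagonalization supplies a \emph{different} $S_e$ for each $e$, which is exactly what is needed to defeat a single uniform $\Gamma$, but it does not furnish the single density-$0$ set that must defeat every $e$ at once when $T$ is ill-founded. The paper closes this gap by an explicit finite-injury construction: for each level $n$ along a candidate branch and the paired index $i = i^\pi_n$, the construction actively changes $Z$ (and $f$) to manufacture a disagreement with $\recfnl{i}{f^{\Box\mathbb{U}(\pi(n))}}{}$, simultaneously routing the changed coordinates into the sets $U_j$ and $\hat{U}_\sigma$ so that the positive requirements $\theta_\sigma(f^{\Box S}) \sagree*0 Z$ are preserved, and preservation lemmas then show that every $\recfnl{i}{f^{\Box\mathbb{U}(X)}}{}$ along the branch $X$ is partial, has strong domain of density less than $1$, or disagrees with $Z$.

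Relatedly, you name the tension between (i) and (ii) but do not resolve it, and it is not clear your encoding can: if your redundancy is strong enough for $Z$ to survive every density-$0$ erasure, it must also survive $S_P$. The paper dissolves this tension not by raw redundancy but by a two-mode encoding. For each column set $U_i \in \mathbb{I}$, the reduction $\theta_\sigma$ recovers a contribution to $Z(x)$ either from the values of $f$ on an infinite subset of $U_i$---available precisely when $S \nsupset* U_i$, via the sets $Y_i$---or by exploiting the promise $S \supset* U_i$, in which case $\theta_\sigma$ may output $\Box$ exactly when $\pair{i}{x} \in S$ and so never commits to a wrong answer. The nonuniform parameter is the pattern $\sigma$ of almost-containments of $S$ against the family $(U_i)$. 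When $T$ is well-founded, the unique $X$ with $S \geq_U X$ has some initial segment $\sigma$ extended by no $\pi(n)$ and that $\theta_\sigma$ succeeds; when $T$ is ill-founded the active diagonalization defeats every $\theta_\sigma$ below the branch. Absent such a dichotomy, making $Z$ simultaneously recoverable from all density-$0$ erasures yet irrecoverable from one is exactly the obstacle you identify but do not overcome.
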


This result contrasts with the complexity of coarse reducibility which we will see in section \ref{sec:coarse-complexity} is arithmetic.   We state the theorem in terms of indexes so that we can give a non-relativized construction of \( f, Z \) from \( \alpha \) and then rely on relativization to give the complete theorem.   While we will build \( f, Z \) uniformly in \( P, X \) the effective dense reduction from \( Z \) to \( f \) (when it exists) will be strongly non-uniform.  The indeed claim requires that not only is \( Z \) never uniformly effective densely reducible to \( f \) this non-uniformity remains even when we restrict our attention to effective dense descriptions of \( f \) whose strong domains can be seen to achieve density \( 1 \) in a particularly uniform fashion.  We will later contrast this with the kind of non-uniformity possible for coarse reductions.

The key idea behind theorem \ref{thm:ned-piii-complete} is that we can use knowledge of the fact that \( S \supset U \) for some set \( U \) to help compute \( Z \) from \( f^{\Box S} \).  Thus, we can have one computation that computes an effective dense description of \( Z \) from the values of \( f^{\Box S} \) on some infinite subset of \( U \) when \( S \nsupset* U \)  and another computation that uses the information that  \( S \supset U \) to compute an effective dense description of \( Z \) from \( f^{\Box S} \) when \( S \supset* U \).  This ability to code information into the set of locations that the true value of \( f \) is hidden is a fundamental difference between effective dense reducibility and coarse reducibility.  

To actually make the existence of a reduction \( \piin{1} \) complete we go beyond considering \( 2 \) possible reductions by having a countable collection of sets \( U_i \) with reductions associated with finite binary strings corresponding to patterns of containment or non-containment (mod finite) of the sets \( U_i \).  Since \( \piin{1} \) predicates can be transformed into the claim that some r.e. set of binary strings lacks any infinite compatible sequence (this is the \( \bstrs \) version of asserting that a computable tree in \( \wstrs \) is well-founded) we can use this correspondence to construct \( f, Z \) for any given \( \piin{1} \) predicate.   Specifically, for each \( U_i \) we will encode information into \( f \) to help it compute \( Z \) which can be recovered from the values of \( f \) on any infinite subset of \( U_i \).  On the other hand, we will build the sets \( U_i \) so that \( S \supset* U_i \) communicates information about where \( Z \) might be non-zero.




\subsection{Preliminaries}
It is easy to see that non-uniform (and also uniform) effective dense reducibility is \( \piin{1} \) as the sentence \[
\psi(S, f, g) \iffdef \udensity(S) = 0 \implies \exists(i)\left(\recfnl{i}{f^{\Box S}}{}\conv \sagree0 g   \right)
\] 
is clearly arithmetic making \( \forall(S \in \cantor)\psi(S, f, g) \) \mydash which is equivalent to \( f \NEDgeq g \) \mydash  \( \piin{1} \).  Thus, to prove theorem \ref{thm:ned-piii-complete} it will suffice to show that given \( \alpha \) we can build \( f, Z \) such that \( f \NEDgeq Z \) iff \( \alpha \in \kleeneO \) in a uniform (i.e., supporting relativization) fashion.  To this end we note the following result \mydash which is essentially the translation of the equivalence between \( \piin{1} \) predicates and computable trees in \( \wstrs \) translated into \( \bstrs \).

\begin{lemma}\label{lem:piii-normal-form}
Given \( \alpha \in \omega \) there is a computable function \( \pi\maps{\omega}{\bstrs} \) whose index is computable from \( \alpha \) such that all of the following hold
\begin{enumerate}
    \item\label{lem:piii-normal-form:injective} \( \pi \) is a total injective function.
    \item\label{lem:piii-normal-form:non-decreasing}  \( n \geq m \implies \lh{\pi(n)} \geq \lh{\pi(m)} \)
    \item\label{lem:piii-normal-form:bound} \( \lh{\pi(n)} \leq n \). 
    \item\label{lem:piii-normal-form:unbounded-height} For all \( k \) there is a \( \subfun \) increasing sequence of height \( k \).     
    \item\label{lem:piii-normal-form:complete} \( \alpha \nin \kleeneO \) iff there is an infinite \( \subfun \) increasing sequence.
\end{enumerate}
Moreover, this uniformly relativizes to produce an \( X \)-computable sequence such that \( \alpha \nin \kleeneO[X] \) iff there is an infinite \( \subfun \) increasing sequence.
\end{lemma}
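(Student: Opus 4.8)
\emph{Proof strategy.} The plan is to carry out, inside \( \bstrs \), the usual translation between \( \piin{1} \) statements and well‑foundedness of computable trees, adding just enough bookkeeping to meet clauses \ref{lem:piii-normal-form:injective}--\ref{lem:piii-normal-form:bound}. Since \( \kleeneO \) is \( \piin{1} \), the normal form theorem supplies a fixed computable relation \( R \) with \( \alpha \in \kleeneO \iff \forall(f)\exists(n)\, R(\alpha, f\restr{n}) \), so setting \( S_\alpha = \set{\sigma \in \wstrs}{\forall(n \le \lh{\sigma})\, \lnot R(\alpha, \sigma\restr{n})} \) gives a computable tree, with index computable from \( \alpha \), which has an infinite path iff \( \alpha \nin \kleeneO \). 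I would then replace \( S_\alpha \) by the computable tree \( T_\alpha \) defined as the downward closure of \( \set{\str{0}\concat\sigma}{\sigma \in S_\alpha} \) together with all singletons \( \str{m} \) (\( m \in \omega \)) and all finite ``dead'' branches \( \str{m+1}\concat\str{0}^{j} \) (\( m \in \omega \), \( j \le m \)). Then \( T_\alpha \) is still computable uniformly in \( \alpha \), still has an infinite path iff \( \alpha \nin \kleeneO \) (any infinite path must pass through \( \str{0} \), below which \( T_\alpha \) is a faithful copy of \( S_\alpha \), while the added material hangs only finite subtrees off the root), but now \( T_\alpha \) also contains \( \str{m} \) for every \( m \) and a node of every depth.

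Next I transfer \( T_\alpha \) into \( \bstrs \) by the block encoding \( \mathsf{e} \) sending \( \str{a_0,\dots,a_{k-1}} \) to \( \str{0}^{a_0}\concat\str{1}\concat\str{0}^{a_1}\concat\str{1}\concat\dots\concat\str{0}^{a_{k-1}}\concat\str{1} \), with \( \mathsf{e}(\estr) = \estr \). This \( \mathsf{e} \) is injective, is computable with computable range (namely \( \estr \) together with the binary strings ending in \( \str{1} \)) and computable inverse, and is both \emph{prefix‑monotone} (\( v \subfun w \implies \mathsf{e}(v) \subfun \mathsf{e}(w) \)) and \emph{prefix‑reflecting} (\( \mathsf{e}(v) \subfun \mathsf{e}(w) \implies v \subfun w \)). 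Let \( R_\alpha = \set{\mathsf{e}(w)}{w \in T_\alpha} \); it is computable uniformly in \( \alpha \) and infinite (it contains \( \mathsf{e}(\str{m}) = \str{0}^{m}\concat\str{1} \) for every \( m \)), and I take \( \pi = \pi_\alpha \), with index computable from \( \alpha \), to be the enumeration of \( R_\alpha \) in order of increasing length, ties broken lexicographically. Then \( \pi \) is total, injective, and of non‑decreasing length, giving clauses \ref{lem:piii-normal-form:injective} and \ref{lem:piii-normal-form:non-decreasing}; and for clause \ref{lem:piii-normal-form:bound}, if \( \lh{\pi(n)} = l \) then every element of \( R_\alpha \) of length \( < l \) precedes \( \pi(n) \) in the enumeration, and \( R_\alpha \) already contains \( \estr \) together with \( \mathsf{e}(\str{0}), \mathsf{e}(\str{1}), \dots, \mathsf{e}(\str{l-2}) \) — in all \( l \) strings of length \( < l \) (the case \( l = 0 \) being vacuous) — whence \( n \ge l \).

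Clauses \ref{lem:piii-normal-form:unbounded-height} and \ref{lem:piii-normal-form:complete} then follow from the monotonicity and reflection properties of \( \mathsf{e} \). A chain \( w_0 \subfun \dots \subfun w_{k-1} \) of distinct nodes of \( T_\alpha \) is carried to a \( \subfun \)‑increasing sequence of length \( k \) in \( R_\alpha \), and since \( T_\alpha \) has nodes of every depth this produces \( \subfun \)‑increasing sequences of every height, giving clause \ref{lem:piii-normal-form:unbounded-height}. Conversely, an infinite \( \subfun \)‑increasing sequence \( \mathsf{e}(w_0) \subfun \mathsf{e}(w_1) \subfun \dots \) in \( R_\alpha \) yields, by prefix‑reflection, a nested sequence \( w_0 \subfun w_1 \subfun \dots \) of nodes of \( T_\alpha \); a nested sequence of bounded length is eventually constant (so would have finite image), so the \( \lh{w_i} \) are unbounded and \( \bigcup_i w_i \) is an infinite path through \( T_\alpha \), i.e.\ \( \alpha \nin \kleeneO \). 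The reverse implication is immediate: if \( f \) is an infinite path through \( T_\alpha \) then \( \mathsf{e}(f\restr{0}) \subfun \mathsf{e}(f\restr{1}) \subfun \dots \) is an infinite \( \subfun \)‑increasing sequence in \( R_\alpha \). This establishes clause \ref{lem:piii-normal-form:complete}. Finally, the relativized statement comes from running the same construction with \( \kleeneO[X] \) in place of \( \kleeneO \): \( \kleeneO[X] \) is \( \piin{1} \) in \( X \) and the normal form relativizes, so \( S_\alpha \), \( T_\alpha \), \( R_\alpha \) and \( \pi \) all become \( X \)‑computable uniformly in \( \alpha \).

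The one step that is not pure bookkeeping is the reason for routing everything through a \( \wstrs \)‑tree rather than an honest binary tree: a downward‑closed infinite subset of \( \bstrs \) is finitely branching and hence, by König's lemma, always has an infinite path, which would make clauses \ref{lem:piii-normal-form:injective} and \ref{lem:piii-normal-form:unbounded-height} (forcing \( \pi \) total and \( R_\alpha \) to carry arbitrarily long finite chains) incompatible with clause \ref{lem:piii-normal-form:complete}. Taking \( R_\alpha \) to be the image, under a prefix‑monotone and prefix‑reflecting encoding, of a \( \wstrs \)‑tree that is infinite but possibly well‑founded is exactly what decouples ``arbitrarily long finite chains'' from ``an infinite chain''. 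Everything else — downward‑closedness and computability of \( T_\alpha \), computability of \( R_\alpha \) and of its length‑ordered enumeration, and uniformity of all indices in \( \alpha \) — is routine.
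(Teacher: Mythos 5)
Your proof is correct, and it takes a genuinely different route from the paper's. The paper never passes through a \( \wstrs \)-tree: starting from the normal form ``\( \alpha \nin \kleeneO \) iff \( \recfnl{e}{X}{} \) is total for some \( X \in \cantor \)'' (with \( e \) uniformly computable from \( \alpha \)), it defines an auxiliary \( p\maps{\omega}{\bstrs} \) by searching, for each \( n \), among strings of length \( \leq n \) for the first \( \sigma \) whose partial computation \( \recfnl{e}{\sigma}{} \) extends strictly farther than all earlier \( \recfnl{e}{p(m)}{} \) (after padding \( e \) so that for \( \sigma = \str{0}^n\concat[1] \) the computation always converges below \( n \), guaranteeing a candidate exists); it then patches \( p \) into \( \pi \) by replacing each \( p(n) \) that would violate the non-decreasing-length clause \ref{lem:piii-normal-form:non-decreasing} with the block of all its extensions of an appropriate fixed length. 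You instead invoke the standard normal form for \( \piin{1} \) as a \( \wstrs \)-tree \( S_\alpha \), pad it to \( T_\alpha \) so as to obtain nodes of every depth and infinitude while adding no infinite paths (the singletons \( \str{m} \) and the dead branches \( \str{m+1}\concat\str{0}^{j} \), hanging only finite subtrees off the root), transfer into \( \bstrs \) via the prefix-monotone, prefix-reflecting block encoding \( \mathsf{e} \), and let \( \pi \) be the length-ordered enumeration of \( \mathsf{e}[T_\alpha] \). Both arguments give all five clauses together with uniform relativization. Yours is arguably cleaner to verify: injectivity and clause \ref{lem:piii-normal-form:non-decreasing} are free from the length-ordered enumeration, the bound \ref{lem:piii-normal-form:bound} reduces to a direct count (there is an element of \( R_\alpha \) of every length \( < l \), so \( \pi(n) \) of length \( l \) has \( n \geq l \)), and clauses \ref{lem:piii-normal-form:unbounded-height} and \ref{lem:piii-normal-form:complete} become a transparent translation of ``arbitrarily long chains in \( T_\alpha \)'' and ``infinite path through \( T_\alpha \)'' along a prefix-monotone, prefix-reflecting map. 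What the paper's route buys in exchange is that it stays entirely inside \( \bstrs \) and avoids introducing an intermediate encoding; the cost it pays is the extra combinatorial patching step to recover monotonicity of lengths. Your closing observation — that the detour through a \( \wstrs \)-tree is forced because a downward-closed infinite subset of \( \bstrs \) would have an infinite path by K\"onig's lemma, collapsing clauses \ref{lem:piii-normal-form:unbounded-height} and \ref{lem:piii-normal-form:complete} — correctly identifies the one conceptual obstacle; it is precisely why the image \( R_\alpha \) must not be taken downward closed, and the paper sidesteps the same issue by searching directly over partial computations rather than over a tree.
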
  
In the above lemma, an increasing sequence of height \( k \) is an injective function  \( i \mapsto n_i \)  from \( k \) to \( \omega \) such that \( \pi(n_i) \subfun \pi(n_{i+1}) \).  
\begin{proof}
It is well known (e.g., see \cite{Sacks1990Higher}) that we can uniformly find an index \( e \) from \( \alpha \) such that \( \alpha \nin \kleeneO \) iff \( \recfnl{e}{X}{} \) is total for some \( X \).  We can assume that if \( \sigma = \str{0}^{n}\concat[1] \) and  \( x \leq n \) then \( \recfnl{e}{\sigma}{0}\conv[n] \).  We first define a function \( p \) with \( p(0) = \estr \) and \( p(n), n > 0 \)  to be the first \( \sigma \in \bstrs \) found such that 
\[
\lh{\sigma} \leq n \land \forall(m < n)\left(\lh{\recfnl{e}{\sigma}{}} > \lh{\recfnl{e}{\sigma}{p(m)}}  \right)
\]  
It is easy to see that \( p(n) \) satisfies \ref{lem:piii-normal-form:complete}, \ref{lem:piii-normal-form:bound} and \ref{lem:piii-normal-form:injective} since our assumption above guarantees that for every \( n \) there  is some \( \sigma \) satisfying this equation and that same assumption also ensures that part \ref{lem:piii-normal-form:unbounded-height} is satisfied.   We now replace each \( p(n) \) anytime we violate \ref{lem:piii-normal-form:non-decreasing} with a sequence of all extensions of \( p(n) \) of some (fixed) sufficiently large length. 

Formally, let \( \pi(0) = \estr, n_0 = 0 \) and assume that at stage \( s - 1 \) we've defined \( n_s \) and  \( \pi\restr{n_s} \) from \( p\restr{s} \).  At stage \( s > 0 \) let \( t < s \) be maximal such that \( p(t) \subfun p(s) \) and let \( l = \max(\lh{\pi(n_s)} - \lh{p(s)}, 1 + \lh{\pi(n_t)} - \lh{p(s)}, 0)  \).  Set  \( n_{s+1} = n_s + 2^l \),  \( \pi(n_s + i) = p(s)\concat \sigma_i \) for \( i < 2^l \) where \( \sigma_i \) is the (length \( l \)) binary representation of \( i \).

The only condition it is unclear if \( \pi \) satisfies is \( \ref{lem:piii-normal-form:bound} \).  However, this is verified by a quick induction using the fact that \( \lh{p(s)} \leq s \leq n_s \).      

\end{proof}

Since we want to ensure \( f \nNEDgeq Z \) only when we have some infinite sequence \( \pi(n_i) \) with \( \pi(n_{i+1}) \supfun \pi(n_i) \) we will work on the \( i \)-th computable functional when that sequence has height \( i \).  To that end, we adopt the following piece of notation with the second equality following from part \ref{lem:piii-normal-form:non-decreasing} of lemma \ref{lem:piii-normal-form}.   

\begin{equation}\label{def:i-of-n}
i^\pi_n \eqdef \card{\set{m}{m < n \land \pi(m) \subfun \pi(n)}} =\card{\set{m}{ \pi(m) \subfun \pi(n)}}
\end{equation}



In the rest of this section, we will  build \( f, Z \) so that \( f \NEDgeq Z \) iff every \( X \in \cantor \)  extends \( \pi(n) \) for only finitely many \( n \) using \( \deltazn{2} \) approximations \( f_s, Z_s \).  

\subsection{Basic Approach}

The basic (to be complicated) idea is to translate between an element \( X \in \cantor \) and a set \( U(X) \in \mathbb{I} \) so that \( X \) extends \( \pi(n) \) for infinitely many \( n \) iff  \( f^{\Box U(X)} \) doesn't compute any effective dense description of \( Z \).  Since finite changes to \( X \) need to potentially affect the existence of a computation from  \( f^{\Box U(X)} \) we ensure that each bit of \( X \) controls an infinite set. Specifically,  we  define a sequence of infinite sets \( U_i \in \mathbb{I}  \) with \( U_i \) contained in the \( i \)-th column of \( \omega \) and make the following definition.

\begin{definition}\label{def:U-of-set}
\[
U(X) \eqdef \Union_{i \in X} U_i \qquad \qquad U \eqdef U(\omega)
\]
We extend this to elements of \( \bstrs \) via our usual identification of sets and characteristic functions and define \( U_s(X)\) by replacing \( U_i \) with \( U_{i,s} \) in the above equation.     
\end{definition}   

Of course, not every \( S \in \mathbb{I} \) will be of the form \( U(X) \) for some \( X \).  So we will associate every \( S \in \mathbb{I} \) with a unique \( X \in \cantor \) based on whether \( S \supset* U_i \).  To that end we define.        

\begin{definition}\label{def:U-compatible}
For \( S \subset \omega \) and  \( \sigma  \) (in \( \bstrs \) or \( \cantor \)) we define  
\[ 
S \geq_U \sigma \iff \forall(i \in \dom \sigma)\left(S \supset* U_i \iff \sigma(i) = 1 \right)
\]  
\end{definition}

Note that, for all sets \( S \subset \omega \) there is a unique set \( X \in \cantor \) with \( S \geq_U X \).  Our construction will build computable functionals \( \theta_\sigma \) such that if \( \sigma \) isn't extended by any \( \pi(n) \)  and \(  S \geq_U \sigma  \) then \( \theta_\sigma(f^{\Box S}) \sagree*0 Z \).  When \( \sigma(i) = 1 \), so \( S \supset* U_i \), we will insist that \( \theta_\sigma(f^{\Box S};x) = \Box  \) whenever \( \pair{i}{x} \in S  \)  so (weak) agreement can be achieved just by enumerating elements into \( U_i \).  When \( \sigma(i) = 0 \), so \( S \nsupset* U_i \), then \( f^{\Box S} \) can see \( f \) on an infinite subset of \( U_i \).  By building \( U_i \) to be r.e., we ensure that in such cases \( f^{\Box S} \) can compute such an infinite subset of \( U_i \) and we will define sets \( Y_i \) that can be computed by all such \( f^{\Box S} \).

\begin{definition}\label{def:Y-sets}
Define \( Y_i \) (and \( Y_{i,s} \in \bstrs  \) similarly with respect to \( f_s \)) by \[
Y_i(x) = j \iff  \exists(z \in U_i)\left(\godelnum{\sigma} = f(z) \land \sigma(x) = j  \right)
\]
We write \( \lh{f(x)} \) to refer to the length of \( \sigma \in \bstrs \) such that \( f(x) = \godelnum{\sigma} \).     
\end{definition} 

We will ensure \( Y_i \in \cantor \) and that it is well-defined.  In the above definition, our coding function is a bijection of \( \bstrs \) with \( \omega \).  While any `reasonable' coding function would probably work we define the following coding
\begin{equation}\label{eq:coding-bstrs}
\godelnum{\sigma} =2^{\lh{\sigma}} -1 + \sum^{\lh{\sigma}-1}_{n = 0} 2^n\sigma(n)
\end{equation} 
To see that this coding is indeed a bijection of \( \bstrs \) and \( \omega \) note that our encoding maps \( \sigma \) to \( x -1 \) where \( x \) has the base-2 representation \( \str{1}\concat \sigma  \) and that every positive integer has a base-2 representation of that form.

As we commit to ensuring that \( Y_i \) is well-defined, i.e., any sequence \( z_j \) gives the same set, it will suffice to ensure that \( \lim_{j \to \infty} \lh{\sigma_j} = \infty  \) to guarantee that every \( f^{\Box S} \) with \( S \nsupset* U_i \) can compute \( Y_i \).  The following definition is meant to capture the property necessary to be able to enumerate \( x \) into \( U_i \)  

\begin{definition}\label{def:Yi-compat}
We say  \( x \) is compatible with \( Y_{i} \) at stage \( s \) if \( x = \pair{i}{y} \) for some \( y \), \( f_s(x) = \godelnum{\sigma} \) and \( \sigma \subfun Y_{i,s} \).
\end{definition}  

Note that the definition of compatibility is stronger than merely not disagreeing with \( Y_{i,s} \) and actually ensures that adding \( x = \pair{i}{y} \) to \( U_i \) leaves \( Y_{i_s} \) unchanged.  With this background in place we can now specify the reductions \( \theta_\sigma \).

\begin{definition}\label{def:theta-sigma}
Given \( S \in \mathbb{I}, f \in \baire \) for \( i < \lh{\sigma} \) we define
\begin{align*}
\chi^{\sigma}_i(S,f;x) &=  \begin{cases}
                        Y_i(\pair{\sigma\restr{i}}{x}) & \text{if } \sigma(i)\conv = 0 \\
                        0 & \text{if } \sigma(i)\conv = 1 \land \pair{i}{x} \nin S \\
                        \Box & \text{if }  \sigma(i)\conv = 1 \land \pair{i}{x} \in S
                      \end{cases}\\
\theta_\sigma(f^{\Box S};x) &= \sum_{i = 0}^{\lh{\sigma}-1} \chi^{\sigma}_i(S,f;x) \bmod 2
\end{align*}
Where we stipulate that \( \Box + j = \Box \).  We also define the stagewise approximations \( \chi^{\sigma}_{i,s}(S,f;x) \) and \( \theta_{\sigma,s} \)  by using \( Y_{i,s} \) in the above equation. 
\end{definition}
We also specify a computation that \( f^{\Box S} \) will use to compute \( \theta_\sigma(f^{\Box S};x) \) when  \( S \geq_U \sigma \).  The only case of interest is when \( \sigma(i) = 0 \) in which case  \( f^{\Box S} \)  computes \( Y_i(\pair{\sigma\restr{i}}{x}) \)   by searching for \( z \in U_i \setminus S \) with \( \lh{f(z)} > \pair{\sigma\restr{i}}{x} \)  and returning \( \tau(\pair{\sigma\restr{i}}{x}) \) where \( \tau = \godelnum{f(z)}  \).  We understand \( \pair{\sigma\restr{i}}{x} \) to use the encoding from \eqref{eq:coding-bstrs} for \( \sigma\restr{i} \). 

The reason for the use of the string \( \sigma \) in  \( Y_i(\pair{\sigma\restr{i}}{x}) \) is to ensure if \( i \) is the first location at which \( \sigma(i) = 0 \) disagrees with  \( \pi(n) \) then flipping the value of  \( Y_i(\pair{\sigma\restr{i}}{x}) \) by changing the values of \( f \) on \( U_i \)   will allow us to preserve the computation \( \theta_\sigma(f^{\Box S};x) \) for \( S \geq_U \sigma \) while flipping \( Z(x) \) to diagonalize against a computation from  \( U(\pi(n)) \supset U_i \) which can't see \( f\restr{U_i} \).

While we've described how we will ensure that \( f \NEDgeq Z \) when there is no \( X \) extending infinitely many \( \pi(n) \) it will take slightly more to ensure \( f \nNEDgeq Z \) when there is such an \( X \).  Our first thought was to simply ensure that \( f^{\Box U(X)} \) doesn't compute any effective dense description of \( Z \).  However, if we merely tried to produce disagreement between  \( \recfnl{i^\pi_n}{f^{\Box U(\sigma)}}{} \) and \( Z \) when \( \sigma  \) is extended by some \( \pi(n) \)  then even if \( \sigma \subfun X \) we might never observe such disagreement (convergence could wait to see parts of \( U(\tau) \) with \( \sigma \subfunneq \tau \subfun X \)).  We could search for \( \tau, m \) with  \( \sigma \subfun \tau \subfun \pi(m) \) and try to preserve disagreement between \( \recfnl{i^\pi_n}{f^{\Box U(\tau)}}{} \)  and \( Z \) but since we don't know which \( \tau \) extend to a full path this risks committing us to accept infinite restraint.    

The natural solution to this problem is to search for a stage \( s \), a string  \( \tau \supfun \sigma \) and \( x \)  where we can produce disagreement between \( \recfnl[s]{i^\pi_n}{f^{\Box U(\tau)}}{x} \) and \( Z_{s+1}(x) \)  and somehow dump elements in \( U_s(\tau) \setminus U_s(\sigma) \)  into \( U_{s+1}(\sigma) \) to preserve that disagreement.  This is essentially what we do, except instead of actually modifying \( U(\sigma) \) we instead define a sequence of adjustment sets \( \hat{U}_\sigma \) and place those elements into \( \hat{U}_\sigma \).  We make the following definition to help us talk about the resulting computations.        

\begin{definition}\label{def:hat-U}
\begin{alignat*}{2}
\hat{U}(X) &\eqdef \Union_{\sigma \subfun X} \hat{U}_\sigma \qquad & \mathbb{U}(X) &\eqdef \hat{U}(X) \union U(X) \\
\hat{U} &\eqdef \Union_{\sigma \subfun X} \hat{U}_\sigma  \qquad & \mathbb{U} &\eqdef U \Union \hat{U}
\end{alignat*}
\end{definition}

Stagewise versions are defined in the obvious way and denoted by an appropriate subscript.

\subsection{Requirements}

In stating our requirements we will use \( l_s \) to denote a value chosen large at the start of stage \( s \), i.e., \( l_s \) will be bigger than whatever computable function of values mentioned at or before stage \( s -1 \) is needed for the verification to succeed.  Using this notation, we can now state the requirements.  Note that \( \sigma \) in \req{P}{\sigma} ranges over \( \bstrs \).  

\begin{requirements}
\require{I}{k} \exists(l)\left(\udensity[l](\mathbb{U}) \leq \frac{1}{2^k}\right) \\
\require{P}{\sigma} S \geq_U \sigma \land S \in \mathbb{I} \land \forall*(m)\left(\pi(m) \nsupfun \sigma \right) \implies \theta_{\sigma}(f^{\Box S}) \sagree*0 Z \\
\require{Y}{n}  \forall*(x \in U_i)\left(\lh{f(x)} \geq n\right)   \\
\require{N}{n} \existsinf(x,s)\exists(\tau \supfun \pi(n))\left(\recfnl{i^\pi_n}{f^{\Box \mathbb{U}_s(\tau)}}{x}\conv[s] \implies  \recfnl{i^\pi_n}{f^{\Box \mathbb{U}(\pi(n)}}{x}\conv \neq Z(x) \right) 
\end{requirements} 
 
 As it may not be clear, we note that the purpose of \req{Y}{n} is to ensure that the sets \( Y_i \) are infinite and computed from \( f \) on any infinite subset of \( U_i \).  We won't specifically list it as a requirement, but we also will ensure that each \( U_i \) is infinite.  All requirements will be subject to finite injury from our attempts to satisfy \( \req{N}{n} \). 

As discussed above, to meet \( \req{N}{n} \) when \( \sigma = \pi(n) \)  we will need to move  finitely many elements from the sets \( U_j \) with \( \tau(j) = 1 \) and \( j \geq \lh{\tau} \) to the set \( \hat{U}_\sigma \).  This will only occur finitely many times for each \( j \) but will mean that the sequence \( U_j \) will not be uniformly r.e..


\subsection{Construction}

\subsubsection{Machinery}

We introduce the following machinery for use in the construction.  Formally, these objects are all defined relative to a stage which will be indicated with a subscript when necessary but will otherwise be suppressed.  We assume that these objects are modified only in the ways we specify at each stage, e.g., \( R_{s+1} = R_s \) modulo any changes we make at stage \( s \) and that any maximum taken over an empty set in the definitions below yields \( 0 \). 

\begin{itemize}
    \item A finite set \( R \) (with \( R_0 = \eset \)) of triples \( (n, i, \sigma) \) with \( n, i \in \omega \) and \( \sigma \in \bstrs \)   which represent instances of \req{N}{n} which we haven't achieved a disagreement victory for yet.

    \item a finite set \( R^{S} \) (with \( R^{S}_0 = \eset \))  of quadruples \( (n, i, \sigma, x) \) where \( x \) witnesses the disagreement victory we have achieved (the S is for satisfied).



    \item A restraint function \( r(n) \) (with \( r_0(n) = n  \) for all \( n \))  defined on some finite subset of \( \omega \) indicating the initial segment of \( f, Z \) and \( U_i \) which \( \req{N}{n} \) wishes to preserve. We define \( \overbar{r}(n) =  \max_{m < n} r(n) \).

\end{itemize}

Keeping track of the sets \( R, R^{S} \)  will make it more convenient to identify cases where a higher priority requirement \( \req{N}{n} \) renders a lower priority requirement \( \req{N}{n'} \) no longer satisfied. Finally, we specify the conditions under which a tuple in \( R \) may prompt us to act on its behalf.

\begin{definition}\label{def:viable-tuple}
The triple \( (n, i, \sigma) \in R_s \) is viable at stage \( s \) (with witnesses \( x, \tau \)) if all of the following obtain for some \( x, \godelnum{\tau} < s \)  
\begin{enumerate}
    \item \( n < s \land \sigma \subfun \tau \)
    \item\label{def:viable-tuple:pair-ls-bound} \( \pair{\sigma}{x} < l_{s-1} \)  
    \item \label{def:viable-tuple:convergence}\( \displaystyle \recfnl{i}{f^{\Box \mathbb{U}_s(\tau)}}{x}\conv[s] \in \omega \) 
    \item Either \( Z_s(x) \neq \recfnl{i}{f^{\Box \mathbb{U}_s(\tau)}}{x}  \) \mydash in which case we say  \( x, \tau \) are a trivial witness for the viability of  \(  (n, i, \sigma)  \)  \mydash or all of the following obtain
    \begin{enumerate}
        \item \( x > \overbar{r}(n) \)
        \item \( x \geq l_n \)  
        \item\label{def:viable-tuple:compatible} For each \( j \) with \( \sigma(j) = 0 \), \( n \leq \lh{f(\pair{j}{x})} \) and  \( x \) is compatible with \( Y_j \) (at stage \( s \)).
    \end{enumerate} 
\end{enumerate}  
\end{definition} 

The idea behind this definition is that the triple \( (n, i, \sigma) \) is an instruction to try and ensure \( \recfnl{i}{f^{\Box \mathbb{U}(\sigma)}}{} \nsagree Z \) which we try and satisfy with priority \( n \).  While we could have simply used \( \lh{\sigma} \) as the priority we then would have had to break ties (e.g., via lexicographic ordering) and this would have resulted in a  proliferation of powers of \( 2 \) in the verification.  

\subsubsection{Initial Conditions}

We start by setting \( Z_0 = \eset \) and \( U_\omega = \eset \).  However, for \( f \) we will need to take a bit more care in setting the initial conditions.  This is because to satisfy \req{N}{n} we will sometimes need to add leave \( f_s \) unchanged while enumerating some \( x \) into \( Z \).  To do this without violating \req{P}{\sigma} we will also want to enumerate some \( \pair{j}{x} \) into \( U_j \) ensuring that \( \theta_\sigma(x) = \Box \) for all \( \sigma \) with \( \sigma(j) = 1 \).  We choose \( f_0 \) so that we can do this while keeping \( Y_j \) well-defined.       

\begin{equation}\label{eq:fzero}
f_0(\pair{i}{\pair{l}{k}}) =  (\floor{\frac{k}{2^{i(l+1)}}} \bmod 2^{l+1} ) + 2^{l+1} - 1
\end{equation}            
Here modulus really denotes the remainder, i.e., the least non-negative representative of the equivalence class.  The reason for the specific form is that it will ensure that the set of \( y \) such that \( f_0(\pair{i}{y}) \) codes a desired string for \( i < n \) has positive measure.  More specifically, it will let us prove lemma \ref{lem:any-bstrs-positive-density} which will allow us to show that we always have the chance to diagonalize against computations producing effective dense descriptions of \( Z \) without making \( Y_i \) either ill-defined or violating \req{Y}{n}  

\subsubsection{Stagewise Operation}

At each stage in our construction we execute the following steps.  We understand that whenever unspecified, we are dealing with the stage \( s \) version of all objects (e.g. \( R = R_s \)) but that changes appear in the stage \( s+1 \) version.  

\begin{steps}
    \step\label{step:satN} Search for the least \( n \) such that \( (n, i, \sigma) \in R_s \) such that \( (n, i, \sigma)  \) is viable at stage \( s \).  If such a triple is found, let \( x, \tau \) be the least witness for viability \mydash ordered lexicographically as \( (x, \godelnum{\tau}) \) \mydash  and execute the following substeps.  Otherwise, continue to the next step.
    \begin{steps}
        \step\label{step:satN:fix-U-hat} \[
          \text{Set }  \hat{U}_\sigma = \mathbb{U}(\tau) \setminus U(\sigma)
        \]



        \step\label{step:satN:resetU} For all \( j \geq \lh{\sigma} \) set \( U_j = \eset \).        

        \step Set \( r(n)  \) to be \( l_s \), remove \( (n, i, \sigma) \) from \( R \) and place \( (n, i, \sigma, x) \) into \( R^{S} \).  

         \step\label{step:satN:add-injured-to-R} Remove every  \( (n', i', \sigma', x') \in R^{S} \) with \( n' > n \) and place \( (n', i', \sigma') \) into \( R \).  Also, set \( \hat{U}_{\sigma'} = \eset \).

        \step\label{step:satN:change-Z}   If \( (x, \tau) \) was a  a trivial witness continue on to the next top-level step.  Otherwise, change the value of \( Z(x) \), i.e., \( Z_{s+1}(x) = 1 - Z_s(x) \) and continue on to the next substep.

        \step\label{step:satN:add-error-to-U}  For each \( j \) with \( \sigma(j) = 0 \) enumerate \( \pair{j}{x} \) into \( U_j \).

        \step\label{step:satN:flip-elements-of-U} For each \( j \) with \( \sigma(j) = 1 \) and each \( z \in U_j \) with \( f_s(z) = \godelnum{\zeta} \)  and  \( \lh{\zeta} > \pair{\sigma\restr{j}}{x}  \) set \( f(z) = \godelnum{\zeta'} \) where \( \zeta' \) is the result of flipping the \(  \pair{\sigma\restr{j}}{x}  \)-th bit of \( \zeta \), i.e., \( \zeta'( \pair{\sigma\restr{j}}{x}) = 1 - \zeta(\pair{\sigma\restr{j}}{x}) \).  

    \end{steps}

    \step\label{step:satN:ext-U} Let \( \hat{Y}_i = Y_{i,s} \) unless \( U_i \) was reset in step \ref{step:satN:resetU} in which case \( \hat{Y}_i = \estr \).  Choose \( \hat{l}_s \) to be \( \pair{\str{1}^s}{l_s} \)  and for \( i \leq s \) define    \( \sigma_i \in 2^{\hat{l}_s} \) to be
            \[
                \sigma_i(x) = \begin{cases}
                                \hat{Y}_{i,s}(x) & \text{if } \hat{Y}_i(x)\conv \\
                                0 & \text{if } x \leq \hat{l}_s \land \hat{Y}_i(x)\diverge \\
                                \diverge & \text{if } x \geq l_s
                            \end{cases}
            \]
           Choose \( x_i = \pair{i}{y} > l_s \) (and hence the uses of computations in previous steps) for each \( i \leq s \) such that \( f_s(x_i) = \godelnum{\sigma_i} \) and add \( x_i \) to \( U_i \). 

    \step Add \( (s, i^\pi_s, \pi(s)) \) to \( R \).       

\end{steps}

This completes the construction.

\subsection{Verification}

We first show that the limits \( R = \lim_{s\to\infty} R_s \) and \( R^{S} = \lim_{s\to\infty} R^{S}_s  \) are well-defined.

\begin{lemma}\label{lem:finite-injury}
A triple \( (n, i, \sigma) \) can only enter (or leave) \( R \) at most \( 2^n \) times.  Moreover, \( (n, i^{\pi}_n, \pi(n)) \entersat{s+1} R \) iff \( n = s \) or \( n < s \) and some   \( (n', i', \sigma' ) \leavesat{s+1} R  \) with \( n' < n \).    
\end{lemma}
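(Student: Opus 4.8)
The statement to prove is Lemma~\ref{lem:finite-injury}: that each triple $(n,i,\sigma)$ enters or leaves $R$ at most $2^n$ times, and the characterization of exactly when $(n, i^\pi_n, \pi(n))$ enters $R$.

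\textbf{Plan of proof.} I would argue by induction on $n$. The key structural observation is that triples only get added to $R$ in two places: Step~\ref{step:satN:add-injured-to-R} (when a higher-priority requirement acts, re-injuring lower-priority satisfied triples) and the final step (when $(s, i^\pi_s, \pi(s))$ is added at stage $s$). A triple leaves $R$ only in Step~\ref{step:satN} when it is found viable and acted upon, at which point it is moved to $R^S$. So the life cycle of a given triple $(n, i, \sigma)$ alternates: it enters $R$, possibly later gets acted on and moves to $R^S$, then possibly gets re-injured back into $R$ by some higher-priority action, and so on. Each ``re-injury'' of $(n,i,\sigma)$ is triggered by Step~\ref{step:satN:add-injured-to-R} acting on behalf of some $(n', i', \sigma')$ with $n' < n$ — that is, by $(n',i',\sigma')$ \emph{leaving} $R$. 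So the number of times $(n,i,\sigma)$ re-enters $R$ is bounded by the number of times triples of strictly smaller priority index leave $R$.

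\textbf{Main steps.} First I would note the only-place-it-can-change facts above, making precise that a triple enters $R$ only via Step~\ref{step:satN:add-injured-to-R} or the final step, and leaves only via Step~\ref{step:satN}. Second, I would observe that the triple $(n, i^\pi_n, \pi(n))$ is the unique triple with first coordinate $n$ ever considered (by part~\ref{lem:piii-normal-form:injective} of Lemma~\ref{lem:piii-normal-form}, $\pi$ is injective, and $i^\pi_n$ is determined by $n$), so ``priority $n$'' and ``the triple $(n, i^\pi_n, \pi(n))$'' are interchangeable and there's no tie-breaking needed. Third, for the inductive bound: by the inductive hypothesis each triple of priority $m < n$ enters $R$ at most $2^m$ times, hence leaves at most $2^m$ times; summing, the total number of leave-events of priorities $< n$ is at most $\sum_{m<n} 2^m = 2^n - 1$. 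The triple of priority $n$ enters once at stage $n$ (the final step), and thereafter only when some smaller-priority triple leaves, giving at most $1 + (2^n - 1) = 2^n$ entries — and the alternation of enter/leave then gives the ``enter or leave at most $2^n$ times'' claim. Fourth, the ``moreover'' clause is essentially the unwinding of this bookkeeping: $(n, i^\pi_n, \pi(n))$ enters at stage $s+1$ precisely when $s = n$ (the scheduled addition in the final step) or when $n < s$ and it is re-injured, which by Step~\ref{step:satN:add-injured-to-R} happens exactly when some $(n', i', \sigma')$ with $n' < n$ leaves $R$ at that stage.

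\textbf{Main obstacle.} The one genuinely delicate point is verifying that there is no \emph{other} mechanism by which a triple could be added to $R$, and in particular that the same triple isn't somehow scheduled to re-enter by the final step more than once — this needs the observation that the final step adds $(s, i^\pi_s, \pi(s))$ only at stage $s$, together with injectivity of $\pi$, so priority $n$ gets a ``free'' entry exactly once. I would also need to be slightly careful that when $(n',i',\sigma')$ leaves $R$ in Step~\ref{step:satN}, Step~\ref{step:satN:add-injured-to-R} re-adds \emph{at most one} triple of each priority $n > n'$ (indeed exactly the previously-satisfied ones), so the count of re-entries of priority $n$ is bounded by, not a multiple of, the number of smaller-priority leave-events. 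None of this requires computation; it's purely a matter of reading off the construction carefully, so I expect the proof to be short.
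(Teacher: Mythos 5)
Your proof is correct and takes essentially the same approach as the paper's: identify that a triple re-enters $R$ only via step \ref{step:satN:add-injured-to-R}, which requires a strictly higher-priority triple to leave, and then deduce the $2^n$ bound by induction. The paper states this in two sentences and leaves the induction $\sum_{m<n} 2^m = 2^n - 1$ implicit, whereas you spell it out along with the observation that re-injury re-adds at most one triple of each lower priority per leave-event; that's a faithful unpacking, not a different argument.
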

Note that, an immediate consequence of this is that \( \overbar{r}_s(n) \), which is obviously increasing in \( s \), approaches a limit for all \( n \).  
\begin{proof}
It is enough to verify the moreover claim.  To see this, note that the only way \( (n, i, \sigma) \) re-enters \( R \) after stage \( n+1 \) is via step \ref{step:satN:add-injured-to-R} which requires some \( (n', i', \sigma') \) to leave \( R \) with \( n' < n \).             
\end{proof}

The next three lemmas show that our various sets and functions are well-behaved.

\begin{lemma}\label{lem:Ui-is-re}
The sets  \( U_i \) and \( \hat{U}_\sigma \) are only reset finitely many times making \( U_i \) a well-defined r.e. set and  \( \hat{U}_\sigma \) a well-defined finite set.  
\end{lemma}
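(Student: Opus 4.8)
I would begin by tracing the construction to isolate exactly which steps can reset a \( U_i \) or alter some \( \hat{U}_\sigma \), and then bound the number of such events using Lemma~\ref{lem:finite-injury}. A set \( U_j \) is emptied only in step~\ref{step:satN:resetU}, which runs precisely when step~\ref{step:satN} acts on a viable triple \( (n, i, \sigma) \in R \) with \( \lh{\sigma} \le j \); apart from this, \( U_i \) only gains elements, and at most finitely many per stage (steps~\ref{step:satN:add-error-to-U} and~\ref{step:satN:ext-U}). Likewise \( \hat{U}_\sigma \) is changed only in step~\ref{step:satN:fix-U-hat}, when step~\ref{step:satN} acts on a triple whose string is exactly \( \sigma \), and in step~\ref{step:satN:add-injured-to-R}, when step~\ref{step:satN} acts on a triple \( (n, i, \tilde{\sigma}) \) and \( \sigma \) is the string of some quadruple \( (n', i', \sigma, x') \in R^{S} \) with \( n' > n \); otherwise it keeps its initial value \( \eset \). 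The crucial structural fact is that every triple ever belonging to \( R \) has the form \( (n, i^\pi_n, \pi(n)) \): \( R_0 = \eset \), the only triples added outright are the \( (s, i^\pi_s, \pi(s)) \) appended at the end of a stage, and step~\ref{step:satN:add-injured-to-R} merely re-inserts previously removed triples. Since \( \pi \) is injective (Lemma~\ref{lem:piii-normal-form}, part~\ref{lem:piii-normal-form:injective}), the string \( \sigma = \pi(n) \) determines \( n \), hence \( i = i^\pi_n \), so for each \( \sigma \) at most one triple of this form ever sits in \( R \), and by Lemma~\ref{lem:finite-injury} it is acted on --- equivalently, removed from \( R \) --- at most \( 2^n \) times.

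Then the counting is routine. For fixed \( j \), the set \( \set{n}{\lh{\pi(n)} \le j} \) is finite, since \( \pi \) is injective and there are only finitely many binary strings of length at most \( j \); each such \( n \) contributes at most \( 2^n \) executions of step~\ref{step:satN:resetU} affecting \( U_j \), so \( U_j \) is reset only finitely often. For fixed \( \sigma \): if \( \sigma \) is not in the range of \( \pi \), then neither step~\ref{step:satN:fix-U-hat} nor step~\ref{step:satN:add-injured-to-R} ever names it (a string appearing in \( R^{S} \) came there from \( R \) and so lies in the range of \( \pi \)), whence \( \hat{U}_\sigma = \eset \) forever. If \( \sigma = \pi(n) \), then step~\ref{step:satN:fix-U-hat} changes \( \hat{U}_\sigma \) at most \( 2^n \) times (only when the unique triple with string \( \pi(n) \) is acted on), while step~\ref{step:satN:add-injured-to-R} changes it only when some triple \( (n'', i^\pi_{n''}, \pi(n'')) \) with \( n'' < n \) is acted on --- at most \( \sum_{n'' < n} 2^{n''} < 2^n \) additional changes. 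Hence \( \hat{U}_\sigma \) changes only finitely often.

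Finally, to get well-definedness, let \( s_0 \) be the last stage at which \( U_i \) is reset; for \( s \ge s_0 \) the (computable) stage approximations \( U_{i,s} \) are non-decreasing in \( s \), so \( U_i = \bigcup_{s \ge s_0} U_{i,s} \) is a well-defined r.e. set. Similarly, beyond the last stage \( t \) at which \( \hat{U}_\sigma \) is modified, its value is either \( \eset \) or an instance of \( \mathbb{U}_t(\tau) \setminus U_t(\sigma) \), and \( \mathbb{U}_t(\tau) \) is a finite union of the finite sets \( \hat{U}_{\rho, t} \) (\( \rho \subfun \tau \)) and \( U_{i, t} \) (\( i \in \tau \)), hence finite; so \( \hat{U}_\sigma \) is a well-defined finite set. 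I expect the only genuine subtlety to be the cascade in step~\ref{step:satN:add-injured-to-R}, where acting for \( \req{N}{n} \) re-arms every lower-priority \( \req{N}{n'} \); but this is exactly the finite-injury bookkeeping already pinned down by Lemma~\ref{lem:finite-injury}, so it is not a real obstacle.
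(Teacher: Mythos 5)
Your proof is correct and uses the same underlying approach as the paper: invoke Lemma~\ref{lem:finite-injury} together with the observation that, by injectivity and non-decreasing length (Lemma~\ref{lem:piii-normal-form}), only finitely many triples in $R$ carry a string short enough to affect a fixed $U_i$ or $\hat{U}_\sigma$, so each such set stabilizes after a finite stage. Your version simply makes explicit the bookkeeping (the $2^n$ and $\sum_{n''<n}2^{n''}$ counts for steps~\ref{step:satN:resetU}, \ref{step:satN:fix-U-hat} and~\ref{step:satN:add-injured-to-R}) that the paper's terser proof leaves to the reader.
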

\begin{proof}
 By finite injury (lemma \ref{lem:finite-injury}) let \( s  \) be large enough that at no stage \( t \geq s \) does any \( (n, i, \tau) \) leave \(  R \) with \( \lh{\tau} \leq i \) or \( \lh{\tau} \leq \lh{\sigma} \) .  It is evident from the construction that after stage \( s \) we never change \( \hat{U}_\sigma \) and don't  reset  \( U_i \).  As the construction proceeds computably, we never remove elements from \( U_i \) after stage \( s \) and  \( \hat{U}_{\sigma, s} \) is always finite this is enough to establish the main claim. 
\end{proof}

\begin{lemma}\label{lem:fZ-defined}
Both \( f, Z \) are well-defined.  Moreover, if no triple \( (m, i, \sigma) \) with \( m \leq n  \) leaves \( R \) after stage \( s \) then \( r\restr{n+1} = r_s\restr{n+1} \), \( f_s\restr{\overbar{r}(n+1)} = f\restr{\overbar{r}(n+1)} \) and \( Z_s\restr{\overbar{r}(n+1)} = Z\restr{\overbar{r}(n+1)} \).
\end{lemma}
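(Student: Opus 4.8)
The plan is to prove the ``moreover'' clause first and then read off well-definedness of $f$ and $Z$ from it. So I would fix $n$ and a stage $s$ as in the hypothesis and argue as follows. The restraint value $r(m)$ is altered only in step~\ref{step:satN}, precisely when we act on a triple whose first coordinate is $m$ and simultaneously remove that triple from $R$; hence if some $r(m)$ with $m \leq n$ changed after stage $s$, a priority-$m$ triple would leave $R$ after stage $s$, contradicting the hypothesis. So $r\restr{n+1} = r_s\restr{n+1}$, and therefore $\overbar{r}(n+1) = \overbar{r}_s(n+1)$ is fixed from stage $s$ on (that $\lim_s \overbar{r}_s(n+1)$ exists at all is the remark following Lemma~\ref{lem:finite-injury}). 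By the same token, after stage $s$ every triple we act on has first coordinate $m > n$.

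I would then track the other two objects. The set $Z$ is changed only in step~\ref{step:satN:change-Z}, and only when the least viability witness $(x,\tau)$ of the acted-on triple is not a trivial witness; in that case the condition $x > \overbar{r}(m)$ from clause~4 of Definition~\ref{def:viable-tuple} (with $m > n$ the triple's first coordinate) yields $x > \overbar{r}(m) \geq \overbar{r}(n+1)$, so $Z\restr{\overbar{r}(n+1)}$ is never touched after stage $s$. For $f$ the crucial observation is that its only change, the bit-flip of step~\ref{step:satN:flip-elements-of-U}, replaces $f(z) = \godelnum{\zeta}$ by $\godelnum{\zeta'}$ with $\lh{\zeta'} = \lh{\zeta}$, so that $\lh{f_t(z)} = \lh{f_0(z)}$ for every $z$ and every stage $t$. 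Such a flip, carried out on some $z \in U_j$, requires $\lh{f_t(z)} > \pair{\sigma\restr{j}}{x}$, and since the pairing function~\eqref{eq:pairing} is monotone (Observation~\ref{obs:pairing}) we have $\pair{\sigma\restr{j}}{x} \geq x > \overbar{r}(m) \geq \overbar{r}(n+1)$, again using that the witness is non-trivial and $m > n$. Finally, \eqref{eq:fzero} together with the coding~\eqref{eq:coding-bstrs} shows that, writing $z = \pair{a}{\pair{b}{c}}$, one has $\lh{f_0(z)} = b+1$ while $z \geq \pair{b}{c} \geq 2^{b} - 1$, so $z \geq 2^{\lh{f_0(z)} - 1} - 1 > \overbar{r}(n+1)$. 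Hence every flip after stage $s$ occurs strictly above $\overbar{r}(n+1)$, giving $f\restr{\overbar{r}(n+1)} = f_s\restr{\overbar{r}(n+1)}$.

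Well-definedness then follows at once: given any argument $y$, since $\overbar{r}_t(n+1)$ is non-decreasing in $t$ with $\overbar{r}_0(n+1) = n$, there is some $N$ with $\overbar{r}(N+1) > y$; choosing (via the finite-injury bound of Lemma~\ref{lem:finite-injury}) a stage $s$ after which no priority-$\leq N$ triple leaves $R$, the ``moreover'' clause gives $f(y) = f_s(y)$ and $Z(y) = Z_s(y)$, so $f = \lim_s f_s$ and $Z = \lim_s Z_s$ exist pointwise. I expect the $f$-preservation step to be the main obstacle: a priority-$m$ action with $m > n$ is free to flip $f$ on entries of columns $U_j$ with small index $j \leq n$, so it is not evident that it leaves $f$ alone below $\overbar{r}(n+1)$. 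What rescues the argument is the coincidence of three facts --- the bit-flip preserves string length, the choice of $f_0$ forces $z \geq 2^{\lh{f_0(z)}-1} - 1$, and the flipped bit position is at least the witness $x$, which a priority-$m$ action keeps above $\overbar{r}(m) \geq \overbar{r}(n+1)$ --- and getting these three to line up is the real content of the verification.
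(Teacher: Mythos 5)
Your proof is correct and follows the paper's route: reduce everything to the moreover claim, then argue that after stage $s$ each of $r$, $Z$, $f$ is only modified above $\overbar{r}(n+1)$, using the viability condition $x > \overbar{r}(m)$ for non-trivial witnesses. Your treatment of the $f$-preservation step is in fact more explicit than the paper's: the paper cites only \eqref{eq:coding-bstrs} to conclude $z > x$, whereas the chain you give — bit-flips preserve $\lh{f(z)}$, \eqref{eq:fzero} gives $\lh{f_0(z)} = b+1$ for $z = \pair{a}{\pair{b}{c}}$, and $z \geq \pair{b}{c} \geq 2^b-1$ — is the argument actually needed to make that inference go through.
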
  
\begin{proof}
By lemma \ref{lem:finite-injury} it is enough to show the moreover claim.  The part relating to \( r \) is immediate since we only change \( r(n) \) when some triple \( (n, i', \sigma') \) leaves \( R \) or re-enters \( R \) which both require some higher priority triple leave \( R \).  The only way to change \( Z \) or \( f \) is via substeps of \ref{step:satN} \mydash  substep \ref{step:satN:change-Z} for \( Z \)  substep \ref{step:satN:flip-elements-of-U} for \( f \).

If \( x \) witnesses the viability of some \( (m, i, \sigma) \) \mydash as is necessary to execute these substeps \mydash then we must have  \( x > \overbar{r}(m)  \).  This immediately establishes the claim for \( Z \) as only substep \ref{step:satN:change-Z} changes \( Z(x) \).  For \( f \) it is enough to note that substep \ref{step:satN:flip-elements-of-U} only changes \( f(z) \) when \( \lh{f(z)} > x \) (as the code of a pair exceeds the values of its elements) which by \eqref{eq:coding-bstrs} implies \( z > x \).                 
\end{proof}

\begin{lemma}\label{lem:Yi-well-defined}
The sets \( Y_{i,s} \)  are well-defined as are the sets \( Y_i = \lim_{s\to\infty} Y_{i,s} \).  
Moreover, if \( \tau \in \bstrs, \lh{\tau} \leq s \) and  \( x \leq l_{s} \) and \( t > s \) then \( Y_{i,t}(\pair{\tau}{x})\conv \)        
\end{lemma}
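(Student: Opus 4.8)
\emph{Approach.} There are three things to establish: (i) each $Y_{i,s}$ is a well-defined partial function (the equivalence of Definition \ref{def:Y-sets} never demands $Y_{i,s}(x)=0$ and $Y_{i,s}(x)=1$ at once); (ii) the pointwise limit $Y_i=\lim_s Y_{i,s}$ exists; and (iii) the quantitative ``moreover'' clause. I would prove (i) by induction on $s$, (ii) from finite injury, and (iii) directly from the padding performed in step \ref{step:satN:ext-U}.

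\emph{The induction for (i).} At stage $0$ each $U_{i,0}=\eset$, so $Y_{i,0}=\estr$. Passing from stage $s$ to $s+1$, the only operations touching a column $U_j$ or the restriction of $f$ to it are: (a) step \ref{step:satN:resetU}, which merely empties columns $U_j$ with $j\geq\lh{\sigma}$, leaving those $Y_j=\estr$; (b) substep \ref{step:satN:add-error-to-U}, which inserts $\pair{j}{x}$ into $U_j$ for $j<\lh{\sigma}$ with $\sigma(j)=0$ --- but $f$ and the columns $U_j$ ($j<\lh{\sigma}$) are still exactly as at the viability check, so viability clause \ref{def:viable-tuple:compatible} gives that $x$ is compatible with $Y_j$ in the sense of Definition \ref{def:Yi-compat}, i.e.\ $f_s(\pair{j}{x})=\godelnum{\rho}$ with $\rho\subfun Y_{j,s}$, and inserting such an element leaves $Y_{j,s}$ unchanged; (c) substep \ref{step:satN:flip-elements-of-U}, which for each $j<\lh{\sigma}$ with $\sigma(j)=1$ flips the \emph{single} position $p=\pair{\sigma\restr{j}}{x}$ in \emph{every} $z\in U_j$ whose coded string is longer than $p$ --- a flip preserves the length of the coded string, so afterwards every code long enough to reach $p$ carries the common value $1-Y_{j,s}(p)$ there and is unchanged elsewhere, whence $Y_{j,s+1}$ is again well-defined and agrees with $Y_{j,s}$ off $p$; and (d) step \ref{step:satN:ext-U}, which adjoins a fresh $x_i$ with $f_s(x_i)=\godelnum{\sigma_i}$, where $\sigma_i$ extends $\hat{Y}_i$ (the current value of $Y_i$, padded by $0$s up to $\hat{l}_s$), so consistency is preserved. (Such $x_i$ exist because on fresh large inputs $f$ still equals $f_0$ and $f_0$ by \eqref{eq:fzero} realizes every binary string there; cf.\ Lemma \ref{lem:any-bstrs-positive-density}.) This closes the induction.

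\emph{(ii) and (iii).} Fix $i$ and decode a position as $p=\pair{\rho}{x}$. By (i), inserting elements into $U_i$ never changes $Y_{i,s}(p)$ once it is defined; the remaining ways $Y_{i,s}(p)$ can move are a reset of $U_i$ (immediately undone by step \ref{step:satN:ext-U} of the same stage) or a flip at $p$, which by (c) above requires acting on the unique triple ever placed in $R$ with first coordinate $n$ --- namely $(n,i^\pi_n,\pi(n))$ --- for some $n$ with $\pi(n)(i)=1$, $\pi(n)\restr{i}=\rho$, and witness $x$. A non-trivial witness obeys $x\geq l_n$ and $l_n\to\infty$, so only finitely many $n$ are eligible; by Lemma \ref{lem:finite-injury} each triple is acted on finitely often and by Lemma \ref{lem:Ui-is-re} $U_i$ is reset finitely often. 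Hence $Y_{i,s}(p)$ is eventually constant, so $Y_i=\lim_s Y_{i,s}$ exists, and by Lemma \ref{lem:fZ-defined} it agrees with the set computed from $f=\lim_s f_s$ and $U_i$ via Definition \ref{def:Y-sets}, read off at any padding element added at a late enough stage. For (iii), assume $\lh{\tau}\leq s$, $x\leq l_s$, and $i\leq s$ (the last is automatic when $\tau$ has the form $\sigma\restr{i}$, which is the only case used in Definition \ref{def:theta-sigma}). A short computation with the pairing function (Observation \ref{obs:pairing}) and \eqref{eq:coding-bstrs} gives $\pair{\tau}{x}\leq\hat{l}_s$, since $\lh{\tau}\leq s$ forces $\godelnum{\tau}\leq\godelnum{\str{1}^s}$ while $\hat{l}_s=\pair{\str{1}^s}{l_s}$; hence $\sigma_i$ from step \ref{step:satN:ext-U} of stage $s$ is defined at $\pair{\tau}{x}$ and the adjoined $x_i$ witnesses $Y_{i,s+1}(\pair{\tau}{x})\conv$. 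An induction on $t>s$ finishes: at each later stage either $U_i$ retains some code of length $>\pair{\tau}{x}$ (flips preserve lengths, insertions delete nothing), or $U_i$ is emptied and step \ref{step:satN:ext-U} of that very stage $t$ re-adjoins a padding element with code of length $\hat{l}_t>\hat{l}_s\geq\pair{\tau}{x}$; either way $Y_{i,t}(\pair{\tau}{x})\conv$.

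\emph{Main obstacle.} The delicate case is the bit-flipping substep \ref{step:satN:flip-elements-of-U}: one must see both that flipping the \emph{same} position in \emph{all} sufficiently long codes of a column preserves the consistency of $Y_i$ (rather than producing a clash between flipped and unflipped codes), and --- for (ii) --- that for any fixed position only finitely many flips ever occur, which is exactly where the bound $x\geq l_n$ on non-trivial witnesses meets the finite-injury Lemma \ref{lem:finite-injury}. The rest is bookkeeping about which ``stage-$s$ version'' of $f$, $U_i$, and $Y_i$ is in force at each substep.
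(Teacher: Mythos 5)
Your proof is correct and follows essentially the same strategy as the paper's: check that each construction step that touches $U_i$ or $f\restr{U_i}$ preserves the consistency of $Y_{i,s}$ (with step \ref{step:satN:add-error-to-U} handled by viability clause \ref{def:viable-tuple:compatible}), deduce the limit exists from finite injury plus a lower bound on where flips can occur, and derive the moreover clause from the padding in step \ref{step:satN:ext-U}. The only material difference is cosmetic: for the finite-changes argument you bound the eligible $n$ via the witness constraint $x\geq l_n$ while the paper bounds them via the flipped position exceeding $\overbar{r}_s(n)\geq n-1$; both give the same conclusion.
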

\begin{proof}
For the moreover claim we note that during stage \( t -1 \) step \ref{step:satN:ext-U} ensures there is some \( z \in U_{i, t} \) with \( \lh{f(z)} > \pair{\str{1}^s}{l_s}  \).  By the properties of our coding function (see eq.  \eqref{eq:coding-bstrs}) and the monotonicity of our pairing function it follows that \( \lh{f(z)} > \pair{\tau}{x} \).  This leaves only the main claim to show. 

We first show that \( Y_{i,s} \) is well-defined.   Both steps  \ref{step:satN:ext-U} and \ref{step:satN:flip-elements-of-U} explicitly preserve compatibility.  This leaves only step \ref{step:satN:add-error-to-U} to consider.  But, if \( x \) is added to \( U_i \) in  step \ref{step:satN:add-error-to-U} then \( x \) must witness the viability of the triple under consideration and by definition  \ref{def:viable-tuple} we must have \( x \) compatible with the current value of \( Y_i \). This ensures that \( Y_{i,s} \) is well-defined.  

By the moreover claim, to verify that the limit is well-defined it is enough to show that \( Y_{i,t}(x) \) only changes value finitely many times.   But this can only happen if either \( U_i \) is emptied, which lemma \ref{lem:Ui-is-re} tells us happens only finitely many times or via the operation of step \ref{step:satN:flip-elements-of-U} which only flips values which exceed \( \overbar{r}_s{n} \geq n - 1 \) when removing a triple starting with \( n \) from \( R \).  As lemma \ref{lem:finite-injury} ensures only finite injury the conclusion is established.
\end{proof}

Now that we've verified all the objects we construct are well-defined, we start by verifying that our maps \( \mathbb{U}(X) \) and \( U(X) \) really do produce sets of \( 0 \) density.  We first note that it will be sufficient to bound the density of elements which enter \( U \) at some point.

\begin{lemma}\label{lem:hatU-takes-from-U}
If  \( x \entersat{s} \hat{U} \) then  \( x \leavesat{s} U \).   Moreover, if \( x \in \hat{U}_{\sigma, s} \) then there was some stage \( t \leq s \) at which \( x \leavesat{t} U_j \) with \( j \geq \lh{\sigma} \).    
\end{lemma}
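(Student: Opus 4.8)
The plan is to identify exactly which substeps of the construction can alter \( \hat{U} \) or the sets \( U_j \), and then to prove the two assertions together by induction on the stage. The bookkeeping is short: \( \hat{U}_\sigma \) is enlarged only in step \ref{step:satN:fix-U-hat}, where it is overwritten by \( \mathbb{U}_{s}(\tau)\setminus U_{s}(\sigma) \) for some witness \( \tau\supfun\sigma \), and is only ever shrunk by being emptied in step \ref{step:satN:add-injured-to-R}; the sets \( U_j \) acquire new elements only in step \ref{step:satN:add-error-to-U} (the single element \( \pair{j}{x} \), for \( j \) with \( \sigma(j)=0 \)) and step \ref{step:satN:ext-U} (one freshly chosen large element), and lose elements only in step \ref{step:satN:resetU}, which empties every \( U_j \) with \( j\geq\lh{\sigma} \). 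Two further facts are used throughout: by the definition \eqref{eq:pairing} of the pairing function the columns of \( \omega \) are pairwise disjoint, so each number lies in at most one \( U_j \); and step \ref{step:satN:fix-U-hat} is always immediately followed, within the same execution of step \ref{step:satN}, by step \ref{step:satN:resetU}.

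For the first assertion, suppose \( x\entersat{s}\hat{U} \); the operations at stage \( s-1 \) responsible for this must be an execution of step \ref{step:satN:fix-U-hat}, setting \( \hat{U}_\sigma=\mathbb{U}_{s-1}(\tau)\setminus U_{s-1}(\sigma) \) for the associated \( \tau\supfun\sigma \), so \( x\in\mathbb{U}_{s-1}(\tau)\setminus U_{s-1}(\sigma) \). Since \( x \) lay in no \( \hat{U}_\rho \) beforehand, \( x\notin\hat{U}_{s-1}(\tau) \), whence \( x\in U_{s-1}(\tau) \), say \( x\in U_{j,s-1} \) with \( \tau(j)=1 \); and \( x\notin U_{s-1}(\sigma) \) together with \( \sigma\subfun\tau \) rules out \( j<\lh{\sigma} \) (that would force \( \sigma(j)=\tau(j)=1 \)), so \( j\geq\lh{\sigma} \), and step \ref{step:satN:resetU} deletes \( x \) from \( U_j \). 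The only numbers returned to \( U \) later in this stage are a witness \( \pair{j'}{x'} \) with \( \sigma(j')=0 \) — hence in a column \( j'<\lh{\sigma} \), distinct from \( j \) — and a fresh large element of step \ref{step:satN:ext-U}, neither equal to \( x \); hence \( x\leavesat{s}U \). The moreover claim is then proved by induction on \( s \): if \( \hat{U}_\sigma \) is merely emptied at the relevant stage the claim is vacuous, so assume \( \hat{U}_{\sigma} \) has just been assigned \( \mathbb{U}_{s-1}(\tau)\setminus U_{s-1}(\sigma) \) by step \ref{step:satN:fix-U-hat} acting on a viable triple \( (n,i,\sigma) \), so \( \sigma=\pi(n) \), and let \( x\in\hat{U}_{\sigma,s} \). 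If \( x\in U_{s-1}(\tau) \), the argument above puts \( x \) into a unique \( U_j \) with \( j\geq\lh{\sigma} \) that step \ref{step:satN:resetU} empties at this stage, so \( x\leavesat{s}U_j \), as needed. Otherwise \( x\in\hat{U}_{\rho,s-1} \) for some \( \rho\subfun\tau \); when \( \sigma\subfun\rho \), the inductive hypothesis supplies \( t\leq s-1 \) and \( j\geq\lh{\rho}\geq\lh{\sigma} \) with \( x\leavesat{t}U_j \).

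The crux is the remaining case, \( x\in\hat{U}_{\rho,s-1} \) with \( \rho\subfunneq\sigma \), and I would split it according to where the corresponding requirement-triple currently sits. Writing \( \rho=\pi(m) \), parts \ref{lem:piii-normal-form:injective} and \ref{lem:piii-normal-form:non-decreasing} of Lemma \ref{lem:piii-normal-form} force \( m<n \). If \( (m,i^\pi_m,\rho)\in R \), then either it has never been acted on — so \( \hat{U}_\rho \) has never been assigned — or it was returned to \( R \) by step \ref{step:satN:add-injured-to-R}, which empties \( \hat{U}_\rho \), and it has not been acted on since (acting on it would move it into \( R^{S} \)); either way \( \hat{U}_\rho=\eset \), contradicting \( x\in\hat{U}_{\rho,s-1} \), so this subcase does not arise. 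The delicate subcase is \( (m,i^\pi_m,\rho)\in R^{S} \), where \( \hat{U}_\rho \) can genuinely be nonempty: here the plan is to trace \( x \) backwards through the finitely many successive assignments of adjustment sets (finite by Lemma \ref{lem:Ui-is-re}) that deposited it, using that each such assignment — of \( \hat{U}_{\rho'} \) at a stage \( t' \), setting \( \hat{U}_{\rho'}=\mathbb{U}_{t'}(\tau')\setminus U_{t'}(\rho') \) — is immediately followed by step \ref{step:satN:resetU} emptying all \( U_j \) with \( j\geq\lh{\rho'} \), and that (again by the \( R \)-versus-\( R^{S} \) dichotomy above together with the finite-injury bound of Lemma \ref{lem:finite-injury}) the \( \subfun \)-chain of strings met along the trace is forced to be nested, so the trace reaches a stage at which \( x \) is expelled from some \( U_j \) with \( j\geq\lh{\sigma} \). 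Concretely this amounts to strengthening the inductive invariant so that every \( x\in\hat{U}_{\sigma,s} \) carries a designated pair \( (t,j) \) with \( t\leq s \), \( j\geq\lh{\sigma} \), and \( x\leavesat{t}U_j \), chosen so as to be preserved by every later operation. I expect verifying that this invariant is maintained to be the only real work here; everything else is direct inspection of the construction.
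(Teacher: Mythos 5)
Your argument for the first assertion is correct and is essentially the paper's: the only operation that can place an element into \( \hat{U} \) is step \ref{step:satN:fix-U-hat}, and for a \emph{newly} entering \( x \) one has \( x\notin\hat{U}_s(\tau) \), so \( x\in U_{j,s} \) for some \( j \) with \( \tau(j)=1 \); since \( x\notin U_s(\sigma) \) and \( \sigma\subfun\tau \) this forces \( j\geq\lh{\sigma} \), and step \ref{step:satN:resetU} then expels \( x \) from \( U_j \). (Your check that neither step \ref{step:satN:add-error-to-U} nor step \ref{step:satN:ext-U} puts \( x \) back into \( U \) during the same stage is also right.)

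For the moreover claim, however, your proposal stops exactly where the work is. You correctly observe that for \( x\in\hat{U}_{\sigma,s+1} \) that was previously living in some \( \hat{U}_{\rho,s} \), the inductive hypothesis gives a stage and a column \( j\geq\lh{\rho} \), which only suffices when \( \rho\supfun\sigma \) so that \( \lh{\rho}\geq\lh{\sigma} \); and you correctly isolate \( \rho\subfunneq\sigma \) with \( (m,i^\pi_m,\rho)\in R^{S} \) as the subcase where this breaks down. But your proposed resolution --- tracing \( x \) backwards through the successive assignments, invoking a ``nested chain'' of strings, and then ``strengthening the inductive invariant so that every \( x\in\hat{U}_{\sigma,s} \) carries a designated pair \( (t,j) \) with \( j\geq\lh{\sigma} \)'' --- is not an argument: the strengthened invariant you describe \emph{is} the moreover claim, so announcing that you will maintain it is just restating the goal. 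You say yourself that verifying the invariant is ``the only real work here,'' and you do not do that work. Moreover it is not at all clear that a backwards trace through adjustment sets must terminate in a column \( j\geq\lh{\sigma} \): if \( \rho\subfunneq\sigma \), the trace can perfectly well bottom out at \( x\leavesat{t'}U_{j} \) with \( \lh{\rho}\leq j<\lh{\sigma} \), and nesting of the \( \rho' \)'s along the trace does not by itself push \( j \) up to \( \lh{\sigma} \). So this is a genuine gap.

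The paper disposes of the moreover claim very differently: after the inductive step for the main claim it states, as a fact to be supplemented, that whenever \( x\entersat{s+1}\hat{U}_\sigma \) then either \( x\in\hat{U}_{\rho,s} \) for some \( \rho\supfun\sigma \) or \( x\in U_{j,s} \) with \( j\geq\lh{\sigma} \) --- in other words, it asserts that the subcase \( \rho\subfunneq\sigma \) you spent most of your effort on simply never occurs. Given that fact, the induction closes in one line, because in both disjuncts one immediately gets \( j\geq\lh{\sigma} \). Your proposal neither arrives at this dichotomy nor proves a substitute for it, and it would need one or the other to be complete.
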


\begin{proof}
For the main claim assume, inductively, that if \( x \entersat{s} \hat{U} \) then \( x \leavesat{s} U \). The base case is trivial since \( U, \hat{U} \) both start as empty at stage \( 0 \).  Assuming the claim holds at stage \( s \) then the only way it could fail at stage \( s + 1 \)  is via the addition of elements to  \( \hat{U} \) at step \ref{step:satN:fix-U-hat} where we set \( \hat{U}_\sigma = \mathbb{U}(\tau) \setminus U(\sigma)  \).  Note that,
\[
 \mathbb{U}(\tau) \setminus U(\sigma) \subset \hat{U}(\tau) \union \Union_{j \geq \lh{\sigma}} U_j
\]
If \( x \in \hat{U}(\tau)  \) then \( x \) doesn't  \( \hat{U} \) at stage \( s + 1 \)  and if \( x \in U_j \) with \( j \geq \lh{\sigma} \) then step \ref{step:satN:resetU} executed at stage \( s  \) ensures that \( x \leavesat{s+1} U \).  

To verify the moreover claim, it is enough to supplement the above inductive argument with the fact that if \( x \entersat{s+1} \hat{U}_\sigma \) then either \( x \in  \hat{U}_{\tau, s} \) with \( \tau \supfun \sigma \) or \( x \in U_{j, s} \) with \( j \geq \lh{\sigma} \).            

\end{proof}

While we've proved that the individual sets \( U_i, \hat{U}_\sigma \) are well-defined this doesn't necessarily entail that the operator  \( \mathbb{U}(X) \) is well-defined.  We now verify that it is.

\begin{lemma}\label{lem:bbU-well-defined}
For all \( X \subset \omega \) the operators \( U(X) \) and   \( \mathbb{U}(X) \) are well-defined and equal to their stagewise limits.  
\end{lemma}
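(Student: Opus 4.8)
The claim is that $U(X)$ and $\mathbb{U}(X)$ don't depend on the stage at which we evaluate them; i.e., for each $x$ there is a stage after which membership in $U_s(X)$ (resp.\ $\mathbb{U}_s(X)$) stabilizes, so that $U(X) = \Union_{i \in X} U_i = \Union_{i \in X} \lim_s U_{i,s}$ agrees with the naive stagewise union in the limit, and likewise for $\mathbb{U}$. I would first dispose of $U(X)$: by Lemma \ref{lem:Ui-is-re} each $U_i$ is reset only finitely often and thereafter only grows, so each $U_{i,s}$ reaches a limit $U_i$; moreover the crucial point is that once we fix $x$, only finitely many columns $i$ are relevant — indeed $x \in U_{i,s}$ forces $x$ to lie in the $i$-th column of $\omega$, so $i$ is determined by $x$. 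Hence $x \in U_s(X)$ for large $s$ iff $i \in X$ and $x \in U_i$, which is exactly $x \in U(X)$. So $U(X) = \lim_s U_s(X)$.

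For $\mathbb{U}(X) = \hat U(X) \union U(X)$ it remains to handle $\hat U(X) = \Union_{\sigma \subfun X} \hat U_\sigma$. By Lemma \ref{lem:Ui-is-re} each $\hat U_\sigma$ is a well-defined finite set reached in the limit, and by finite injury (Lemma \ref{lem:finite-injury}) it is changed only finitely often. The potential worry is that infinitely many distinct $\sigma \subfun X$ could contribute elements, so that $x \in \hat U_s(X)$ never stabilizes even though each $\hat U_{\sigma,s}$ does. This is where Lemma \ref{lem:hatU-takes-from-U} does the real work: its moreover clause says that if $x \in \hat U_{\sigma,s}$ then at some earlier stage $x$ left $U_j$ for some $j \geq \lh{\sigma}$. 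So any $\sigma$ contributing $x$ to $\hat U$ has $\lh{\sigma} \leq j$ where $j$ is determined (as above) by the column of $x$; there are only finitely many binary strings of length $\leq j$, hence only finitely many candidate $\sigma$. For each such $\sigma$ the set $\hat U_\sigma$ stabilizes, so $x \in \hat U_s(X)$ stabilizes, giving $\hat U(X) = \lim_s \hat U_s(X)$ and therefore $\mathbb{U}(X) = \lim_s \mathbb{U}_s(X)$.

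The main obstacle is exactly the one just described: ruling out that infinitely many strings $\sigma \subfun X$ pour elements into $\hat U(X)$ over the course of the construction. Once one has the column-index bound from the structure of the pairing function together with the $j \geq \lh\sigma$ bound supplied by Lemma \ref{lem:hatU-takes-from-U}, everything else is a routine appeal to the finiteness statements already proved (Lemmas \ref{lem:finite-injury} and \ref{lem:Ui-is-re}). I would present the proof in that order: reduce "well-defined" to "stabilizes pointwise", handle $U(X)$ via the column observation, then handle $\hat U(X)$ by combining the column observation with the moreover clause of Lemma \ref{lem:hatU-takes-from-U} to bound the relevant $\sigma$.
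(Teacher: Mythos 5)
Your proof is correct and follows essentially the same route as the paper: dispose of $U(X)$ via the fact that $U_i \subset \setcol{\omega}{i}$ pins down the single relevant column for each $x$, then handle $\hat U(X)$ by combining that column bound with the moreover clause of Lemma \ref{lem:hatU-takes-from-U} to cap $\lh{\sigma}$, so only finitely many $\sigma \subfun X$ can ever contribute a given $x$. The paper phrases this as a contradiction with an infinite chain of contributing strings, but the underlying argument — column disjointness plus the $j \geq \lh{\sigma}$ bound — is the same.
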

\begin{proof}
\[
\forall(X)\left[\mathbb{U}(X) = \lim_{s\to\infty} \mathbb{U}_s(X) = \hat{U}_s(X) \union U_s(X)  \right]
\]
By lemma \ref{lem:Ui-is-re} we've established that each \( U_i \) is a well-defined r.e. set and each \( \hat{U}_{\sigma} \) is a well-defined finite set both equal to their stagewise limits.  This suffices to show that \( U(X) \) is well-defined, since \( U_i \subset \setcol{\omega}{i} \),  but to show that \( \hat{U}(X) \) is well-defined we need to verify there is no \( x \) and  sequence \( \sigma_i, s_i, s'_i \) with \( \sigma_i \subfun \sigma_{i+1} \) and  \( s_i < s'_i < s_{i+1} \) with \( x \entersat{s_i} \hat{U}_{\sigma_i} \) and \( x \nin \mathbb{U}_{s'_i}(\sigma_i) \).  As each \( U_i \subset  \setcol{\omega}{j}  \) this contradicts the moreover from lemma \ref{lem:hatU-takes-from-U}.          
\end{proof}

We now show show that very few elements are ever placed into a \( U \) or \( \hat{U} \) set.  

\begin{lemma}\label{lem:U-is-in-I}
\[ \Union_{s \in \omega} \mathbb{U}_s  \in \mathbb{I} \] 
\end{lemma}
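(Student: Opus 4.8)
The plan is to reduce the statement to a density estimate on $\Union_s U_s$, the set of numbers ever enumerated into one of the sets $U_i$, and then to bound that set below an arbitrary threshold directly from the construction. First I would apply the moreover clause of Lemma~\ref{lem:hatU-takes-from-U}: any $x$ lying in $\hat{U}_{\sigma,s}$ was removed from some $U_j$ with $j \ge \lh{\sigma}$ at an earlier stage $t \le s$, hence $x \in U_{j,t-1}$. Thus $\Union_s \hat{U}_s \subseteq \Union_s U_s$, so $\Union_s \mathbb{U}_s = \Union_s U_s$, and it suffices to show $\udensity(\Union_s U_s) = 0$.

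Next I would count the numbers in $\Union_s U_s$ that lie below a fixed $l$. Numbers enter the sets $U_i$ only at step~\ref{step:satN:ext-U} and step~\ref{step:satN:add-error-to-U} (step~\ref{step:satN:resetU} only empties some $U_j$, which does not affect $\Union_s U_s$). Let $s$ be least with $l < l_s$; then $l \ge l_{s-1}$ and, since the $l_m$ are increasing, $l_t \ge l_s > l$ for every $t \ge s$. At step~\ref{step:satN:ext-U} of stage $t$ we put one number, which exceeds $l_t$, into each $U_i$ with $i \le t$; so only stages $t < s$ contribute numbers below $l$ this way, at most $\sum_{t<s}(t+1) \le s^2$ of them. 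At step~\ref{step:satN:add-error-to-U} of stage $t$ we act on a viable triple $(n,i,\sigma)$ with a non-trivial witness $x$, and by Definition~\ref{def:viable-tuple} such an $x$ satisfies $x \ge l_n$; every number enumerated there has the form $\pair{j}{x}$ with $\pair{j}{x} \ge \pair{0}{x} = 2x \ge l_n$ (using \eqref{eq:pairing} and the monotonicity of pairing from Observation~\ref{obs:pairing}), so $\pair{j}{x} < l$ forces $l_n < l_s$, i.e.\ $n < s$.

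Now the only triples that ever appear in $R$ are re-additions of the triples $(m, i^{\pi}_{m}, \pi(m))$ placed in $R$ at the end of stage $m$ — one for each $m$ — and by Lemma~\ref{lem:finite-injury} each of these is acted on at most $2^m$ times, each action enumerating at most $\lh{\pi(m)} \le m$ numbers by part~\ref{lem:piii-normal-form:bound} of Lemma~\ref{lem:piii-normal-form}. Hence at most $\sum_{n<s} n 2^n < s 2^s$ numbers below $l$ enter the $U_i$ via step~\ref{step:satN:add-error-to-U}, and altogether $\card{(\Union_t U_t)\restr{l}} \le s^2 + s 2^s$ whenever $l_{s-1} \le l < l_s$. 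To close the argument I would use that $l_m$ is chosen large at the start of stage $m$: insisting $l_m \ge (m+1)^2 2^{m+1}$ is a fixed computable demand, so for $l_{s-1} \le l < l_s$,
\[
\frac{\card{(\Union_t U_t)\restr{l}}}{l} \;\le\; \frac{s^2 + s 2^s}{l_{s-1}} \;\le\; \frac{s^2 + s2^s}{s^2 2^s} \;\le\; \frac{2}{s},
\]
which tends to $0$ as $l \to \infty$. Therefore $\udensity(\Union_t U_t) = 0$, that is, $\Union_s \mathbb{U}_s \in \mathbb{I}$.

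The step I expect to be the main obstacle is the bound for step~\ref{step:satN:add-error-to-U}: numbers below a fixed $l$ can be enumerated into the $U_i$ at arbitrarily late stages, so a purely stage-indexed count does not work. The fix is to index the acting triples by their priority number $n$ rather than by the stage — the requirement $x \ge l_n$ for non-trivial witnesses forces $n < s$, and then finite injury (Lemma~\ref{lem:finite-injury}) bounds the total number of actions of the finitely many triples with $n < s$. The reduction via Lemma~\ref{lem:hatU-takes-from-U} and the final choice of $l_m$ are routine by comparison.
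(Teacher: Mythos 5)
Your proof is correct and takes essentially the same approach as the paper: both bound the contributions of steps \ref{step:satN:ext-U} and \ref{step:satN:add-error-to-U} to $\Union_t U_t$ below a threshold $l$, using the viability requirement $x \ge l_m$ together with pairing monotonicity, the finite-injury bound $2^m$ from Lemma~\ref{lem:finite-injury}, and $\lh{\pi(m)} \le m$ from Lemma~\ref{lem:piii-normal-form}, and conclude by choosing $l_n$ to grow sufficiently fast. You are merely more explicit than the paper in reducing to $\Union_s U_s$ via Lemma~\ref{lem:hatU-takes-from-U} and in naming a concrete growth rate for $l_m$.
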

This obvious entails that \( \mathbb{U} \in \mathbb{I} \) as is  every set of the form \( \mathbb{U}(X) \).
\begin{proof} 
The only way elements are added to \( U \) is via steps \ref{step:satN:ext-U} and \ref{step:satN:add-error-to-U}.  At stage \( s \),  step \ref{step:satN:ext-U} adds \( s  \) elements to   \( [l_{s}, l_{s+1}) \).  Hence, for each \( s \leq n - 1 \)  at most\( s \) elements are added to \( U_\omega \) below \( l_n \) by step \ref{step:satN:ext-U}.

Now suppose that step \ref{step:satN:add-error-to-U} adds elements to \( U\restr{l_n} \) while acting on the triple \( (m, i, \sigma) \).  In this case, we add \( \card{\set{j}{ \sigma(j) = 0}} \leq \lh{\sigma} \) elements so by part \ref{lem:piii-normal-form:bound} of lemma \ref{lem:piii-normal-form} at most \( m \) elements.  Since by definition \ref{def:viable-tuple}, we only add elements above \( l_m \) we must have \( m < n \).  By lemma \ref{lem:finite-injury} we can only remove the triple \( (m, i, \sigma) \) from \( R \) at most \( 2^m \) times meaning for each \( m < n -1  \) at most \( 2^m(m+1) \) elements are added by  step \ref{step:satN:add-error-to-U} to \( U\restr{l_n} \).  As both of these values are computable functions of \( n \) independent of our choice of \( l_n \) for a sufficiently fast growing choice of \( l_n \) the claim is clear. 

\end{proof}

We prove our reductions don't return \( \Box \) too frequently. 

\begin{lemma}\label{lem:theta_sigma-sdom-one}
\[
\forall(\sigma \in \bstrs)\forall(S \in \mathbb{I})\left[S \geq_U \sigma \implies  \udensity( \set{x}{\theta_\sigma(f^{\Box S};x) = \Box}  ) = 0\right]
\] 
\end{lemma}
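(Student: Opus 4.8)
The plan is to reduce the claim to a density estimate on the columns of $S$. First I would note that, by Definition~\ref{def:theta-sigma} together with the stipulation $\Box + j = \Box$, we have $\theta_\sigma(f^{\Box S};x) = \Box$ only when $\chi^{\sigma}_i(S,f;x) = \Box$ for some $i < \lh{\sigma}$; indeed, if every summand $\chi^{\sigma}_i(S,f;x)$ is an element of $\omega$ then so is the sum taken mod $2$ (and if some summand diverges the sum diverges, which is not the same as equalling $\Box$). Inspecting the cases in the definition of $\chi^{\sigma}_i$, a value of $\Box$ can only arise when $\sigma(i) = 1$ and $\pair{i}{x} \in S$, since the $\sigma(i) = 0$ case always returns a bit of $Y_i$, never $\Box$. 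Consequently
\[
\set{x}{\theta_\sigma(f^{\Box S};x) = \Box} \subseteq \Union_{\substack{i < \lh{\sigma} \\ \sigma(i) = 1}} \set{x}{\pair{i}{x} \in S},
\]
a finite union; note that this inclusion uses only $S \in \mathbb{I}$, not the hypothesis $S \geq_U \sigma$.

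Next I would bound the upper density of each set $\setcol{S}{i} = \set{x}{\pair{i}{x} \in S}$ for fixed $i < \lh{\sigma}$. The image of $\setcol{S}{i}$ under the map $x \mapsto \pair{i}{x}$ is precisely $S \isect \set{\pair{i}{x}}{x \in \omega}$, which is a subset of $S$ and hence, since $S \in \mathbb{I}$, has upper density $0$ by Observation~\ref{obs:density}. Applying Observation~\ref{obs:pairing} in the form $\udensity(S_i) = \frac{1}{2^{i+1}}\udensity(S)$, but with $\setcol{S}{i}$ in the role of $S$, gives $\udensity(S \isect \set{\pair{i}{x}}{x \in \omega}) = \frac{1}{2^{i+1}}\udensity(\setcol{S}{i})$, whence $\udensity(\setcol{S}{i}) = 2^{i+1}\cdot 0 = 0$. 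Since by Observation~\ref{obs:density} upper density is always defined, no hypothesis on the existence of a density is needed here.

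Finally, since a finite union of sets of upper density $0$ again has upper density $0$ (Observation~\ref{obs:density}, iterated over the finitely many $i < \lh{\sigma}$ with $\sigma(i) = 1$), the displayed inclusion yields $\udensity(\set{x}{\theta_\sigma(f^{\Box S};x) = \Box}) = 0$, which completes the proof. The whole argument is routine bookkeeping; the only step deserving a second glance is the \emph{reverse} application of Observation~\ref{obs:pairing}, which recovers the density of a de-paired column $\setcol{S}{i}$ from the density of its realization inside $S$.
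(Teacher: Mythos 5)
Your proof is correct and follows the same route as the paper: identify that $\theta_\sigma(f^{\Box S};x) = \Box$ forces some $i$ with $\sigma(i)=1$ and $\pair{i}{x}\in S$, bound each set $\set{x}{\pair{i}{x}\in S}$ by Observation~\ref{obs:pairing}, and conclude via the finite-union closure of $\mathbb{I}$. The only difference is that you spell out the ``reverse'' use of Observation~\ref{obs:pairing} (recovering $\udensity(\setcol{S}{i})=0$ from $\udensity(S)=0$), which the paper invokes without elaboration; your remark that the hypothesis $S \geq_U \sigma$ is not needed for this particular inequality is also accurate.
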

\begin{proof}
Assume \( S \in \mathbb{I} \) and \( S \geq_U \sigma \).   By definition \ref{def:theta-sigma} we see that \( \theta_\sigma(f^{\Box S};x) = \Box \) only when there is some \( i \) with \( \sigma(i) = 1 \) and \( \pair{i}{x} \in S \).  Thus, if \( S_i = \set{x}{\pair{i}{x} \in S} \), then by observation \ref{obs:pairing} we have \( \udensity(S_i) = 0 \).  Hence, \[
\set{x}{\theta_\sigma(f^{\Box S};x) = \Box} \subset \Union_{i < \lh{\sigma}} S_i \in \mathbb{I}
\]  
\end{proof}

Before we can say anything about how our reductions behave on their strong domain we first need to be sure they actually converge.

\begin{lemma}\label{lem:compute-Yi} 
If \( S \nsupset* U_i \) then \( f^{\Box S} \) can compute \( Y_i \) via the procedure used in \( \theta_\sigma(f^{\Box S}) \).  Moreover, if \( S, \hat{S} \geq_U \sigma \) then  \( \theta_\sigma(f^{\Box S}) \sagree \theta_\sigma(f^{\Box \hat{S}}) \) where both sides are total and \( \theta_\sigma \) is a computable functional.    
\end{lemma}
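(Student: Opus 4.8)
The plan is to verify the two assertions in turn; the only real work is in checking that the \( Y_i \)-search terminates.

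For the main claim, fix \( i \) and \( S \) with \( S \nsupset* U_i \), so that \( U_i \setminus S \) is infinite. Since \( f \) is total, \( f^{\Box S}(z) = \Box \) exactly when \( z \in S \), so \( f^{\Box S} \) decides membership in \( S \); together with the fact that \( U_i \) is r.e. (lemma \ref{lem:Ui-is-re}) this lets \( f^{\Box S} \) enumerate the infinite set \( U_i \setminus S \), and for \( z \) in that set it knows \( f(z) = f^{\Box S}(z) \), hence also \( \lh{f(z)} \) and the string \( \tau \) with \( \godelnum{\tau} = f(z) \). On an input \( m \) (inside \( \theta_\sigma \) one uses \( m = \pair{\sigma\restr{i}}{x} \)) the procedure searches \( U_i \setminus S \) for a \( z \) with \( \lh{f(z)} > m \) and returns \( \tau(m) \) for \( \godelnum{\tau} = f(z) \). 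I would argue this search halts from \req{Y}{n}: for each \( n \) only finitely many \( z \in U_i \) satisfy \( \lh{f(z)} < n \), so the infinite set \( U_i \setminus S \) contains some \( z \) with \( \lh{f(z)} > m \). Well-definedness of \( Y_i \) (lemma \ref{lem:Yi-well-defined}) then gives, for any such \( z \), that \( m < \lh{\tau} \) and \( \tau(m) = Y_i(m) \), so the procedure indeed computes \( Y_i \).

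For the moreover, suppose \( S \geq_U \sigma \). For each \( i < \lh{\sigma} \), \( f^{\Box S} \) can evaluate \( \chi^{\sigma}_i(S,f;x) \): when \( \sigma(i) = 1 \) it reads off whether \( \pair{i}{x} \in S \) from \( f^{\Box S}(\pair{i}{x}) \), returning \( 0 \) or \( \Box \); when \( \sigma(i) = 0 \) we have \( S \nsupset* U_i \), so the main claim applies and returns \( Y_i(\pair{\sigma\restr{i}}{x}) \in \set{0,1} \). Summing these finitely many contributions modulo \( 2 \) under the rule \( \Box + j = \Box \) thus halts with a value in \( \set{0,1,\Box} \) for every \( x \), so \( \theta_\sigma(f^{\Box S}) \) is total, and since every search and oracle query involved is effective (using the r.e. sets \( U_j \), \( j < \lh{\sigma} \)), \( \theta_\sigma \) is a computable functional. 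For the agreement, fix \( x \) and compare the contributions to \( \theta_\sigma(f^{\Box S};x) \) and \( \theta_\sigma(f^{\Box \hat{S}};x) \): for \( \sigma(i) = 0 \) they coincide, both being \( Y_i(\pair{\sigma\restr{i}}{x}) \), since \( Y_i \) does not depend on the oracle set; for \( \sigma(i) = 1 \) each is \( 0 \) or \( \Box \). Hence either both \( \theta_\sigma(f^{\Box S};x) \) and \( \theta_\sigma(f^{\Box \hat{S}};x) \) are numbers, in which case every contribution is a number so the \( \sigma(i) = 1 \) ones are all \( 0 \) and the two sums mod \( 2 \) agree, or at least one side is \( \Box \), in which case \( \boxeq \) holds trivially; either way \( \theta_\sigma(f^{\Box S};x) \boxeq \theta_\sigma(f^{\Box \hat{S}};x) \), which is exactly \( \theta_\sigma(f^{\Box S}) \sagree \theta_\sigma(f^{\Box \hat{S}}) \).

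The step I expect to need real care is the halting of the \( Y_i \)-search, which rests on \req{Y}{n} forcing \( \lh{f(z)} \to \infty \) along \( U_i \); should that fact not yet be in hand, I would derive it inline from step \ref{step:satN:ext-U} (which, once \( U_i \) has stopped being reset, keeps adding to \( U_i \) elements of \( f \)-length at least \( \hat{l}_s \to \infty \)), clause \ref{def:viable-tuple:compatible} of definition \ref{def:viable-tuple}, and finite injury (lemma \ref{lem:finite-injury}). Everything else is routine bookkeeping with definition \ref{def:theta-sigma} and the relation \( \boxeq \).
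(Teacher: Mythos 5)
Your proof is correct and follows the same route as the paper: enumerate $U_i \setminus S$ from $f^{\Box S}$ (using that $f^{\Box S}$ decides $S$-membership and $U_i$ is r.e.), argue the search halts because $\lh{f(z)}$ is unbounded on $U_i$ (that is, \req{Y}{n}), appeal to lemma \ref{lem:Yi-well-defined} for well-definedness, and then for the moreover decompose $\theta_\sigma$ into the finitely many $\chi^\sigma_i$ contributions and do the case analysis on whether a $\Box$ appears. The only piece your sketched inline derivation of \req{Y}{n} leaves implicit is that step \ref{step:satN:flip-elements-of-U} does not change $\lh{f(z)}$ for $z \in U_i$ (it only flips one bit of the coded string), so the lower bound on $\lh{f(z)}$ obtained at the stage $z$ entered $U_i$ persists; the paper states this explicitly, and you should too if you unpack \req{Y}{n} rather than cite it.
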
 
\begin{proof}
Since lemma \ref{lem:Ui-is-re} tells us \( U_i \) is r.e. and  since \( S \nsupset* U_i \), \( f^{\Box S} \) can enumerate infinitely many elements in \( U_i \setminus S \).   By lemma \ref{lem:Yi-well-defined} for the main claim it suffices to show that \req{Y}{n} is satisfied, i.e., that \( \lh{f(z)} \) goes to infinity as \( z \in U_i \) goes to infinity.  Step \ref{step:satN:ext-U} only adds \( z \) to \( U_i \) with \( \lh{f(z)} \geq l_s \).  Step \ref{step:satN:flip-elements-of-U} leaves \( \lh{f(z)} \) for \( z \in U_i \) unchanged.  This leaves only step \ref{step:satN:add-error-to-U} to consider. However, \hyperref[def:viable-tuple]{the definition of viability} ensures that when acting on a triple \( (m, i, \sigma) \) if \( y = \pair{i}{x} \) is added to \( U_i \) then \( \lh{f(y)} \geq m \) and \hyperref[lem:finite-injury]{finite injury} ensures that there is some stage after which only triples with \( m \geq n \) leave \( R \).  

As \( \theta_\sigma \) relies on computing \( Y_i \) for the finitely many \( i \) with \( \sigma(i) = 0 \) it is easily seen to be a computable functional.  By lemma \ref{lem:Yi-well-defined} we know that the sets \( Y_i \) are well-defined ensuring that both sides are total and an examination of definition \ref{def:theta-sigma}, plus the well-definition of \( Y_i \), shows that these two computations are either equal or one of them is \( \Box \).  
\end{proof}

The next lemma is the heart of the positive argument.  It shows how we preserve agreement for \( \theta_\sigma \) even while satisfying negative requirements for incompatible strings \( \tau \). 

\begin{lemma}\label{lem:theta-sigma-preserve-agreement}
Suppose that \( \delta \in \bstrs, \lh{\delta}\leq s, x < l_{s-1} \) and no triple \( (n, j, \sigma) \) with \( \sigma \supfun \delta \) or \( \lh{\sigma} < \lh{\delta} \) leaves \( R \) at stage \( s+1 \) then \[ 
\theta_{\delta,s}(f_s^{\Box U_s(\delta)}) \boxeq Z_s(x) \implies \theta_{\delta,s+1}(f_{s+1}^{\Box U_{s+1}(\delta)}) \boxeq Z_{s+1}(x)
\] 
\end{lemma}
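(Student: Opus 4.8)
The plan is to argue by cases on the action taken at stage \( s+1 \), using throughout that \( U(\delta) \) and \( \theta_\delta \) refer only to the sets \( U_i \) and the values \( Y_i(\pair{\delta\restr{i}}{x}) \) for \( i < \lh{\delta} \). The hypothesis forces the triple \( (n,i,\sigma) \) firing at step \ref{step:satN} (if any) to satisfy \( \sigma \nsupfun \delta \) and \( \lh{\sigma} \geq \lh{\delta} \), hence \( \sigma \) is incompatible with \( \delta \) and has a least disagreement \( j_0 < \lh{\delta} \). First I would dispose of the harmless changes. Steps \ref{step:satN:resetU}, \ref{step:satN:fix-U-hat}, \ref{step:satN:add-injured-to-R} and the final step touch only the sets \( U_j \) with \( j \geq \lh{\sigma} \geq \lh{\delta} \), the adjustment sets \( \hat{U}_\tau \), the restraint \( r \), or \( R \), none of which appears in \( \theta_\delta(f^{\Box U(\delta)};x) \). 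Step \ref{step:satN:ext-U} adds to each \( U_i \) only a single element \( x_i > l_s \) whose \( f \)-value codes a string extending \( Y_{i,s} \) (the equality \( \hat{Y}_i = Y_{i,s} \) holding since \( U_i \) with \( i < \lh{\delta} \) is never reset), so it changes neither whether \( \pair{i}{x} \in U(\delta) \) (since \( \pair{i}{x} < l_s \) for \( x < l_{s-1} \)) nor the value \( Y_i(\pair{\delta\restr{i}}{x}) \), which is already defined by the moreover clause of Lemma \ref{lem:Yi-well-defined}. Hence if step \ref{step:satN} does not fire, or fires on a trivial witness (so that \( Z \) and \( f \) are unchanged), both sides of the implication are literally unchanged across the stage and there is nothing to prove.

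So assume step \ref{step:satN} fires on \( (n,i,\sigma) \) with a non-trivial witness \( (x',\tau) \); then, beyond the additions of step \ref{step:satN:ext-U}, \( Z(x') \) flips (step \ref{step:satN:change-Z}), each \( \pair{j}{x'} \) with \( \sigma(j)=0 \) enters \( U_j \) (step \ref{step:satN:add-error-to-U}), and the \( \pair{\sigma\restr{j}}{x'} \)-th bit of \( f \) is flipped throughout \( U_j \) for each \( \sigma(j)=1 \) (step \ref{step:satN:flip-elements-of-U}), which flips \( Y_j \) at that single position. When \( x \neq x' \): \( Z(x) \) is untouched, the new members of \( U(\delta) \) are among the \( \pair{j}{x'} \neq \pair{i}{x} \) and the excluded elements \( x_i \), and for \( \delta(i)=0 \) the only alterations to the relevant \( Y_i \) occur at \( \pair{\sigma\restr{i}}{x'} \neq \pair{\delta\restr{i}}{x} \) or are the compatible enumerations guaranteed by the compatibility clause of Definition \ref{def:viable-tuple}. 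So every \( \chi^{\delta}_i(\cdot;x) \), and hence \( \theta_\delta(\cdot;x) \), is preserved while \( Z(x) \) is constant, and the implication holds.

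The crux is the case \( x = x' \), where \( Z(x') \) flips and I must show \( \theta_{\delta,s+1}(f_{s+1}^{\Box U_{s+1}(\delta)};x') \) either becomes \( \Box \) or flips. If \( \theta_{\delta,s}(\cdot;x')=\Box \), then some \( i<\lh{\delta} \) with \( \delta(i)=1 \) has \( \pair{i}{x'}\in U_s(\delta) \), and this survives because \( U_i \) is not reset, so the value stays \( \Box \). Otherwise \( \theta_{\delta,s}(\cdot;x')\in\omega \), so \( \pair{i}{x'}\notin U_s(\delta) \) for every \( i<\lh{\delta} \) with \( \delta(i)=1 \); if some disagreement \( j<\lh{\delta} \) has \( \delta(j)=1 \) (hence \( \sigma(j)=0 \)), step \ref{step:satN:add-error-to-U} puts \( \pair{j}{x'} \) into \( U_j\subseteq U(\delta) \), making \( \chi^{\delta}_j(\cdot;x')=\Box \) and \( \theta_{\delta,s+1}(\cdot;x')=\Box \). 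In the remaining case every disagreement \( j<\lh{\delta} \) has \( \delta(j)=0 \) and \( \sigma(j)=1 \); since \( j_0 \) is least, \( \sigma\restr{j_0}=\delta\restr{j_0} \), so the flip step \ref{step:satN:flip-elements-of-U} makes in column \( j_0 \) lands exactly on the position \( \pair{\delta\restr{j_0}}{x'} \) read by \( \chi^{\delta}_{j_0} \) and flips it. Every other \( \chi^{\delta}_i(\cdot;x') \) is preserved: when \( \delta(i)=0 \), step \ref{step:satN:flip-elements-of-U} alters \( Y_i \) (if \( \sigma(i)=1 \)) only at \( \pair{\sigma\restr{i}}{x'}\neq\pair{\delta\restr{i}}{x'} \) since then \( i>j_0 \), while enumerations into columns with \( \sigma(i)=0 \) are compatible with \( Y_i \); when \( \delta(i)=1 \) we have \( \sigma(i)=1 \), so \( \chi^{\delta}_i \) reads only membership in \( U_i \), which gains no new element and so stays \( 0 \). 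No coordinate becomes \( \Box \), so \( \theta_{\delta,s+1}(\cdot;x')=1-\theta_{\delta,s}(\cdot;x')=1-Z_s(x')=Z_{s+1}(x') \).

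I expect the last sub-case to be the main obstacle: isolating a single parity flip, at \( j_0 \), that exactly matches the flip of \( Z(x') \), while checking that no spurious \( \Box \) appears and that the bit-flips performed for the sake of \( \sigma \) in the other columns are invisible to \( \theta_\delta \). It rests on the two structural facts that \( \sigma\restr{j_0}=\delta\restr{j_0} \) whereas \( \sigma\restr{i}\neq\delta\restr{i} \) for every \( i>j_0 \), together with the column-disjointness of the \( U_i \): diagonalizing against \( \sigma \) surgically alters exactly the bit of \( f \) that \( \theta_\delta \) reads when \( \delta \) and \( \sigma \) first part in a \( 0 \)-versus-\( 1 \) pattern, and in every other pattern it only forces a position read by \( \theta_\delta \) into the hidden set, so \( \theta_\delta \) answers \( \Box \).
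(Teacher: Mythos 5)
Your proof is correct and takes essentially the same route as the paper's: case-analyze the action at stage \( s+1 \), isolate a single parity flip at the least disagreement \( j_0 \) between \( \delta \) and \( \sigma \) when step \ref{step:satN:flip-elements-of-U} fires, and show that every other coordinate of \( \theta_\delta(\cdot;x') \) is preserved using the column-disjointness of the \( U_i \) and the compatibility clause of Definition \ref{def:viable-tuple}. The only cosmetic difference is in the case split: you send \emph{any} disagreement with \( \delta(j)=1, \sigma(j)=0 \) directly to the \( \Box \) outcome before running the parity argument, whereas the paper proves one blanket preservation claim (its equation \eqref{eq:theta-sigma-preserve-agreement:agree-agree}) covering all non-minimal coordinates and then absorbs any stray \( \Box \) via the remark that a \( \Box \) appearing anywhere already suffices for victory.
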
 
\begin{proof}
Recall that \[
\theta_{\delta,s}(f_s^{\Box U_s(\delta)}) = \sum_{i = 0}^{\lh{\delta}-1} \chi^{\delta}_{i,s}(U_s(\delta),f_s;x) \bmod 2 
\]
Where  \[
\chi^{\delta}_{i,s}(U_s(\delta),f_s;x) =  \begin{cases}
                        Y_{i,s}(\pair{\delta\restr{i}}{x}) & \text{if } \delta(i)\conv = 0 \\
                        0 & \text{if } \delta(i)\conv = 1 \land \pair{i}{x} \nin U_s(\delta) \\
                        \Box & \text{if }  \delta(i)\conv = 1 \land \pair{i}{x} \in U_s(\delta)
                      \end{cases}
\]  
By lemma \ref{lem:Yi-well-defined} we know that \( Y_{i,s}(\pair{\delta\restr{i}}{x}) \) and \( Y_{i,s+1}(\pair{\delta\restr{i}}{x})\) are both defined for any \( i < \lh{\delta} \).  Thus, we may assume that   \( \theta_{\delta,s}(f_s^{\Box U_s(\delta)})\conv \boxeq Z_s(x) \) and prove the consequent.
 This clearly holds if no triple leaves \( R \) at stage \( s+1 \) so we may assume that \( (n, j, \sigma) \leavesat{s+1} R \), \( \lh{\sigma} \geq \lh{\delta} \), \( \sigma \nsupfun \delta \).  For the claim to fail, we must have  either \( Y_{i,s}(\pair{\delta\restr{i}}{x}) \neq Y_{i,s+1}(\pair{\delta\restr{i}}{x}) \) for some \( i < \lh{\delta} \) or  \( Z_s(x) \neq Z_{s+1}(x) \).  Therefore, we may also assume that the witness to the viability of the tuple  \( (n, j, \sigma) \) is of the form \( x, \tau \) with \( \tau \supfun \sigma \) and therefore that \( Z_s(x) \neq Z_{s+1}(x) \).   As \( \lh{\sigma} > \lh{\delta} \) we know that for  \( i < \lh{\delta} \), \( U_i \) isn't reset at stage \( s \) so \( U_{i,s} \subset U_{i,s+1} \) and therefore if \( \theta_{\delta,s}(f_s^{\Box U_s(\delta)})  = \Box \) then \( \theta_{\delta,s+1}(f_{s+1}^{\Box U_{s+1}(\delta)}) = \Box \).   Therefore, we may also assume that \( \theta_{\delta,s}(f_s^{\Box U_s(\delta)}) \neq \Box \). 


We claim that if  \( i < \lh{\delta} \) then
 \begin{equation}\label{eq:theta-sigma-preserve-agreement:agree-agree}
\delta(i) = \sigma(i) \lor \delta\restr{i} \neq \sigma\restr{i}  \implies   \chi^{\delta}_{i,s}(U_{s}(\delta),f_s;x)  \boxeq \chi^{\delta}_{i,s+1}(U_{s+1}(\delta),f_{s+1};x)
 \end{equation}
 If \( \delta(i) = 1 \) then the claim follows from the fact that \( U_i \) isn't reset at stage \( s \) so \( U_{i, s+1} \supset U_{i,s} \).  If \( \delta(i) = 0 \) and \( \sigma(i) = 0 \) then we don't do anything in step \ref{step:satN:flip-elements-of-U} to change \( \setcol{f_s}{i} \).  Hence,  \( Y_{i,s}(\pair{\delta\restr{i}}{x}) = Y_{i,s+1}(\pair{\delta\restr{i}}{x})  \) and therefore \( \chi^{\delta}_{i,s}(U_{s}(\delta),f_s;x)  = \chi^{\delta}_{i,s+1}(U_{s+1}(\delta),f_{s+1};x) \).  This leaves only the case where \( \delta(i) = 0, \sigma(i) = 1 \) and \( \delta\restr{i} \neq \sigma\restr{i}   \).  However, step  \ref{step:satN:flip-elements-of-U} only changes  \(  Y_{i}(\pair{\sigma\restr{i}}{x})  \) which, by assumption, isn't equal to \( \delta\restr{i} \).     

 Let \( k < \lh{\delta} \) be the first location at which \( \delta(k) \neq \sigma(k) \).  If \( \delta(k) = 1 \)  and \( \sigma(k) = 0 \) then step \ref{step:satN:add-error-to-U} adds \( \pair{k}{x} \) to \( U_k \) yielding \( \chi^{\delta}_{k,s+1}(U_{s+1}(\delta),f_{s+1};x) = \Box  \).  Therefore \( \theta_{\delta,s+1}(f_{s+1}^{\Box U_{s+1}(\delta)}) = \Box \boxeq Z_{s+1}(x) \).  Hence, it is enough to prove the conclusion assuming that \( \delta(k) =0 \) and  \( \sigma(k) = 1 \).  Assuming that neither \( \theta_{\delta,s+1}(f_{s+1}^{\Box U_{s+1}(\delta)})  \) or \( \theta_{\delta,s}(f_s^{\Box U_s(\delta)}) \) are \( \Box \) \mydash either of which would suffice for victory \mydash  we can apply   \eqref{eq:theta-sigma-preserve-agreement:agree-agree} to the definition of \( \theta_{\delta} \) to yield
  \begin{multline*}
\theta_{\delta,s+1}(f_{s+1}^{\Box U_{s+1}(\delta)}) - \theta_{\delta,s}(f_s^{\Box U_s(\delta)}) \equiv \\ 
\chi^{\delta}_{i,s+1}(U_{s+1}(\delta),f_{s+1};x) -  \chi^{\delta}_{k,s}(U_{s}(\delta),f_s;x) \pmod{2}
 \end{multline*}
 As  \( \sigma(k) = 1 \) step \ref{step:satN:flip-elements-of-U} flips \( Y_k(\pair{\sigma\restr{k}}{x})  \) and since \( \sigma\restr{k} = \delta\restr{k} \) and \( \delta(k) = 0 \)  we have  \( \chi^{\delta}_{i,s+1}(U_{s+1}(\delta),f_{s+1};x) = 1 - \chi^{\delta}_{k,s}(U_{s}(\delta),f_s;x)  \).  As \( Z_{s+1}(x) = 1 - Z_s(x) \) this is enough to establish the conclusion.

\end{proof}

The next lemma verifies that the requirements \( \req{P}{\sigma} \) are satisfied.

\begin{lemma}\label{lem:P-req-met}
If \( S \in \mathbb{I} \) and \( S \geq_U \delta \) and \( \forall*(m)\left(\pi(m) \nsupfun \delta \right) \) then \( \theta_{\delta}(f^{\Box S}) \sagree*0 Z  \)     
\end{lemma}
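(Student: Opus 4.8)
The plan is to unwind what $\theta_{\delta}(f^{\Box S}) \sagree*0 Z$ demands and check the pieces in turn. By Lemma~\ref{lem:compute-Yi} (with $S \geq_U \delta$) the functional $\theta_{\delta}(f^{\Box S})$ is total, and by Lemma~\ref{lem:theta_sigma-sdom-one} the set $\set{x}{\theta_{\delta}(f^{\Box S};x) = \Box}$ has upper density $0$; so it remains only to show $\theta_{\delta}(f^{\Box S};x) \boxeq Z(x)$ for all but finitely many $x$. I would first reduce to $S = U(\delta)$: each $U_i$ is infinite and confined to the $i$-th column, so $U(\delta) \geq_U \delta$, whence Lemma~\ref{lem:compute-Yi} gives $\theta_{\delta}(f^{\Box S}) \sagree \theta_{\delta}(f^{\Box U(\delta)})$; and since $U_i \subset* S$ whenever $\delta(i)=1$, for cofinitely many $x$ every $\Box$ produced at $x$ by the $U(\delta)$-computation is also produced by the $S$-computation, so for such $x$ either $\theta_{\delta}(f^{\Box S};x)=\Box$ (hence $\boxeq Z(x)$) or $\theta_{\delta}(f^{\Box S};x)=\theta_{\delta}(f^{\Box U(\delta)};x)$. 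Finally, since each $U_i$ and each value $Y_{i,s}(y)$ reaches its stagewise limit (Lemmas~\ref{lem:Ui-is-re} and~\ref{lem:Yi-well-defined}), $\theta_{\delta}(f^{\Box U(\delta)};x)=\lim_{s}\theta_{\delta,s}(f_s^{\Box U_s(\delta)};x)$ while $Z(x)=\lim_{s}Z_s(x)$, so it is enough to prove $\theta_{\delta,s}(f_s^{\Box U_s(\delta)};x)\boxeq Z_s(x)$ for all large $s$, for cofinitely many $x$.

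To get a safe threshold stage: by hypothesis $\pi(m)\nsupfun\delta$ for almost all $m$, and by Lemma~\ref{lem:piii-normal-form} the lengths $\lh{\pi(m)}$ are non-decreasing and unbounded, so only finitely many triples that ever enter $R$ have $\sigma\supfun\delta$ or $\lh{\sigma}<\lh{\delta}$; by Lemma~\ref{lem:finite-injury} each of them leaves $R$ only finitely often, so fix $s_0\geq\lh{\delta}$ past all their departures. Then for every $s\geq s_0$ the (at most one) triple leaving $R$ at stage $s$ has $\sigma\nsupfun\delta$ and $\lh{\sigma}\geq\lh{\delta}$, so Lemma~\ref{lem:theta-sigma-preserve-agreement} is applicable at stage $s$, for our fixed $\delta$, whenever $x<l_{s-1}$.

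The core is then an invariant. Fix any $x$ for which $s_1(x)$, the least $s$ with $x<l_{s-1}$, satisfies $s_1(x)\geq s_0$ (there are cofinitely many such $x$, since $l_s\to\infty$); I claim $\theta_{\delta,s}(f_s^{\Box U_s(\delta)};x)\boxeq Z_s(x)$ for all $s\geq s_1(x)$, which on letting $s\to\infty$ proves the lemma. For the base case at $s=s_1(x)$, first $Z_{s_1(x)}(x)=0$: any stage $s$ at which $Z(x)$ changes is one where a triple $(m,i,\sigma)$ is acted on with a non-trivial witness $x,\tau$, and part~\ref{def:viable-tuple:pair-ls-bound} of Definition~\ref{def:viable-tuple} forces $x\leq\pair{\sigma}{x}<l_{s-1}$, hence $s\geq s_1(x)$. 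Second, $\theta_{\delta,s_1(x)}(f_{s_1(x)}^{\Box U_{s_1(x)}(\delta)};x)\in\set{0,\Box}$: each summand $\chi^{\delta}_{i,s_1(x)}$ at $x$ with $\delta(i)=1$ is $0$ or $\Box$, and for $\delta(i)=0$ the value $Y_{i,s_1(x)}(\pair{\delta\restr{i}}{x})$ is defined (by the moreover claim of Lemma~\ref{lem:Yi-well-defined}) and equals $0$ --- $Y_i$ is assembled by step~\ref{step:satN:ext-U}, which only extends the already-known part of $Y_i$ by zeros, together with error-elements that are forced to be compatible with the current $Y_i$, so $Y_i$ can become $1$ at a position $\pair{\delta\restr{i}}{x}$ only via a flip in step~\ref{step:satN:flip-elements-of-U}, and every such flip is accompanied by a change of $Z(x)$ (step~\ref{step:satN:change-Z}), impossible before $s_1(x)$. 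Hence $\theta_{\delta,s_1(x)}(\ldots;x)\boxeq 0=Z_{s_1(x)}(x)$, and since every $s\geq s_1(x)$ exceeds $s_0$, a single application of Lemma~\ref{lem:theta-sigma-preserve-agreement} at each such $s$ carries the agreement from $s$ to $s+1$.

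The base case --- specifically the claim that every $\delta(i)=0$ coordinate of $\theta_{\delta}$ is $0$ at stage $s_1(x)$ --- is the step I expect to be the main obstacle: it needs a careful trace of how the sets $Y_i$ acquire their values (the interaction of the zero-padding in step~\ref{step:satN:ext-U}, the compatibility constraint on error-elements, and the bit-flips of step~\ref{step:satN:flip-elements-of-U}) together with the observation that every bit-flip affecting a position $\pair{\delta\restr{i}}{x}$ is coupled to a change of $Z(x)$. Secondary care is needed in the bookkeeping linking the $x$-dependent stage $s_1(x)$ with the fixed threshold $s_0$, and in checking that the numeric side conditions of Definition~\ref{def:viable-tuple} genuinely force $x<l_{s-1}$ at every stage where $Z(x)$, or a value $Y_i(\pair{\delta\restr{i}}{x})$, is modified.
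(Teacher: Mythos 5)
Your argument is correct and follows essentially the same path as the paper's proof: you fix a threshold stage past which Lemma~\ref{lem:theta-sigma-preserve-agreement} is applicable for \( \delta \), establish that at the first relevant stage \( s_1(x) \) both \( Z_{s_1(x)}(x)=0 \) and \( \theta_{\delta,s_1(x)}(f_{s_1(x)}^{\Box U_{s_1(x)}(\delta)};x)\in\{0,\Box\} \), push forward inductively, and finally transfer the agreement from \( U(\delta) \) to an arbitrary \( S \geq_U \delta \) via Lemma~\ref{lem:compute-Yi} and Lemma~\ref{lem:theta_sigma-sdom-one}. The detailed trace at the base case (zero-padding in step~\ref{step:satN:ext-U}, compatibility of error elements in step~\ref{step:satN:add-error-to-U}, and the injectivity of the pairing coupling bit-flips in step~\ref{step:satN:flip-elements-of-U} to changes in \( Z(x) \)) is precisely what the paper compresses into one sentence, and your remark that the \( S \)-to-\( U(\delta) \) transfer needs the \( \Box \)-containment observation rather than bare transitivity of \( \sagree \) makes explicit a point the paper glosses.
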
 
\begin{proof}
By \hyperref[lem:finite-injury]{finite injury} we may pick \( s_0 > \lh{\delta} \) large enough  that no triple with \( \lh{\sigma} \leq \lh{\delta} \) leaves \( R \) after stage \( s_0 \).  By assumption there are only finitely many strings \( \sigma \supfun \delta \) appearing in a tuple that ever enters \( R \) so we appeal to \hyperref[lem:finite-injury]{finite injury} again to pick \( s_1 > s_0 \) large enough that no all such tuples have permanently settled into \( R \) or \( R^{S} \).  Thus, if \( s \geq s_1 \) and  \( x < l_{s-1} \) then the assumptions  of lemma \ref{lem:theta-sigma-preserve-agreement} are met.  

For \( i < \lh{\delta} \) the first stage at which \( Y_{i,s}(\pair{\delta\restr{i}}{x})\conv \) is defined in step \ref{step:satN:ext-U} it will be \( 0 \) and it will keep that value until we change \( Z_s(x) \).   By definition \ref{def:viable-tuple} if \( x \) is a witness for viability at stage \( s \)  then \( x < s \).  Thus, if \( x \geq s \) we have \( Z_s(x) = Z_0(x) = 0 \).   As we can assume that for all \( s > 2 \) we have \( l_{s-2} \geq s  \).  Thus, for almost all \( x \) we have some \( s \geq s_1 \) with \( x < l_{s -1} \) and \( \theta_{\delta,s}(f^{\Box U_s(\delta)};x) \boxeq 0 \boxeq Z_s(x) \).  Thus, inductively applying lemma \ref{lem:theta-sigma-preserve-agreement} and appealing to the fact that in lemmas \ref{lem:fZ-defined} and \ref{lem:bbU-well-defined} we've proved that \( f,Z \) and \( U(\delta) \) are equal to their stagewise limits it is straightforward to see that \( \theta_{\delta}(f^{\Box U(\delta)}) \sagree* Z \).  

Now suppose \( S \geq_U \delta \).  As \( U(\delta) \geq_U \delta \) by lemma \ref{lem:compute-Yi} we have that  \(  \theta_\delta(f^{\Box S}) \sagree \theta_{\delta}(f^{\Box U(\delta)}) \sagree* Z \).   Finally,  lemma \ref{lem:theta_sigma-sdom-one} shows that \( \udensity(\set{x}{\theta_{\delta}(f^{\Box S};x) = \Box}) = 0 \) establishing that  \( \theta_\delta(f^{\Box S}) \sagree*0 Z \).       
\end{proof}

We can now put the pieces together to show if \( \alpha \in \kleeneO \) then  we've built a non-uniform effective dense reduction. 

\begin{lemma}\label{lem:WF-implies-nedgeq}
If \( \alpha \in \kleeneO \) then \( f \NEDgeq Z \)  
\end{lemma}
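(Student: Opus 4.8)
The plan is to verify the definition of \( f \NEDgeq Z \) directly. By Observation~\ref{obs:f-box-all-edd} an arbitrary effective dense description of \( f \) has the form \( f^{\Box S} \) for some \( S \in \mathbb{I} \), and by the remark following Definition~\ref{def:U-compatible} there is a unique \( X \in \cantor \) with \( S \geq_U X \); so it suffices to show that each such \( f^{\Box S} \) computes an effective dense description of \( Z \).

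The main line is to produce a finite \( \delta \subfun X \) satisfying the hypothesis of \( \req{P}{\delta} \), i.e.\ with \( \forall*(m)\bigl(\pi(m) \nsupfun \delta\bigr) \), and then invoke Lemma~\ref{lem:P-req-met}. Since \( \alpha \in \kleeneO \), part~\ref{lem:piii-normal-form:complete} of Lemma~\ref{lem:piii-normal-form} tells us there is no infinite \( \subfun \)-increasing sequence; combined with parts~\ref{lem:piii-normal-form:injective}, \ref{lem:piii-normal-form:non-decreasing}, \ref{lem:piii-normal-form:bound} (so that \( \lh{\pi(m)} \to \infty \) and pairwise compatible \( \pi \)-values are \( \subfun \)-linearly ordered) this forces that no real is extended by infinitely many \( \pi(m) \), and in particular that only finitely many \( \pi(m) \) are initial segments of \( X \). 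From this I would extract \( \delta \) as a sufficiently long initial segment of \( X \). Granting such a \( \delta \): since \( \delta \subfun X \) we get \( S \geq_U \delta \), so Lemma~\ref{lem:P-req-met} yields \( \theta_{\delta}(f^{\Box S}) \sagree*0 Z \); by Lemma~\ref{lem:compute-Yi} \( \theta_{\delta} \) is a total computable functional, so \( f^{\Box S} \) computes \( \theta_{\delta}(f^{\Box S}) \); and since \( \theta_{\delta}(f^{\Box S}) \sagree*0 Z \) there is a finite set \( F \) off of which \( \theta_{\delta}(f^{\Box S}) \) is weakly equal to \( Z \) and returns \( \Box \) only on a density-\( 0 \) set, so redefining it on \( F \) by the hard-coded values \( Z\restr{F} \) produces \( g \sagree0 Z \) still computed by \( f^{\Box S} \) (a finite change affects neither computability nor any density). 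Hence \( f^{\Box S} \) computes an effective dense description of \( Z \), and as \( f^{\Box S} \) was arbitrary, \( f \NEDgeq Z \).

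The step I expect to be the main obstacle is exactly the extraction of \( \delta \). The difficulty is that \( X\restr{k} \) can be extended by infinitely many \( \pi(m) \) for \emph{every} \( k \) even when no real is extended by infinitely many \( \pi(m) \) and no infinite \( \subfun \)-chain among the \( \pi(m) \) exists, so one cannot simply read \( \delta \) off from part~\ref{lem:piii-normal-form:complete}: the argument has to use the particular padded shape of the \( \pi \) built in the proof of Lemma~\ref{lem:piii-normal-form}, namely that a prefix of \( X \) shared by infinitely many \( \pi(m) \) should force infinitely many of the strings \( \pi \) pads to be \( \subfun \)-comparable, hence an infinite chain. For any residual \( X \) where genuinely no single finite \( \delta \) meets the hypothesis of \( \req{P}{\delta} \), one likely needs a separate limiting argument: take \( \delta = X\restr{k} \) with \( k \to \infty \), use that only finitely many \( \pi(m) \subfun X \) to bound the \( \req{N}{n} \)-activity relevant to \( \theta_{X\restr{k}} \), and check that \( \theta_{X\restr{k}}(f^{\Box S};x) \) stabilizes, uniformly enough to stay \( f^{\Box S} \)-computable, to an effective dense description value of \( Z(x) \). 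Everything downstream of obtaining the reduction — the appeal to Lemma~\ref{lem:P-req-met} and the finite correction — is routine.
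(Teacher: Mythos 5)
Your outline is the paper's proof almost verbatim: invoke Observation~\ref{obs:f-box-all-edd} to reduce to descriptions of the form \( f^{\Box S} \), take the unique \( X \) with \( S \geq_U X \), extract a finite \( \delta \subfun X \) meeting the hypothesis of \( \req{P}{\delta} \), and apply Lemma~\ref{lem:P-req-met} (with a trivial finite patch to pass from \( \sagree*0 \) to \( \sagree0 \), which the paper elides). So the approach is the same.

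The obstacle you flag — extracting \( \delta \) — is a genuine gap, and it is a gap in the paper's own proof too, which simply asserts ``by lemma~\ref{lem:piii-normal-form}, there must be some \( \sigma \subfun X \) such that no \( \pi(n) \supfun \sigma \).'' That claim does \emph{not} follow from properties \ref{lem:piii-normal-form:injective}--\ref{lem:piii-normal-form:complete} alone. Concretely, take the family of strings \( \{\estr\} \cup \{\str{0}^m \concat \str{1}^j : 1 \leq j \leq m\} \), enumerated in non-decreasing length order; it is injective, has lengths \( \leq n \), has \( \subfun \)-chains of every finite height (take \( \str{0}^k\concat\str{1}, \str{0}^k\concat\str{1}^2, \dots, \str{0}^k\concat\str{1}^k \)), and has no infinite chain (strings with different leading-zero blocks are incompatible) — so it is consistent with \( \alpha \in \kleeneO \) and with all of \ref{lem:piii-normal-form:injective}--\ref{lem:piii-normal-form:complete}. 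Yet for \( X = \str{0}^\omega \) \emph{every} initial segment \( X\restr{k} = \str{0}^k \) is extended by infinitely many members (all \( \str{0}^m\concat\str{1}^j \) with \( m \geq k \)), so no \( \delta \) exists. Hence part~\ref{lem:piii-normal-form:complete} cannot, by itself, hand you the \( \delta \) that Lemma~\ref{lem:P-req-met} needs, exactly as you suspected.

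So any completion has to use information not recorded in the statement of Lemma~\ref{lem:piii-normal-form} — most plausibly the block structure of the construction (each block of \( \pi \)-values consists of \emph{all} fixed-length extensions of a single \( p(s) \), and, because the new block length is chosen to exceed that of the maximal earlier compatible block, every \( \pi(n) \) extends some \( \pi(n') \) from its ``predecessor'' block). Your first suggested route (showing that a prefix of \( X \) hit by infinitely many \( \pi(m) \) forces infinitely many of those padding strings to be comparable) is the right kind of argument, and your second (a limiting argument with \( \delta = X\restr{k} \), \( k \to \infty \), reproving a version of Lemma~\ref{lem:P-req-met} directly) is a reasonable fallback; but neither is carried out, and neither is trivial. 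As written, both your proposal and the paper's proof stop at an assertion whose justification is missing.
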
  
\begin{proof}
Suppose \( S \in \mathbb{I} \) and  \( \alpha \in \kleeneO \).  Let \( X \in \cantor \) be the  unique set such that \( S \geq_U X \).  As \( \alpha \in \kleeneO \), by lemma \ref{lem:piii-normal-form}, there must be some \( \sigma \subfun X \) such that no \( \pi(n) \supfun \sigma \).  By lemma \ref{lem:P-req-met} we have that  \( \theta_{\sigma}(f^{\Box S}) \sagree*0 Z  \) showing that every effective dense description of \( f \) computes an effective dense description of \( Z \).                          
\end{proof}

This leaves us only to verify that no such reduction exists when \( \alpha \nin \kleeneO \).  The next three lemmas work together to show that if \( (n, i, \sigma) \) permanently leaves \( R \) then we preserve the resulting disagreement.

\begin{lemma}\label{lem:hatU-changes}
\[
\hat{U}_{\sigma, s} = \begin{cases}
                        \mathbb{U}_t(\tau) \setminus U_t(\sigma) & \text{if } (n, i, \sigma,x) \entersat{t+1} R^{S} \land \forall(s' \in [t+1, s])\left((n, i, \sigma,x) \in R^S_{s'}  \right) \\
                        \eset & \text{if } \forall(n, i, x)\left((n, i, \sigma,x) \nin R^S_{s}   \right)
                      \end{cases}
\]
Where we assume in the first case that \( t+1 \leq s \).
\end{lemma}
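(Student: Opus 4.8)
The plan is to argue by induction on $s$, tracking exactly when $\hat{U}_\sigma$ and the set of quadruples in $R^S$ with third coordinate $\sigma$ can change. The structural fact that makes this manageable is part~\ref{lem:piii-normal-form:injective} of Lemma~\ref{lem:piii-normal-form}: a straightforward induction on the construction shows that every triple ever added to $R$ has the form $(n, i^\pi_n, \pi(n))$ and every quadruple ever added to $R^S$ the form $(n, i^\pi_n, \pi(n), x)$, so a string $\sigma$ in the range of $\pi$ determines its priority $n$ and index $i^\pi_n$ uniquely --- and if $\sigma$ is not in the range of $\pi$ then $\hat{U}_\sigma$ is never assigned a value, so it stays $\eset$, and no quadruple with third coordinate $\sigma$ ever enters $R^S$, so the second case holds. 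Moreover a quadruple enters $R^S$ only at the stage its triple leaves $R$, and that triple re-enters $R$ only when the quadruple leaves $R^S$, so for each priority $n$ at most one of ``$(n, i^\pi_n, \pi(n)) \in R$'' and ``$(n, i^\pi_n, \pi(n), x) \in R^S$ for some $x$'' holds at a given stage; with injectivity of $\pi$ this shows at most one quadruple with third coordinate $\sigma$ lies in $R^S$ at any time, which is what makes the quadruple --- and the witness $\tau$ chosen at the stage it was created --- referenced in the first case unambiguous.

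\textbf{Key steps.} First I would note that the only operations altering $\hat{U}_\sigma$ or the $R^S$-status of a string $\sigma$ are: acting on a viable triple $(n, i, \sigma)$ with witness $\tau$ at a stage $t$, which at substep~\ref{step:satN:fix-U-hat} sets $\hat{U}_{\sigma, t+1} = \mathbb{U}_t(\tau) \setminus U_t(\sigma)$ and at the next substep moves $(n, i, \sigma, x)$ into $R^S$; and substep~\ref{step:satN:add-injured-to-R}, which for each quadruple $(n', i', \sigma', x')$ of priority $n' > n$ removes it from $R^S$ and sets $\hat{U}_{\sigma'} = \eset$. Nothing else, in particular neither step~\ref{step:satN:ext-U} nor the final step, touches $\hat{U}$ or $R^S$. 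The base case $s = 0$ is immediate since $\hat{U}_{\sigma, 0} = \eset$ and $R^S_0 = \eset$. For $s \to s+1$, if step~\ref{step:satN} does not act at stage $s$ then $\hat{U}_\sigma$ and the $R^S$-status of $\sigma$ are unchanged and the conclusion transfers; otherwise step~\ref{step:satN} acts on some $(n_0, i_0, \sigma_0)$ with least witness $x_0, \tau_0$, and I would split into three cases. If $\sigma = \sigma_0$, then substep~\ref{step:satN:fix-U-hat} gives $\hat{U}_{\sigma, s+1} = \mathbb{U}_s(\tau_0) \setminus U_s(\sigma_0)$ and $(n_0, i_0, \sigma, x_0)$ enters $R^S$ at $s+1$; since $\sigma_0 = \pi(n_0)$, injectivity forces every quadruple with third coordinate $\sigma_0$ to have priority $n_0$, so substep~\ref{step:satN:add-injured-to-R} (which acts only on priorities $> n_0$) neither removes one nor resets $\hat{U}_{\sigma_0}$, and the first case of the lemma holds at $s+1$ with $t = s$. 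If $\sigma \neq \sigma_0$ but $\sigma$ is the third coordinate of a quadruple removed at substep~\ref{step:satN:add-injured-to-R}, then that substep sets $\hat{U}_{\sigma, s+1} = \eset$ and removes from $R^S$ every quadruple with third coordinate $\sigma$ --- all of which, by injectivity, share the priority of the one removed and hence exceed $n_0$ --- while the only quadruple added, $(n_0, i_0, \sigma_0, x_0)$, has third coordinate $\sigma_0 \neq \sigma$; so no quadruple with third coordinate $\sigma$ lies in $R^S_{s+1}$ and the second case of the lemma holds. If $\sigma \neq \sigma_0$ and $\sigma$ is not such a third coordinate, then neither substep alters $\hat{U}_\sigma$, and (since quadruples with third coordinate $\sigma$ all share a priority and none is removed) the $R^S$-status of $\sigma$ is unchanged; so the inductive hypothesis carries over, the only thing to note being that in its first-case subcase the interval $[t+1, s]$ merely extends to $[t+1, s+1]$ while $\mathbb{U}_t(\tau) \setminus U_t(\sigma)$ is the value fixed back at stage $t$.

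\textbf{Main obstacle.} This is fundamentally a bookkeeping argument, so I do not expect a genuine obstacle. The only care required is in verifying that the list of sites where $\hat{U}_\sigma$ and $R^S$-membership can change is complete, and in wielding injectivity of $\pi$ to rule out a string being simultaneously created in and injured from $R^S$ (or represented there by two quadruples at once); once those are in hand each case is a short verification.
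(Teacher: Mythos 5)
Your proof is correct and takes essentially the same approach as the paper's: identifying that $\hat{U}_\sigma$ changes only at substep~\ref{step:satN:fix-U-hat} (setting the top value when $(n,i,\sigma)$ leaves $R$) and at substep~\ref{step:satN:add-injured-to-R} (resetting to $\eset$ when the corresponding quadruple is injured out of $R^S$). The paper's proof is a one-sentence observation of exactly this; you have unpacked it into a careful induction on $s$ with an explicit appeal to the injectivity of $\pi$, which the paper relies on only implicitly.
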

\begin{proof}
This follows from the fact that the only times we modify \( \hat{U}_\sigma \) is when either removing \( (n, i, \sigma) \) from \( R \) \mydash in which case we use the top definition \mydash  or when removing some \( (m', i', \sigma') \) from \( R \) the  with \( m' < n \) \mydash in which case, by step \ref{step:satN:add-injured-to-R}, we put \( (n, i, \sigma) \) back into \( R \) and reset \( \hat{U}_\sigma \).  
\end{proof}

\begin{lemma}\label{lem:U-changes-large}
Suppose that \( (n, i, \sigma) \) permanently leaves \( R \) at stage \( s+1 \), \( \tau \supfun \sigma  \) and \( t > s \) then \( U(\tau)\restr{s} = U_t(\tau)\restr{s} =  U_s(\sigma)\restr{s}  \).  Moreover, if \( \tau \supfunneq \sigma \) then \( \hat{U}_\tau\restr(s) = \eset \).   
\end{lemma}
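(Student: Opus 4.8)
Throughout this sketch I write $\sigma=\pi(n)$, which is legitimate since every triple ever placed in $R$ has the form $(m,i^\pi_m,\pi(m))$. The plan is to read both assertions directly off the stagewise operation, using finite injury (Lemma~\ref{lem:finite-injury}) and the length monotonicity of $\pi$ (Lemma~\ref{lem:piii-normal-form}(\ref{lem:piii-normal-form:non-decreasing})). The pivot is the following: because $(n,i,\sigma)$ leaves $R$ at stage $s+1$, it is the (unique) triple acted on during the stage-$s$ processing, and no triple of priority $\le n$ is acted on at any processing stage $\ge s+1$. Indeed $(n,\cdot,\cdot)$ cannot be acted on again once removed, and if some $(m,\cdot,\cdot)$ with $m<n$ were acted on at a stage $>s$, then step~\ref{step:satN:add-injured-to-R} would return $(n,i,\sigma)$ — which by permanence sits in $R^S_t$ for all $t\ge s+1$ — to $R$, contradicting permanence.

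From this I read off the behaviour of the columns. Acting on $(n,i,\sigma)$, step~\ref{step:satN:resetU} empties every $U_j$ with $j\ge\lh\sigma$, so $U_{j,s+1}=\eset$ there. At every processing stage $p\ge s+1$, each element added to any $U_j$ exceeds $s$: step~\ref{step:satN:ext-U} adds elements $>l_p>s$; step~\ref{step:satN:add-error-to-U} — which runs only for a non-trivial witness $x$ of the acted-on triple $(m',\cdot,\cdot)$, and that triple has priority $m'>n$ — adds $\pair{j'}{x}\ge x>\overbar{r}(m')\ge r(n)=l_s\ge s$ (viability gives $x>\overbar{r}(m')$, and $r(n)$ has permanently settled to $l_s$ with $n<m'$); and step~\ref{step:satN:flip-elements-of-U} changes $f$, not the $U_j$. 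Moreover no $U_j$ with $j<\lh\sigma$ is reset after stage $s$, since a reset of $U_j$ happens only while acting on some $(m',\cdot,\pi(m'))$ with $\lh{\pi(m')}\le j<\lh\sigma=\lh{\pi(n)}$, forcing $m'<n$ by Lemma~\ref{lem:piii-normal-form}(\ref{lem:piii-normal-form:non-decreasing}), which was excluded; the stage-$s$ processing itself also leaves these columns unchanged below $s$. Hence, for all $t>s$: $U_{j,t}\restr s=\eset$ when $j\ge\lh\sigma$, and $U_{j,t}\restr s=U_{j,s}\restr s$ when $j<\lh\sigma$.

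To assemble the first claim, note $\sigma\subfun\tau$ and $\sigma(j)=1\Rightarrow j<\lh\sigma$, so splitting the unions $U_t(\sigma)=\bigcup_{\sigma(j)=1}U_{j,t}$ and $U_t(\tau)=\bigcup_{\tau(j)=1}U_{j,t}$ at $\lh\sigma$ and applying the previous paragraph gives $U_t(\tau)\restr s=U_t(\sigma)\restr s=U_s(\sigma)\restr s$ for every $t>s$; since $U(\tau)=\lim_t U_t(\tau)$ by Lemma~\ref{lem:bbU-well-defined} and this restriction is already constant in $t$, also $U(\tau)\restr s=U_s(\sigma)\restr s$.

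For the \emph{moreover}, $\tau\supfunneq\sigma$ forces $\tau=\pi(n')$ with $\lh{\pi(n')}>\lh{\pi(n)}$, hence $n'>n$. When $(n,i,\sigma)$ is removed, step~\ref{step:satN:add-injured-to-R} (applied with $n'>n$) sets $\hat{U}_\tau=\eset$ if $(n',\cdot,\tau,\cdot)\in R^S_s$, and otherwise Lemma~\ref{lem:hatU-changes} already gives $\hat{U}_{\tau,s}=\eset$; in all cases $\hat{U}_{\tau,s+1}=\eset$. Thereafter $\hat{U}_\tau$ is enlarged only by step~\ref{step:satN:fix-U-hat} acting on $(n',\cdot,\tau)$, and by Lemma~\ref{lem:hatU-takes-from-U} every element $\hat{U}_\tau$ ever contains once left a column $U_j$ with $j\ge\lh\tau\ge\lh\sigma$, which by the computation above holds no element $\le s$ at any stage $>s$; combining these observations gives $\hat{U}_\tau\restr s=\eset$. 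The step I expect to need the most care — the main obstacle — is making this last combination precise: the set $\mathbb{U}_q(\tau'')$ that feeds step~\ref{step:satN:fix-U-hat} also carries the nested sets $\hat{U}_\delta$ for $\delta\subfun\tau''$ (including $\hat{U}_\sigma$ itself), so one must argue, by tracing each such element back through Lemma~\ref{lem:hatU-takes-from-U} to a column-departure and reapplying the first part of this lemma, that the difference $\mathbb{U}_q(\tau'')\setminus U_q(\tau)$ contributes nothing below $s$ — which also pins down precisely which stage-indexed version of $\hat{U}_\tau$ the statement refers to.
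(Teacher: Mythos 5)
Your proof of the main claim is essentially the paper's argument, just spelled out a little more explicitly: you establish that no triple of priority $\le n$ is acted on after stage $s$, that step~\ref{step:satN:resetU} at stage $s$ empties $U_j$ for $j\ge\lh\sigma$, that nothing $<s$ is added to any $U_j$ at later stages (splitting by which of steps~\ref{step:satN:ext-U}, \ref{step:satN:add-error-to-U}, \ref{step:satN:flip-elements-of-U} is doing the adding), and that no $U_j$ with $j<\lh\sigma$ is reset after stage $s$ (via Lemma~\ref{lem:piii-normal-form}(\ref{lem:piii-normal-form:non-decreasing})); assembling the two unions at $\lh\sigma$ gives the result. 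This is precisely the paper's decomposition, and your bookkeeping of which substep contributes what, and why the viability restraint forces witnesses above $r(n)=l_s$, is the same computation.

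For the moreover, you likewise take the paper's path --- Lemma~\ref{lem:hatU-changes} gives $\hat U_{\tau,s+1}=\eset$, and Lemma~\ref{lem:hatU-takes-from-U} is invoked for the tracing. The concern you flag at the end is genuine and worth making explicit: when step~\ref{step:satN:fix-U-hat} rebuilds $\hat U_\tau$ at a stage $t>s$ as $\mathbb{U}_{t}(\tau'')\setminus U_{t}(\tau)$, the set $\hat U_t(\tau'')$ feeding it still contains the (never-subsequently-cleared) sets $\hat U_\delta$ for $\delta\subfun\sigma$, and these can contain elements $<s$ that are not in $U_t(\tau)$. The paper's one-line assertion that ``elements only enter $\hat U_\tau$ from some $U_j$ with $j\ge\lh\tau$'' is stronger than what the moreover of Lemma~\ref{lem:hatU-takes-from-U} literally says (which only asserts that each element of $\hat U_\tau$ \emph{at some earlier stage} left such a column), so the paper's proof here is at least equally elliptic. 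The reassuring observation --- which would close your gap and is implicit in how the lemma is actually used (e.g.\ in Lemma~\ref{lem:computation-preserved}) --- is that any such straggler $x<s$ in $\hat U_\tau$ is inherited from some $\hat U_\delta$ with $\delta\subfun\sigma$, and those sets are never modified after stage $s$, so $x$ remains in $\hat U(\sigma)$ and the eventual identity $\mathbb{U}(X)\restr s = \mathbb{U}_s(\tau)\restr s$ is unaffected. Modulo this refinement of the moreover, your proof is correct and matches the paper's approach.
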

\begin{proof}
As step \ref{step:satN:ext-U} only adds values to \( U \) during stage \( t \)  that are larger than \( l_t \geq s \) we can ignore its impact for this claim.   By lemma \ref{lem:piii-normal-form} no \( (n', i', \sigma') \) with \( \lh{\sigma'} < \lh{\sigma} \) leaves \( R \) after stage \( s \).  Thus, we don't reset any \( U_j \) with \( j < \lh{\sigma} \) after stage \( s \).  As we reset \( U_j \) for \( j \geq \lh{\sigma} \) we have \( U_{s+1}(\tau)\restr{s} = U_s(\sigma)\restr{s}  \) for the main claim it is enough to show that after stage \( s \) step  \ref{step:satN:add-error-to-U} only adds values to \( U_j \) larger than \( s-1 \).  But if \( (n', i', \sigma') \) leaves \( R \) at stage \( t+1 > s + 1 \) then \( n' > n \) so any witnesses \( (x, \tau') \) to the viability of \( (n', i', \sigma') \) must satisfy  \( x > \overbar{r}_t(m) \geq r_{s+1}(n) = l_s \geq s \).  Therefore any values added to \( U \) after stage \( s \) must be larger than stage \( s -1 \).  For the moreover claim it is enough to further note that lemma \ref{lem:hatU-changes} guarantees \( \hat{U}_{\tau, s+1}  = \eset\) and the conclusion follows from the argument above and the guarantee from lemma \ref{lem:hatU-takes-from-U} that elements only enter \( \hat{\tau} \) from some \( U_j \) with \( j \geq \lh{\tau} \).          
\end{proof}

\begin{lemma}\label{lem:computation-preserved}
If \( (n, i, \sigma) \) permanently leaves \( R \) at stage \( s+1 \) with witnesses to viability \( x, \tau \) and \( X \supfun \sigma \) then \( \mathbb{U}(X)\restr{s} = \mathbb{U}_s(\tau)\restr{s} \) and   
\[
\Box \neq \recfnl{i}{f^{\Box \mathbb{U}(X)}}{x}\conv = \recfnl[s]{i}{f^{\Box \mathbb{U}_s(\tau)}}{x} \neq Z(x)
\] 
\end{lemma}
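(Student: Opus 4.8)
The plan is to deduce the displayed chain of (in)equalities from the set identity $\mathbb{U}(X)\restr{s}=\mathbb{U}_s(\tau)\restr{s}$ together with a freezing argument for $f$ and $Z$.

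\emph{Set-up.} Since $(n,i,\sigma)$ permanently leaves $R$ at stage $s+1$ and $(n,i^\pi_n,\pi(n))$ is the only triple of priority $n$ ever enumerated into $R$, we have $\sigma=\pi(n)$ and $i=i^\pi_n$. Moreover no triple of priority $\le n$ leaves $R$ at any stage after $s$: if one did, then by Lemma \ref{lem:finite-injury} step \ref{step:satN:add-injured-to-R} would return $(n,i,\sigma)$ — now parked in $R^{S}$ — to $R$, contradicting permanence. Hence $r(n)=l_s$ is never revised; since $\overbar{r}(n+1)\ge r(n)=l_s$ and (by step \ref{step:satN:ext-U}) $l_s$ exceeds the use of the computation witnessing clause \ref{def:viable-tuple:convergence}, which is itself $<s$, Lemma \ref{lem:fZ-defined} shows that $f$ and $Z$ are constant below that use from stage $s+1$ on.

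\emph{The set identity.} I would read this off Lemmas \ref{lem:U-changes-large} and \ref{lem:hatU-changes}. Step \ref{step:satN:fix-U-hat} puts $\hat{U}_\sigma=\mathbb{U}_s(\tau)\setminus U_s(\sigma)$, and since $U_s(\sigma)\subseteq U_s(\tau)\subseteq\mathbb{U}_s(\tau)$ this gives $U_s(\sigma)\cup\hat{U}_\sigma=\mathbb{U}_s(\tau)$; because $(n,i,\sigma,x)$ stays in $R^{S}$ forever, Lemma \ref{lem:hatU-changes} keeps $\hat{U}_\sigma$ pinned to this value. On the $\mathbb{U}(X)$ side, $U(X)\restr{s}=U_s(\sigma)\restr{s}$ by the argument of Lemma \ref{lem:U-changes-large} (verbatim with $X$ in place of the finite $\tau$: columns below $\lh{\sigma}$ are never reset after $s$ and gain no elements $<s$, while columns of height $\ge\lh{\sigma}$ are emptied at $s$ and thereafter gain only large elements). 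For the adjustment sets, the ``moreover'' of Lemma \ref{lem:U-changes-large} gives $\hat{U}_\delta\restr{s}=\eset$ whenever $\sigma\subfunneq\delta\subfun X$, while for $\delta\subfunneq\sigma$ the set $\hat{U}_\delta$ has already reached its final value $\hat{U}_{\delta,s}$ — by Lemmas \ref{lem:hatU-changes} and \ref{lem:piii-normal-form} it can only change when a triple of priority $<n$ acts — and $\hat{U}_{\delta,s}\subseteq\hat{U}_s(\sigma)\subseteq\mathbb{U}_s(\tau)$. Assembling these, both $\mathbb{U}(X)\restr{s}$ and $\mathbb{U}_s(\tau)\restr{s}$ equal $\bigl(U_s(\sigma)\cup\hat{U}_\sigma\cup\Union_{\delta\subfunneq\sigma}\hat{U}_{\delta,s}\bigr)\restr{s}$.

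\emph{The computation and the disagreement.} By clause \ref{def:viable-tuple:convergence}, $\recfnl{i}{f_s^{\Box\mathbb{U}_s(\tau)}}{x}\conv[s]\in\omega$; write $v$ for this value, so $v\neq\Box$. The only stage-$s$ change to $f$ is step \ref{step:satN:flip-elements-of-U}, which alters $f(z)$ only for $z\in U_j$ with $\sigma(j)=1$; every such $z$ lies in $U_s(\sigma)\subseteq\mathbb{U}_s(\tau)$ and in $U(X)$ (as $\sigma(j)=1$ forces $j\in X$ since $\sigma\subfun X$), so these positions are $\Box$-blanked in both oracles and the change is invisible. With the set identity and the freezing of $f$ below the use, it follows that $f^{\Box\mathbb{U}(X)}$ and $f_s^{\Box\mathbb{U}_s(\tau)}$ agree on the use of the computation, so $\recfnl{i}{f^{\Box\mathbb{U}(X)}}{x}\conv=\recfnl[s]{i}{f^{\Box\mathbb{U}_s(\tau)}}{x}=v$. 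Finally, if $x,\tau$ is a trivial witness then $Z_s(x)\neq v$ and step \ref{step:satN:change-Z} does nothing, so $Z(x)=Z_s(x)\neq v$; otherwise step \ref{step:satN:change-Z} sets $Z_{s+1}(x)=1-Z_s(x)$ while non-triviality forces $Z_s(x)=v$, so $Z(x)=1-v\neq v$. Either way $Z(x)\neq v$ and $Z(x)$ is already at its final value, which completes the display.

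\emph{Main obstacle.} The crux is the set identity. The whole design of the construction is arranged so that any element of $\mathbb{U}_s(\tau)$ relied on by the stage-$s$ computation but living in a high column $U_j$ with $j\ge\lh{\sigma}$ — which step \ref{step:satN:resetU} deletes — is re-deposited into $\hat{U}_\sigma\subseteq\hat{U}(X)$, so that no oracle bit the computation used is ever retracted in the limit; turning this into the precise equality above is exactly where the precise statements of Lemmas \ref{lem:U-changes-large}, \ref{lem:hatU-changes}, and \ref{lem:hatU-takes-from-U} do the work, the rest being routine bookkeeping.
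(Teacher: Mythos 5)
Your proof is correct and follows essentially the same route as the paper's: the reduction to the set identity $\mathbb{U}(X)\restr{s}=\mathbb{U}_s(\tau)\restr{s}$ via Lemmas \ref{lem:hatU-changes}, \ref{lem:U-changes-large}, and \ref{lem:hatU-takes-from-U}, the use of the restraint $r(n)=l_s$ to freeze $Z$ (and $f$), and the trivial/non-trivial witness split to get $Z(x)\neq v$. You spell out some steps the paper treats as implicit background — notably why the stage-$s$ action of step \ref{step:satN:flip-elements-of-U} is invisible to both oracles (those $z$ lie in $U_s(\sigma)\subseteq\mathbb{U}_s(\tau)\cap\mathbb{U}(X)$) and why $f$ is afterwards frozen below the use — but this is elaboration rather than a different argument.
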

\begin{proof}
Suppose the assumptions of the lemma are satisfied.  We first note that it is enough to show that  \( \mathbb{U}(X)\restr{s} = \mathbb{U}_s(\tau)\restr{s} \).  As \( x, \tau \) witnesses the viability of \( (n, i, \sigma) \)  we must have \( \recfnl{i}{f^{\Box \mathbb{U}_s(\tau)}}{x}\conv[s] \neq \Box \).  By the operation of step \ref{step:satN:change-Z} we know that \( Z_{s+1}(x) \) must disagree with \( \recfnl{i}{f^{\Box \mathbb{U}_s(\tau)}}{x}\conv[s]  \) and since  \( (n, i, \sigma) \) never reenters \( R \) we must respect its restraint ensuring that \( Z_{s+1}(x) = Z(x) \).  

To verify \( \mathbb{U}(X)\restr{s} = \mathbb{U}_s(\tau)\restr{s} \), we first note that if \( \sigma' \subfunneq \sigma \) then, as no tuple including \( \sigma' \) can enter or leave \( R \) after stage \( s \),   lemma \ref{lem:hatU-changes} guarantees that \( \hat{U}_{\sigma', s} = \hat{U}_{\sigma'} \) and therefore \( \hat{U}(\sigma^{-}) = \hat{U}_s(\sigma^{-}) \).  Applying the same to \( \sigma \) after stage \( s+1 \) yields \( \hat{U}(\sigma) = \hat{U}_{s+1}(\sigma) \).  By construction we set \( \mathbb{U}_{s+1}(\sigma)\restr{s} = \mathbb{U}_s(\tau)\restr{s} \) and the desired conclusion now follows from lemmas \ref{lem:hatU-changes} and \ref{lem:U-changes-large}.     
\end{proof}

Our last major hurdle is to verify that triples which remain in \( R \) permanently result in some other kind of victory.  This will require we prove that computations whose strong domain has density \( 1 \) always give us a chance to diagonalize.  To that end we make the following definition (where \( \godelnum{\sigma_j} \) is as defined in \eqref{eq:coding-bstrs}).  

\begin{equation}\label{eq:S-simul-equal}
S^{\sigma_0, \ldots, \sigma_{n-1}} = \set{y}{\forall(j < n)\left(f_0(\pair{j}{y}) = \godelnum{\sigma_j} \right)}
\end{equation} 

We will eventually relate this set to potential witnesses to viability but first we show that it has non-zero density.

\begin{lemma}\label{lem:any-bstrs-positive-density}
If \(  \sigma_i \in 2^{l+1} \) for all  \( i < n \) then \( \udensity(S^{\sigma_0, \ldots, \sigma_{n-1}}) = 2^{-(l+1)(n+1)} \). 
\end{lemma}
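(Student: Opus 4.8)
The plan is to compute the density of $S^{\sigma_0,\dots,\sigma_{n-1}}$ directly from the explicit formula \eqref{eq:fzero} for $f_0$. Recall that $y \in S^{\sigma_0,\dots,\sigma_{n-1}}$ exactly when $f_0(\pair{j}{y}) = \godelnum{\sigma_j}$ for every $j < n$. Writing $y = \pair{l'}{k}$, equation \eqref{eq:fzero} gives $f_0(\pair{j}{\pair{l'}{k}}) = (\floor{k/2^{j(l'+1)}} \bmod 2^{l'+1}) + 2^{l'+1} - 1$. Since each $\sigma_j \in 2^{l+1}$, by the coding \eqref{eq:coding-bstrs} we have $\godelnum{\sigma_j} = 2^{l+1} - 1 + \sum_{m<l+1} 2^m \sigma_j(m)$, which lies in $[2^{l+1}-1, 2^{l+2}-1)$. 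So the condition $f_0(\pair{j}{\pair{l'}{k}}) = \godelnum{\sigma_j}$ can only be met when $l' = l$ (the ``$2^{l'+1}-1$'' offset forces the length), and then it says precisely that the block of $l+1$ bits of $k$ starting at position $j(l+1)$ equals the bit-string $\sigma_j$.

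The key observation is that, for a fixed $l$, as $j$ ranges over $0,\dots,n-1$ these bit-blocks of $k$ are disjoint: block $j$ occupies bit positions $j(l+1)$ through $(j+1)(l+1)-1$. So among the integers of the form $y = \pair{l}{k}$, the fraction whose $k$ simultaneously realizes all $n$ prescribed blocks is exactly $2^{-(l+1)n}$ — each of the $n$ blocks is a free choice of $l+1$ bits being pinned down. First I would make this counting precise: among $k < 2^{(l+1)N}$ for $N \geq n$, exactly $2^{(l+1)(N-n)}$ many have the right first $n$ blocks, giving limiting frequency $2^{-(l+1)n}$ within the $l$-th column.

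Then I would convert this column-relative density into an absolute upper density using Observation~\ref{obs:pairing}: the $l$-th column $\{\pair{l}{k} : k \in \omega\}$ has density $2^{-(l+1)}$ in $\omega$, and more precisely $\udensity(\{\pair{l}{k} : k \in A\}) = 2^{-(l+1)}\udensity(A)$. Applying this with $A$ the set of $k$ realizing all the blocks (which has density $2^{-(l+1)n}$ by the previous paragraph — and genuine density, not just upper density, since the block pattern recurs with exact periodicity in the binary digits of $k$), we get $\udensity(S^{\sigma_0,\dots,\sigma_{n-1}}) = 2^{-(l+1)} \cdot 2^{-(l+1)n} = 2^{-(l+1)(n+1)}$, as claimed. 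The main obstacle — really the only subtle point — is verifying that the set of good $k$ has a genuine limit density equal to $2^{-(l+1)n}$ rather than just an upper bound; this follows because divisibility/digit conditions on $k$ are periodic, so the counting function is linear up to a bounded error, but one should be careful that the supremum defining $\udensity$ over initial segments of $\omega$ (as opposed to initial segments of the column) is computed correctly, which is exactly what Observation~\ref{obs:pairing} is set up to handle.
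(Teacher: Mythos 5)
Your proposal is correct and follows essentially the same route as the paper: both read off from \eqref{eq:fzero} that membership forces $y = \pair{l}{k}$ with the $j$-th block of $l+1$ bits of $k$ pinned to $\godelnum{\sigma_j}-2^{l+1}+1$, both deduce density $2^{-(l+1)n}$ among the $l$-th column, and both finish via Observation~\ref{obs:pairing}. The only cosmetic difference is that the paper formalizes "disjoint blocks give exact periodicity" as an induction showing the condition is a single congruence modulo $2^{(l+1)n}$, whereas you state the disjoint-block counting directly.
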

\begin{proof}
Recall that we defined in  \eqref{eq:fzero}  
\begin{equation*}
f_0(\pair{i}{\pair{l}{k}}) =  (\floor{\frac{k}{2^{i(l+1)}}} \bmod 2^{l+1} ) + 2^{l+1} - 1
\end{equation*}
Fixing \( \sigma_i \in 2^{l+1} \) let  \( S_m = S^{\sigma_0, \ldots, \sigma_{m-1}} \) for \( 1 \leq m \leq n \) and \( k_i = \godelnum{\sigma_i} - 2^{l+1} + 1 \).  Note that \( 0 \leq k_i < 2^{l+1} \) since, by \eqref{eq:coding-bstrs}, \( k_i \) will be the value whose base \( 2 \) representation is \( \sigma_i \).  Combining these definitions with \eqref{eq:coding-bstrs} gives us (for all \( k \) and \( m \in [1,n] \))
\begin{equation}\label{eq:Sm-membership}
\pair{l}{k} \in S_m \iff \forall(i < m)\left(\floor{\frac{k}{2^{i(l+1)}}} \bmod 2^{l+1} = k_i  \right)
\end{equation}
Recall we write \( \bmod 2^{l+1} \) inside the equation to denote the operation which returns the representative in \( [0, 2^{l+1}) \).  Thus, when \( m \in [1, n) \)  we have 
\begin{equation}\label{eq:Smplus-membership}
\pair{l}{k} \in S_{m+1} \iff \pair{l}{k} \in S_{m} \land \floor{\frac{k}{2^{m(l+1)}}} \bmod 2^{l+1} = k_m
\end{equation}
We argue, by way of induction, that (for all \( k \) and \( m \in [1,n] \))
\begin{equation}\label{eq:any-bstrs-positive-density:kmod}
\pair{l}{k} \in S_m \iff  k \equiv  \sum_{i < m} k_i 2^{i(l+1)} \pmod{2^{(l+1)m}} 
\end{equation}
We note, for later use, that the above sum is in \(  [0, 2^{(l+1)m}) \).  When \( m = 1 \) we note that both eqs. \eqref{eq:Sm-membership}, \eqref{eq:any-bstrs-positive-density:kmod} simplify to \( \pair{l}{k} \in S_1 \) iff \( k \equiv k_0 \pmod{2^{l+1}} \).  Now suppose, the claim holds for some \( m < n \).  Unpacking \eqref{eq:Smplus-membership} we have \( y \in S_{m+1} \)  iff \( y = \pair{l}{k} \in S_m \) and for some integer  \( d \)  \[
2^{m(l+1)}k_m + d2^{m(l+1)}2^{l+1} \leq  k <  2^{m(l+1)}(k_m+1) + d2^{m(l+1)}2^{l+1} 
\]
Note that for any given \( d \), each equivalence class modulo \( 2^{(l+1)m} \) has exactly one representative in the interval above.  Thus, combining the above equation with our inductive hypothesis   we see that \( k \) satisfies these inequalities  iff 
\begin{multline*}
k = 2^{m(l+1)}k_m  + \sum_{i < m} k_i 2^{i(l+1)} +  d2^{(m+1)(l+1)}  \iff \\
k \equiv \sum_{i < m+1} k_i 2^{i(l+1)} \pmod{2^{(l+1)(m+1)}} 
\end{multline*}
This vindicates our inductive assumption and therefore implies that \( \density(\set{k}{\pair{l}{k} \in S_n}) = 2^{-(l+1)n}  \).  As it is easy to see from \eqref{eq:fzero} that \( y \in S_n \) iff \( y  \) has the form \(  \pair{l}{k} \)  it follows from observation  \ref{obs:pairing} that \[
\density(S^{\sigma_0, \ldots, \sigma_{n-1}}) = 2^{-(l + 1)}2^{-(l+1)n}   = 2^{-(l+1)(n+1)}  
\]
\end{proof}

The lemma above only tells us about \( f_0 \).  To ensure our changes to \( f \) don't cause a problem we show that we only change \( f \) on a set of density \( 0 \).    

\begin{lemma}\label{lem:f-equal-fzero-on-density-one}
For all \(n \) 
\[
\udensity(\set{y}{\exists(i < n)\exists(s)\left(f_s(\pair{i}{y}) \neq f_0(\pair{i}{y})\right)}) = 0
\]
\end{lemma}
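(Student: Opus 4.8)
The plan is to confine the exceptional set, column by column, inside sets that have already been shown to be null. The first step is the bookkeeping observation that the only step of the construction that ever alters \( f \) is step \ref{step:satN:flip-elements-of-U}: the value \( f_0 \) is fixed once by \eqref{eq:fzero}, step \ref{step:satN:ext-U} merely \emph{selects} a witness \( x_i \) at which \( f_s \) already has the desired value \( \godelnum{\sigma_i} \) rather than modifying \( f_s \), and no other step refers to \( f \) at all. Moreover, step \ref{step:satN:flip-elements-of-U} changes \( f_s(z) \) only for \( z \in U_{j,s} \) with \( \sigma(j) = 1 \); since \( U_{j,s} \subseteq \setcol{\omega}{j} \) and distinct columns are disjoint, such a \( z \) of the form \( \pair{i}{y} \) forces \( j = i \), so \( \pair{i}{y} \in U_{i,s} \subseteq \Union_t U_{i,t} \).

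From this I would conclude, for each fixed \( i \), that \( \set{y}{\exists(s)\left(f_s(\pair{i}{y}) \neq f_0(\pair{i}{y})\right)} \subseteq W_i \), where \( W_i \eqdef \set{y}{\pair{i}{y} \in \Union_s U_{i,s}} \). Since each \( U_{i,s} \subseteq \setcol{\omega}{i} \), we have \( \Union_s U_{i,s} = \set{\pair{i}{y}}{y \in W_i} \), and this set is contained in \( \Union_s \mathbb{U}_s \), which lies in \( \mathbb{I} \) by lemma \ref{lem:U-is-in-I}. Hence \( \udensity(\Union_s U_{i,s}) = 0 \), and applying observation \ref{obs:pairing} to the \( i \)-th column gives \( \udensity(W_i) = 2^{i+1}\udensity(\Union_s U_{i,s}) = 0 \), i.e. \( W_i \in \mathbb{I} \).

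The statement then follows at once: the displayed set is contained in \( \Union_{i < n} W_i \), a finite union of members of \( \mathbb{I} \), and by the union clause of observation \ref{obs:density} such a finite union again has upper density \( 0 \).

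I do not expect a genuine obstacle here. All the real content is in the bookkeeping claim that step \ref{step:satN:flip-elements-of-U} is the unique source of modifications to \( f \) and that every such modification occurs at a location currently lying in the corresponding \( U_i \); once that is pinned down, the density estimate is immediate from lemma \ref{lem:U-is-in-I} together with observations \ref{obs:pairing} and \ref{obs:density}, with no calculation required.
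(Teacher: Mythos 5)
Your proof is correct and follows essentially the same route as the paper's: identify step \ref{step:satN:flip-elements-of-U} as the only place \( f \) is modified, note that it only touches locations in \( U \subseteq \Union_s \mathbb{U}_s \in \mathbb{I} \) (lemma \ref{lem:U-is-in-I}), pull back through the pairing via observation \ref{obs:pairing}, and finish with the finite-union clause of observation \ref{obs:density}. The paper phrases the pullback slightly differently (it first bounds the full exceptional set and then reads off each column), but the content is identical; your extra bookkeeping about step \ref{step:satN:ext-U} merely selecting rather than modifying is a harmless clarification.
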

\begin{proof}
We first note that \( \udensity(\set{x}{\exists(s)\left(f_s(x) \neq f_0(x)\right)}) = 0 \).  This holds as the only place we change \( f \) is step \ref{step:satN:flip-elements-of-U} which only changes \( f \) on elements currently in \( U \) and by lemma \ref{lem:U-is-in-I} this is a set of density \( 0 \).  By observation \ref{obs:pairing} we see that for any \( i \) the set \( \set{y}{\exists(s)\left(f_s(\pair{i}{y}) \neq f_0(\pair{i}{y})\right)} \) must also have density \( 0 \) and therefore so is a finite union of such sets.   
\end{proof}  

We can now show the desired claim about computations.

\begin{lemma}\label{lem:desc-eventually-diagonalized}
If \( (n, i, \sigma) \in R \), \( X \supfun \sigma \) then either  \( \recfnl{i}{f^{\Box \mathbb{U}(X)}}{} \) is partial or \( \udensity(\set{x}{\recfnl{i}{f^{\Box \mathbb{U}(X)}}{x} = \Box }) > 0 \).   
\end{lemma}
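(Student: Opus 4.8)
\emph{Proof approach (proposal).} The plan is to argue by contradiction: suppose $(n,i,\sigma)$ lies in the limit $R=\lim_s R_s$ but $\recfnl{i}{f^{\Box \mathbb{U}(X)}}{}$ is total with $\udensity(\set{x}{\recfnl{i}{f^{\Box \mathbb{U}(X)}}{x} = \Box}) = 0$, so that by Observation~\ref{obs:density} its strong domain $D=\sdom\recfnl{i}{f^{\Box \mathbb{U}(X)}}{}$ has density $1$. Since the only triples ever placed into $R$ have the form $(m,i^\pi_m,\pi(m))$, we have $\sigma=\pi(n)$ and $i=i^\pi_n$. By finite injury (Lemma~\ref{lem:finite-injury}) fix $s_1$ past which no triple of priority $<n$ ever leaves $R$ and $(n,i,\sigma)$ has permanently settled into $R$. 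It then suffices to exhibit a stage $s>s_1$ at which $(n,i,\sigma)$ is viable: at such a stage step~\ref{step:satN} would act on some viable triple of priority $\le n$, removing it from $R$, contradicting the choice of $s_1$ if its priority is $<n$ and contradicting $(n,i,\sigma)\in\lim_s R_s$ if its priority is $n$.

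The crux, and the step I expect to be the real obstacle, is producing a single number $x$ which (i) lies in $D$, (ii) exceeds $l_n$ and $\overbar{r}(n)=\lim_s\overbar{r}_s(n)$, and (iii) for every $j<\lh\sigma$ with $\sigma(j)=0$ has $\lh{f(\pair{j}{x})}\ge n$ and $\pair{j}{x}$ compatible with $Y_j$ at all sufficiently large stages. For (iii) I would exploit the particular shape of $f_0$ from \eqref{eq:fzero} via Lemmas~\ref{lem:any-bstrs-positive-density} and~\ref{lem:f-equal-fzero-on-density-one}. Fix $l\ge n-1$ and let $N=\lh\sigma$, which is $\le n$ by Lemma~\ref{lem:piii-normal-form}\ref{lem:piii-normal-form:bound}; each $Y_j$ is total (by \req{Y}{n}, established in the proof of Lemma~\ref{lem:compute-Yi}, together with each $U_j$ being infinite), so we may set $\sigma_j=Y_j\restr{l+1}\in 2^{l+1}$ when $\sigma(j)=0$ and $\sigma_j$ arbitrary (say the all-zeros string of length $l+1$) otherwise. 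By Lemma~\ref{lem:any-bstrs-positive-density} the set $S^{\sigma_0,\dots,\sigma_{N-1}}$ has density $2^{-(l+1)(N+1)}>0$, and by Lemma~\ref{lem:f-equal-fzero-on-density-one} the set of $y$ on which $f$ disagrees with $f_0$ on some column below $N$ has density $0$; intersecting both of these with $D$ and invoking Observation~\ref{obs:density} (with $\density(D)=1$) leaves a set of density $2^{-(l+1)(N+1)}>0$, hence infinite, from which I pick such an $x$ also above $l_n$ and $\overbar{r}(n)$. For this $x$ and each relevant $j$ we get $\lh{f(\pair{j}{x})}=l+1\ge n$ and $f(\pair{j}{x})=\godelnum{Y_j\restr{l+1}}$ with $Y_j\restr{l+1}$ an initial segment of $Y_j=\lim_s Y_{j,s}$ (Lemma~\ref{lem:Yi-well-defined}); hence at every sufficiently large stage $s$ we have $f_s(\pair{j}{x})=f(\pair{j}{x})$ and $Y_{j,s}\restr{l+1}=Y_j\restr{l+1}$, i.e.\ $\pair{j}{x}$ is compatible with $Y_j$ at stage $s$ (Definition~\ref{def:Yi-compat}).

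It remains to convert this into an honest stage-$s$ viability witness. Since $x\in D$, $\recfnl{i}{f^{\Box \mathbb{U}(X)}}{x}$ converges with some finite use $u$. Each $U_j$ is a well-defined r.e.\ set and each $\hat{U}_\rho$ a well-defined finite set, and $f$, $Z$, $\mathbb{U}(X)$ all equal their stagewise limits (Lemmas~\ref{lem:Ui-is-re}, \ref{lem:hatU-takes-from-U}, \ref{lem:bbU-well-defined}, \ref{lem:fZ-defined}); moreover, by the pairing function and Lemma~\ref{lem:hatU-takes-from-U} only the finitely many columns $U_j$ and adjustment sets $\hat{U}_\rho$ with index/length bounded by a function of $u$ can affect $\mathbb{U}(X)\restr{u}$. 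So there is a finite $\tau\subfun X$ with $\sigma\subfun\tau$ and a stage $s>s_1$ with $f_s\restr{u}=f\restr{u}$ and $\mathbb{U}_s(\tau)\restr{u}=\mathbb{U}(X)\restr{u}$, whence $\recfnl{i}{f^{\Box \mathbb{U}_s(\tau)}}{x}\conv[s]=\recfnl{i}{f^{\Box \mathbb{U}(X)}}{x}\in\omega$; enlarging $s$ we also secure $n<s$, $\godelnum{\tau}<s$, $\pair{\sigma}{x}<l_{s-1}$ (the $l_t$ grow without bound), $x>\overbar{r}_s(n)$, $x\ge l_n$, and the compatibility facts above at stage $s$. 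Now every clause of Definition~\ref{def:viable-tuple} holds; in particular clause~\ref{def:viable-tuple:convergence} and the second alternative of the final clause (via $x>\overbar{r}(n)$, $x\ge l_n$, and clause~\ref{def:viable-tuple:compatible}) are met, so we need say nothing about $Z$. Thus $(n,i,\sigma)$ is viable at stage $s$, the desired contradiction. Beyond the density computation of the middle paragraph the only care needed is in choosing $s$ large enough to settle $f\restr{u}$, $\mathbb{U}(X)\restr{u}$, the strings $Y_{j,s}\restr{l+1}$ and $\overbar{r}_s(n)$, which is routine finite-injury bookkeeping.
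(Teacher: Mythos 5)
Your argument follows the paper's proof essentially step for step: negate both disjuncts to get a density-one strong domain $D$, intersect $S^{\sigma_0,\dots,\sigma_{N-1}}$ (built from initial segments of the $Y_j$) with $D$ and the density-one agreement set from Lemma~\ref{lem:f-equal-fzero-on-density-one} to produce infinitely many candidate witnesses $x$, then choose a sufficiently settled stage $s$ and string $\tau\subfun X$ so that $x,\tau$ witness viability of $(n,i,\sigma)$, contradicting that it never again leaves $R$. The only cosmetic divergence is that the paper fixes the string length to $n+1$ (setting $\xi_j=Y_j\restr{n+1}$) whereas you allow any $l\geq n-1$; both satisfy the $\lh{f(\pair{j}{x})}\geq n$ clause of Definition~\ref{def:viable-tuple} and give a positive density by Lemma~\ref{lem:any-bstrs-positive-density}, so nothing substantive changes.
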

\begin{proof}
Suppose, for a contradiction, that  the claim fails with witnesses \( X \supfun \sigma \) and  \( (n, i, \sigma) \in R \).   For \( j < \lh{\sigma} \) let \( \xi_j = Y_j\restr{n+1} \) and, following \eqref{eq:S-simul-equal}, define 
\begin{align*}
S &= \set{y}{\forall(j < \lh{\sigma})\left(f_0(\pair{j}{y}) = \godelnum{\xi_j} \right)  } \\
\hat{S} &= \set{y}{y \in S \land  \forall(s)\forall(j< \lh{\sigma})\left(f_s(\pair{j}{x}) = f_0(\pair{j}{x})\right) \land \recfnl{i}{f^{\Box \mathbb{U}(X)}}{x}\conv \neq \Box   } 
\end{align*}
By lemma \ref{lem:any-bstrs-positive-density} we have \( \udensity(S) = 2^{-(n+1)(\lh{\sigma} +1)} \).   By our assumption and lemma \ref{lem:f-equal-fzero-on-density-one} we see that \( \hat{S} \) is the intersection of \( S \) with two sets of density \( 1 \) so by observation \ref{obs:density} we also have \( \udensity(\hat{S}) = 2^{-(n+1)(\lh{\sigma} +1)} > 0  \).  Thus, \( \hat{S} \) must be infinite.     

Now let \( x \in \hat{S} \) be larger than both \( \overbar{r}(n) \) and \( l_n \).  Let \( s_0 \) be large enough that \( \recfnl{i}{f^{\Box \mathbb{U}(X)}}{x}\conv[s_0] \).  Now let \( s_1 > s_0 \) be large enough that for all \( t \geq s_1 \) we have \(  \mathbb{U}_t(X)\restr{s_0} = \mathbb{U}(X)\restr{s_0} \) and choose \( \tau  \) such that \( \sigma \subfun \tau \subfun X \) such that \( \mathbb{U}_{s_1}(\tau)\restr{s_0} = \mathbb{U}_{s_1}(X)\restr{s_0}  \) (this must be possible as only finitely many \( \hat{U}_\tau \) are non-empty at any stage).  Note that, by lemma \ref{lem:hatU-takes-from-U} if any element left \( \mathbb{U}(\tau)\restr{s_0} \) at stage \( t \geq s_1 \) it would also leave \( \mathbb{U}_t(X)\restr{s_0} \) so we have  \( \mathbb{U}_t(\tau)\restr{s_0}= \mathbb{U}(X)\restr{s_0}  \) for all \( t \geq s_1 \).  Finally, choose \( s \) large enough that all of the following obtain for all \( j < \lh{\sigma} \) and \( t \geq s \)  
\begin{itemize}
    \item \( s_1,  n, x, \godelnum{\tau}  \) are all less than \( s \)
    \item  \( \pair{j}{\xi_j} < l_{s-1} \)
    \item  \( Y_{j,t}\restr{n+1} = Y_j\restr{n+1} \) 
    \item \( (n, i, \sigma) \in R_t \) 
    \item No triple \( (n', i', \sigma') \) with \( n' < n \) leaves \( R \) at stage \( t \).    
\end{itemize}

As \( s > s_1 > s_0 \) we have  \( \mathbb{U}_s(\tau)\restr{s_0}= \mathbb{U}(X)\restr{s_0}   \) and therefore  \( \recfnl{i}{f^{\Box \mathbb{U}_s(\tau)}}{x}\conv[s] \in \omega \).  As \( x \in \hat{S} \) we have that \( \forall(j< \lh{\sigma})\left(f_s(\pair{j}{x}) = f_0(\pair{j}{x})\right) \) and as \( x \in S \) we have that \( x \) is compatible with \( Y_j \) for \( j < \lh{\sigma} \).  Thus, by definition \ref{def:viable-tuple}, \( x, \tau \) witness that \( (n, i, \sigma) \) is a viable triple  at stage \( s \).  As no triple \( (n', i', \sigma') \) with \( n' < n \) leaves \( R \) at stage \( s \) the triple \( (n, i, \sigma) \) must leave \( R \) at stage \( s \) contradicting our assumption.          
\end{proof}

We are at last ready to complete the verification.

\begin{lemma}\label{lem:nWF-implies-not-nedgeq}
If \( \alpha \nin \kleeneO \) then \( f \nNEDgeq Z \)  
\end{lemma}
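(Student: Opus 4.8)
The plan is to read an infinite branch off the non‑wellfounded ``tree'' of $\pi$‑strings and exploit the values of $f$ concealed along that branch to defeat every candidate reduction. So assume $\alpha \nin \kleeneO$. By part~\ref{lem:piii-normal-form:complete} of Lemma~\ref{lem:piii-normal-form} there is an infinite $\subfun$‑increasing sequence $\pi(n_0) \subfunneq \pi(n_1) \subfunneq \cdots$; let $X \in \cantor$ be the union of this chain, so that $\pi(n_j) \subfun X$ for every $j$. Since, by Lemma~\ref{lem:U-is-in-I} (together with Lemma~\ref{lem:bbU-well-defined}), every set of the form $\mathbb{U}(X)$ lies in $\mathbb{I}$, the function $f^{\Box \mathbb{U}(X)}$ is an effective dense description of $f$ by Observation~\ref{obs:f-box-all-edd}. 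It therefore suffices to show that for every $i$ the functional $\recfnl{i}{f^{\Box \mathbb{U}(X)}}{}$ fails to be an effective dense description of $Z$ --- keeping in mind that such a description must be total, may output $\Box$ only on a set of upper density $0$, and may never disagree with $Z$ at a point where it outputs a genuine number.

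The combinatorial point I would isolate next is that the $\pi$‑strings below $X$ form an infinite $\subfun$‑chain $\rho_0 \subfunneq \rho_1 \subfunneq \cdots$ exhausting $\set{\pi(n)}{\pi(n) \subfun X}$, and that if $\rho_i = \pi(n)$ then $i^\pi_n = i$: the members of the range of $\pi$ strictly below $\rho_i$ are exactly $\rho_0,\ldots,\rho_{i-1}$, and by part~\ref{lem:piii-normal-form:non-decreasing} of Lemma~\ref{lem:piii-normal-form} each of these is $\pi(m)$ for some $m<n$. Consequently, fixing $i$, there is an $n$ with $\pi(n) \subfun X$ and $i^\pi_n = i$, and the triple $(n, i^\pi_n, \pi(n)) = (n, i, \pi(n))$ for this $n$ is placed into $R$ at stage $n$ of the construction. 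By finite injury (Lemma~\ref{lem:finite-injury}) this triple changes status only finitely often, so it reaches one of two stable configurations: either it is eventually permanently in $R$, or it eventually permanently leaves $R$ (settling into $R^{S}$), in which case there is a last stage at which it left, with some witnesses $x,\tau$ to its viability there.

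In the first configuration, since $X \supfun \pi(n)$, Lemma~\ref{lem:desc-eventually-diagonalized} applies directly and tells us that $\recfnl{i}{f^{\Box \mathbb{U}(X)}}{}$ is either partial or outputs $\Box$ on a set of positive upper density; in either case it is not an effective dense description of $Z$. In the second configuration, again $X \supfun \pi(n)$, so Lemma~\ref{lem:computation-preserved} yields $\Box \neq \recfnl{i}{f^{\Box \mathbb{U}(X)}}{x}\conv \neq Z(x)$, i.e.\ the functional outputs a genuine value at $x$ disagreeing with $Z$, which again rules it out as an effective dense description of $Z$. As $i$ was arbitrary, $f \nNEDgeq Z$. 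I expect no serious obstacle in this last lemma itself: the real work has been front‑loaded into Lemmas~\ref{lem:computation-preserved} and \ref{lem:desc-eventually-diagonalized}, so what remains is essentially bookkeeping; the one place to be careful is verifying that the $\pi$‑prefixes of $X$ realise \emph{every} value of $i^\pi_n$, so that each functional $\recfnl{i}{}{}$ is assigned its own diagonalisation triple along the branch $X$.
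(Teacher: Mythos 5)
Your proof is correct and follows essentially the same route as the paper: take $X$ extending the infinite $\pi$‑chain, note $f^{\Box \mathbb{U}(X)}$ is an effective dense description of $f$ by Lemma~\ref{lem:U-is-in-I}, and then for each $i$ find $n$ with $\pi(n)\subfun X$ and $i^{\pi}_n=i$ so that the triple $(n,i,\pi(n))$ is enumerated into $R$, after which Lemma~\ref{lem:finite-injury} splits into the two cases handled by Lemmas~\ref{lem:computation-preserved} and \ref{lem:desc-eventually-diagonalized}. The only difference is that you spell out, rather than merely assert, why each index $i$ is realised as $i^{\pi}_n$ for some $n$ with $\pi(n)\subfun X$ (via the chain-position argument using part~\ref{lem:piii-normal-form:non-decreasing} of Lemma~\ref{lem:piii-normal-form}), which is a welcome bit of extra bookkeeping but not a different approach.
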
 
\begin{proof}
By lemma \ref{lem:piii-normal-form} if \( \alpha \nin \kleeneO \) there is some \( X \) which extends \( \pi(n) \) for infinitely many \( n \).  We claim \( f^{\Box \mathbb{U}(X)} \) is an effective dense description of \( f \) that doesn't compute an effective dense description of \( Z \).

By lemma \ref{lem:U-is-in-I} we have that \( \mathbb{U}(X) \in \mathbb{I} \) so \( f^{\Box \mathbb{U}(X)} \) is an effective dense description of \( f \).  Now suppose, for a contradiction, that \( \recfnl{i}{f^{\Box \mathbb{U}(X)}}{} \) is total and \( \recfnl{i}{f^{\Box \mathbb{U}(X)}}{} \sagree0 Z \). Since \( X \) extends \( \pi(n) \) for infinitely many \( n \) and \( \pi(n) \) is injective for some \( n \) we have \( i^{\pi}_n = i \).  If \( \sigma = \pi(n) \) then at stage \( n+1 \) we enumerate \( (n, i, \sigma) \) into \( R \).  By lemma \ref{lem:finite-injury},  \( (n, i, \sigma) \) eventually either leaves \( R \) permanently or stays in \( R \) permanently.  

In the former case, lemma \ref{lem:computation-preserved} ensures that \( \recfnl{i}{f^{\Box \mathbb{U}(X)}}{} \) disagrees with \( Z \).  In the later case, lemma \ref{lem:desc-eventually-diagonalized} ensures that \( \recfnl{i}{f^{\Box \mathbb{U}(X)}}{} \) is either partial or the strong domain of \( \recfnl{i}{f^{\Box \mathbb{U}(X)}}{}   \) doesn't have density \( 1 \).  This provides the desired contradiction proving the lemma.              
\end{proof}

This, plus the observation that this construction relativizes, shows that can uniformly produce \( f, Z \in \deltazn(X){2} \) such that \( f \NEDgeq Z \) iff \( P(X) \).  To finish the proof of theorem \ref{thm:ned-piii-complete} it is enough to prove the following lemma.

\begin{lemma}\label{lem:ned-construction-non-uniform}
When the \( \piin{1} \) predicate \( P(X) \) holds we have \( f \NEDgeq Z \) but not \( f \nUEDgeq Z \).  Indeed, there is a strictly monotonic computable function \( l_k \) such that there is no computable functional \( \Gamma \) satisfying \[
\forall(S \subset \omega)\left(\forall(k)\left[\udensity[l_k](S) \leq 2^{-k} \right] \implies \Gamma(f^{\Box S}) \in (\omega \union \set{\Box})^{\omega} \land   \Gamma(f^{\Box S}) \sagree0 Z  \right)
\] 
\end{lemma}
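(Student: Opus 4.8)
The plan is to establish the stronger ``indeed'' assertion and read off both halves of the lemma from it. The first half, \( f \NEDgeq Z \), is immediate: since \( P(X) \) holds exactly when \( \alpha \in \kleeneO[X] \) (by the relativized form of Lemma~\ref{lem:piii-normal-form}), the relativized Lemma~\ref{lem:WF-implies-nedgeq} gives \( f \NEDgeq Z \). For the second half it suffices to produce no computable functional \( \Gamma \) meeting the displayed condition, since any witness that \( Z \) is uniformly effective densely reducible to \( f \) would be such a \( \Gamma \): by Observation~\ref{obs:f-box-all-edd} the effective dense descriptions of \( f \) are exactly the \( f^{\Box S} \) with \( S \in \mathbb{I} \), and every \( S \) with \( \forall(k)\left[\udensity[l_k](S)\leq 2^{-k}\right] \) lies in \( \mathbb{I} \) (its upper density is \( 0 \)). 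As the function \( l_k \) I would take the sequence of ``large'' values \( l_s \) already chosen in the construction, arranged strictly monotone; the one growth condition I impose is exactly the one met in the proof of Lemma~\ref{lem:U-is-in-I}, where \( \card{\mathbb{U}\restr{l_n}} \) is bounded by a computable function of \( n \) not depending on the (as yet unchosen) later values \( l_m \). Choosing \( l_n \) to dominate \( 2^n \) times that bound makes the recursion defining \( l_n \) legitimate and forces \( \udensity[l_k](\mathbb{U})\leq 2^{-k} \), hence \( \udensity[l_k](\mathbb{U}(X))\leq 2^{-k} \) for every \( X \) since \( \mathbb{U}(X)\subseteq\mathbb{U} \). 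This is (a strengthening of) requirement \req{I}{k}, and it is the only place the specific rate of \( l_k \) matters.

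Fix, toward a contradiction, a computable functional \( \Gamma=\recfnl{e}{}{} \) meeting the display. First locate the requirement that diagonalizes \( \Gamma \). Since \( \pi \) is injective and length-monotone (parts~\ref{lem:piii-normal-form:injective} and~\ref{lem:piii-normal-form:non-decreasing} of Lemma~\ref{lem:piii-normal-form}), the \( \pi \)-values lying \( \subfun \)-below any fixed \( \pi \)-value are linearly ordered and finite in number, while part~\ref{lem:piii-normal-form:unbounded-height} provides \( \subfun \)-chains of \( \pi \)-values of every finite length; reading \( i^\pi \) off along the chain of all \( \pi \)-values below the top of a length-\( k \) chain yields the values \( 0,1,\dots,k-1 \), so \( n\mapsto i^\pi_n \) is onto \( \omega \). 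Pick \( n \) with \( i^\pi_n=e \) and set \( \sigma=\pi(n) \); the construction enumerates \( (n,e,\sigma) \) into \( R \) at stage \( n+1 \). Choose any \( X\supfun\sigma \) and put \( S=\mathbb{U}(X) \). By Lemma~\ref{lem:U-is-in-I}, \( S\in\mathbb{I} \), and by the previous paragraph \( S \) meets \( \forall(k)\left[\udensity[l_k](S)\leq 2^{-k}\right] \); so our assumption on \( \Gamma \) forces \( \recfnl{e}{f^{\Box S}}{}\in(\omega\union\set{\Box})^{\omega} \) and \( \recfnl{e}{f^{\Box S}}{}\sagree0 Z \).

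Now contradict this via the dichotomy of Lemma~\ref{lem:finite-injury}: the triple \( (n,e,\sigma) \) either permanently leaves \( R \) or permanently remains in it. If it permanently leaves \( R \), say at stage \( s+1 \) with witnesses \( x,\tau \), then Lemma~\ref{lem:computation-preserved} gives \( \Box\neq\recfnl{e}{f^{\Box\mathbb{U}(X)}}{x}\conv\neq Z(x) \), so \( \recfnl{e}{f^{\Box S}}{x} \) is a genuine disagreement with \( Z(x) \), contradicting \( \recfnl{e}{f^{\Box S}}{}\sagree0 Z \). If it permanently remains in \( R \), then (with \( X\supfun\sigma \)) Lemma~\ref{lem:desc-eventually-diagonalized} gives that \( \recfnl{e}{f^{\Box\mathbb{U}(X)}}{} \) is partial or that \( \udensity(\set{x}{\recfnl{e}{f^{\Box\mathbb{U}(X)}}{x}=\Box})>0 \); the former contradicts \( \recfnl{e}{f^{\Box S}}{}\in(\omega\union\set{\Box})^{\omega} \), and the latter contradicts \( \recfnl{e}{f^{\Box S}}{}\sagree0 Z \) since \( Z\in\cantor \) has empty ``\( \Box \)''-set. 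Either way \( \Gamma \) fails the display; as \( \Gamma \) was arbitrary, we are done.

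The substantive content — preserving \( \theta_\delta \)-agreement while diagonalizing (Lemma~\ref{lem:theta-sigma-preserve-agreement}), the density bound on \( \mathbb{U} \) (Lemma~\ref{lem:U-is-in-I}), and the guarantee that a density-one strong domain always exposes a chance to diagonalize (Lemma~\ref{lem:desc-eventually-diagonalized}) — is already in hand, so the proof is essentially bookkeeping. The two points that take a moment of care are: confirming that the single \( l_k \) named in the statement can simultaneously be the one threaded through the construction and the one witnessing \( \udensity[l_k](\mathbb{U}(X))\leq 2^{-k} \) (fine, because the bounds in Lemma~\ref{lem:U-is-in-I} are computable in \( k \) and do not refer to later \( l_m \)); and confirming that \( n\mapsto i^\pi_n \) is surjective, so that \emph{every} functional index is the target of some requirement \req{N}{n} on a finite \( \pi \)-chain — the point being that, unlike in Lemma~\ref{lem:nWF-implies-not-nedgeq}, when \( P(X) \) holds there is no infinite path extending infinitely many \( \pi(n) \) to hand us such an \( n \) for free. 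This surjectivity of \( i^\pi \) is the one place I expect to have to think rather than compute.
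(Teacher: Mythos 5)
Your proof is correct and follows the paper's route: choose \( l_k \) fast-growing exactly as in Lemma~\ref{lem:U-is-in-I}, locate a tuple \( (n,e,\sigma)\in R \) with \( i^\pi_n=e \), set \( S=\mathbb{U}(X) \) for some \( X\supfun\sigma \) (the paper takes \( S=\mathbb{U}(\sigma) \), which works identically), and close via Lemmas~\ref{lem:finite-injury}, \ref{lem:computation-preserved} and \ref{lem:desc-eventually-diagonalized}. The one point you flag as requiring thought --- surjectivity of \( n\mapsto i^\pi_n \) --- does hold by exactly the sketch you give: by parts~\ref{lem:piii-normal-form:injective} and~\ref{lem:piii-normal-form:non-decreasing} the \( \pi \)-predecessors of the top of a length-\((i+1)\) chain are linearly ordered, so \( i^\pi \) takes consecutive values \( 0,1,\dots \) along them and hits \( i \); the paper instead appeals to the particular strings \( \str{0}^{k}\concat[1] \) produced inside the proof of Lemma~\ref{lem:piii-normal-form}, but the two arguments are interchangeable.
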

\begin{proof}
It is enough to prove the indeed claim.  It is clear that we can define a computable monotonic sequence \( l_k \) such that \( \udensity[l_k](\mathbb{U}) \leq 2^{-k} \).  We now prove that this sequence witnesses the truth of the lemma.   Given \( i \) by the proof of lemma \ref{lem:piii-normal-form} there will be some \( \sigma = \str{0}^k\concat[1] \) such that for some \( n \)  we enumerate \( (n, i, \sigma) \) into \( R \).  If we set \( S = \mathbb{U}(\sigma) \) then, as \( S \subset \mathbb{U} \), we have  \( \forall(k)\left[\udensity[l_k](S) \leq 2^{-k} \right] \).   Applying lemmas \ref{lem:finite-injury}, \ref{lem:computation-preserved} and \ref{lem:desc-eventually-diagonalized} entails that \( \recfnl{i}{f^{\Box S}}{} \) isn't an effective dense description of \( Z \). 
\end{proof}

\section{Effective Dense Degrees Without Sets}\label{sec:edd-no-sets}

In this section we prove the following theorem.  

\begin{theorem}\label{thm:effective-dense}
If  \( f \in \baire \) is \( 3 \)-generic then there is no set \( X \) such that \( f \NEDequiv X \).  
\end{theorem}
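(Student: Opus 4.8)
The plan is to assume toward a contradiction that $f$ is $3$-generic and $f \NEDequiv X$ for some $X \in \cantor$, and to refute the direction $f \NEDgeq X$ by producing an effective dense description of $f$ that computes no effective dense description of $X$. I build this description by forcing with the generic effective dense description notion promised in the abstract, relativized to $f$: a condition is a pair $p = (\rho, k)$ with $\rho \in (\omega \union \set{\Box})^{<\omega}$, $\rho(x) \in \set{f(x), \Box}$ for all $x < \lh{\rho}$, and the density of $\Box$'s in $\rho$ below every $l \geq l_k$ staying below $2^{-k}$; a condition $q = (\rho', k')$ extends $p$ when $\rho \subfun \rho'$ and $k' \geq k$. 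A sufficiently generic filter yields $\hat{f} = f^{\Box S}$ with $S \in \mathbb{I}$ generic over $f$, so $\hat{f}$ is an effective dense description of $f$, computing $S$ and computed from $f \oplus S$ but \emph{not} computing $f$ itself (it loses the values of $f$ on the infinite set $S$).

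For each index $i$ I want the generic to meet the requirement $\mathcal{R}_i$: ``$\recfnl{i}{\hat f}{}$ is not an effective dense description of $X$.'' Given a condition $p$, I search for an extension $q \leq p$ witnessing one of the following: (a) $\recfnl{i}{f^{\Box S}}{x}\diverge$ is forced for some $x$; (b) $\recfnl{i}{f^{\Box S}}{x}\conv$ to a value other than $\Box$ and other than $X(x)$, for some $x$; or (c) the $\Box$-locations of $\recfnl{i}{f^{\Box S}}{}$ are forced to have positive upper density, so that $\sdom \recfnl{i}{f^{\Box S}}{}$ cannot have density $1$. If such an extension exists for every $p$ and every $i$, then every sufficiently generic $\hat f$ meets all the $\mathcal{R}_i$ and we are done. (A computable $X$ is already excluded: then $X \NEDgeq f$ fails outright, since a $1$-generic $f$ has no $\emptyset$-computable effective dense description, by a routine density argument on initial segments of $f$.)

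The central case is a condition $p$ that is \emph{committed} to $i$: no extension of $p$ witnesses (a), (b) or (c), so $p$ forces $\recfnl{i}{f^{\Box S}}{}$ to track $X$ correctly on a density-$1$ set for every $S \in \mathbb{I}$ extending $p$. Taking $S$ to be the finite $\Box$-set of $\rho$, so $f^{\Box S}$ is Turing equivalent to $f$, we get that $f$ computes an effective dense description $\hat X$ of $X$ via the fixed index $i$, and pushing the commitment through arbitrary \emph{further} hiding shows the same $i$ still works after we additionally hide $f$ on a generic density-$0$ set $S$. Now I invoke $X \NEDgeq f$: there is an index $j$ with $\recfnl{j}{\hat X}{}$ an effective dense description $\tilde f$ of $f$, and $\tilde f \leq_T f^{\Box S}$. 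Since $\tilde f$ agrees with $f$ on a density-$1$ set, $f^{\Box S}$ would reconstruct $f$ on infinitely many of its own hidden coordinates $x \in S$ — which the genericity of $f$ over the finite data in $p$ forbids, because the values $f(x)$ for $x$ in a generic $S$ are not recoverable by any fixed functional; this is where $3$-genericity (rather than $1$- or $2$-genericity) is needed, in order to handle the $\Sigma^0_3$-level questions ``does condition $q$ force commitment / reconstruction?'' that drive the search above. An absoluteness and bookkeeping step then lets me carry the diagonalization out against all $i$ with a single generic $S$.

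The step I expect to be the main obstacle is exactly this treatment of committed conditions: making rigorous that $3$-genericity of $f$ bars a functional from ``guessing back'' $f$ along a generic density-$0$ set of hidden coordinates while still meeting the density-$1$ strong-domain constraint, and doing so in a way that survives the interplay between the two reductions $f \NEDgeq X$ and $X \NEDgeq f$. This is precisely where the construction must go beyond reducing to ``$X$ is hyperarithmetic in $f$'': as the $\piin{1}$-completeness of the reduction relation in \S\ref{sec:effective-dense-complexity} shows, no hyperarithmetic-in-$f$ witness for $f \NEDgeq X$ need exist, so the diagonalization cannot route through such a witness and must confront all reduction indices directly.
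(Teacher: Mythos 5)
The proposal has a genuine gap at the ``committed condition'' step.  You argue that if no extension of $p$ can kill $\recfnl{i}{f^{\Box S}}{}$ as an effective dense description of $X$, then invoking $X \NEDgeq f$ yields $\tilde f = \recfnl{j}{\hat X}{}$, an effective dense description of $f$ with $\tilde f \leq_T f^{\Box S}$, and hence that ``$f^{\Box S}$ would reconstruct $f$ on infinitely many of its own hidden coordinates $x \in S$,'' which genericity forbids.  That inference does not hold.  An effective dense description of $f$ need only agree with $f$ on a density-$1$ set, and since $S$ has density $0$ nothing forces $\sdom \tilde f$ to meet $S$ at all: $\tilde f$ may be $\Box$ everywhere on $S$, and indeed $\tilde f$ may simply be $f^{\Box S}$ itself, which $f^{\Box S}$ trivially computes without ever ``guessing back'' a single hidden value.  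No genericity argument over the finite data in $p$ rules this out, so the committed case produces no contradiction.  A symptom of the gap is that the committed-case step never uses that $X$ lies in $\cantor$ rather than $\baire$; applied with $X$ replaced by $f$ itself it would need to show that $f^{\Box S}$ computes no effective dense description of $f$, which is false since it computes itself.

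The proof given here uses a different mechanism precisely because of this obstruction.  It fixes an $f$-computable approximation $X_{e} = \recfnl{e}{f}{}$ of $X$ together with an index $i$ for which $\recfnl{i}{f^{\Box S}}{}$ is forced to be an effective dense description of $X$, and it hides $X_{e}$ --- not $f$ --- on a second generic density-$0$ set $U$ built through the nested $\Qcond$ forcing, diagonalizing against $\recfnl{j}{X_{e}^{\Box U}}{}$ being an effective dense description of $f$.  The crux is a pigeonhole exploiting the asymmetry of $\baire$ against $\cantor$: change $f$ on a finite interval $F$ of fixed positive local density.  Since $f$ has infinitely many options on $F$ but $X_{e}$ restricted to $[0,l_0)$ takes only $3^{l_0}$ values, one finds $y \neq z$ such that the resulting versions $X^y_{e}$, $X^z_{e}$ agree below $l_0$ and have all disagreements above $l_0$ covered by $U$; then $X_{e}^{y,\Box U} = X_{e}^{z,\Box U}$, so $\recfnl{j}{X_{e}^{\Box U}}{}$ cannot distinguish the two versions of $f$ on $F$ and must therefore diverge somewhere on $F$, or output $\Box$ on a large fraction of $F$ (destroying the density-$1$ strong domain), or strongly disagree with one of the two versions of $f$.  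This $\omega$-versus-$2$ pigeonhole and the auxiliary masking set $U$ are exactly the ideas your proposal is missing.
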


Ideally, this result could be proved by simply showing that \( f \) forces that every candidate \( X \in \cantor \) isn't non-uniformly effective dense equivalent to \( f \).  However, unlike coarse reducibility, there are going to be some functions \( g \) with \( f \NEDequiv g \) but with \( g \) extremely non-definable from \( f \) \mydash if \( V \neq L \) we could take \( f  \) to be the \( 0 \) function and \( g \) to be the \( 0 \) function except on a computable set of density \( 0 \) but with \( g \nin L \).  It is tempting to think that this kind of pathology can only occur by taking a well-behaved element of the equivalence class and modifying it on a set of density \( 0 \).  If so, we could exclude consideration of such highly non-definable sets \( X \) by arguing that if \( f \NEDequiv Y \) with \( Y \subset \omega \) then there will be some hyperarithmetic in \( f \)  set \( X \) with \( f \NEDequiv X \).  While we still hypothesize this is true, we have been unable to prove this (see question \ref{q:ned-definable-set-element}).  Indeed, theorem \ref{thm:ned-piii-complete} grew out of a a failed attempt to prove this claim so we never tried to address the complexity of effective dense reducibility on \( \cantor \).  However, the important difference we observe in this section between the behavior of effective dense reductions on  \( \baire \) and \( \cantor \) raises doubts about whether effective dense reducibility remains \( \piin{1} \) complete on \( \cantor \).  The fact that we can't assume we must have an \( f \)-definable set \( X \) witnessing \( f \NEDequiv X \) if there is any such set will render the proof of theorem \ref{thm:effective-dense} more complicated than the basic intuition might suggest so we can avoid quantifying over \( X \).

\def\e{\underline{e}}
\def\i{\underline{i}}
\def\j{\underline{j}}

However,  the basic intuition is relatively simple.  At the highest possible level the idea is that if we had some description of \( X \) in terms of \( f \) and we changed \( f \) on some finite interval \( F \) and \( U \) covered all the places \( X \) changed as a result then \( X^{\Box U} \) couldn't tell the difference between these versions of \( f \).   If we knew that \( \recfnl{\i}{f^{\Box S}}{} \) was guaranteed to compute an effective dense description of \( X \) then we could use this to find a specific density \( 0 \) set \( U_i \) which covered the changes to \( X \) as a result of modifying \( f \) on \( F_i \) \mydash roughly, we will argue that if we set \( S = F_i \) and look at the locations  \( \recfnl{\i}{f^{\Box S}}{} \) is \( \Box \) this set must cover any location that \( X \) could change as a result of changing \( f \) on \( F_i \).  If we ensured that each interval \( F_i \) was large enough we could ensure that \( X^{\Box U} \) with \( U = \Union U_i  \) didn't compute an effective dense description of \( f \) since we could ensure that any such computation was either incorrect on some \( F_i \) \mydash pick the version of \( f \) which disagrees with the computation on \( F_i \) \mydash or returned \( \Box \) on almost all of the intervals \( F_i \) violating the density condition.  However, to ensure \( f \nNEDequiv X \) we need \( F = \Union F_i \) to have positive density but \( U =  \Union U_i \) to have \( 0 \) density \mydash hence why this fails if \( f = X \).  The key idea we use is that for some sufficiently large \( l_i \) we can ensure that \( \Union U_i\restr{[l_i, \infty)} \) has density \( 0 \) and since there are infinitely many values \( f \) can taken on \( F_i \) we can use the pigeonhole principle to find versions of \( f \), \( f^y, f^z \)  which disagree on \( F_i \) but which ensure \( X^y, X^z \) agree up to \( l_i \).    

The role of forcing will be to let us assume we have specific computations that are guaranteed to produce effective dense descriptions.  We now sketch a simplified version of this argument in a bit more detail.  Suppose we build \( f \) and \( S, U \in \mathbb{I} \) via finite extensions to be sufficiently generic \mydash in \S\ref{ssec:effective-dense-conditions} we will supplement Cohen conditions with density bounds formally define generic elements of \( \mathbb{I} \).  If we could assume that \( X \) was sufficiently definable from \( f \) then we could assume that if \( f \NEDequiv X \) then some finite condition \( q_0 \)  forced \( \recfnl{\i}{f^{\Box S}}{} \) to be an effective dense description of \( X \) and \( \recfnl{\j}{X^{\Box U}}{} \) to be an effective dense description of \( f \).  For simplicity, assume that \( q_0 \) is actually the empty condition.

For some condition \( q_1  \) extending \( q_0 \) and some \( l_0 \)  we must have \[ 
q_1 \forces \udensity[l_0](\set{y}{\recfnl{\j}{X^{\Box U}}{y} = \Box}  ) \leq \frac{1}{4} 
\] 
We can assume that \( l_0 \geq \lh{\tau_1} \) where \( \tau_1 \) is the \( f \) component of \( q_1 \) and define  \( F = [l_0, 2l_0) \).  Using the fact that \( q_0 \) forces that  \( \recfnl{\i}{f^{\Box S}}{} \) is an effective dense description of \( X \) we can argue that there is some \( l_1 \) such that changing \( f \) on \( F \) only changes \( \recfnl{\i}{f^{\Box S}}{} \) on a set whose density above \( l_1 \) is much smaller than the bound \( q_1 \) places on \( U \).  We can now argue that, as we have infinitely many options for \( f \) on \( F \),  we can find two versions of some extension \( q_2 \), \( q^y_2 \) and \( q^z_2 \), which differ only in that \( \tau^y_2 \) is \( y \) on \( F \) and \( \tau^z_2 \) is \( z \neq y \) on \( F \), give equal values for   \( \recfnl{\i}{f^{\Box S}}{}\restr{l_1} \) and above \( l_1 \) cover any potential disagreement between the two versions of this computation with \( U \).  If \( X^y, X^z \) refer to the corresponding versions of \( X \) we have that \( X^y \symdiff X^z \subset U \) \mydash at least on the part of \( U \) specified by \( q^y_2 \) and \( q^z_2 \).   As a result, if either \( q^y_2 \) or \( q^z_2 \) ensured \( \recfnl{\j}{X^{\Box U}}{y}\conv \in \omega \) for some  \( y \in F \) we could switch to the other condition without altering the computation.  On the other hand, if either condition forced \( \recfnl{\j}{X^{\Box U}}{} \) to be \( \Box \)  on \( F \)  this would violated the assumption we made about \( q_1 \) limiting the upper density of this set above \( l_0 \). 

Unfortunately, this argument simplifies the actual situation in a number of ways.  First, rather than taking \( F = [l_0, 2l_0) \) it will need to have density (below it's endpoint) that is below a bound imposed by \( q_0 \) and this will require some bookkeeping.  More seriously, we have the issue mentioned above that we can't assume we have some definition of \( X \) in terms of \( f \).  We deal with this by noting that if \( f \NEDequiv X \) for any \( X \) we can assume that we have some \( \e \) and \( \i \) so that \( X_{\e} \eqdef \recfnl{\e}{f}{} \) and \( \recfnl{\i}{f^{\Box S}}{}  \) are both forced to have strong domain of density \( 1 \) and agree on that strong domain.  Since the union of two sets of density \( 0 \) have density \( 0 \) we can use \( X^{\Box U}_{\e} \) in place of \( X^{\Box U} \) with a bit of extra bookkeeping.  Finally, there is the issue that even if \( q_0 \) forces \( \recfnl{\j}{X_{\j}^{\Box U}}{} \) to be total \mydash as forcing only equals truth on a dense set \mydash it may refuse to converge as long as \( U \) contains all the locations above \( l_1 \) on which changes to \( f \) on \( F \) can affect \( X_{\e} \).  We solve this by considering conditions for \( U \) that not only commit to extending some finite initial segment and obey some density bound but also commit to containing some finite number of \( f \Tplus S \) computable sets.  Unfortunately, this will require a fair bit of bookkeeping as we can only commit to \( U \) containing a particular \( f \Tplus S \) computable set if we've forced that set to be compatible with the density bound.

Before we present the full proof, we briefly remark on why this argument is successful for effective dense reducibility despite the fact that we will see in   \S\ref{sec:coarse-sets} that every uniform coarse degree contains a set.  After all, the uniform functional \( \Gamma\maps{\baire}{\cantor} \)  we produce in that section satisfying \( \Gamma(f)  \UCequiv f \) will have the property that changing \( f \) on a finite set \( F \) results in a change to \( \Gamma(f) \) on a set of density \( 0 \).  The key difference is that for effective dense reducibility we can consider \( f^{\Box F} \) and argue that we have a \textit{single} set \( U \) of density \( 0 \) which covers \( \Gamma(f^y) \symdiff \Gamma(f^z) \) for \textit{any} values \( y, z \) that we might change \( f(x) \) to on \( F \).  In contrast, the reduction \( \Gamma \)  we construct in \S\ref{sec:coarse-sets} merely ensures that for any particular values of \( y,z \)  we have  \( \Gamma(f^y) \symdiff \Gamma(f^z) \) is density \( 0 \) while \( \Union_{z \in \omega}  \Gamma(f^y) \symdiff \Gamma(f^z)   \) will be a set of positive density.  Indeed, we were lead to the solution in \S\ref{sec:coarse-sets} by considering the kind of forcing argument given in this section and noticing that we needed to, but couldn't, ensure that the set of \( x \) in \( \Gamma(f') \symdiff \Gamma(f) \) for some \( f' \agree[F] f \) had \( 0 \) density.

\subsection{Zero Density Conditions}\label{ssec:effective-dense-conditions}

We now define modified Cohen conditions to produce sets of density \( 0 \).

\begin{definition}\label{def:zero-density-condition}
\( \Icond \) is the set of pairs \( \sigma, k \) with \( k \in \omega \), \( \sigma \in \bstrs \) such that  \( \udensity[\lh{\sigma}][\lh{\sigma}](\sigma) \leq 2^{-k} \).  If \( p = (\sigma, k), \hat{p} = (\hat{\sigma}, \hat{k}) \) then
\begin{align*}
&p \Ileq \hat{p} \iffdef p, \hat{p} \in \Icond \land \hat{k} \geq k \land \hat{\sigma} \supfun \sigma \land \udensity[\lh{\sigma}][\lh{\hat{\sigma}}](\hat{\sigma}) \leq 2^{-k} \\
&S \Igeq p \iffdef p \in \Icond \land S \subset \omega \land \sigma \subfun S \land \udensity[\lh{\sigma}](S) \leq 2^{-k} \\
&\Pcond \eqdef \set{(\tau, p)}{\tau \in \wstrs \land p \in \Icond} \\
&(\tau, p) \Pleq (\hat{\tau}, \hat{p}) \iffdef \tau \subfun \hat{\tau} \land p \Ileq \hat{p} \\
&(f, S) \Pgeq (\tau, p) \iffdef f \supfun \tau \land S \Igeq p
\end{align*} 
\end{definition}

As every condition \( p_i = (\sigma_i, k_i) \in \Icond \) can be extended with \( p_{i+1} \Igeq p_i \) and \( k_{i+1} > k_i \) a set \( S \) that is generic with respect to \( \Icond \) \mydash that is satisfies \( S \Igeq p_i \) for all \( i \) in some generic sequence \mydash will be in \( \mathbb{I} \).  Thus, the forcing notion associated with \( \Pcond \) produces a pair \( f, S \) with \( f \in \baire \) and \( S \in \mathbb{I} \).  We observe that interpreting \( \hat{\sigma} \Igeq p \) consistent with our practice of identifying sets with their characteristic functions means  \( \hat{\sigma} \Igeq p \) iff \( (\hat{\sigma}, k) \Igeq p \).   We will drop unnecessary parentheses writing conditions, e.g., we write \( (\tau, \sigma, k) \) rather than \( (\tau, (\sigma, k)) \), and will adopt the convention that, unless specifically stated otherwise, that accents, subscripts and superscripts are inherited by the (standardly denoted) components of a condition, e.g., the condition \( \hat{p}_0 \) has components \( (\hat{\tau}_0, \hat{\sigma}_0, \hat{k}_0) \).

We define forcing and genericity for conditions in \( \wstrs \) or \( \Pcond \) standardly (e.g., see \cite{Odifreddi1999Classical}) with \( f \) and \( f, S \) the designated constant symbols for \( \wstrs \) and \( \Pcond \) respectively.  We tweak the standard definition slightly in that we assume  \( \tau \forces \exists(x)\psi(f,x) \) requires \( \tau \forces \psi(f,x) \) for some \( x < \lh{\tau} \) and similarly for our other forcing notions\footnote{For \( \Pcond \) we will require \( x < \min(\lh{\tau}, \lh{\sigma}) \) and for \( \Qcond \) \( x < \min(\lh{\tau}, \lh{\sigma}, \lh{\xi}) \).  However, we will assume that if \( S \) isn't mentioned in \( \psi(f) \)  then \( (\tau, p) \forces \psi(f) \) iff \( \tau \forces \psi(f) \) and likewise for \( U \) and \( \Pcond \).}. This change improves the normal result about the complexity of forcing to make the forcing relation for \( \sigmazn(f){n+1} \) sentences \( \pizn{n} \) (or \( \deltazn{1} \) when \( n = 1 \)).  However, the primary advantage isn't so much to improve the complexity of forcing but in conjunction with the convention we adopt that we can treat conditions like partial oracles in computations when context makes clear what part of the condition is being consulted.  For instance, if in context we are discussing \( \recfnl{\i}{f^{\Box S}}{} \) and \( p = (\tau, \sigma, k) \in \Pcond \) we will simply write \( \recfnl{\i}{p}{} \) in place of \( \recfnl{\i}{\tau^{\Box \sigma}}{} \).  In conjunction with this convention this rule for existential forcing allows us to assume that a computations are forced by conditions iff the computation applied to the condition converges, e.g., \( p \forces \recfnl{\i}{f^{\Box S}}{x} = y  \) iff \( \recfnl{\i}{p}{x}\conv = y \).


It is easy to check that all the usual results about forcing and the complexity of the forcing relation  with \( \wstrs \) translate\footnote{The only material difference that the inclusion of density bounds in \( \Icond \) makes to the usual results about forcing and genericity is that we can no longer assume that a set \( S \in \mathbb{I} \) is associated with a unique chain in \( \Icond \).  However, as we will only build generic sets using \( \Icond \) and \( \Pcond \) and won't try to characterize the class of sets that are generic with respect to these notions this issue won't matter.}. to forcing with  \( \Icond \) and \( \Pcond \) and that the resulting forcing notions satisfy  monotonicity, consistency and quasi-completeness (see see \cite{Odifreddi1999Classical}).   We also observe that  the pair \( (f, S) \) will the \( 3 \)-generic with respect to \( \Pcond \) iff they force  every \( \sigmazn(f, S){3} \) sentence or it's negation  iff they extend an infinite chain \( p_i \in \Pcond \) which meets or strongly avoids every  \( \sigmazn{3} \) of \( \Pcond \).  This implies that if \( f \) is \( 3 \)-generic then the standard argument regarding iterated/mutual genericity proves that there is some \( S \) such that the pair \( (f, S) \) is \( 3 \)-generic with respect to \( \Pcond \).

We also want to build \( U \in \mathbb{I} \) but, as discussed in the introduction to this section, we need to be able to commit to \( U \) containing certain zero density sets computable in \( f \Tplus S \).   To that end, we offer a modified version of the conditions \( \Icond \) which allows us to commit to \( U \)  containing \( 0 \) density \( X \) computable sets.  To express this definition we adopt the convention that when \( E \) is a finite set \( \recset(X){E} \) is defined by    \[
\recset(X){E} =  \Union_{e \in E} \recset(X){e} \qquad \recset(X){E}(x) \eqdef  \begin{cases}
                            \diverge & \text{unless } \forall(e \in E)\left(\recset(X){e}(x)\conv \right) \\
                            1 & \text{if } \exists(e \in E)\left(\recset(X){e}(x)\conv = 1 \right) \\
                            0 & \text{if } \forall(e \in E)\left(\recset(X){e}(x)\conv = 0 \right) 
                        \end{cases}  
\]
where we recall \( \recset(X){e} \) is just a suggestive way to write the \( e \)-th \( 0-1 \) valued functional to which we extend our convention about totality, i.e.,  \( \recset(X){E}\conv \) means \( \forall(y)\left(\recset(X){E}(x)\conv \right) \).     

 \begin{definition}\label{def:I-cond-star}
Assuming \( r =  (\xi, w, E)  \) and \( r' =  (\xi', w', E') \)  then  
\begin{align*}
&\Icond* \eqdef \set{(p, r)}{p \in \Icond \land E \subset \omega \land \card{E} < \omega } \\
&\Icond[X] \eqdef \set{r}{ r \in \Icond* \land \recset(X){E}\conv \land \udensity(\recset(X){E}) = 0 \land \udensity[\lh{\xi}]({ \xi \union  \recset(X){E}\restr{[\lh{\xi}, \infty)} }) \leq 2^{-w}  } \\
&r \Ileq[X] r' \iffdef r, r' \in \Icond[X]  \land (\xi, w) \Ileq (\xi', w') \land E \subset E' \land \phantom{X} \\
& \qquad \qquad \forall(y < \lh{\xi'})\Bigl(y \geq \lh{\xi} \land \recset(X){E}(y) = 1 \implies \xi'(y) = 1 \Bigr) \\
&U \Igeq[X] (\xi, w, E) \iffdef U \Igeq (\xi, w) \land U \supset \recset(X){E}\restr{[\lh{\xi}, \infty)}
\end{align*}  
\end{definition}

We clarify that when discussing a set \( Y \) that is a r.e. or computable relative to \( X \) (such as \( \recset(X){E} \) above) we understand \( \udensity(Y) = 0 \) to abbreviate the following \( \pizn(X){3} \) sentence   
\begin{equation}\label{eq:density-for-re}
\forall(k)\exists(l)\forall(l' > l)\forall(s)\left( \frac{\card{Y_s\restr{l'}}}{l'} \leq 2^{-k}  \right)
\end{equation}  
It is easy to see that, as with \( \Icond \), any sequence of conditions in \( \Icond[X] \) which meet the dense set of conditions \( \set{(\xi, w, E)}{(\xi, w, E) \in \Icond[X] \land w \geq n} \) for every \( n \) define a set \( U \in \mathbb{I} \).  It will be useful to adopt the viewpoint of building \( f, S, U \) simultaneously therefore we make the following definition.

\begin{definition}\label{def:q-condition}
\begin{align*}
&\Qcond \eqdef \left\{(p, r) \mid p \in \Pcond \land r = (\xi, w, E) \in \Icond* \land p \forces r \in \Icond[f \Tplus S] \land p \forces \lh{\recset(f \Tplus S){E}} \geq \lh{\xi}   \right\} \\
&(p, r) \Qleq (p', r') \iffdef (p, r), (p', r') \in \Qcond \land p \Pleq p' \land p' \forces r \Ileq[f \Tplus S] r'
\end{align*} 
\end{definition}  

This definition reflects the usual way we can turn iterated forcing into simultaneous forcing by considering pairs of conditions where the first condition forces the appropriate relationship between the second conditions relative to the object built by the first condition. This has only been modified to ensure that \( p \) must specifically force the convergence of \( \recset(f \Tplus S){E} \) on the domain of \( \xi \) not merely be densely extended by conditions which do so.    We will now unpack this definition into a slightly more useful form which we will effectively treat as the definition for \( \Qcond \) going forward.  


\begin{lemma}\label{lem:q-condition-equiv}
Suppose that  \( r =  (\xi, w, E) \in \Icond* \) and \( p =  (\tau, \sigma, k) \in \Pcond \) then \( (p, r) \in \Qcond \) iff all of the following obtain 
\begin{enumerate}
    \item\label{lem:q-condition-equiv:length} \( \displaystyle p \forces \lh{\recset(f \Tplus S){E}} \geq \lh{\xi}  \)  
    \item\label{lem:q-condition-equiv:sets-total}   \( \displaystyle p \forces \forall(y)\exists(s)\recset(f \Tplus S){E}(y)\conv[s] \)
    \item\label{lem:q-condition-equiv:respects-w}  \( \displaystyle p \forces \udensity[\lh{\xi}]({ \xi \union \recset(f \Tplus S){E}\restr{[\lh{\xi}, \infty)} }) \leq 2^{-w} \)
    \item\label{def:q-condition:density-zero}  \( \displaystyle p \forces \udensity({  \recset(f \Tplus S){E}  }) = 0 \)
\end{enumerate} 
Furthermore, if \( (p,r), (p', r') \in \Qcond \) then
\begin{equation}\label{eq:q-extension-equiv}
\begin{aligned}
(p, r) \Qleq (p', r') \iff& p \Pleq p' \land (\xi, w) \Ileq (\xi', w') \land E \subset E' \land \\
& \forall(y < \lh{\xi'})\left(y \geq \lh{\xi} \land \xi'(y) = 0 \implies \recset(p'){E}(y)\conv = 0  \right)
\end{aligned}
\end{equation}
\end{lemma}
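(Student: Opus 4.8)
The proof is entirely a matter of unpacking Definition~\ref{def:q-condition} through Definition~\ref{def:I-cond-star}, and it rests on just two standard facts about the forcing relations for $\Pcond$ and $\Qcond$ (which, as noted, inherit monotonicity, consistency and quasi‑completeness from $\wstrs$). First, forcing distributes over finite conjunctions --- $p \forces (\phi \land \psi)$ iff $p \forces \phi$ and $p \forces \psi$, directly from the recursive definition --- and an arithmetic sentence mentioning none of $f, S, U$ is forced by a condition iff it is plainly true. Second, by the convention adopted for existential forcing (treating a condition as a partial oracle) a condition $q$ forces $\recset(f \Tplus S){E}(y)\conv = v$ exactly when $\recset(q){E}(y)\conv = v$, and forces $\lh{\recset(f \Tplus S){E}} \geq m$ exactly when $\recset(q){E}(y)\conv$ for every $y < m$.

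For the first equivalence, fix $p = (\tau, \sigma, k) \in \Pcond$ and $r = (\xi, w, E) \in \Icond*$ as hypothesized, so by Definition~\ref{def:q-condition} membership $(p,r) \in \Qcond$ reduces to the two clauses $p \forces (r \in \Icond[f \Tplus S])$ and $p \forces \lh{\recset(f \Tplus S){E}} \geq \lh{\xi}$. The second is verbatim part~\ref{lem:q-condition-equiv:length}. Expanding $r \in \Icond[f \Tplus S]$ via Definition~\ref{def:I-cond-star} as the conjunction of ``$r \in \Icond*$'', ``$\recset(f \Tplus S){E}\conv$'', ``$\udensity(\recset(f \Tplus S){E}) = 0$'' (read as the $\pizn{3}$ sentence~\eqref{eq:density-for-re}), and ``$\udensity[\lh{\xi}]({ \xi \union \recset(f \Tplus S){E}\restr{[\lh{\xi}, \infty)} }) \leq 2^{-w}$'', and distributing forcing over it: the first conjunct is forced by everything because it is a true fact about $r$ given by hypothesis, while the remaining three become parts~\ref{lem:q-condition-equiv:sets-total}, \ref{def:q-condition:density-zero} and \ref{lem:q-condition-equiv:respects-w} respectively (part~\ref{lem:q-condition-equiv:sets-total} being the expansion of ``$\recset(f \Tplus S){E}\conv$'' as $\forall(y)\exists(s)\,\recset(f \Tplus S){E}(y)\conv[s]$). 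This is the claimed equivalence of $(p,r) \in \Qcond$ with the conjunction of \ref{lem:q-condition-equiv:length}--\ref{def:q-condition:density-zero}.

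For~\eqref{eq:q-extension-equiv}, assume $(p,r),(p',r') \in \Qcond$; by Definition~\ref{def:q-condition}, $(p,r)\Qleq(p',r')$ reduces to $p \Pleq p'$ together with $p' \forces (r \Ileq[f \Tplus S] r')$, and I would expand $r \Ileq[X] r'$ via Definition~\ref{def:I-cond-star} into its four conjuncts and see what forcing each by $p'$ amounts to. The conjunct ``$r, r' \in \Icond[X]$'' is forced by $p'$ automatically: $p' \forces (r' \in \Icond[f \Tplus S])$ directly since $(p',r') \in \Qcond$, and $p' \forces (r \in \Icond[f \Tplus S])$ by monotonicity from $(p,r) \in \Qcond$ and $p \Pleq p'$. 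The conjuncts ``$(\xi, w) \Ileq (\xi', w')$'' and ``$E \subset E'$'' mention no generic object, so forcing each is equivalent to its truth, matching the corresponding clauses of~\eqref{eq:q-extension-equiv}. This leaves only the conjunct $\forall(y < \lh{\xi'})(y \geq \lh{\xi} \land \recset(f \Tplus S){E}(y) = 1 \implies \xi'(y) = 1)$. Since $(p',r') \in \Qcond$ gives $p' \forces \lh{\recset(f \Tplus S){E'}} \geq \lh{\xi'}$, our conventions yield $\recset(p'){E'}(y)\conv$ for all $y < \lh{\xi'}$, and since $E \subset E'$ this forces $\recset(p'){E}(y)\conv$ for all such $y$ as well, so $p'$ decides $\recset(f \Tplus S){E}(y) = \recset(p'){E}(y) \in \{0,1\}$ there. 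As $\forall(y<\lh{\xi'})$ is a finite conjunction, $p'$ forces the displayed statement iff it forces the implication for each $y < \lh{\xi'}$; for $y < \lh{\xi}$ or $\xi'(y) = 1$ the implication is forced trivially, and for $\lh{\xi} \le y < \lh{\xi'}$ with $\xi'(y) = 0$ it is forced iff $p'$ strongly forces $\recset(f \Tplus S){E}(y) \ne 1$, which --- the computation being decided --- holds iff $\recset(p'){E}(y)\conv = 0$. That is precisely the last clause of~\eqref{eq:q-extension-equiv}.

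The whole argument is routine definition‑chasing; the one step needing care, and the main obstacle, is this final translation. Specifically one must invoke the length clause of Definition~\ref{def:q-condition} for the pair $(p',r')$ (not $(p,r)$, which only controls $\recset(p'){E}$ below $\lh{\xi}$) together with $E \subset E'$ to know that $\recset(p'){E}$ is decided on all of $[0,\lh{\xi'})$, and one must notice that forcing the implication with the false consequent $\xi'(y) = 1$ is strong forcing of $\recset(f \Tplus S){E}(y) \neq 1$, which for a decided computation collapses to $\recset(p'){E}(y)\conv = 0$.
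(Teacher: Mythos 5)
Your proof is correct and takes essentially the same approach as the paper, whose proof simply says the lemma is immediate from unpacking Definitions \ref{def:q-condition} and \ref{def:I-cond-star} given the conventions on treating bounded quantification as finite conjunction/disjunction and on when a condition forces convergence of a computation. You have carried out that unpacking in full detail, including the point the paper leaves tacit: that the length clause of \( \Qcond \) applied to \( (p', r') \) together with \( E \subset E' \) is what guarantees the computations \( \recset(p'){E}(y) \) are decided for all \( y < \lh{\xi'} \), so that forcing the implication with false consequent collapses to \( \recset(p'){E}(y)\conv = 0 \).
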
 
\begin{proof}
This is immediate from unpacking the definition of forcing and definition \ref{def:q-condition} given that we treat bounded quantification as if it was either conjunction or disjunction and that our convention ensures that computations are forced to converge just if the condition witnesses the convergence.  
\end{proof}      

We will also use \( \Qcond \) to define a forcing relation via the standard inductive definition (requiring existential witnesses to be below \( \lh{\xi} \)), using \( U \) as the constant symbol for the set built by \( \xi \).  However, unlike \( \Pcond \), it is not clear (indeed it is likely false) that  meeting or strongly avoiding every \( \sigmazn{3} \) set of conditions is enough to guarantee that every \( \sigmazn(f, S, U){3} \) sentence or its negation is forced.  This will not pose any problems for us as we will build \( U \) as the limit of an explicitly constructed sequence of conditions and we will directly prove the complexity of forcing for the particular cases our construction depends on.  Before we start proving the utility lemmas about forcing that we will need we introduce one last piece of notation that will help us talk about modifying conditions on a set \( F \).

\begin{definition}\label{def:condition-agree}
For \( p, p' \in \Pcond \) or \( q ,q' \in \Qcond \) we define
\begin{align*}
p \agree[F] p' &\iffdef \tau \agree[F] \tau' \land \sigma = \sigma' \land k = k' \\
p' \Pgeq[F] p &\iffdef \exists(p'')\left(p'' \Pgeq p \land p' \agree[F] p''  \right) \\
q \agree[F] q' &\iffdef  p \agree[F] p' \land \xi = \xi' \land w = w' \land E = E' \\
q' \Qgeq[F] q &\iffdef \exists(q'')\left(q'' \Qgeq q \land q' \agree[F] q''  \right) \\
\end{align*} 
\end{definition}
Recall that \( \tau \agree[F] \tau' \) held just if \( \tau\restr{\setcmp{F}} = \tau'\restr{\setcmp{F}} \) and \( \lh{\tau} = \lh{\tau'} \).  Thus, \( q \agree[F] q' \) if the only difference between \( q \) and \( q' \) is a possible change of values for \( f \) on \( F \) and \( q' \Qgeq q \) only if \( q' \) agrees modulo \( F \) with some extension of \( q \).

\subsection{Forcing Lemmas}

In this subsection we prove a number of helpful results about our conditions and the forcing relation for \( \Pcond \).  We start by proving some results about the complexity of forcing over \( \Qcond \).

\begin{lemma}\label{lem:q-forcing-lvl-one}
If we understand the claim that \( A \in \Gamma \) assuming \( P(z) \) to mean that there is some \( B \in \Gamma \) such that for all \( z \) satisfying \( P(z) \) we have \( A(z) \iff B(z) \) then all of the following obtain     
\begin{enumerate}
    \item\label{lem:q-forcing-lvl-one:qcond} \( \Qcond   \) is \( \pizn{3} \) 
    \item\label{lem:q-forcing-lvl-one:qgeq}  \( q \Qleq q'  \) is \(  \pizn{3} \)
    \item\label{lem:q-forcing-lvl-one:qcond-assume-forces} Assuming \( p \in \Pcond \) forces both   \(  \udensity(\recset(f \Tplus S){E}) = 0 \) and \( \forall(x)\exists(s)\recset(f \Tplus S){E}\conv[s] \) then \( (p, r) \in \Qcond  \)  is \(  \pizn{1} \)
    \item\label{lem:q-forcing-lvl-one:qgeq-assuming} Assuming \( (p, \xi, w, E) \in \Qcond \) then \( (p', \xi', w', E) \Qgeq (p, \xi, w, E)  \) is \( \pizn{1} \). 
    \item\label{lem:q-forcing-lvl-one:force-sigmaone} Assuming \( q \in \Qcond \), the relation   \( q \forces \psi(f, S, U)  \)  is \( \deltazn{1} \) for \( \psi \in \sigmazn{1} \)
    \item\label{lem:q-forcing-lvl-one:force-pione} Assuming \( q \in \Qcond \), the relation   \( q \forces \psi(f, S, U)  \) is \( \pizn{2} \) for \( \psi \in \pizn{1} \).
\end{enumerate} 
  
\end{lemma}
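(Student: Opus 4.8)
The plan is to reduce every part to Lemma~\ref{lem:q-condition-equiv}, the fact that \( \Pcond \) and \( \Pleq \) are computable relations, and the standard bookkeeping for the complexity of forcing over \( \Pcond \), which (as the paper notes) transfers verbatim from \( \wstrs \)-forcing. Two facts will be used repeatedly. First, because conditions are treated as partial oracles with step bounds tied to the lengths of their strings, the assertion that a finite condition \( p \) witnesses convergence of \( \recset(p){E}(y) \) (or convergence to a given value) is decidable from \( p \), so any finite conjunction of such facts is computable; and since the density statements are read stagewise as in \eqref{eq:density-for-re}, the assertion that a condition witnesses that the density at a level \( l \) exceeds \( 2^{-k} \) is likewise decidable. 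Second, forcing a \( \sigmazn{n+1} \) sentence over \( \Pcond \) is \( \pizn{n} \) and forcing a \( \pizn{n} \) sentence over \( \Pcond \) is \( \pizn{n} \), obtained by the usual recursion on the sentence.

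For \ref{lem:q-forcing-lvl-one:qcond} and \ref{lem:q-forcing-lvl-one:qgeq} I would unpack \( \Qcond \)-membership via clauses \ref{lem:q-condition-equiv:length}--\ref{def:q-condition:density-zero} of Lemma~\ref{lem:q-condition-equiv} and bound each: clause \ref{lem:q-condition-equiv:length} is decidable; clause \ref{lem:q-condition-equiv:respects-w} is \( p \) forcing a \( \pizn{1}(f\Tplus S) \) sentence, hence \( \pizn{1} \); clause \ref{lem:q-condition-equiv:sets-total} is \( p \) forcing that \( \recset(f\Tplus S){E} \) is total, a \( \pizn{2}(f\Tplus S) \) sentence, hence \( \pizn{2} \); and clause \ref{def:q-condition:density-zero} is \( p \) forcing the \( \pizn{3}(f\Tplus S) \) sentence \eqref{eq:density-for-re} that \( \recset(f\Tplus S){E} \) is null, hence \( \pizn{3} \), which is the dominant term. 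So \( \Qcond \) is \( \pizn{3} \). For \ref{lem:q-forcing-lvl-one:qgeq}, \( \Qleq \) is by definition the conjunction of membership of both conditions in \( \Qcond \) with the relation displayed in \eqref{eq:q-extension-equiv}, whose conjuncts are all computable; so \( \Qleq \) is a conjunction of two \( \pizn{3} \) predicates and a computable one, hence \( \pizn{3} \).

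Parts \ref{lem:q-forcing-lvl-one:qcond-assume-forces} and \ref{lem:q-forcing-lvl-one:qgeq-assuming} are exactly where the hypotheses pay off. If \( p \) is already known to force \( \recset(f\Tplus S){E} \) total and null, then clauses \ref{lem:q-condition-equiv:sets-total} and \ref{def:q-condition:density-zero} hold automatically, so \( (p,r)\in\Qcond \) reduces to the decidable clause \ref{lem:q-condition-equiv:length} together with the \( \pizn{1} \) clause \ref{lem:q-condition-equiv:respects-w}, giving \ref{lem:q-forcing-lvl-one:qcond-assume-forces}. For \ref{lem:q-forcing-lvl-one:qgeq-assuming} I would use that forcing is downward closed: since \( (p,\xi,w,E)\in\Qcond \) and \( p'\Pleq p \), the condition \( p' \) again forces \( \recset(f\Tplus S){E} \) total and null, so by \ref{lem:q-forcing-lvl-one:qcond-assume-forces} the side condition \( (p',\xi',w',E)\in\Qcond \) occurring in \eqref{eq:q-extension-equiv} is \( \pizn{1} \); the remaining conjuncts of \eqref{eq:q-extension-equiv} (with \( E'=E \)) are computable, so the whole relation is \( \pizn{1} \).

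Finally, for \ref{lem:q-forcing-lvl-one:force-sigmaone} and \ref{lem:q-forcing-lvl-one:force-pione} I would argue from the recursive definition of \( \Qcond \)-forcing. For \( \psi\in\sigmazn{1} \), \( q\forces\psi \) is a bounded search (witnesses below \( \lh{\xi} \)) through a Boolean combination of atomic forcing facts, each decidable from \( q \), hence \( \deltazn{1} \). For \( \psi\in\pizn{1} \), write \( \psi=\lnot\varphi \) with \( \varphi\in\sigmazn{1} \); then \( q\forces\psi \) iff no \( \Qcond \)-extension of \( q \) forces \( \varphi \). The key observation is that forcing the \( \sigmazn{1} \) sentence \( \varphi \) about \( f,S,U \) never requires enlarging \( E \): any deciding extension can be obtained by lengthening \( \tau,\sigma,\xi \) alone (extending \( p \) first, if necessary, to decide \( \recset(f\Tplus S){E} \) on the relevant initial segment, which is possible because \( q\in\Qcond \)). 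Hence \( q\forces\psi \) iff for every \( q'=(p',\xi',w',E)\Qgeq q \) we have \( q'\not\forces\varphi \); by \ref{lem:q-forcing-lvl-one:qgeq-assuming} this quantifier ranges over a \( \pizn{1} \) set, and the matrix \( q'\not\forces\varphi \) is decidable, so the relation is \( \pizn{2} \). The main obstacle throughout is controlling the density statements: verifying that the a priori \( \pizn{3} \) clauses of Lemma~\ref{lem:q-condition-equiv} collapse to \( \pizn{1} \) once totality and nullity of \( \recset(f\Tplus S){E} \) are forced, and checking that the \( \pizn{1} \)-forcing argument for \ref{lem:q-forcing-lvl-one:force-pione} does not covertly reintroduce a quantifier over \( \Qcond \)-membership, which is handled by the observation that \( E \) need never grow.
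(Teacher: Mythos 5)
Your proof is correct and follows essentially the same route as the paper's: clause-by-clause analysis via Lemma~\ref{lem:q-condition-equiv} for \ref{lem:q-forcing-lvl-one:qcond}--\ref{lem:q-forcing-lvl-one:qgeq-assuming}, the bounded-witness convention for \ref{lem:q-forcing-lvl-one:force-sigmaone}, and the observation that the $E$-component is inert for $\sigmazn{1}$-forcing so that part~\ref{lem:q-forcing-lvl-one:qgeq-assuming} bounds the quantifier in part~\ref{lem:q-forcing-lvl-one:force-pione}. The only divergence is that the paper makes explicit (via a stagewise reformulation) why clause~\ref{lem:q-condition-equiv:respects-w} collapses to $\pizn{1}$, whereas you assert it; also note the typo "$p'\Pleq p$" in part~\ref{lem:q-forcing-lvl-one:qgeq-assuming} should read "$p\Pleq p'$".
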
 
\begin{proof}
Part \ref{lem:q-forcing-lvl-one:qcond} follows from lemma \ref{lem:q-condition-equiv} and the complexity of forcing for \( \Pcond \) \mydash where we understand  \( \udensity({  \recset(f \Tplus S){e}  }) = 0  \) as per equation \eqref{eq:density-for-re}.  \( q \Qleq q' \) holds iff  \eqref{eq:q-extension-equiv}, \( q \in \Qcond \) and \( q' \in \Qcond \) so by part \ref{lem:q-forcing-lvl-one:qcond} the relation  \( q \Qleq q' \) is  \( \pizn{3} \) establishing part \ref{lem:q-forcing-lvl-one:qgeq}.  Part \ref{lem:q-forcing-lvl-one:force-sigmaone} follows directly from the definition for forcing for existential sentences given that we adopt the convention that to force an existential claim the existential witnesses must be bounded by \( \lh{\xi} \). 

For part \ref{lem:q-forcing-lvl-one:qcond-assume-forces}, by lemma \ref{lem:q-condition-equiv} and our assumptions, it is enough to observe that  \( p \forces \lh{\recset(f \Tplus S){E}} \geq \lh{\xi}  \) is \( \deltazn{1} \) (recall our convention for forcing existential claims)  and that the relation on the left-hand side of the following observation is \( \pizn{1} \).  
\begin{multline*}
p \forces \udensity[\lh{\xi}]({ \xi \union \recset(f \Tplus S){E}\restr{[\lh{\xi}, \infty)} }) \leq 2^{-w} \iff \\
\forall(p' \Pgeq p)\forall(l\geq \lh{\xi})\forall(s)\left(p' \nforces  \udensity[l][l]({ \xi \union \recset(f \Tplus S)[s]{E}\restr{[\lh{\xi}, l)} }) > 2^{-w}     \right)
\end{multline*}
By complexity of forcing for \( \Pcond \) the right-hand side above is \( \pizn{1} \) establishing part \ref{lem:q-forcing-lvl-one:qcond-assume-forces}.  For part \ref{lem:q-forcing-lvl-one:qgeq-assuming} we observe that by \eqref{eq:q-extension-equiv} 
\begin{align*}
(p', \xi', w', E) \Qgeq (p, \xi, w, E) &\iff (p', \xi', w', E) \in \Qcond \land p \Pleq p' \land (\xi, w) \Ileq (\xi', w') \land \phantom{X} \\
& \forall(y < \lh{\xi'})\left(y \geq \lh{\xi} \land \xi'(y) = 0 \implies \recset(p'){E}(y)\conv = 0  \right)
\end{align*}
To prove the right-hand side is \(  \pizn{1} \) it suffices to show that \( (p', \xi', w', E) \in \Qcond \) can be decided by a \( \pizn{1} \) question.  Since we are assuming  \( (p, \xi, w, E) \in \Qcond \) we have that \( p \) and hence  \( p' \) forces both  \(  \udensity(\recset(f \Tplus S){E}) = 0 \) and the totality of \( \recset(f \Tplus S){E} \).   By part \ref{lem:q-forcing-lvl-one:qcond-assume-forces}, assuming those two facts,  \( (p', \xi', w', E) \in \Qcond \)  is  \(  \pizn{1} \).  This establishes part \ref{lem:q-forcing-lvl-one:qgeq-assuming}.   Finally, for part \ref{lem:q-forcing-lvl-one:force-pione} we note that since \( E \) plays no role in the forcing of \( \sigmazn(f, S, U){1} \) formulas we have 
\begin{align*}\label{eq:q-forcing-lvl-one:pi}
(p, \xi, w, E) \forces& \lnot \exists(x)\psi(f, S, U, x) \iff \forall(x)\forall(p',\xi', w')\Bigl( \\
& (p', \xi', w', E) \nQgeq (p, \xi, w, E) \lor (p', \xi', w', E) \not\models \psi(f, S, U, x)\Bigr) 
\end{align*} 
Part \ref{lem:q-forcing-lvl-one:qgeq-assuming} says, assuming \( (p, \xi, w, E) \in \Qcond \),   \( (p', \xi', w', E) \nQgeq (p, \xi, w, E) \) is \( \sigmazn{1} \).  Hence,  assuming \( q \in \Qcond \), forcing for \( \pizn(f, S, U){1} \) sentences is \( \pizn{2} \). 
\end{proof}



We now show that if we force a set to have density \( 0 \) then we can force the density above some \( l \) to be below any positive bound.

\begin{lemma}\label{lem:density-zero-force-small}
Suppose that \( p \in \Pcond \), \( k \in \omega \),  and \( p \forces \udensity({ \set{x}{\psi(x)}  }) = 0 \) then there is some \( l, p' \Pgeq p \) such that \( p' \forces \udensity[l]({ \set{x}{\psi(x)}  }) \leq 2^{-k}   \).  
\end{lemma}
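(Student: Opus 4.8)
The plan is to unwind ``\( p \forces \udensity(\set{x}{\psi(x)}) = 0 \)'' and read off the desired condition directly from the forcing clauses for \( \Pcond \). Write \( X = \set{x}{\psi(x)} \). Exactly as the density‑zero condition is unpacked in \eqref{eq:density-for-re} --- but dropping the innermost \( \forall(s) \), since \( X \) need not be r.e.\ --- the assertion \( \udensity(X) = 0 \) abbreviates
\[
\forall(j)\exists(n)\forall(l' \geq n)\left(\frac{\card{X\restr{l'}}}{l'} \leq 2^{-j}\right),
\]
and \( \udensity[n](X) \leq 2^{-k} \) abbreviates \( \forall(l' \geq n)\bigl(\frac{\card{X\restr{l'}}}{l'} \leq 2^{-k}\bigr) \), since \( \udensity[n](X) = \sup_{l' \geq n}\frac{\card{X\restr{l'}}}{l'} \).

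First I would apply the clause for forcing a universal statement: since \( p \) forces the displayed \( \forall(j) \)‑sentence, it forces the instance with \( j = k \), i.e.\ \( p \forces \exists(n)\forall(l' \geq n)\bigl(\frac{\card{X\restr{l'}}}{l'} \leq 2^{-k}\bigr) \). Then I would apply the clause for forcing an existential statement. Under the convention adopted in \S\ref{ssec:effective-dense-conditions} (the existential witness is bounded by the length of the condition) this already yields a concrete \( n \) with \( p \forces \forall(l' \geq n)\bigl(\frac{\card{X\restr{l'}}}{l'} \leq 2^{-k}\bigr) \), so \( p' = p \) works. If one prefers not to invoke that convention, the same follows from quasi‑completeness of \( \Pcond \): the collection of conditions deciding the existential sentence \( \exists(n)\forall(l' \geq n)(\ldots \leq 2^{-k}) \) is dense, while monotonicity and consistency rule out any extension of \( p \) forcing its negation, so some \( p' \Pgeq p \) forces \( \forall(l' \geq n)\bigl(\frac{\card{X\restr{l'}}}{l'} \leq 2^{-k}\bigr) \) for a suitable \( n \).

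Finally I would note that the forcing assertion just obtained is literally \( p' \forces \udensity[n](X) \leq 2^{-k} \), so taking \( l = n \) finishes the proof. I do not expect a real obstacle here: the only care required is to write \( \udensity(X) = 0 \) and \( \udensity[l](X) \leq 2^{-k} \) in their unpacked arithmetic forms so that the \( \Pcond \)‑forcing clauses apply verbatim, and to recall that forcing \( \forall(j)\theta(j) \) entails forcing \( \theta(k) \) --- both already part of the standard forcing apparatus set up for \( \Pcond \).
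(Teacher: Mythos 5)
Your fallback via quasi-completeness is exactly the paper's proof: unpack $\udensity(\set{x}{\psi(x)}) = 0$ into its arithmetic form and invoke the definition of forcing together with quasi-completeness. The problem is your first route, where you conclude that $p' = p$ already works. That step rests on the claim that $p \forces \forall(j)\theta(j)$ implies $p \forces \theta(k)$; but under the Odifreddi-style definition of forcing the paper adopts (with $\forall$ read as $\lnot\exists\lnot$), forcing a universal sentence guarantees only that no extension of $p$ forces $\lnot\theta(k)$, not that $p$ itself forces the instance $\theta(k)$. A toy example: the empty Cohen condition forces $\forall(n)\exists(m)\left(m > n \land f(m) = 0\right)$, yet by the bounded-witness convention it forces no single instance. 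So one really must pass to a proper extension: quasi-completeness gives $p' \Pgeq p$ deciding $\theta(k)$, and since no extension can force $\lnot\theta(k)$ without contradicting $p \forces \forall(j)\theta(j)$, we get $p' \forces \theta(k)$; the bounded-witness convention for $\exists$ then extracts a concrete $l$ with $p' \forces \udensity[l](\set{x}{\psi(x)}) \leq 2^{-k}$. Keep the quasi-completeness branch and drop the claim that $p' = p$ suffices.
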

\begin{proof}
Observe that  \[
 p \forces \udensity({ \set{x}{\psi(x)}  }) = 0  \iffdef p \forces \forall(k)\exists(l)\forall(l' \geq l)\left(\udensity[l][l']({ \set{x}{\psi(x)}  }) \leq 2^{-k}  \right)
\] 
The desired result now follows from the definition of forcing and quasi-completeness.  
\end{proof}

Our last forcing lemma is that we can always properly extend conditions in \( \Qcond \). 

\begin{lemma}\label{lem:q-condition-extendable}
Given \( q = (p, (\xi, w, E)) \in \Qcond \) and \( w' \geq w \)  then there are \( p' \Pgeq p \) and \( \xi' \supfunneq \xi \) such that  \[
p' \forces \udensity[\lh{\xi'}]({ \xi' \union \recset(f \Tplus S){E}\restr{[\lh{\xi'}, \infty)}  }) \leq 2^{-w'} 
\]
and  \( q' = (p', \xi', w', E) \Qgeq q \).  Moreover, we can assume \( \tau' \supfunneq \tau\), \( \sigma' \supfunneq \sigma \) and either \( k' > k  \)  or \( k' = k \). 
\end{lemma}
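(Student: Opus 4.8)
The plan is to push $p$ far enough that the tail of $\recset(f \Tplus S){E}$ has been forced sparse, and then simply read $\xi'$ off the forced initial segment of $\recset(f \Tplus S){E}$. First I would unpack $q = (p,(\xi,w,E)) \in \Qcond$ via Lemma~\ref{lem:q-condition-equiv}: $p$ forces $\recset(f \Tplus S){E}$ to be total, to satisfy $\udensity(\recset(f \Tplus S){E}) = 0$, and to have length at least $\lh{\xi}$, and $p \forces \udensity[\lh{\xi}]({\xi \union \recset(f \Tplus S){E}\restr{[\lh{\xi}, \infty)}}) \leq 2^{-w}$. Since $p \forces \udensity(\recset(f \Tplus S){E}) = 0$, Lemma~\ref{lem:density-zero-force-small} supplies some $l_0$ and $p'' \Pgeq p$ with $p'' \forces \udensity[l_0](\recset(f \Tplus S){E}) \leq 2^{-(w'+1)}$; replacing $l_0$ by $l' \eqdef \max(l_0,\ 2^{w'+1}\lh{\xi},\ \lh{\xi}+1)$ keeps this forced (the supremum now runs over a smaller index set) while guaranteeing $l' > \lh{\xi}$. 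Because $p''$ still forces $\recset(f \Tplus S){E}$ total, the conditions forcing $\recset(f \Tplus S){E}(y)\conv$ are dense below $p''$ for each fixed $y$, so finitely many ($l'$-many) successive extensions yield $p' \Pgeq p''$ with $\recset(p'){E}(y)\conv$ for all $y < l'$.

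Now I would define $\xi' \in 2^{l'}$ by $\xi'\restr{\lh{\xi}} = \xi$ and $\xi'(y) = \recset(p'){E}(y)$ for $\lh{\xi} \leq y < l'$. For any $(f,S)\Pgeq p'$ this gives $\xi' \union \recset(f \Tplus S){E}\restr{[l',\infty)} = \xi \union \recset(f \Tplus S){E}\restr{[\lh{\xi},\infty)} \eqdef A$, a set differing from $\recset(f \Tplus S){E}$ only below $\lh{\xi}$; hence for every $m \geq l'$ one has $\frac{\card{A\restr m}}{m} \leq \frac{\card{\recset(f \Tplus S){E}\restr m}}{m} + \frac{\lh{\xi}}{l'} \leq \frac{\card{\recset(f \Tplus S){E}\restr m}}{m} + 2^{-(w'+1)}$, so taking suprema $p' \forces \udensity[l'](A) \leq 2^{-w'}$. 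Taking $m = l'$ also gives $\frac{\card{\xi'}}{l'} \leq 2^{-w'}$, so $(\xi', w') \in \Icond$.

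Set $q' \eqdef (p', \xi', w', E)$. Then $q' \in \Qcond$ by Lemma~\ref{lem:q-condition-equiv}: clause~\ref{lem:q-condition-equiv:respects-w} is precisely the bound $p'\forces\udensity[l'](A)\leq 2^{-w'}$ just established, and clauses~\ref{lem:q-condition-equiv:length}, \ref{lem:q-condition-equiv:sets-total}, \ref{def:q-condition:density-zero} follow from the convergence of $\recset(p'){E}$ below $l' = \lh{\xi'}$ together with the totality and density-zero facts $p'$ inherits from $p$. And $q' \Qgeq q$ by~\eqref{eq:q-extension-equiv}: $p\Pleq p'$ holds by construction; $E \subset E$ trivially; $(\xi,w)\Ileq(\xi',w')$ holds since $w'\geq w$, $\xi'\supfun\xi$, $(\xi',w')\in\Icond$, and $\udensity[\lh{\xi}][l'](\xi') = \sup_{\lh{\xi}\leq m\leq l'}\frac{\card{A\restr m}}{m} \leq 2^{-w}$ by the bound $p$ (hence $p'$) forces on $A$; and if $\xi'(y) = 0$ with $\lh{\xi}\leq y < l'$ then $\recset(p'){E}(y)\conv = 0$ by the definition of $\xi'$. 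For the ``moreover'', appending one symbol to $\tau'$ and a $0$ to $\sigma'$ (which only lowers $\sigma'$'s density below its length) makes both proper extensions without disturbing any forced statement, and $k'$ may be left equal to $k$ or bumped strictly above it.

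The only point that needs genuine care is transferring the forced density bound through the finite discrepancy between $A$ and $\recset(f \Tplus S){E}$, which is why $l'$ is chosen large enough to absorb the $\lh{\xi}/l'$ error term; everything else is a direct unwinding of Lemmas~\ref{lem:q-condition-equiv} and~\ref{lem:density-zero-force-small} and the definitions of $\Pcond$, $\Icond$ and $\Qcond$.
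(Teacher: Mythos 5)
Your proof is correct and follows essentially the same approach as the paper's: both apply Lemma~\ref{lem:density-zero-force-small} to $\recset(f \Tplus S){E}$, extend $p$ to force convergence of $\recset(f \Tplus S){E}$ out to $l'$, read $\xi'$ off the forced computation, and absorb the initial-segment error term by choosing $l'$ large. The only differences are cosmetic bookkeeping: you split the density error as $2^{-(w'+1)}+2^{-(w'+1)}$ and bound $\lh{\xi}/l'$ directly, whereas the paper splits it into three pieces with $2^{-(w'+2)}$ and bounds $l/l'$.
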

\begin{proof}
Apply lemma \ref{lem:density-zero-force-small} to \( p \)  using \( w' + 2 \) in place of \( k \) and \( \recset(f \Tplus S){E} \) as \( \psi(x) \).  If \( l \) and  \( p_0 \) are given by that lemma we have 
\begin{equation}\label{eq:q-condition-extendable:pzero-bound}
p_0 \forces \udensity[l](\recset(f \Tplus S){E} ) \leq 2^{-w' -2}  
\end{equation}
Choose \( l' > l \) large enough that \( \frac{l}{l'} \leq 2^{-w' -1} \).    Since part \ref{lem:q-condition-equiv:sets-total} of \hyperref[def:q-condition]{the definition of \( \Qcond \)}   guarantees that \( p \) forces \( \recset(f \Tplus S){E} \) is total we can find an extension \( p' \Pgeq p_0 \Pgeq p \) such that \( \lh{\recset(p'){E}} \geq l'  \).  Now let \( \xi' \) be the unique string of length \( l' \) extending \( \xi \) and equal to \( \recset(p'){E} \) on the interval \( [\lh{\xi}, l') \). We now observe   
\begin{align*}
\udensity[\lh{\xi'}]({ \xi' \union \recset(f \Tplus S){E}\restr{[\lh{\xi'}, \infty)}  }) \leq&  \frac{\card{\xi}}{\lh{\xi'}} + \udensity[\lh{\xi'}][l']({\xi'\restr{[\lh{\xi}, l')}  }) + \udensity[\lh{\xi'}]({ \recset(f \Tplus S){E}\restr{[\lh{\xi'}, \infty)}  }) \leq \phantom{X} \\
&\frac{l}{l'} + \udensity[l][l']({ \recset(p'){E} }) + \udensity[l']({ \recset(f \Tplus S){E} }) \leq \phantom{X} \\
& 2^{-w' -1} + 2^{-w' -2}  + \udensity[l]({ \recset(f \Tplus S){E} })
\end{align*}    
With the fact that \(  \udensity[l][l']({ \recset(p'){E} }) \leq 2^{-w' -2} \) following from the fact that \( p' \Pgeq p_0 \) and \eqref{eq:q-condition-extendable:pzero-bound}.  Since the same equation shows us that \( p' \) forces \(  \udensity[l]({ \recset(f \Tplus S){E} }) \leq 2^{-w' -2} \) and \( 2^{-w' -1} + 2^{-w' -2}  + 2^{-w' -2} = 2^{-w'} \) we can derive \[
p' \forces \udensity[\lh{\xi'}]({ \xi' \union \recset(f \Tplus S){E}\restr{[\lh{\xi'}, \infty)}  }) \leq 2^{-w} 
\]  
This leaves us only to prove that \(  (p', \xi', w', E) \Qgeq q \).  By construction \( p' \Pgeq p \) and  it is easy to see that we've either already verified all \( 4 \)  parts of lemma \ref{lem:q-condition-equiv} or they follow immediately from the fact that \( (p, \xi, w, E) \in \Qcond \) and we haven't changed \( E \).  As it is clear that \( \xi' \) is \( 1 \) whenever \(  \recset(p'){E} \) this leaves us only to verify  \( (\xi', w') \Igeq (\xi, w) \)  and  \( (\xi', w') \in \Icond \).  To verify   \( (\xi', w') \in \Icond \) it suffices to note that    \( \udensity[\lh{\xi'}][\lh{\xi'}]( \xi' ) \leq 2^{-w} \)  since otherwise  \( p' \) in the above equation would be forcing a statement guaranteed to be false.     

To verify \( (\xi', w') \Igeq (\xi, w) \) it suffices to check that \( \udensity[\lh{\xi}][\lh{\xi'}]( \xi' ) \leq 2^{-w} \) and that \( (\xi', w') \in \Icond \).   By part \ref{lem:q-condition-equiv:respects-w} of lemma \ref{lem:q-condition-equiv} since \( q \in \Qcond \)  we have that \[
p \forces \udensity[\lh{\xi}]({ \xi \union \recset(f \Tplus S){E}\restr{[\lh{\xi}, \infty)} }) \leq 2^{-w} 
\]
As \( p' \Pgeq p \) it can't witness the failure of the sentence forced by \( p \) and since we simply copied \( \recset(f \Tplus S){E} \) (as interpreted by \( p' \) to build \( \xi' \) we have that \( \udensity[\lh{\xi}][\lh{\xi'}]( \xi' ) \leq 2^{-w} \).   The moreover claim with \( k' > k \)  follows immediately from the fact that we can always further extend \( p' \) to be a strict extension in each component.  To see that we can have \( k' = k \) note that the only place where we extended \( p \) was to a condition \( p' \) which only needed to force a \( \sigmazn(f, S){1} \) fact and therefore doesn't require a change to \( k \).               
\end{proof}



\subsection{A Sufficient Condition}\label{ssec:extending-conditions}

We  identify an arithmetic sentence asserting that we have witnesses for the fact that  \( f \) and \( f^{\Box S} \) both compute effective dense descriptions of some set \( X \).  


\begin{definition}\label{def:iiprime-good}
\begin{subequations}\label{eq:def-ED}
\begin{align}
\ED(e, i) \eqdef & \forall(y)\exists(s)\left( \recfnl{e}{f}{y}\conv[s] \boxeq \recfnl{i}{f^{\Box S}}{y}\conv[s] \right) \land \phantom{X} \label{eq:def-ED:boxeq} \\
& \udensity({ \set{y}{\recfnl{e}{f}{y} = \Box   }   }  ) = 0 \label{eq:def-ED:Xe-density-zero} \land \phantom{X} \\
& \udensity({ \set{y}{\recfnl{i}{f^{\Box S}}{y}  = \Box   }   }  ) = 0  \label{eq:def-ED:i-box-S-density-zero} \\
& \notag \\
X_e \eqdef& \recfnl{e}{f}{} \notag
\end{align}
\end{subequations}
\end{definition}

  To avoid the complication of adding sentences to \( \ED(e,i) \) asserting that \( \recfnl{e}{f}{} \) and \( \recfnl{i}{f^{\Box S}}{} \) are sets we will tacitly assume that the indexes we use for \( \ED(e, i) \) are  set-indexes, i.e., indexes from a computable set guaranteed to only return \( 0, 1 \) or \( \Box \).  Using \eqref{eq:density-for-re} to expand the assertions of \( 0 \) density in equations \eqref{eq:def-ED:Xe-density-zero} and \eqref{eq:def-ED:i-box-S-density-zero} we can take   \( \ED(e,i) \) to be a  \( \pizn(f \Tplus S){3} \) sentence.  We now identify a condition that will suffice to prove theorem \ref{thm:effective-dense}.

\begin{proposition}\label{prop:extendable-conditions}
If \( \e, \i \) are set-indexes \( \j \in \omega \),  \( q_0 = (p_0, r_0) \in \Qcond \),  \( p_0  \forces \ED  \) and \( q_1 \Qgeq q_0 \)  then there is a condition \( q^{\ddagger}  \Qgeq q_1 \) such that one of the following obtains.
\begin{enumerate}
    \item\label{prop:extendable-conditions:partial}    \( \displaystyle \exists(x)\left( q^{\ddagger} \forces \lnot \exists(s)\recfnl{\j}{X_{\e}^{\Box U}}{x}\conv[s] \right) \)
    \item\label{prop:extendable-conditions:not-small}   \( \displaystyle q^{\ddagger} \forces \exists(l > \lh{\tau_1})\exists(s)\left(\udensity[l][l](\set{y}{\recfnl{\j}{X_{\e}^{\Box U}}{y}\conv[s] = \Box}) \geq 2^{-k_0 -2}\right)  \) 
    \item\label{prop:extendable-conditions:disagree}  \(\displaystyle q^{\ddagger} \forces \exists(x)\exists(s)\left(\recfnl{\j}{X_{\e}^{\Box U}}{x}\conv[s] \nboxeq f(x) \right)  \) 
\end{enumerate} 
Moreover, given \( q_0, q_1 \) and \( p_2 \Pgeq p_1 \), \( \zerojj \) can compute a condition \( q^{\ddagger} \) satisfying the above with \( p^{\ddagger} \Pgeq p_2 \).  
\end{proposition}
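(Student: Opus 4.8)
The plan is to run a forcing construction over \( \Qcond \) above \( q_1 \), driven entirely by the behaviour of \( \recfnl{\j}{X_{\e}^{\Box U}}{} \).  First I would try to land in case~\ref{prop:extendable-conditions:partial}: search for some \( x \) and some \( q_2 \Qgeq q_1 \) whose \( \Pcond \)-component extends \( p_2 \) and which forces the \( \pizn(f, S, U){1} \) sentence \( \lnot\exists(s)\recfnl{\j}{X_{\e}^{\Box U}}{x}\conv[s] \); if such \( x, q_2 \) exist, set \( q^{\ddagger}=q_2 \).  By lemma~\ref{lem:q-forcing-lvl-one} the relations \( \Qcond \), \( \Qgeq \) and the forcing of the relevant \( \pizn(f, S, U){1} \) sentence are arithmetically tame enough that \( \zerojj \) can decide this and, when it succeeds, exhibit \( x, q_2 \).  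So from now on I may assume that for every \( q \Qgeq q_1 \) and every \( x \), some extension of \( q \) makes \( \recfnl{\j}{X_{\e}^{\Box U}}{x} \) converge.

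The heart of the argument is a pigeonhole over the infinitely many ways to redefine \( f \) on a finite block.  Fix a finite block \( F \) above \( \lh{\tau_1} \) whose relative density below its right endpoint is at least \( 2^{-k_0-2} \), small enough that extending the \( S \)-component to force \( F\subset S \) is still permitted by the operative density constraint — this is where the constant in case~\ref{prop:extendable-conditions:not-small} comes from — and make that extension.  For \( y\in\omega \) let \( X_{\e}^{y} \) denote the value of \( \recfnl{\e}{f}{} \) when \( f \) is redefined to be constantly \( y \) on \( F \).  Since \( F \) lies above \( \lh{\tau_0} \), any such redefinition keeps us above \( q_0 \), so the corresponding condition still forces \( \ED \); hence each \( X_{\e}^{y} \) is forced total with \( \Box \)-set of density \( 0 \) and forced to agree mod \( \Box \) with \( \recfnl{\i}{f^{\Box S}}{} \), which — because \( F\subset S \) — is insensitive to the change on \( F \).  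Pick \( l_1\geq\lh{\tau_1},\lh{\xi_1} \); as \( X_{\e}^{y}\restr{l_1} \) ranges over a finite set, after finitely many extensions of \( q_1 \) (the \( y \)-th deciding \( X_{\e}^{y}\restr{l_1} \)) pigeonhole supplies \( y\neq z \) with \( X_{\e}^{y}\restr{l_1}=X_{\e}^{z}\restr{l_1} \) forced.  The set \( (X_{\e}^{y}\symdiff X_{\e}^{z})\restr{[l_1,\infty)} \) is computable from \( f\Tplus S \) and, being contained in the union of the forced density-\( 0 \) sets \( \set{v}{X_{\e}^{y}(v)=\Box} \), \( \set{v}{X_{\e}^{z}(v)=\Box} \) and \( \set{v}{\recfnl{\i}{f^{\Box S}}{v}=\Box} \), is forced to have density \( 0 \); so using lemmas~\ref{lem:density-zero-force-small} and~\ref{lem:q-condition-extendable} I would extend the \( \Pcond \)- and \( \xi \)-components and adjoin an index for it to \( E \), keeping the pair in \( \Qcond \) and under the bound \( 2^{-w} \).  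Write \( q_2^{y}, q_2^{z} \) for the two resulting conditions, which differ only in the value they give \( f \) on \( F \).

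Now \( X_{\e}^{\Box U} \) is literally the same at \( q_2^{y} \) and \( q_2^{z} \): they agree on \( [0,l_1) \) by pigeonhole, and above \( l_1 \) the oracle is boxed out wherever \( X_{\e}^{y} \) and \( X_{\e}^{z} \) might differ.  Hence any forced value of \( \recfnl{\j}{X_{\e}^{\Box U}}{w} \) is common to both versions, and I finish with a two-way split.  If some extension of \( q_2^{y} \) (equivalently, of its \( z \)-twin \( q_2^{z} \)) forces \( \recfnl{\j}{X_{\e}^{\Box U}}{w}\conv\in\omega \) for some \( w\in F \), then this common value disagrees with \( f(w) \) in at least one of the two versions since \( y\neq z \); take that version for \( q^{\ddagger} \) and land in case~\ref{prop:extendable-conditions:disagree}.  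Otherwise, by the first paragraph each \( w\in F \) has an extension that makes \( \recfnl{\j}{X_{\e}^{\Box U}}{w} \) converge, necessarily to \( \Box \); combining these finitely many extensions gives \( q^{\ddagger}\Qgeq q_2^{y}\Qgeq q_1 \) forcing \( \recfnl{\j}{X_{\e}^{\Box U}}{w}=\Box \) for every \( w\in F \), whence for the witnessing stage \( s \) and \( l=\max F>\lh{\tau_1} \) we get \( \udensity[l][l](\set{w}{\recfnl{\j}{X_{\e}^{\Box U}}{w}\conv[s]=\Box})\geq 2^{-k_0-2} \) — case~\ref{prop:extendable-conditions:not-small}.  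Every search here ranges over a \( \pizn{3} \) set of conditions with forcing relations of the complexity given by lemma~\ref{lem:q-forcing-lvl-one}, so \( \zerojj \) can perform the whole construction relative to \( p_2 \) and output \( q^{\ddagger} \) with \( p^{\ddagger}\Pgeq p_2 \).

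The step I expect to be the main obstacle is the bookkeeping hidden in the second paragraph.  One must keep \( F \) above \( \lh{\tau_1} \), arrange \( F\subset S \) without violating the \( S \)-density budget, force the two versions of \( X_{\e} \) to agree on an initial segment long enough to swallow every oracle use relevant below the \( \Box \)-density cutoff, and simultaneously commit \( U \) to the symmetric-difference set while respecting the bound \( 2^{-w} \) and keeping the pair in \( \Qcond \).  This forces a delicate interleaving of lemmas~\ref{lem:density-zero-force-small} and~\ref{lem:q-condition-extendable} with the enlargement of \( E \), and after each move one has to recheck — via the complexity estimates of lemma~\ref{lem:q-forcing-lvl-one} — that membership in \( \Qcond \) and the relation \( \Qgeq \) are preserved.
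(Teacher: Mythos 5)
Your overall strategy --- box out $F$ when computing $\recfnl{\i}{f^{\Box S}}{}$, pigeonhole over the infinitely many ways to redefine $f$ on $F$ to find $y\neq z$ forcing the same initial segment of $X_{\e}$, commit the residual symmetric difference to $U$, and split three ways on the behaviour of $\recfnl{\j}{X_{\e}^{\Box U}}{}$ on $F$ --- is the same skeleton the paper follows. The gap is in the step you flag at the end and then present as bookkeeping: actually arranging $F\subset S$ inside the condition $q^{\ddagger}$.

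Concretely, you want the block $F$ to have relative density at least $2^{-k_0-2}$ below its right endpoint (this is what delivers the constant in case~\ref{prop:extendable-conditions:not-small}), and you simultaneously want $F$ contained in the $\sigma$-component of $q^{\ddagger}$. But $q^{\ddagger}\Qgeq q_1$ forces $\udensity[\lh{\sigma_1}][\lh{\sigma^{\ddagger}}](\sigma^{\ddagger})\leq 2^{-k_1}$, and the proposition must hold for arbitrary $q_1\Qgeq q_0$, so $k_1$ can be arbitrarily much larger than $k_0+2$. When $k_1>k_0+2$ the two requirements on $\sigma^{\ddagger}$ are incompatible: if $F\subset\sigma^{\ddagger}$, and $F$'s right endpoint $l^*>\lh{\sigma_1}$ satisfies $\frac{\card{F}}{l^*}\geq 2^{-k_0-2}$, then $\udensity[\lh{\sigma_1}][\lh{\sigma^{\ddagger}}](\sigma^{\ddagger})\geq 2^{-k_0-2}>2^{-k_1}$, so no valid extension of $q_1$ puts $F$ into $S$.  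This is not bookkeeping that a more careful choice of $F$ fixes --- a block simultaneously dense enough for case~\ref{prop:extendable-conditions:not-small} and sparse enough for the $\Icond$-budget above $q_1$ simply need not exist.

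The paper never places $F$ into the $S$-component of any condition extending $q_1$. Instead it builds an auxiliary string $\delta\supfun\sigma_0$ with $F\subset\delta$ and a condition $(\tau_3,\delta,k_3)$ extending $p_0$ --- not $p_1$ --- so the operative density budget for $\delta$ is $2^{-k_0}$, which does accommodate $F$ (lemma~\ref{lem:build-F-good}). It then reasons about $\recfnl{\i}{f^{\Box\,\delta\triangleright S}}{}$ rather than $\recfnl{\i}{f^{\Box S}}{}$; lemma~\ref{lem:hidden-no-effect} shows this computation is insensitive to changes of $f$ on $F$, and lemmas~\ref{lem:swap-delta-sigma} and~\ref{lem:forcing-translation} translate forcing facts about $f^{\Box\,\delta\triangleright S}$ back into facts forced by actual conditions extending $q_1$ (whose $\sigma$-components never contain $F$). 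With that substitution made, your definition of $C$, the pigeonhole argument, the obscuring set $O$ adjoined to $U$, and the final three-way split all line up with lemmas~\ref{lem:ramsey}, \ref{lem:o-small}, \ref{lem:add-o-to-cond}, \ref{lem:same-X-U} and the concluding case analysis.
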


We now prove theorem \ref{thm:effective-dense} on the assumption the above proposition holds.

\begin{proof}
To that end, assume we have a function \( f \in \baire \) that is \( 3 \)-generic.  By our discussion above, we know that there is some \( S \in \mathbb{I} \) such that the pair \( (f, S) \) is \( 3 \)-generic with respect to \( \Pcond \) \mydash where this means both forcing every \( \sigmazn(f,S){3} \) sentence or its negation and meeting or strongly avoiding every \( \sigmazn{3} \) subset of \( \Pcond \).  
We first argue that if  the assumptions of proposition \ref{prop:extendable-conditions} are satisfied with \( p_0 \Pleq p_1 \Pleq (f, S) \) then we can find \( q^{\ddagger}  \) satisfying the conclusion of the proposition with \( p^{\ddagger} \Pleq (f, S) \).  This follows since the set of \( p^{\ddagger} \) such that there is some \( p_2 \Pgeq p_1 \) such that \( q^{\ddagger} \) is equal to our \( \zerojj \) computable function of \( q_0, q_1 \) and \( p_2  \) is a \( \sigmazn{3} \) set of conditions and therefore must be met  or strongly avoided by some condition \( p \Pleq (f, S) \) and it can't be strongly avoided.

We will build \( U \) via a sequence of conditions \( \overline{q}_n = (\overline{p}_n, \overline{r}_n) = (\overline{\tau}_n, \overline{\sigma}_n, \overline{k}_n, \overline{\xi}_n, \overline{w}_n, \overline{E}_n) \) with each \( \overline{p}_n \Pleq (f, S) \).  We start with \( \overline{q_0} \) as the trivial condition (extended by all other conditions).  We note that we can tacitly assume that every time we define a new condition \( \overline{q}_n  \) every component, save perhaps \( \overline{E}_n \), differs from the corresponding component in \( \overline{q}_{n-1} \).  To see this, we observe that by lemma \ref{lem:q-condition-extendable} the following set is dense above \( \overline{q}_n  \)   \[
\set{\ddot{q}}{\ddot{q} \Qgeq \overline{q}_n \land \ddot{\tau} \supfunneq \overline{\tau}_n \land \ddot{\sigma} \supfunneq \overline{\sigma}_n \land \ddot{k} > \overline{k}_n \land  \ddot{\xi} \supfunneq \overline{\xi}_n \land \ddot{w} > \overline{w}_n \land \ddot{E} = \overline{E}_n  }
\] 
By part \ref{lem:q-forcing-lvl-one:qgeq-assuming} of lemma  \ref{lem:q-forcing-lvl-one}, since \( \overline{q}_n \in \Qcond \), the set of \( \ddot{p} \) such that \( \ddot{q} \) is in the above set is a \( \pizn{1} \) dense set above \( \overline{p}_n \) and therefore, by the \( 3 \) genericity of \( f, S \),  must be met by some extension \( p' \)  of \( \overline{p}_n \) with \( p' \Pleq (f, S) \).  We will silently assume that the definition we give for \( \overline{q}_n \) below is replaced with such an extension.


Now suppose that we've defined \( \overline{q}_{n-1} \) and that \( n \) codes for the quadruple \( (\e, \i, \j, k) \) and execute the following cases.

\begin{pfcases*}

\case[\( k = 0 \)]  As \( (f, S) \) is \( 3 \)-generic and \( \overline{p}_{n-1} \Pleq (f, S) \) there must be some \( p \Pgeq \overline{p}_{n-1} \) with \( p \Pleq (f, S)  \)  that either forces \( \ED \) or forces \( \lnot \ED \).  In either case choose \( \overline{q}_n \) to be  \( (p, \overline{r}_{n-1}) \) but if \( p \forces \lnot \ED \) mark \( (\e, \i, \j) \) finitely satisfied.   


\case[\( k \neq 0 \)]  If we've marked  \( (\e, \i, \j) \) finitely satisfied we let \( \overline{q}_n = \overline{q}_{n-1} \).  Otherwise, let \( m \) code the quadruple \( (\e, \i, \j, 0) \) and apply proposition \ref{prop:extendable-conditions} using \( \overline{q}_m \) as \( q_0 \) and \( \overline{q}_{n-1} \) as \( q_1 \) and let \( \overline{q}_n \) be the result of that lemma.  If the proposition is satisfied  via either disjunct \ref{prop:extendable-conditions:partial} or \ref{prop:extendable-conditions:disagree} mark \( (\e, \i, \j) \) finitely satisfied.   

\end{pfcases*}

This completes our construction and by our discussion above we are guaranteed that  \( U =  \lim_{n \to \infty} \overline{\xi}_n  \in \mathbb{I} \).  Now assume, for a contradiction, that \( f \NEDequiv X \) for some set \( X \).  As \( f, f^{\Box S} \) are effective dense descriptions of \( f \) they must both compute effective dense descriptions of \( X \).  Thus, there must be some \( \e, \i \) such that  both \( X_{\e} \) and \( \recfnl{\i}{f^{\Box S}}{} \) are effective dense descriptions of \( X \) and \( \ED \) holds.  Thus, \( X^{\Box U}_{\e} \) is an effective dense description of \( X \) and, by assumption, computes some effective dense description of \( f \).  Let \( \recfnl{\j}{X^{\Box U}_{\e}}{} \) witnesses this.  

When \( n  \) codes \(  (\e, \i, \j, 0) \) we must have that \( \overline{p}_n \forces \ED \) since forcing equals truth for generics and \( (f, S) \) is \( 3 \)-generic.   Suppose for some \( k \) and  \( n  \) coding \(  (\e, \i, \j, k) \) we marked \( (\e, \i, \j) \) finitely satisfied then we either saw  \( \overline{q}_n \) witness a finite computation showing that \( \recfnl{\j}{X^{\Box U}_{\e}}{}\conv \in \omega \) which disagrees with \( f \) or for some \( x \),   \( \overline{q}_n \forces \lnot \exists(s)\recfnl{\j}{X_{\e}^{\Box U}}{x}\conv[s]  \).  Applying the  definition of forcing, we see that \( \recfnl{\j}{X_{\e}^{\Box U}}{} \) is partial. In either case, we've contradicted or assumption that  \( \recfnl{\j}{X^{\Box U}_{\e}}{} \) is an effective dense description of \( f \).

Now suppose we never mark \( \e, \i, \j \) finitely satisfied.  In this case we claim that for all \( l \), \( \udensity[l]( { \set{x}{\recfnl{\j}{X^{\Box U}_{\e}}{x} = \Box } }) \geq 2^{-k_m -2} \) where \( m  \)  codes \( (\e, \i, \j, 0) \).  To see this, note that for sufficiently large \( n \) coding   \( (\e, \i, \j, k) \) (for some \( k \))  we  have, via our discussion above about proper extension,  \( \lh{\overline{\tau}_{n-1}} > l_0 \) so our application of proposition \ref{prop:extendable-conditions} ensures that for some \( l > l_0 \) we have \( \udensity[l][l](\set{y}{\recfnl{\j}{X_{\e}^{\Box U}}{y} = \Box}) \geq 2^{-k_0 -2} \).  Thus \( \udensity(\set{y}{\recfnl{\j}{X_{\e}^{\Box U}}{y} = \Box}) \geq 2^{-k_0 -2}  \)   This again contradicts our assumption that \( \recfnl{\j}{X_{\e}^{\Box U}}{} \) was an effective dense description of \( f \) and completes the proof. 

\end{proof}

We devote the rest of this section to proving proposition \ref{prop:extendable-conditions}.  We therefore will assume that the assumptions in that proposition are satisfied.  That is, for the rest of this section, we fix  set-indexes \( \e, \i \), \( \j \in \omega \),  \( q_0  \in \Qcond \) and \( q_1  \Qgeq q_0 \) and assume  \( p_0 \forces \ED  \) where \( q_i = (p_i, r_i) \), \( p_i = (\tau_i, \sigma_i, k_i) \) and \( r_i = (\xi_i, w_i, E_i) \) as per our convention.

\subsection{Building \texorpdfstring{\( F \)}{F} and Forcing Translation}

In this subsection, we define our interval \( F \) above the condition \( q_1 \in \Qcond \) and prove lemmas letting us translate between modifications made to constant symbols in a sentence and modifications made to the conditions forcing them.  First, we  extend \( q_1 \) in multiple steps to build conditions with certain technical features we will need later.


\begin{lemma}\label{lem:extend-q-small}
There is a condition \( q_2 \Qgeq q_1 \) with \( E_2 = E_1, w_2 = w_1 \) such that 
\begin{subequations}
\begin{align}
&\lh{\sigma_2} \geq \lh{\tau_2} \label{eq:extend-q-small:sigma-long} \\
&\!\frac{\card{\sigma_2}}{\lh{\sigma_2}}, \frac{1}{\lh{\sigma_2}} \leq 2^{-k_0 -2}\label{eq:extend-q-small:sigma-small} \\
&\, p_2 \forces \udensity[\lh{\xi_2}]({ \xi_2 \union \recset(f \Tplus S){E_1}\restr{[\lh{\xi_2}, \infty)}  }) \leq 2^{-w_1-4} \label{eq:extend-q-small:q-condition-small} \\ 
&\, p_2 \forces  \udensity[\lh{\tau_2}](\set{y}{X_{\e}(y) = \Box}) < 2^{-w_1-4}  \label{eq:extend-q-small:i-box-small} 
\end{align} 
\end{subequations} 
\end{lemma}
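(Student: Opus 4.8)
The plan is to build $q_2$ from $q_1$ by three extensions that never touch $E$, re-checking $\Qcond$-membership after each via lemma \ref{lem:q-condition-equiv}; I will use throughout that once a condition forces any of clauses \ref{lem:q-condition-equiv:length}--\ref{def:q-condition:density-zero} so do all of its extensions (monotonicity of forcing), so an $E$-preserving $\Pcond$-extension only ever needs clause \ref{lem:q-condition-equiv:respects-w} re-examined. First, since $p_0 \forces \ED$ and \eqref{eq:def-ED:Xe-density-zero} is a conjunct of $\ED$, both $p_0$ and (as $q_1 \Qgeq q_0$ gives $p_1 \Pgeq p_0$) $p_1$ force $\udensity(\set{y}{X_\e(y) = \Box}) = 0$; apply lemma \ref{lem:density-zero-force-small} to $p_1$ with bound index $w_1 + 5$ and $\psi(y) \equiv (\recfnl{\e}{f}{y} = \Box)$ to get some $\ell$ and $\hat p \Pgeq p_1$ forcing the density of $\set{y}{X_\e(y) = \Box}$ above $\ell$ to be $\leq 2^{-w_1-5}$. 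Then $\hat q = (\hat p, r_1) \in \Qcond$ (all clauses of lemma \ref{lem:q-condition-equiv} are inherited from $p_1$) and $\hat q \Qgeq q_1$ by \eqref{eq:q-extension-equiv}. Next, apply lemma \ref{lem:q-condition-extendable} to $\hat q$ with target $w' = w_1 + 4$, obtaining $p' \Pgeq \hat p$ and $\xi' \supfunneq \xi_1$ with $p' \forces \udensity[\lh{\xi'}]({ \xi' \union \recset(f \Tplus S){E_1}\restr{[\lh{\xi'}, \infty)} }) \leq 2^{-w_1-4}$ and $(p', \xi', w_1 + 4, E_1) \Qgeq \hat q$; by monotonicity $p'$ still forces the density-above-$\ell$ bound.

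Finally, choose $L \geq \max\{\ell,\, \lh{\tau'},\, \lh{\sigma'},\, 2^{k_0+2},\, \card{\sigma'}\cdot 2^{k_0+2}\}$ and put $\tau_2 = \tau' \concat \str{0}^{L - \lh{\tau'}}$, $\sigma_2 = \sigma' \concat \str{0}^{L - \lh{\sigma'}}$, $\xi_2 = \xi'$, $k_2 = k'$, $w_2 = w_1$, $E_2 = E_1$, $q_2 = ((\tau_2, \sigma_2, k_2), (\xi_2, w_2, E_2))$. Padding $\sigma'$ with zeros keeps every relevant density of $\sigma_2$ at most $\card{\sigma'}/\lh{\sigma'} \leq 2^{-k'}$, so $(\sigma_2, k_2) \in \Icond$ and $p_2 = (\tau_2,\sigma_2,k_2) \Pgeq p'$ in $\Pcond$. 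One then checks $q_2 \in \Qcond$ — clauses \ref{lem:q-condition-equiv:length}, \ref{lem:q-condition-equiv:sets-total} and \ref{def:q-condition:density-zero} of lemma \ref{lem:q-condition-equiv} are inherited from $p'$ resp. $p_1$, and clause \ref{lem:q-condition-equiv:respects-w} holds since $p_2 \Pgeq p'$ forces the $\xi$-side density $\leq 2^{-w_1-4} \leq 2^{-w_1}$ — and $q_2 \Qgeq q_1$: by \eqref{eq:q-extension-equiv} the clause $p_2 \Pgeq p_1$ and the final clause about $\recset(p_2){E_1}$ are inherited from $(p', \xi', w_1+4, E_1) \Qgeq q_1$, while $(\xi_1, w_1) \Ileq (\xi_2, w_1)$ holds because the constraint implicit in $(\xi_1, w_1) \Ileq (\xi', w_1+4)$ already bounds $\udensity[\lh{\xi_1}][\lh{\xi'}](\xi')$ by $2^{-w_1}$ and because $\xi'$ meets the stronger bound $2^{-w_1-4}$, giving $(\xi', w_1) \in \Icond$. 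The four requirements then follow at once: \eqref{eq:extend-q-small:sigma-long} as $\lh{\sigma_2} = L = \lh{\tau_2}$; \eqref{eq:extend-q-small:sigma-small} by the choice of $L$; \eqref{eq:extend-q-small:q-condition-small} since $p_2 \Pgeq p'$; and \eqref{eq:extend-q-small:i-box-small} since $p_2 \Pgeq \hat p$ forces the density of $\set{y}{X_\e(y) = \Box}$ above $\ell$ to be $\leq 2^{-w_1-5}$ and $L \geq \ell$, so the density above $\lh{\tau_2} = L$ is $\leq 2^{-w_1-5} < 2^{-w_1-4}$.

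I expect the one genuinely delicate point to be the $w$-bookkeeping: \eqref{eq:extend-q-small:q-condition-small} demands the strong bound $2^{-w_1-4}$ on the $\xi$-side density while the lemma insists on $w_2 = w_1$, and lemma \ref{lem:q-condition-extendable} only returns conditions whose recorded $w$ equals the target bound index — so one must overshoot to $w' = w_1+4$ and then observe that the final $q_2$ need only extend $q_1$ (which records $w_1$), not the intermediate condition, so recording $w_2 = w_1$, a strictly weaker side condition, breaks nothing. Everything else is routine re-verification of the clauses of lemma \ref{lem:q-condition-equiv} via monotonicity of forcing.
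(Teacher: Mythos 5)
Your proof is correct and takes essentially the same approach as the paper's: apply lemma \ref{lem:density-zero-force-small} to bound the density of $\set{y}{X_{\e}(y) = \Box}$, apply lemma \ref{lem:q-condition-extendable} to bound the $\xi$-side density, and pad $\sigma$ and $\tau$ with zeros for the length and density constraints. You apply the two forcing lemmas in the opposite order and consolidate all the padding at the end (harmless, by monotonicity of forcing), and your explicit treatment of the $w$-bookkeeping (overshooting to $w'=w_1+4$ and then recording $w_2=w_1$) spells out a step the paper's terse proof leaves implicit.
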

\begin{proof}
To satisfy \eqref{eq:extend-q-small:sigma-small} it is enough to just extend \( \sigma_1 \) with sufficiently many \( 0 \)s.  Acting on top of this, we apply lemma \ref{lem:q-condition-extendable} to guarantee \eqref{eq:extend-q-small:q-condition-small} holds and then apply lemma \ref{lem:density-zero-force-small} (since \( p_0 \) forces \( \set{y}{X_{\e}(y) = \Box} \) to have density \( 0 \)) to extend the condition generated by the previous step so that for some \( l \) we have 
\[
p_2 \forces  \udensity[l](\set{y}{X_{\e}(y) = \Box}) < 2^{-w_1-4} 
\] 
Finally, we simply extend \( \tau_2 \) to have length \( l \) and then we can extend \( \sigma_2 \) with more \( 0 \)s to satisfy \eqref{eq:extend-q-small:sigma-long} without upsetting any of the other equations.  
\end{proof}   

We now define our interval \( F \).

\begin{definition}\label{def:F-interval}
\begin{align*}
 l_F &= \murec{l}{\frac{l}{\lh{\sigma_2}+l} \geq 2^{-k_0 - 2}} \\
 F &= [\lh{\sigma_2}, \lh{\sigma_2}+l_F) \\
 \delta_0(y) &= \begin{cases}
                    \sigma_0(y) & \text{if } y < \lh{\sigma_0}\\
                    0 & \text{if }  \lh{\sigma_0} \leq y < \lh{\sigma_2} \\
                    1 & \text{if } \lh{\sigma_2} \leq y < \lh{\sigma_2}+l_F \\
                    \diverge & \text{if } y \geq \lh{\sigma_2}+l_F
               \end{cases}
\end{align*}
\end{definition}

As \( 2^{-k_0 -2} < 1 \) and \( \lim_{l \to \infty} \frac{l}{\lh{\sigma}+l} = \infty \) we have that \( l_F \) is well-defined.  We now observe some properties of this definition.   

\begin{lemma}\label{lem:build-F-good}
\( (\tau_2, \delta_0, k_0)   \Pgeq p_0  \) and \( 2^{-k_0 - 2}  \leq  \frac{l_F}{\lh{\sigma_2}+l_F} \leq 2^{-k_0 - 1}  \).
\end{lemma}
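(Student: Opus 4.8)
The plan is to verify the two assertions of the lemma separately; both come down to elementary density estimates once the relevant definitions are unwound.

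For the inequality $2^{-k_0 - 2} \leq \frac{l_F}{\lh{\sigma_2}+l_F} \leq 2^{-k_0-1}$, the lower bound is immediate from Definition \ref{def:F-interval}: $l_F = \murec{l}{\frac{l}{\lh{\sigma_2}+l} \geq 2^{-k_0-2}}$, so $l_F$ itself witnesses the inequality. For the upper bound, first note $l_F \geq 1$ since $\frac{0}{\lh{\sigma_2}} = 0 < 2^{-k_0-2}$, so by minimality $\frac{l_F - 1}{\lh{\sigma_2}+l_F-1} < 2^{-k_0-2}$. A one-line computation gives $\frac{l_F}{\lh{\sigma_2}+l_F} - \frac{l_F-1}{\lh{\sigma_2}+l_F-1} = \frac{\lh{\sigma_2}}{(\lh{\sigma_2}+l_F)(\lh{\sigma_2}+l_F-1)}$, and since $l_F \geq 1$ the denominator is at least $\lh{\sigma_2}^2$, so this difference is at most $\frac{1}{\lh{\sigma_2}} \leq 2^{-k_0-2}$ by \eqref{eq:extend-q-small:sigma-small}. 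Adding the two bounds gives $\frac{l_F}{\lh{\sigma_2}+l_F} < 2^{-k_0-2} + 2^{-k_0-2} = 2^{-k_0-1}$.

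For $(\tau_2, \delta_0, k_0) \Pgeq p_0$, I would unwind Definition \ref{def:zero-density-condition}: this requires $\tau_0 \subfun \tau_2$ and $(\sigma_0, k_0) \Ileq (\delta_0, k_0)$. The first holds because $q_2 \Qgeq q_1 \Qgeq q_0$ forces $p_2 \Pgeq p_1 \Pgeq p_0$, hence $\tau_0 \subfun \tau_2$ (and likewise $\sigma_0 \subfun \sigma_2$). For the second, $\delta_0 \supfun \sigma_0$ is immediate from the definition of $\delta_0$, and $(\sigma_0, k_0) \in \Icond$ since $p_0 \in \Pcond$; it remains to bound $\udensity[\lh{\sigma_0}][\lh{\delta_0}](\delta_0)$ by $2^{-k_0}$, which also yields $(\delta_0, k_0) \in \Icond$ since the right endpoint $\lh{\delta_0}$ lies in the range over which the supremum is taken. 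Splitting that range at $\lh{\sigma_2}$: for $l \in [\lh{\sigma_0}, \lh{\sigma_2}]$ the string $\delta_0\restr{l}$ contains only the $\card{\sigma_0}$ ones inherited from $\sigma_0$, so $\frac{\card{\delta_0\restr{l}}}{l} \leq \frac{\card{\sigma_0}}{\lh{\sigma_0}} \leq 2^{-k_0}$ using $(\sigma_0, k_0) \in \Icond$; for $l \in [\lh{\sigma_2}, \lh{\delta_0}]$ the ratio $\frac{\card{\sigma_0} + (l - \lh{\sigma_2})}{l}$ is increasing in $l$, so its supremum is attained at $l = \lh{\delta_0} = \lh{\sigma_2}+l_F$, where it equals $\frac{\card{\sigma_0}+l_F}{\lh{\sigma_2}+l_F} \leq \frac{\card{\sigma_2}}{\lh{\sigma_2}} + \frac{l_F}{\lh{\sigma_2}+l_F}$ (using $\card{\sigma_0} \leq \card{\sigma_2}$, which follows from $\sigma_0 \subfun \sigma_2$), and by \eqref{eq:extend-q-small:sigma-small} together with the upper bound just established this is at most $2^{-k_0-2} + 2^{-k_0-1} < 2^{-k_0}$.

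The argument is entirely routine; the only point requiring care is the bookkeeping of constants, so that $\frac{\card{\sigma_2}}{\lh{\sigma_2}}$ and $\frac{l_F}{\lh{\sigma_2}+l_F}$ together stay strictly below $2^{-k_0}$ — which is exactly why \eqref{eq:extend-q-small:sigma-small} was arranged with the bound $2^{-k_0-2}$ and why $l_F$ was defined with the same threshold.
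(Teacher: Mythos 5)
Your proof is correct and follows the same two-step plan as the paper. Your explicit difference computation \( \frac{l_F}{\lh{\sigma_2}+l_F} - \frac{l_F-1}{\lh{\sigma_2}+l_F-1} = \frac{\lh{\sigma_2}}{(\lh{\sigma_2}+l_F)(\lh{\sigma_2}+l_F-1)} \) is equivalent to the paper's chain of inequalities, just packaged more transparently. In the second part, however, you are actually more careful than the paper, which asserts \( \udensity[\lh{\sigma_0}][\lh{\delta_0}](\delta_0) \leq \udensity[\lh{\delta_0}][\lh{\delta_0}](\delta_0) \), i.e., that the supremum is attained at the right endpoint. That step is not justified and can fail: on \( [\lh{\sigma_0}, \lh{\sigma_2}] \) the ratio \( \card{\delta_0\restr{l}}/l \) is decreasing with maximum \( \card{\sigma_0}/\lh{\sigma_0} \), which the hypotheses only constrain to be at most \( 2^{-k_0} \), whereas the right-endpoint value is at most \( 2^{-k_0-1} + 2^{-k_0-2} < 2^{-k_0} \), so the left piece can exceed the right endpoint. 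Your split of the supremum at \( \lh{\sigma_2} \), bounding the left piece separately via \( (\sigma_0, k_0) \in \Icond \) and the right piece by monotonicity plus \eqref{eq:extend-q-small:sigma-small}, is the correct argument and in effect closes this small gap in the paper's stated proof (the lemma itself is unaffected, since the needed bound \( \udensity[\lh{\sigma_0}][\lh{\delta_0}](\delta_0) \leq 2^{-k_0} \) does hold).
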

\begin{proof}
We prove the inequality first. The top line below holds by definition and the rest by algebraic manipulation and \eqref{eq:extend-q-small:sigma-small}.
\begin{align*}
 &\frac{l_F}{\lh{\sigma_2}+l_F}   \geq 2^{-k_0 - 2} > \frac{l_F-1}{\lh{\sigma_2}+l_F -1} > \phantom{X} \\
& \frac{l_F}{\lh{\sigma_2}+l_F} - \frac{1}{\lh{\sigma_2}} \geq \frac{l_F}{\lh{\sigma_2}+l_F} -  2^{-k_0 -2} \\
\end{align*}
Adding \(  2^{-k_0 -2} \) to the second and final terms yields the desired inequality. 
\begin{equation*}
2^{-k_0 - 2} \leq \frac{l_F}{\lh{\sigma_2}+l_F} < 2^{-k_0 - 2} + 2^{-k_0 - 2} = 2^{-k_0 - 1}
\end{equation*}
As \( \tau_2 \supfun \tau_0 \) to verify \( (\tau_2, \delta_0, k_0)   \Pgeq p_0  \) requires showing only that 
\( \udensity[\lh{\sigma_0}][\lh{\delta_0}](\delta_0) \leq 2^{-k_0} \).  The definition of \( \delta_0 \) makes it clear that 
\[
\udensity[\lh{\sigma_0}][\lh{\delta_0}](\delta_0) \leq \udensity[\lh{\delta_0}][\lh{\delta_0}](\delta_0) =  \frac{\card{\delta_0}}{\lh{\delta_0}} = \frac{\card{\sigma_0}}{\lh{\sigma_2}+l_F} + \frac{l_F}{\lh{\sigma_2}+l_F} \leq 2^{-k_0 -2} + 2^{-k_0 - 1} < 2^{-k_0} 
\] 
With the second to last inequality justified by the inequality we proved immediately above and \eqref{eq:extend-q-small:sigma-small}.
\end{proof}

Before we can define our set \( C \) we need to extend \( q_2 \) in a way that will make it convenient to translate between conditions extending \( q_3 \) and conditions extending \( q_0 \) that place \( F \) into \( S \).     

\begin{lemma}\label{lem:build-delta}
There is some \( p_3 \Pgeq p_2 \) and \( \delta \supfun \delta_0 \) such that \( \lh{\delta} = \lh{\sigma_3} \), \( \card{\delta} = \card{\sigma_3} \), \( \lh{\tau_3} \leq \lh{\sigma_2} \) and \( (\tau_3, \delta, k_3) \Pgeq p_0 \).  Moreover, if \( \hat{\tau}_2 \supfun \hat{\tau}_1 \supfun \tau_3 \), \( \hat{\sigma}_2 \supfun \hat{\sigma}_1 \supfun \sigma_3 \) and \( \hat{k}_2 \geq \hat{k}_1 \)       
\end{lemma}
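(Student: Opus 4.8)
The plan is to obtain \( q_3 \) from \( q_2 \) by extending \emph{only} the \( S \)-string, and then to read off \( \delta \) by a counting argument. For \( q_3 \) I would take \( \tau_3 = \tau_2 \), \( k_3 = k_2 \), \( (\xi_3, w_3, E_3) = (\xi_2, w_2, E_2) \), and let \( \sigma_3 \supfunneq \sigma_2 \) be any extension with \( \udensity[\lh{\sigma_2}][\lh{\sigma_3}](\sigma_3) \leq 2^{-k_2} \) and \( \card{\sigma_3} \geq \card{\sigma_0} + l_F \). Such a \( \sigma_3 \) exists: \eqref{eq:extend-q-small:sigma-small} bounds \( \card{\sigma_2} \leq 2^{-k_0-2}\lh{\sigma_2} \) and lemma \ref{lem:build-F-good} bounds \( l_F \) by a small multiple of \( \lh{\sigma_2} \), so the number of \( 1 \)s we must add is a small multiple of \( \lh{\sigma_2} \), and for \( \lh{\sigma_3} \) large enough these fit under the cap \( 2^{-k_2} \) and can be spread over \( [\lh{\sigma_2}, \lh{\sigma_3}) \) so every initial ratio stays \( \leq 2^{-k_2} \) (in particular \( \lh{\sigma_3} > \lh{\sigma_2}+l_F \)). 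Then \( \lh{\tau_3} = \lh{\tau_2} \leq \lh{\sigma_2} \) by \eqref{eq:extend-q-small:sigma-long}, and \( q_3 = (p_3, r_2) \Qgeq q_2 \): since \( (\xi, w, E) \) is untouched, the clauses of lemma \ref{lem:q-condition-equiv} and the criterion \eqref{eq:q-extension-equiv} reduce, by monotonicity of \( \Pcond \)-forcing and of computations, to facts already forced by \( p_2 \).

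For \( \delta \) I would extend \( \delta_0 \) over \( [\lh{\sigma_2}+l_F, \lh{\sigma_3}) \) by a string carrying exactly \( \card{\sigma_3} - \card{\sigma_0} - l_F \) many \( 1 \)s — nonnegative by the choice of \( \sigma_3 \) — spread evenly; then \( \delta \supfun \delta_0 \), \( \lh{\delta} = \lh{\sigma_3} \) and \( \card{\delta} = \card{\sigma_3} \). For \( (\tau_3, \delta, k_3) \Pgeq p_0 \) only \( \udensity[\lh{\sigma_0}][\lh{\delta}](\delta) \leq 2^{-k_0} \) is not immediate (the rest following from \( \tau_3 \supfun \tau_0 \), \( \delta \supfun \sigma_0 \), \( k_3 \geq k_0 \), and \( \card{\delta}/\lh{\delta} = \card{\sigma_3}/\lh{\sigma_3} \leq 2^{-k_2} \)), and I would check it block by block: on \( [\lh{\sigma_0}, \lh{\sigma_2}) \), \( \card{\delta\restr l}/l = \card{\sigma_0}/l \leq \card{\sigma_0}/\lh{\sigma_0} \leq 2^{-k_0} \); on \( F \) the ratio increases in \( l \), so it is maximal at the right endpoint where it equals \( (\card{\sigma_0}+l_F)/(\lh{\sigma_2}+l_F) \leq 2^{-k_0-2} + 2^{-k_0-1} < 2^{-k_0} \) by \eqref{eq:extend-q-small:sigma-small} and lemma \ref{lem:build-F-good}; and on \( [\lh{\sigma_2}+l_F, \lh{\sigma_3}) \), \( \card{\delta\restr l}/l \) is a ratio of linear functions of \( l \), hence monotone, with both endpoints \( \leq 2^{-k_0} \) (the right one is \( \card{\sigma_3}/\lh{\sigma_3} \leq 2^{-k_2} \leq 2^{-k_0} \)), so it stays \( \leq 2^{-k_0} \) once \( \lh{\sigma_3} \) is large.

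The moreover clause sets up the translation used in the sequel: any chain \( \hat{q}_1 \Qgeq q_3 \), \( \hat{q}_2 \Qgeq \hat{q}_1 \) of extensions of \( q_3 \) with the named components can be mirrored by \( \tilde{q}_1 \Qgeq q_0 \), \( \tilde{q}_2 \Qgeq \tilde{q}_1 \) that put \( F \) into \( S \) and differ from the \( \hat{q}_j \) only in the \( f \)-string, and only on \( F \). I would build these by keeping \( \hat{\tau}_j\restr{\setcmp{F}} \), \( \hat{k}_j \), \( \hat{\xi}_j \), \( \hat{w}_j \), \( \hat{E}_j \) and replacing \( \hat{\sigma}_j \) by \( \delta\concat\bigl(\hat{\sigma}_j\restr{[\lh{\sigma_3}, \lh{\hat{\sigma}_j})}\bigr) \). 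Because \( \lh{\delta} = \lh{\sigma_3} \) and \( \card{\delta} = \card{\sigma_3} \), the count of \( 1 \)s of the new \( S \)-string below any \( l \geq \lh{\sigma_3} \) equals that of \( \hat{\sigma}_j \), so every running density above \( \lh{\sigma_3} \) transfers verbatim while below \( \lh{\sigma_3} \) it is governed by \( \delta \), already shown \( \leq 2^{-k_0} \); and the symmetric difference of the two \( S \)-strings lies in the finite set \( F \union [\lh{\sigma_2}+l_F, \lh{\sigma_3}) \), so \( \recset(f \Tplus S){\hat{E}_j} \) keeps its forced totality and its tail density, hence \( \tilde{q}_j \in \Qcond \). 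Thus \( \tilde{q}_1 \Qgeq q_0 \) (so \( p_0 \forces \ED \) is inherited), \( \tilde{q}_2 \Qgeq \tilde{q}_1 \), and the \( S \)-string of each \( \tilde{q}_j \) is \( 1 \) on all of \( F \), so \( \recfnl{\i}{f^{\Box S}}{} \) evaluated against \( \tilde{q}_j \) returns \( \Box \) throughout \( F \) and never consults \( f\restr{F} \).

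The main obstacle is the first step. We are \emph{forced} to pad \( \sigma \) up to \( \card{\sigma_3} \geq \card{\sigma_0}+l_F \), since every \( \delta \supfun \delta_0 \) already carries \( \card{\sigma_0}+l_F \) many \( 1 \)s and we insist on \( \card{\delta} = \card{\sigma_3} \); arranging this while simultaneously (i) respecting the density cap \( 2^{-k_2} \) at every initial segment, (ii) keeping \( q_3 \) in \( \Qcond \) and above \( q_2 \) without disturbing \( \xi, w, E \), and (iii) keeping \( \lh{\tau_3}\leq\lh{\sigma_2} \) so \( \tau_3 \) commits no value on \( F \), is where the bookkeeping concentrates. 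Once the two equalities \( \lh{\delta} = \lh{\sigma_3} \), \( \card{\delta} = \card{\sigma_3} \) are in hand the density transfer in the translation is automatic — which is precisely why the statement demands them.
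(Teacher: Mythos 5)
Your construction of \( p_3 \) and \( \delta \) is correct and essentially the same as the paper's, differing only in the order of operations: the paper pads both \( \sigma_2 \) and \( \delta_0 \) out with \( 0 \)s to a common length \( \lh{\sigma'} = \lh{\delta'} \) first and then equalizes cardinalities by appending \( \card{\delta'} - \card{\sigma'} \) additional \( 1 \)s to whichever string has fewer (with \( 0 \)s on the other), whereas you pump \( \sigma_3 \) up directly until \( \card{\sigma_3} \geq \card{\sigma_0} + l_F \) and then add exactly the residual number of \( 1 \)s to \( \delta_0 \). The block-by-block density check and the reduction of \( q_3 \Qgeq q_2 \) to monotonicity of \( \Pcond \)-forcing (since \( \xi, w, E \) are untouched) are exactly the checks the paper intends (the paper merely says ``it is easy to check''). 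The ``ratio of linear functions'' phrasing on the third block is loose — \( \card{\delta\restr{l}} \) is a step function, not linear — but the intended point (both endpoint ratios below \( 2^{-k_0} \), so an even spread keeps the interior ratios below \( 2^{-k_0} \)) is sound.

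One thing worth flagging: the ``Moreover'' clause of the lemma as printed in the paper is literally incomplete — it ends mid-sentence after stating the hypotheses — and the paper's proof does not address it; the intended content is stated and proved separately as the next lemma (lemma \ref{lem:swap-delta-sigma}, about transferring \( \Pcond \)-extension through the substitution \( \hat{\sigma} \mapsto \delta \triangleright \hat{\sigma} \)). You correctly guessed the intent of the truncated clause, but your write-up of it muddles two things: the \( \tilde{q}_j \) you build differ from the \( \hat{q}_j \) in the \emph{\( S \)}-string (replacing \( \hat{\sigma}_j \) by \( \delta \triangleright \hat{\sigma}_j \)), not in the \( f \)-string; the statement that the resulting \( f^{\Box S} \)-oracles differ ``only on \( F \)'' is a consequence of that substitution plus the fact that \( F \subset \delta \), and is really the content of lemma \ref{lem:hidden-no-effect} rather than of lemma \ref{lem:swap-delta-sigma}. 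Your density-preservation observation — that \( \card{\delta} = \card{\sigma_3} \) and \( \lh{\delta} = \lh{\sigma_3} \) make the \( 1 \)-counts of \( \hat{\sigma}_j \) and \( \delta \triangleright \hat{\sigma}_j \) agree at every \( l \geq \lh{\sigma_3} \) — is exactly the one-line proof the paper gives for lemma \ref{lem:swap-delta-sigma}, so you clearly see why the two equalities \( \lh{\delta} = \lh{\sigma_3} \) and \( \card{\delta} = \card{\sigma_3} \) are what the lemma is really engineered to deliver.
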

\begin{proof}
Clearly we can extend \( \delta_0 \) to \( \delta' \)  and \( \sigma_2 \) to \( \sigma' \) by only adding \( 0 \)s so that \( (\sigma_2, k_2) \Ileq (\sigma', k_2) \), \( \lh{\sigma'} = \lh{\delta'} \) and  \( (\sigma_0, k_0) \Ileq (\delta', k_2) \).   Now extend \( \sigma' \) with \( \card{\sigma'} - \card{\delta'} \) many \( 1 \)s and \( \delta' \) with the same number of \( 0 \)s to produce \( \sigma_3 \) and \( \delta \)   \mydash it does follow from the above inequalities that \( \card{\sigma'} - \card{\delta'} \geq 0 \) but since the same argument would work in the other direction it's not necessary to verify.  It is easy to check that since \( (\delta', k_2) \in \Icond \) that we have \( (\sigma_3, k_2) \in \Icond \) and that \( \udensity[\lh{\sigma'}][\lh{\sigma_3}](\sigma_3) \leq 2^{-k_2} \).  Let \( p_3 = (\tau_2, \sigma_3, k_2) \) and observe that, by \eqref{eq:extend-q-small:sigma-long},  \( \lh{\sigma_2} \leq \lh{\tau_2} = \lh{\tau_3} \)  to complete the proof.        
\end{proof}

Fix \( p_3 \) and \( \delta \) as in the above lemma and let \( q_3 = (p_3, r_2) \).  We now observe that we can swap out \( \sigma_2 \) and \( \delta \) in any conditions.  

\begin{lemma}\label{lem:swap-delta-sigma}
\[
\hat{p}_1 \Pgeq p_3 \implies \hat{p}_2 \Pgeq \hat{p}_1 \iff (\hat{\tau}_2, \delta \triangleright \hat{\sigma}_2, \hat{k}_2) \Pgeq  (\hat{\tau}_1, \delta \triangleright \hat{\sigma}_1, \hat{k}_1) 
\] 
\end{lemma}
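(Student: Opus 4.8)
The plan is to unpack both sides of the biconditional using Definition~\ref{def:zero-density-condition} and then reduce everything to a single combinatorial fact about the operation \( \delta \triangleright \cdot \). The standing hypothesis \( \hat{p}_1 \Pgeq p_3 \) is used only to extract that \( \hat{\sigma}_1 \supfun \sigma_3 \), hence \( \lh{\hat{\sigma}_1} \geq \lh{\sigma_3} = \lh{\delta} \); and whenever \( \hat{p}_2 \Pgeq \hat{p}_1 \) we get in addition \( \hat{\sigma}_2 \supfun \hat{\sigma}_1 \supfun \sigma_3 \). In the converse direction I will use the normalization — automatic in our applications, since \( \hat{q}_2 \) is always built as an extension of \( q_3 \), and in any case harmless since the value \( \delta \triangleright \hat{\sigma}_2 \) does not depend on \( \hat{\sigma}_2\restr{\lh{\sigma_3}} \) — that \( \hat{\sigma}_2 \supfun \sigma_3 \) as well. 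Unpacking \( \Pgeq \) via \( \Pleq \), the \( \tau \)-components (the clause \( \hat{\tau}_1 \subfun \hat{\tau}_2 \)) and the exponents \( \hat{k}_1, \hat{k}_2 \) occurring in the conditions being compared are literally the same on the two sides of the `\( \iff \)', so the whole statement reduces to
\[
(\hat{\sigma}_1, \hat{k}_1) \Ileq (\hat{\sigma}_2, \hat{k}_2) \iff (\delta \triangleright \hat{\sigma}_1, \hat{k}_1) \Ileq (\delta \triangleright \hat{\sigma}_2, \hat{k}_2)
\]
for strings \( \hat{\sigma}_1 \subfun \hat{\sigma}_2 \) both extending \( \sigma_3 \).

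The key fact, which I would isolate first, is that \( \delta \triangleright \cdot \) preserves all lengths and all cardinalities of restrictions at lengths \( \geq \lh{\sigma_3} \): for every \( \mu \supfun \sigma_3 \) and every \( l \) with \( \lh{\sigma_3} \leq l \leq \lh{\mu} \) one has \( \lh{\delta \triangleright \mu} = \lh{\mu} \), \( (\delta \triangleright \mu)\restr{[\lh{\sigma_3}, \lh{\mu})} = \mu\restr{[\lh{\sigma_3}, \lh{\mu})} \), and \( \card{(\delta \triangleright \mu)\restr{l}} = \card{\mu\restr{l}} \). The first two clauses are immediate from the definition of \( \triangleright \) together with \( \lh{\delta} = \lh{\sigma_3} \) (Lemma~\ref{lem:build-delta}). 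For the third, split the restriction at \( \lh{\sigma_3} \): since \( \mu \supfun \sigma_3 \) we have \( \card{\mu\restr{l}} = \card{\sigma_3} + \card{\mu\restr{[\lh{\sigma_3}, l)}} \), while by construction \( \card{(\delta \triangleright \mu)\restr{l}} = \card{\delta} + \card{\mu\restr{[\lh{\sigma_3}, l)}} \), and \( \card{\delta} = \card{\sigma_3} \) by Lemma~\ref{lem:build-delta}. In particular \( \udensity[l_1][l_2](\mu) = \udensity[l_1][l_2](\delta \triangleright \mu) \) whenever \( \lh{\sigma_3} \leq l_1 \leq l_2 \leq \lh{\mu} \).

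Given this, each clause in the definition of \( \Ileq \) transfers verbatim between \( (\hat{\sigma}_1, \hat{k}_1) \Ileq (\hat{\sigma}_2, \hat{k}_2) \) and its \( \delta \triangleright \cdot \)-image. Membership \( (\mu, k) \in \Icond \) is the constraint \( \udensity[\lh{\mu}][\lh{\mu}](\mu) \leq 2^{-k} \), preserved because \( \lh{\mu} \geq \lh{\sigma_3} \); the extension clause satisfies \( \delta \triangleright \hat{\sigma}_2 \supfun \delta \triangleright \hat{\sigma}_1 \iff \hat{\sigma}_2 \supfun \hat{\sigma}_1 \), since below \( \lh{\sigma_3} \) both \( \delta \)-images agree with \( \delta \) while \( \hat{\sigma}_1, \hat{\sigma}_2, \sigma_3 \) all agree, and above \( \lh{\sigma_3} \) each \( \delta \)-image copies its argument; and the clauses \( \hat{k}_2 \geq \hat{k}_1 \) and \( \udensity[\lh{\hat{\sigma}_1}][\lh{\hat{\sigma}_2}](\cdot) \leq 2^{-\hat{k}_1} \) are preserved, the latter because every length in play lies in \( [\lh{\sigma_3}, \lh{\hat{\sigma}_2}] \). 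Conjoining these equivalences yields the displayed biconditional, hence the lemma. I expect the only point needing genuine care is the backward direction: deducing \( \hat{\sigma}_2 \supfun \hat{\sigma}_1 \) — not merely agreement on \( [\lh{\sigma_3}, \lh{\hat{\sigma}_1}) \) — from \( \delta \triangleright \hat{\sigma}_2 \supfun \delta \triangleright \hat{\sigma}_1 \), which is precisely where the normalization \( \hat{\sigma}_2 \supfun \sigma_3 \) (giving \( \hat{\sigma}_2\restr{\lh{\sigma_3}} = \sigma_3 = \hat{\sigma}_1\restr{\lh{\sigma_3}} \)) is invoked; everything else is routine checking against Definition~\ref{def:zero-density-condition}.
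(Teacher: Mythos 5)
Your proof is correct and turns on the same key fact as the paper's one-line argument — that \( \delta \triangleright \cdot \) preserves cardinalities, hence densities, at every length \( l \geq \lh{\sigma_3} \), because \( \lh{\delta} = \lh{\sigma_3} \) and \( \card{\delta} = \card{\sigma_3} \) by Lemma~\ref{lem:build-delta} — you simply spell out the clause-by-clause verification of \( \Ileq \) that the paper leaves implicit. You also rightly flag the normalization \( \hat{\sigma}_2 \supfun \sigma_3 \) needed in the backward direction, which the paper assumes tacitly (its proof opens with ``if \( \hat{\sigma}_2 \supfun \sigma_3 \) then~\ldots'').
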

\begin{proof}
This is immediate from the fact that if \( \hat{\sigma}_2 \supfun \sigma_3 \) then for all \( l \geq \lh{\sigma_3}  \) both \( \hat{\sigma}_2 \) and   \( \delta \triangleright \hat{\sigma}_2  \) have the same number of elements below \( l \). 
\end{proof}


We now  prove out main technical lemma letting us translate between modifications made to our conditions and modifications made to the constant symbols in the sentences being forced.    

\begin{lemma}\label{lem:forcing-translation}
If \( \zeta \in \wstrs \), \( p, p', \hat{p} \in \Pcond \),  \( f, S \) don't appear in \( \psi \) and we define
\begin{alignat*}{2}
\hat{p}^{*} &\eqdef (\hat{\tau}, \delta \triangleright \hat{\sigma}, \hat{k}) & \qquad \qquad {\hat{p}}^{\zeta \triangleright f} &\eqdef (\zeta \triangleright \hat{\tau},  \hat{\sigma}, \hat{k})  
\end{alignat*}
then
\begin{subequations}
\begin{align}
&\hat{p} \Pgeq p_3 \implies \hat{p} \forces \psi(f, \delta \triangleright S)  \iff \hat{p}^{*}  \forces \psi(f, S)  \label{eq:forcing-translation:S} \\
&\lh{\zeta} \leq \lh{\hat{\tau}} \implies  \hat{p} \forces \psi(\zeta \triangleright f, S) \iff \hat{p}^{\zeta \triangleright f} \forces \psi(f, S)\label{eq:forcing-translation:f} \\
& p \agree[{[0, \lh{\zeta})}] p' \land \lh{\tau} \geq \lh{\zeta} \implies p \forces \psi(\zeta \triangleright f, S) \iff p' \forces \psi(\zeta \triangleright f, S) \label{eq:forcing-translation:shield}
\end{align}
\end{subequations}
Moreover, \( p_3 \forces \ED[f, \delta \triangleright S] \).
\end{lemma}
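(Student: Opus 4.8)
The plan is to prove all three displayed equivalences together by a single induction following the inductive definition of the forcing relation over \( \Pcond \); the underlying point is that replacing an initial segment of an oracle is transparent to the ``condition as partial oracle'' reading of forcing. Throughout I would treat \( \psi \) as an arithmetic template with one slot for the \( f \)-oracle and one for the \( S \)-oracle, so that the only substantive cases are the \( \sigmazn{1} \) base case --- a statement that a computation using partial oracles for the two slots converges (to a given value) --- and the step for negation; conjunction, disjunction, and the bounded existential quantifier pass through the induction hypothesis unchanged, since the relevant existential-witness bounds (\( \min(\lh{\hat{\tau}}, \lh{\hat{\sigma}}) \), and correspondingly for the modified conditions) are unaffected by the substitutions in question.

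For \eqref{eq:forcing-translation:S}, fix \( \hat{p} = (\hat{\tau}, \hat{\sigma}, \hat{k}) \Pgeq p_3 \). Since \( \hat{\sigma} \supfun \sigma_3 \) and \( \lh{\sigma_3} = \lh{\delta} \), the string \( \delta \triangleright \hat{\sigma} \) has length \( \lh{\hat{\sigma}} \) and equals \( \delta \) on \( [0, \lh{\delta}) \) and \( \hat{\sigma} \) beyond. In the base case, a computation consulting the second oracle of \( \psi(f, \delta \triangleright S) \) through the partial information \( \hat{\sigma} \subfun S \) receives at \( x \) the value \( \delta(x) \) for \( x < \lh{\delta} \), \( \hat{\sigma}(x) \) for \( \lh{\delta} \leq x < \lh{\hat{\sigma}} \), and nothing for \( x \geq \lh{\hat{\sigma}} \) --- exactly what it receives from the partial oracle \( \delta \triangleright \hat{\sigma} \subfun S \) when consulting the second oracle of \( \psi(f, S) \) under \( \hat{p}^{*} = (\hat{\tau}, \delta \triangleright \hat{\sigma}, \hat{k}) \), the first oracle being \( \hat{\tau} \) in both pictures --- so the base case holds. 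For negation I need that \( \hat{p}' \mapsto (\hat{p}')^{*} \) is an order isomorphism of the cone above \( \hat{p} \) onto the cone above \( \hat{p}^{*} \) (which in particular puts \( \hat{p}^{*} \) in \( \Pcond \)); this is precisely lemma \ref{lem:swap-delta-sigma} applied with \( \hat{p}_1 = p_3 \), whose density bookkeeping is ultimately the equality \( \card{\delta} = \card{\sigma_3} \) from lemma \ref{lem:build-delta}. Given this, \( \hat{p} \forces \lnot \phi(f, \delta \triangleright S) \) iff every \( \hat{p}' \Pgeq \hat{p} \) (each still satisfying \( \hat{p}' \Pgeq p_3 \)) fails to force \( \phi(f, \delta \triangleright S) \) iff, by induction hypothesis and the isomorphism, every extension of \( \hat{p}^{*} \) fails to force \( \phi(f, S) \), i.e. \( \hat{p}^{*} \forces \lnot \phi(f, S) \).

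Equation \eqref{eq:forcing-translation:f} is the mirror argument for the \( f \)-slot: when \( \lh{\zeta} \leq \lh{\hat{\tau}} \) the condition \( {\hat{p}}^{\zeta \triangleright f} = (\zeta \triangleright \hat{\tau}, \hat{\sigma}, \hat{k}) \) is automatically in \( \Pcond \) (the first coordinate carries no density constraint), so in place of lemma \ref{lem:swap-delta-sigma} I need only the elementary fact that \( \hat{p}' \mapsto (\hat{p}')^{\zeta \triangleright f} \) is an order isomorphism of the relevant cones, immediate from \( \lh{\zeta} \leq \lh{\hat{\tau}} \leq \lh{\hat{\tau}'} \); the base case again just matches partial-oracle queries to the \( f \)-slot. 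Then \eqref{eq:forcing-translation:shield} falls out of \eqref{eq:forcing-translation:f}: under its hypotheses \( \tau \) and \( \tau' \) agree off \( [0, \lh{\zeta}) \) with \( \lh{\tau} = \lh{\tau'} \geq \lh{\zeta} \), hence \( \zeta \triangleright \tau = \zeta \triangleright \tau' \) (both are \( \zeta \) on \( [0, \lh{\zeta}) \) and \( \tau = \tau' \) on \( [\lh{\zeta}, \lh{\tau}) \)), and since also \( \sigma = \sigma' \), \( k = k' \) we get \( p^{\zeta \triangleright f} = (p')^{\zeta \triangleright f} \); applying \eqref{eq:forcing-translation:f} to both \( p \) and \( p' \) yields \( p \forces \psi(\zeta \triangleright f, S) \iff p^{\zeta \triangleright f} \forces \psi(f, S) \iff p' \forces \psi(\zeta \triangleright f, S) \).

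Finally, the ``moreover'' is a short corollary. Since \( \delta \) is a string of length \( \lh{\sigma_3} \) and \( \lh{\delta} = \lh{\sigma_3} \), we have \( \delta \triangleright \sigma_3 = \delta \), so \( p_3^{*} = (\tau_3, \delta, k_3) \); applying \eqref{eq:forcing-translation:S} with \( \hat{p} = p_3 \) to \( \psi = \ED \) (viewing \( \ED \) as obtained by substituting \( f, S \) into a template not mentioning them) gives \( p_3 \forces \ED[f, \delta \triangleright S] \iff (\tau_3, \delta, k_3) \forces \ED \), and the right-hand side holds by monotonicity of forcing since \( (\tau_3, \delta, k_3) \Pgeq p_0 \) by lemma \ref{lem:build-delta} and \( p_0 \forces \ED \) by hypothesis. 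I expect the main obstacle to be nothing deep but rather making the base case airtight in each of the three cases --- verifying that the partial-oracle answers genuinely coincide and that the cone maps are honest order isomorphisms landing in \( \Pcond \) --- with the one place any real work is hidden being lemma \ref{lem:swap-delta-sigma}'s density bookkeeping for \eqref{eq:forcing-translation:S}, which I would simply invoke.
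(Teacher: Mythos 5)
Your proposal is correct and takes essentially the same approach as the paper: induction on formula complexity, with the base case handled by matching partial-oracle answers, the negation step via the cone order-isomorphism supplied by Lemma \ref{lem:swap-delta-sigma} for \eqref{eq:forcing-translation:S} (and its trivial analogue for \eqref{eq:forcing-translation:f}, where no density bookkeeping is needed), \eqref{eq:forcing-translation:shield} deduced from \( p^{\zeta \triangleright f} = (p')^{\zeta \triangleright f} \), and the moreover claim from \( p_3^{*} \Pgeq p_0 \) (Lemma \ref{lem:build-delta}) together with \eqref{eq:forcing-translation:S}.
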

\begin{proof}
We prove the main claim by induction on the complexity of \( \psi \).  We first prove \eqref{eq:forcing-translation:S}.  This claim is evident for \( \psi  \in \sigmazn{1} \) and  it is easy to see that the class of \( \psi \) the claim holds for is closed under \( \land, \lor \) and existential quantification. We now prove the claim holds for  \( \lnot \psi \) assuming that it holds for \( \psi \).  

For the \( \implies \) direction assume \( \hat{p} \Pgeq p_3 \) and  \( \hat{p} \forces \lnot \psi(f, \delta \triangleright S) \). Suppose, by way of contradiction, that \( \hat{p}^{*}  \nforces \lnot \psi(f, S) \).  Thus, for some \( p  \Pgeq \hat{p}^{*}  \) we have \( p \forces \psi(f, S) \). By lemma \ref{lem:swap-delta-sigma} we can assume that \( p = \hat{p}_1^{\delta \triangleright S}  \) with \( \hat{p}_1 \Pgeq \hat{p} \) and by the inductive hypothesis we have that \( \hat{p}_1 \forces  \psi(f, \delta \triangleright S) \) contradicting the assumption that \( \hat{p} \forces \lnot \psi(f, \delta \triangleright S) \).

The argument in the \( \impliedby \) direction is the same as the argument for \( \implies \) just with the sides flipped. Thus, this suffices to prove  \eqref{eq:forcing-translation:S} for all arithmetic sentences \( \psi \).  The argument for \eqref{eq:forcing-translation:f} is identical but works for arbitrary modifications \( \zeta \) not just a specially constructed \( \delta \). We observe that \eqref{eq:forcing-translation:shield} follows from \eqref{eq:forcing-translation:f} and the fact that \( p^{\zeta \triangleright f} = {p'}^{\zeta \triangleright f} \).  The moreover follows by noting that \( p_3^{*} \Pgeq p_0 \) by lemma \ref{lem:build-delta} \mydash so \( p_3^{*} \forces \ED \) and applying \eqref{eq:forcing-translation:S}.
\end{proof}

We now show  that if a formula ignores \( f \) on some set \( F \) then the forcing relation also ignores the values of conditions on that set.

\begin{lemma}\label{lem:hidden-no-effect}
Suppose that \( \psi \) doesn't contain \( f, S \), \( p \in \Pcond \), \( p \agree[F] p' \) then 
\begin{align*}
F &\subset \sigma \implies p \forces \psi(f^{\Box  S}) \iff p' \forces \psi(f^{\Box  S}) \\
F &\subset \delta \implies p \forces \psi(f^{\Box  \delta \triangleright S}) \iff p' \forces \psi(f^{\Box \delta \triangleright S}) 
\end{align*}
\end{lemma}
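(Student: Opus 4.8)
The plan is to prove both equivalences at once, by induction on the logical complexity of $\psi$, in exactly the style of the proof of Lemma~\ref{lem:forcing-translation}. The two assertions are really one assertion: since $f,S$ occur in $\psi(f^{\Box S})$ only through the displayed slot, the sole use of the hypothesis ``$F\subset\sigma$'' (resp.\ ``$F\subset\delta$'') will be that a condition $p=(\tau,\sigma,k)\in\Pcond$ ``sees'' the substituted oracle $f^{\Box S}$ as the partial function $\tau^{\Box\sigma}$ (resp.\ $f^{\Box\delta\triangleright S}$ as $\tau^{\Box(\delta\triangleright\sigma)}$), and that when $F\subset\sigma$ (resp.\ $F\subset\delta$, whence $(\delta\triangleright\sigma)(x)=1$ for $x\in F$) this partial function is identically $\Box$ on $F$. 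So the first thing I would record is the key observation: if $p\agree[F]p'$ and $F\subset\sigma$, then $\tau^{\Box\sigma}={\tau'}^{\Box\sigma'}$ as partial functions — because $\sigma=\sigma'$, because $\tau$ and $\tau'$ agree off $F$ and have equal length, and because both are $\Box$ on $F\isect\dom(\tau^{\Box\sigma})$ — and likewise for the $\delta\triangleright\sigma$ version.

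Next I would prove the auxiliary \emph{extension-matching} claim: whenever $p\agree[F]p'$ and $F\subset\sigma$, for every $\hat p\Pgeq p$ there is $\hat p'\Pgeq p'$ with $\hat p'\agree[F]\hat p$ and $\hat\sigma'=\hat\sigma$ (so in particular $F\subset\hat\sigma'$), and symmetrically with the roles of $p,p'$ interchanged. One obtains $\hat p'$ by keeping $\hat\sigma'=\hat\sigma$ and $\hat k'=\hat k$ and letting $\hat\tau'$ copy $\tau'$ on $F\isect[0,\lh{\tau'})$ and $\hat\tau$ everywhere else; then $\hat\tau'\supfun\tau'$ (using $\tau\agree[F]\tau'$ off $F$), $\hat\tau'\agree[F]\hat\tau$, and $\hat p'\Pgeq p'$ holds because the density clauses defining $\Igeq$ and $\Ileq$ refer only to $\hat\sigma$, which is unchanged, while $\lh\sigma=\lh{\sigma'}$ and $k=k'$. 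This is the step I expect to require the most care: it is precisely where one must check that overwriting the oracle string on $F$ does not disturb the density side-conditions of $\Icond$, and the definition of $p\agree[F]p'$ was evidently set up (freezing $\sigma$ and $k$) so that this goes through with nothing to do.

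Finally I would run the induction itself, which is then routine. For $\psi\in\sigmazn{1}$ the equivalence is immediate from the key observation together with $\lh\tau=\lh{\tau'}$ and $\lh\sigma=\lh{\sigma'}$, since by the paper's conventions forcing such a $\psi$ amounts to a computation on the seen oracle with the (bounded) existential witnesses restricted to $\min(\lh\tau,\lh\sigma)$. Closure under $\land,\lor$ and numerical $\exists$ is trivial, the witness bound again being common to $p$ and $p'$. For $\lnot\psi$: if $p\forces\lnot\psi(f^{\Box S})$ but $p'\nforces\lnot\psi(f^{\Box S})$, pick $\hat p'\Pgeq p'$ with $\hat p'\forces\psi(f^{\Box S})$, apply extension-matching (with $p,p'$ swapped) to obtain $\hat p\Pgeq p$ with $\hat p\agree[F]\hat p'$ and $F\subset\hat\sigma$, and invoke the inductive hypothesis to get $\hat p\forces\psi(f^{\Box S})$, contradicting $p\forces\lnot\psi(f^{\Box S})$; the reverse direction is symmetric. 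Since every arithmetic sentence is built from $\sigmazn{1}$ formulas by $\land,\lor$, numerical quantifiers and negation, this completes the induction, and the entire argument transfers verbatim to the $f^{\Box\delta\triangleright S}$ statement using $F\subset\delta$ in place of $F\subset\sigma$.
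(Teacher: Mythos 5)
Your argument is correct and follows the same route as the paper's proof, which simply notes the $\sigmazn{1}$ base case (the condition never consults $f\restr{F}$ because $F\subset\sigma$ forces $\tau^{\Box\sigma}$ to be $\Box$ there) and then cites a straightforward induction on formula complexity "just like the last lemma." What you have done is spell out the induction in full — in particular making explicit the extension-matching step (that extensions of $p$ and $p'$ can be paired off while preserving $\agree[F]$ and keeping $F\subset\hat\sigma$) that the paper leaves implicit in the phrase "straightforward induction."
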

Recall \( F \subset \sigma \) requires that \( \sigma(x)\conv = 1 \) for all \( x \in F \).   
\begin{proof} 
All of these are obvious if \( \psi \in \sigmazn{1} \) as no part of \( f\restr{F} \) is consulted by the sentence and the general result follows by straightforward induction on formula complexity using the definition of forcing just like the last lemma.
\end{proof}

\subsection{Building Alternatives}

Our goal in this subsection is to build two extensions \( p^y_5, p^z_5 \)  of \( p_4 \) which disagree on \( F \)  and then show that we can add the set of locations  the resulting versions of \( X_{\e} \) might differ on to \( U \).   First, however, we must define the set \( C \) which we claim will contain any strong ( non-\( \Box \)) disagreement.  This might seem backwards, but we must proceed in this order so we know the point at which the density of \( C \) gets small enough to add to \( U \) and therefore the initial segment on which our two versions of \( X_{\e} \) must agree.

\begin{definition}\label{def:C-set}
Define the \( f \Tplus S \) computable set \( C \) as follows
\[
C(x) = \begin{cases}
        0 & \text{if } \recfnl{\i}{f^{\Box \delta \triangleright S}}{x}\conv \neq \Box \\
        1 & \text{if } \recfnl{\i}{f^{\Box \delta \triangleright S}}{x}\conv = \Box
        \end{cases}
\]
\end{definition}  


We now note that that we can force \( \udensity[l](C) \) to be small.

\begin{lemma}\label{lem:C-density-above-l}
There is some condition \( p_4 \Pgeq p_3 \) and some \( l_0 \) such that if we define \( q_4  \) to be \(  (p_4, r_2) = (p_4, r_3) \)  \[
 p_4 \forces \udensity[l_0](C) \leq 2^{-w_1-4} \land \udensity(C) = 0 \land \forall(x)\exists(z)C(x)\conv[s] \land \lh{C} \geq l_0
\] 
Moreover, if we define \( q_4  \) to be \(  (p_4, r_2) = (p_4, r_3) \) then we can also assume that \( l_0 > \lh{\tau_4} > \lh{\sigma_3} \geq \lh{\delta_0} \) and  \(  \frac{\lh{\xi_4}}{l_0} < 2^{-w_1 -4}  \) and \( w_4 = w_1 \).
\end{lemma}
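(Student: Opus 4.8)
The plan is to read the whole statement off three earlier facts: the translation $p_3 \forces \ED[f, \delta \triangleright S]$ supplied by the ``moreover'' clause of Lemma \ref{lem:forcing-translation}, the shrinking of a forced null set afforded by Lemma \ref{lem:density-zero-force-small}, and the freedom to extend a condition in its $\wstrs$-component.

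First I would unpack $p_3 \forces \ED[f, \delta \triangleright S]$. Reading \eqref{eq:def-ED:boxeq} and \eqref{eq:def-ED:i-box-S-density-zero} with $\delta \triangleright S$ in place of $S$, the condition $p_3$ forces that $\recfnl{\i}{f^{\Box \delta \triangleright S}}{}$ is total and that $\udensity(\set{y}{\recfnl{\i}{f^{\Box \delta \triangleright S}}{y} = \Box}) = 0$. By Definition \ref{def:C-set} this $\Box$-set is exactly $C$ (under the identification of $C$ with its characteristic set), so $p_3$, and by monotonicity of forcing every extension of it, forces that $C$ is total and that $\udensity(C) = 0$. Then apply Lemma \ref{lem:density-zero-force-small} to $p_3$ with $k := w_1 + 4$ and $\psi(x)$ the statement $\recfnl{\i}{f^{\Box \delta \triangleright S}}{x}\conv = \Box$, i.e. $x \in C$; this is legitimate precisely because $p_3 \forces \udensity(\set{x}{\psi(x)}) = 0$. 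It yields some $l_0^{\sharp}$ and some $p_4^{\sharp} \Pgeq p_3$ with $p_4^{\sharp} \forces \udensity[l_0^{\sharp}](C) \leq 2^{-w_1 - 4}$, a bound preserved both under replacing $l_0^{\sharp}$ by any larger $l_0$ and under further extension.

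Next I would extend $p_4^{\sharp}$ to $p_4$ purely by lengthening its $\wstrs$-component $\tau$ (keeping $\sigma$ and $k$ fixed, which is a legitimate $\Pcond$-extension) until $\lh{\tau_4} > \lh{\sigma_3}$, and set $q_4 := (p_4, r_2)$. Since $\xi$, $w$, $E$ are untouched and $p_4 \Pgeq p_3 \Pgeq p_2$, monotonicity carries the four $\Qcond$-clauses for $r_2$ down from $q_2$, so Lemma \ref{lem:q-condition-equiv} and \eqref{eq:q-extension-equiv} give $q_4 \in \Qcond$ with $q_4 \Qgeq q_3 \Qgeq q_2$, and $w_4 = w_2 = w_1$ is immediate. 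Finally I would name $l_0$ to be any integer exceeding $\max(l_0^{\sharp},\ \lh{\tau_4},\ \lh{\xi_2}\cdot 2^{w_1+4})$. Because $\delta \supfun \delta_0$ and $\lh{\delta} = \lh{\sigma_3}$ by Lemma \ref{lem:build-delta} we automatically have $\lh{\sigma_3} \geq \lh{\delta_0}$, hence $l_0 > \lh{\tau_4} > \lh{\sigma_3} \geq \lh{\delta_0}$; moreover $\lh{\xi_4}/l_0 = \lh{\xi_2}/l_0 < 2^{-w_1-4}$, and $p_4$ forces $\udensity[l_0](C) \leq \udensity[l_0^{\sharp}](C) \leq 2^{-w_1-4}$. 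The remaining forced clauses $\udensity(C) = 0$ and totality of $C$ are inherited from $p_3$, and ``$\lh{C} \geq l_0$'' is understood, exactly as the length clause functions in Definition \ref{def:q-condition} and Lemma \ref{lem:q-condition-equiv}, as recording that $C\restr{l_0}$ is well defined — a consequence of $C$ being forced total (an explicit witness is furnished by a later extension of $p_4$).

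The one genuinely delicate point is the bookkeeping: all of $l_0 > \lh{\tau_4} > \lh{\sigma_3} \geq \lh{\delta_0}$, $\lh{\xi_2}/l_0 < 2^{-w_1-4}$, the density bound at $l_0$, and both membership in and extension-to-$\Qcond$ must hold at once. This is why I would fix $p_4$ first — via Lemma \ref{lem:density-zero-force-small} followed by a pure $\tau$-extension — and only then choose $l_0$; carrying it out in the opposite order would manufacture a spurious circularity between $l_0 > \lh{\tau_4}$ and a literal existential-witness reading of $\lh{C} \geq l_0$, which the totality reading above sidesteps. Everything else is routine monotonicity of forcing and elementary estimates on densities.
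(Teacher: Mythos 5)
Your plan for the first three conjuncts matches the paper's proof exactly: extract totality and zero density of $C$ from $p_3 \forces \ED[f, \delta \triangleright S]$ (the moreover clause of Lemma \ref{lem:forcing-translation}), invoke Lemma \ref{lem:density-zero-force-small} with $k = w_1 + 4$, and then lengthen $\tau$ while enlarging $l_0$ to satisfy the bookkeeping inequalities. The derivation of $\lh{\sigma_3} \geq \lh{\delta_0}$ from Lemma \ref{lem:build-delta} and the inequality $\lh{\xi_4}/l_0 = \lh{\xi_2}/l_0 < 2^{-w_1-4}$ are also handled correctly.

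The gap is in your treatment of the final conjunct, $p_4 \forces \lh{C} \geq l_0$. You read this as ``recording that $C\restr{l_0}$ is well defined — a consequence of $C$ being forced total (an explicit witness is furnished by a later extension of $p_4$).'' That does not match the paper's forcing convention. Section \ref{ssec:effective-dense-conditions} states that the convention on existential forcing ``allows us to assume that a computations are forced by conditions iff the computation applied to the condition converges,'' and the discussion of Definition \ref{def:q-condition} is explicit: the analogous length clause there is ``modified to ensure that $p$ must specifically force the convergence of $\recset(f \Tplus S){E}$ on the domain of $\xi$, not merely be densely extended by conditions which do so.'' So $p_4 \forces \lh{C} \geq l_0$ means $p_4$ itself must exhibit a witnessing computation of $C$ on $[0, l_0)$, and a ``later extension'' does not discharge this. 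The paper's proof handles this by further extending $p_4$ to witness the convergence, then re-adjusting $l_0$ (``extending $\tau_4$ and increasing $l_0$ until the inequalities are satisfied''); your fix-$p_4$-first, then-choose-$l_0$ strategy simply does not produce the convergence witness. The circularity concern you raise between $l_0 > \lh{\tau_4}$ and the witness reading is a real tension that the proof needs to address head-on — not something that evaporates under a weaker reading of the forcing relation. You should replace the totality reading with the extend-and-readjust step.
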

\begin{proof}
We observe that \( p_3 \) forces the totality and \( 0 \) density of \( C \) follows by unpacking the \hyperref[eq:def-ED]{definition of} \( \ED[f, \delta \triangleright S]  \) which, by lemma \ref{lem:forcing-translation}, is forced by \( p_3 \).  Using this fact, apply lemma \ref{lem:density-zero-force-small} to find \( p_4 \Pgeq p_3  \) and \(  l_0 \) with \(  p_4 \forces \udensity[l_0](C) \leq 2^{-w_1-4} \) and note that we define \( w_4 = w_2 = w_1 \).  We can further extend \( p_4 \)  as needed to ensure \( C \) is forced to converge on \( [0, l_0) \)   and the other inequalities can be satisfied simply by extending \( \tau_4 \) and increasing \( l_0 \) until the inequalities are satisfied as \( \lh{\sigma_3} \geq \lh{\delta_0} \) was already guaranteed in lemma \ref{lem:build-delta}.  
\end{proof}

Fix \( q_4 \) as defined in the above lemma.  For the first time,  \( \tau_4 \) is defined on \( F \) so we define modified versions of \( \tau_4 \) we can use which take on different values on \( F \).

\begin{definition}\label{def:tau-prime}
\begin{equation*}
\tau^v_{i}(x) = \begin{cases}
                    \tau_i(x) & \text{if } x < \lh{\tau_i} \land x \nin F \\
                    v & \text{if } x \in F \land x \leq \lh{\tau_i} \\
                    \diverge & \text{otherwise}
                \end{cases}
\end{equation*}
Further define \( f^v = \tau^v_4 \triangleright f \), \( X^v_{\e} = \recfnl{\e}{f^v}{} \),   \( p^v_i = (\tau^v_i, \sigma_i, k_i) \) and \( q^v_i = (p^v_i, r_i) \). 
\end{definition} 

This definition will apply to conditions we have yet to define as well as \( p_4 \).  The conditions \( q_1, q_2, q_3 \) as well as their components are unaffected by this definition as \( \lh{\tau_3} = \lh{\tau_2} \leq \lh{\sigma_2} \) and \( F = [\lh{\sigma_2}, \lh{\sigma_2} + l_F) \) while \( \lh{\tau_4} > \lh{\delta_0} = \lh{\sigma_2} + l_F \).  Also, recall that in terms of the notation from definition \ref{def:condition-agree} we have \( p^r_i \agree[F] p^{r'}_i \).   We now show that we can find \( y, z \) and condition \( p_5 \) so that \( p^y_5 \) and \( p^z_5 \) agree on \( X_{\e}\restr{l_0} \).    

\begin{lemma}\label{lem:ramsey}
There is a condition \( p_5 \Pgeq p_4 \) and \( y, z \in \omega \)  such that  if \( q_5 = (p_5, r_4) \) then  for any \( v \in \set{y,z} \)
\begin{subequations}\label{eq:ramsey}
\begin{align}
& \lh{\recfnl{\e}{p^v_5}{}} \geq l_0 \label{eq:ramsey:Xe-long} \\
& \lh{\recset(p^v_5){E_5}} \geq l_0 \label{eq:ramsey:recset-long} \\
& \recfnl{\e}{p^y_5}{}\restr{l_0} = \recfnl{\e}{p^z_5}{}\restr{l_0} \label{eq:ramsey:agree}
\end{align}
\end{subequations}
\end{lemma}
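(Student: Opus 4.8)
The plan is to prove this by a pigeonhole argument (this is the ``Ramsey'' of the lemma's name): since \( \e \) is a set-index, \( \recfnl{\e}{p^v_5}{}\restr{l_0} \) ranges over only \( 3^{l_0} \) strings as \( v \) ranges over \( \omega \), so two values \( y \neq z \) must yield the same string. The real obstacle is that a \emph{single} condition \( p_5 \) must work for both \( y \) and \( z \) at once, whereas the extension of \( p^v_4 \) needed to make \( \recfnl{\e}{\cdot}{} \) and \( \recset(\cdot){E_1} \) converge on \( [0, l_0) \) will in general depend on \( v \). I would get around this with a greedy construction exploiting the monotonicity of convergence.

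First I would observe that for every \( v \) the condition \( p^v_4 \) extends \( p_3 \): by Lemma~\ref{lem:build-delta} we have \( \lh{\tau_3} \leq \lh{\sigma_2} = \min F \), so \( \dom \tau_3 \) is disjoint from \( F \), and hence \( \tau^v_4 \) still agrees with \( \tau_4 \supfun \tau_3 \) below \( \lh{\tau_3} \) while the \( \sigma \)- and \( k \)-components are untouched. Consequently \( p^v_4 \forces \ED[f, \delta \triangleright S] \) by Lemma~\ref{lem:forcing-translation}, and since \( q_3 = (p_3, r_2) \in \Qcond \) also \( p^v_4 \) forces \( \udensity(\recset(f \Tplus S){E_1}) = 0 \) and that \( \recfnl{\e}{f}{} \) and \( \recset(f \Tplus S){E_1} \) are total (via Lemma~\ref{lem:q-condition-equiv} and monotonicity of forcing).

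Next, for a condition \( p'' \Pgeq p_4 \) and \( v \in \omega \), let \( p''[v] \) denote the condition obtained by resetting \( \tau'' \) to the constant \( v \) on \( F \); this is well-defined since \( F \subset \dom \tau_4 \subseteq \dom \tau'' \) as \( \lh{\tau_4} > \lh{\delta_0} \) (Lemma~\ref{lem:C-density-above-l}), and one checks \( p''[v] \Pgeq p^v_4 \). Let \( D_v \) be the set of \( p'' \Pgeq p_4 \) with \( \recfnl{\e}{p''[v]}{}\restr{l_0}\conv \) and \( \recset(p''[v]){E_1}\restr{l_0}\conv \). I would verify that each \( D_v \) is upward closed above \( p_4 \) — because extending \( p'' \) extends \( p''[v] \) and a convergent computation stays convergent — and dense above \( p_4 \): given \( p'' \Pgeq p_4 \), the condition \( p''[v] \Pgeq p^v_4 \) forces the totality of \( \recfnl{\e}{f}{} \) and \( \recset(f \Tplus S){E_1} \), so it has an extension \( p_* \) witnessing both convergences on \( [0, l_0) \); resetting the \( F \)-coordinates of \( \tau_* \) back to \( \tau''\restr F \) yields an extension \( p''' \Pgeq p'' \) with \( p'''[v] = p_* \), so \( p''' \in D_v \).

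Finally I would run the greedy construction: put \( p^{(0)} = p_4 \), and given \( p^{(k)} \) use density of \( D_k \) to pick \( p^{(k+1)} \in D_k \) with \( p^{(k+1)} \Pgeq p^{(k)} \); by upward closure \( p^{(k+1)} \in D_0 \isect \dots \isect D_k \). After \( M = 3^{l_0} + 1 \) steps the \( M \) strings \( \recfnl{\e}{p^{(M)}[v]}{}\restr{l_0} \) for \( v < M \) all converge and lie in a set of size \( 3^{l_0} \), so by pigeonhole there are \( y \neq z \) below \( M \) with \( \recfnl{\e}{p^{(M)}[y]}{}\restr{l_0} = \recfnl{\e}{p^{(M)}[z]}{}\restr{l_0} \). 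Taking \( p_5 = p^{(M)} \) and \( q_5 = (p_5, r_4) \), Definition~\ref{def:tau-prime} gives \( p^v_5 = p^{(M)}[v] \) for \( v \in \set{y,z} \), and since \( r_4 = r_2 \) (so \( E_5 = E_1 \)) the fact that \( p_5 \in D_y \isect D_z \), together with the pigeonhole, is exactly \eqref{eq:ramsey:Xe-long}, \eqref{eq:ramsey:recset-long} and \eqref{eq:ramsey:agree}; monotonicity of forcing also yields \( q_5 \in \Qcond \). The delicate point throughout is the density and upward-closure bookkeeping for the \( D_v \), which is why I would set everything up over conditions extending \( p_4 \) equipped with the uniform reset operation \( p'' \mapsto p''[v] \), rather than over conditions extending the individual \( p^v_4 \).
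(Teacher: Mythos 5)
Your proposal is correct and rests on the same essential mechanism as the paper's proof: extend above \( p_4 \) in a finite chain so that, after sufficiently many steps, the reset computations \( \recfnl{\e}{p[v]}{} \) and \( \recset(p[v]){E_1} \) all converge on \( [0,l_0) \) for every \( v \) up to the pigeonhole bound \( 3^{l_0} \), then pick \( y \neq z \) producing the same restriction. The paper presents this more concretely: it builds strings \( \hat{\tau}^n = \tau^n_4 \concat \nu^n \) with monotonically increasing ``tails'' \( \nu^n \), applies pigeonhole to the \( \recfnl{\e}{}{} \)-computations only, and then in a \emph{second} pass picks \( p \Pgeq (\hat{\tau}^z, \sigma_4, k_4) \) and \( \dot{p} \) to additionally force convergence of \( \recset(\cdot){E_5} \) for the two chosen values \( y, z \). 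Your packaging of the monotonicity trick into a family of upward-closed dense sets \( D_v \) (with the key observation that extending \( p'' \) extends \( p''[v] \), so the chain \( p^{(k)} \) accumulates membership in every prior \( D_v \)) does both jobs at once and avoids the second pass, and it makes explicit the dependence on \( \lh{\tau_3} \leq \lh{\sigma_2} = \min F \) that the paper leaves implicit when it asserts that \( \tau^n_4 \supfun \tau_3 \). The density argument for \( D_v \) and the check that \( q_5 \in \Qcond \) by monotonicity of forcing over \( r_4 = r_2 \) both hold up; in short, this is a correct and somewhat tidier reorganization of the paper's own greedy--chain--plus--pigeonhole argument rather than a genuinely different route.
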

This proof requires a fair bit of bookkeeping but the basic intuition is very simply \mydash we build an increasing sequence of compatible strings \( \nu^v \) which we concatenate with the strings \( \tau^v_4 \) to make \( \recfnl{\e}{\tau^v_4 \concat \nu^v}{} \) converge on a long enough initial segment.  We then use the pigeonhole principle to find \( y, z \) giving compatible computations.  By a similar extend and copy process we can produce the conditions \( p^y_5, p^z_5 \).
\begin{proof}

Let \( \hat{\tau}^{0} \) be the shortest extension of \(  \tau^{0}_4  \)  satisfying \( \lh{\recfnl{\e}{\hat{\tau}^{0}}{}} \geq l_0 \)   \mydash note that only the unhatted versions are within the scope of definition \ref{def:tau-prime}.  Now assume that \( \hat{\tau}^{n} \supfun \tau^n_4 \) is defined and \( \nu^{n} \) is the unique string satisfying \( \hat{\tau}^{n}  = \tau^n_4\concat \nu^{n}  \) and let \( \hat{\tau}^{n+1} \) be the least extension of \( \tau^{n+1}_4\concat \nu^{n} \) such that \( \lh{\recfnl{\e}{\hat{\tau}^{0}}{}} \geq l_0 \).  The string \( \hat{\tau}^n \) must exist since \( \tau^{n}_4 \supfun \tau_3  \) and \( p_3  \) \mydash and hence \( \tau_3 \) since the sentence doesn't mention \( S \) \mydash forces the totality of \( \recfnl{\e}{f}{} \).       

As there are only \( 3^{l_0} \) possible elements of \( \set{0, 1, \Box}^{l_0} \) but there are infinitely many strings \( \hat{\tau}^n \) by the pigeonhole principle there must be \( y < z \) such that \( \recfnl{\e}{\hat{\tau}^{y}}{}\restr{l_0} = \recfnl{\e}{\hat{\tau}^{z}}{}\restr{l_0} \).  Now pick some \( p = (\hat{\tau}^z\concat \theta, \sigma, k) \) such that  \( p \Pgeq (\hat{\tau}^z, \sigma_4, k_4)  \) and  \( \lh{\recset(p){E_5}} \geq l_0 \).  As \( E_5 = E_1 \) and \( (\hat{\tau}^z, \sigma_4, k_4) \Pgeq p_1 \) such an extension must exist as \( p_1 \) forces the totality of \( \recset(f \Tplus S){E_1} \).  Define \( \dot{p} \) to be a condition satisfying \( \dot{p} \Pgeq (\hat{\tau}^{y}\concat \nu^z \concat \theta, \sigma, k)  \) such that \( \lh{\recset(\dot{p}){E_5}} \geq l_0 \) \mydash this is guaranteed to exist by the same argument above. 

Finally, define \( \tau_5 \) to be the unique extension of \( \tau_4 \) satisfying \( \tau_5 \agree[F] \dot{\tau} \) and \( p_5 = (\tau_5, \dot{\sigma}, \dot{k}) \).  A careful, but uninteresting, analysis shows the resulting conditions \( p^y_5, p^z_5 \) have the desired features.        
\end{proof}

Fix \( p_5, y, z \) and \( q_5 = (p_5, r_4) \) and continue in the next subsection.

\subsection{Hiding Disagreement}   

While, we will shortly prove, that \( C \) covers any differences between \( X^y_{\e} \) and \( X^z_{\e} \) on their strong domain we also need to ensure that  \( X_{\e}^{y, \Box U}  \) and \(  X_{\e}^{z, \Box U} \) have the same strong domain.  We deal with this by defining the set \( O \) to contain \( C \) and all \( x \) such that either \( X_{\e}^{z, \Box U}(x) = \Box \) or  \( X_{\e}^{y, \Box U}(x) = \Box \). 

\begin{definition}\label{def:obscuring-set}
\[
O \eqdef \recset(f \Tplus S){o} \eqdef \set{x}{\recfnl{\e}{f^y}{x} = \Box  } \union \set{x}{\recfnl{\e}{f^z}{x} = \Box  } \union C \union \recset(f^y \Tplus S){E_1} \union \recset(f^z \Tplus S){E_1}
\]
\end{definition}

We now verify that the set \( O \) define above has sufficiently small density above \( l_0 \) for us to add it to \( U \).   

\begin{lemma}\label{lem:o-small}
The conditions \( p_5, p^y_5 \) and \( p^z_5 \) all force the following sentence.  
\[
\udensity(O) = 0 \land \forall(x)\exists(s)O(x)\conv \land \udensity[l_0]({O}) \leq 2^{-w_1 -1} = 2^{-w_5 -1}
\]
\end{lemma}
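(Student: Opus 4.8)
The plan is to bound the density of $O = \recset(f\Tplus S){o}$ above $l_0$ by a sum of five contributions, one for each set appearing in Definition \ref{def:obscuring-set}, and to observe that each of these is either already controlled by a sentence forced by $p_4$ (hence by $p_5, p^y_5, p^z_5$, which extend $p_4$ modulo $F$) or is controlled by $\ED$ applied to the modified constant $f^v$. First I would note that $p_5 \Pgeq p_4 \Pgeq p_3$, and that the sentences asserting $\udensity(O)=0$, the totality of $O$, and the density bound $\udensity[l_0](O)\leq 2^{-w_1-1}$ do not mention $S$ directly except through the $f\Tplus S$-computable sets, so by Lemma \ref{lem:hidden-no-effect} (applied with $F\subset \sigma$, since $\sigma_5 \supfun \sigma_2$ and we arranged $F$ to lie below $\lh{\sigma_2}$ — actually $F$ sits just above $\lh{\sigma_2}$, so I must instead check that the only places $f$ is consulted on $F$ in the defining formula for $O$ are as $f^y, f^z$, where the values on $F$ are hard-coded by $\tau^y_5, \tau^z_5$ and not drawn from the oracle) it suffices to prove the three conditions are forced by $p_5$ (resp.\ $p^y_5$, $p^z_5$). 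Concretely: $\recfnl{\e}{f^v}{\cdot}$ for $v\in\{y,z\}$ uses $f$ only through a fixed finite modification of $\tau_5$, so the forcing of its properties by $p_5^v$ reduces to facts about $\recfnl{\e}{f}{\cdot}$ forced by $p_3$ (via Lemma \ref{lem:forcing-translation}\eqref{eq:forcing-translation:f}), namely totality and $0$ density of its $\Box$-set.

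Next I would assemble the density bound. The set $C$ satisfies $\udensity[l_0](C)\leq 2^{-w_1-4}$ and $\udensity(C)=0$ and is forced total by $p_4$ by Lemma \ref{lem:C-density-above-l}. The sets $\set{x}{\recfnl{\e}{f^v}{x}=\Box}$ for $v\in\{y,z\}$: here I would invoke \eqref{eq:extend-q-small:i-box-small}, which gives $p_2 \forces \udensity[\lh{\tau_2}](\set{y}{X_\e(y)=\Box}) < 2^{-w_1-4}$, and then transfer this to $f^v$ via Lemma \ref{lem:forcing-translation}\eqref{eq:forcing-translation:f} together with \eqref{eq:forcing-translation:shield} (the modification $\tau^v_4$ changes $f$ only on $F$, which is bounded, so the upper density above some large enough $l_0$ is unchanged — this is where I need $l_0$ chosen large relative to $\lh{\tau_4}$, which Lemma \ref{lem:C-density-above-l} guarantees). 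Finally $\recset(f^v\Tplus S){E_1}$ for $v\in\{y,z\}$: by \eqref{eq:extend-q-small:q-condition-small} we have $p_2\forces \udensity[\lh{\xi_2}](\xi_2 \union \recset(f\Tplus S){E_1}\restr{[\lh{\xi_2},\infty)}) \leq 2^{-w_1-4}$, hence in particular $\udensity(\recset(f\Tplus S){E_1})$ is forced to be at most $2^{-w_1-4}$ above a suitable point; again transferring to $f^v$ by the $F$-localization argument since $E_1$-computations from $f^v\Tplus S$ agree with those from $f\Tplus S$ outside the bounded set $F$ up to a finite modification. Summing: $\udensity[l_0](O) \leq 5\cdot 2^{-w_1-4} < 2^{-w_1-1}$, and $\udensity(O)=0$ follows since each piece has upper density $0$ by Observation \ref{obs:density} (the union of finitely many $0$-density sets has $0$ density). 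Totality of $O$ follows because each of the five defining sets is forced total.

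The main obstacle I anticipate is the bookkeeping around the modification on $F$: I must be careful that changing $f$ to take values $y$ or $z$ on $F$ does not spoil the density estimates for the $E_1$-computable and $\e$-computable sets. The point is that $F$ is a single bounded interval, so any $f\Tplus S$-computable set $A$ and its counterpart $A^v$ (computed from $f^v\Tplus S$) satisfy $A^v \symdiff A \subset \{x : \use{\text{computation at }x} > \min F\}$ in general, which need not be small — so this naive approach fails. Instead the correct move is: the relevant sentences are all of the form "$\udensity[l_0](\cdot) \leq \epsilon$", and by Lemma \ref{lem:forcing-translation}\eqref{eq:forcing-translation:f}, $p^v_5 \forces \psi(f^v, S)$ iff $(p^v_5)^{\tau^v_4\triangleright f}$-type manipulation — more simply, $p^v_5$ literally \emph{is} a condition whose $f$-component is $\tau^v_5$, so $\recset(f^v\Tplus S){E_1}$ as computed from the \emph{oracle} $f^v\Tplus S$ is exactly $\recset{E_1}$ relativized to that oracle, and the forcing of its density properties by $p^v_5$ is a direct statement about the condition $p^v_5$. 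What I actually need is that these forcing facts for $p^v_5$ follow from the forcing facts for $p_2$ (about the unmodified $f$); for this I would argue that $p^v_5 \Pgeq p_2$ "up to $F$" and that the forced $\Pi^0$ density sentences are insensitive to finitely-bounded changes in the $f$-component because their truth depends only on the tail behavior of the set above $l_0 > \lh{\tau_4} > \min F + l_F$, combined with the uniform convergence / use conventions. Pinning down exactly this insensitivity — i.e.\ proving a clean lemma that "$p \agree[F] p'$ and $\lh{\tau} > $ (use bound) implies $p, p'$ force the same density-above-$l_0$ sentences about $f\Tplus S$-computable sets when $l_0$ is large enough" — is the technical heart, and I expect it to follow from an argument parallel to Lemma \ref{lem:forcing-translation} but requires the explicit observation that we chose $l_0$ after fixing $\tau_4$, so that the modifications on $F$ are "already absorbed" into the fixed initial segment on which the condition decides the finitely many relevant computations.
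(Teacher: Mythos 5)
Your decomposition of $O$ into five pieces, the sources you cite for the $2^{-w_1-4}$ bounds on each, and the final summation $5 \cdot 2^{-w_1-4} \leq 2^{-w_1-1}$ all match the paper. The gap is in the transfer step, where you explicitly second-guess your first (correct) instinct and replace it with a lemma that would actually be false. You propose proving that ``$p \agree[F] p'$ and $\lh{\tau} >$ (use bound) implies $p, p'$ force the same density-above-$l_0$ sentences about $f\Tplus S$-computable sets when $l_0$ is large enough.'' But as you yourself noted one sentence earlier, changing $f$ on $F$ can change $\recset(f \Tplus S){E_1}(x)$ for infinitely many $x$ above any $l_0$, so the upper density of that set above $l_0$ genuinely \emph{can} change; the fact that you chose $l_0$ after $\tau_4$ gives you nothing, because the ``already absorbed'' intuition is an intuition about use bounds, and the use at $x$ exceeds $\max F$ for infinitely many $x$.

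What actually closes the gap requires no new lemma. First, $p^v_5 \Pgeq p_2$ \emph{literally}, not merely ``up to $F$'': since $\lh{\tau_2} \leq \lh{\sigma_2}$ and $F = [\lh{\sigma_2}, \lh{\sigma_2}+l_F)$, the interval $F$ is disjoint from $\dom \tau_2$, so $\tau^v_5 \supfun \tau_2$. Hence $p^v_5$ inherits every sentence $\psi(f, S)$ forced by $p_2$ (or $p_3$), including the density bounds. Second, the sentence you need forced is $\psi(f^v, S) = \psi(\tau^v_4 \triangleright f, S)$, and Lemma~\ref{lem:forcing-translation}\eqref{eq:forcing-translation:f} shows $p^v_5 \forces \psi(\tau^v_4 \triangleright f, S) \iff (\tau^v_4 \triangleright \tau^v_5, \sigma_5, k_5) \forces \psi(f, S)$, and $\tau^v_4 \triangleright \tau^v_5 = \tau^v_5$, so the right side is just $p^v_5 \forces \psi(f, S)$, which you already have. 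The role of \eqref{eq:forcing-translation:shield} (which the paper cites) is to get the cross cases: $p_5$ and $p^{v'}_5$ for $v' \neq v$ each agree with $p^v_5$ modulo $F \subset [0, \lh{\tau^v_4})$, so they force the same $\psi(\tau^v_4 \triangleright f, S)$ sentences. This is a purely syntactic fact about forcing of sentences that by construction ignore $f$ below $\lh{\tau^v_4}$ — no use bounds, no tail insensitivity, no new lemma. A smaller issue: for $C$, the transfer is via the second branch of Lemma~\ref{lem:hidden-no-effect}, using $F \subset \delta$ (since $\delta_0$ is identically $1$ on $F$), which is different from and cleaner than your attempted $F \subset \sigma$ route.
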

\begin{proof}
We first verify that totality and \( 0 \) density are forced by \( p_5 \).   It is enough to consider each component of \( O \) separately.   The claims for \( C \) with respect to \( p_5 \) is proved in lemma \ref{lem:C-density-above-l} and since \( C \) is defined in terms of \( \delta \triangleright S \) we can apply lemma \ref{lem:hidden-no-effect} to infer that these claims are also forced by \( p^y_5 \) and \( p^z_5 \).  Since \( q_1 \in \Qcond \) and \( p_0 \forces \ED \) we have \[
p_3 \forces \udensity(\recset(f \Tplus S){E_1}) = 0  \land \udensity(\set{x}{\recfnl{\e}{f}{x} = \Box  }) = 0 \land \recset(f \Tplus S){E_1}\conv \land \recfnl{\e}{f}{}\conv  
\] 
Appealing to \eqref{eq:forcing-translation:shield} lets us infer from the above equation that \( p_5, p^y_5 \) and \( p^z_5 \) all force the following for \( v = y,z \).  
\[
\udensity(\recset(f^v \Tplus S){E_1}) = 0  \land \udensity(\set{x}{\recfnl{\e}{f^v}{x} = \Box  }) = 0 \land \recset(f^v \Tplus S){E_1}\conv \land \recfnl{\e}{f^v}{}\conv 
\]
This leaves only the claims about bounding to prove. By  lemma \ref{lem:C-density-above-l}  we have \( p_5 \forces \udensity[l_0](C) \leq 2^{w_1 -4} \)  and appealing to \eqref{eq:extend-q-small:q-condition-small} and \eqref{eq:extend-q-small:i-box-small} we see that (as \( \lh{\xi_2}, \lh{\tau_2} \leq l_0 \)) that  \( p_3 \) forces \( \udensity[l_0](\set{y}{X_{\e}(y) = \Box}) < 2^{-w_1-4} \) and \(  \udensity[l_0]({ \recset(f \Tplus S){E_1}  })  < 2^{-w_1-4}  \).  By the same appeal above to \eqref{eq:forcing-translation:shield} we can parlay this into the fact that all of \( p_5, p^y_5 \) and \( p^z_5 \) force the versions of these claims where \( f \) is replaced with \( f^y \) or \( f^z \).  Putting this together reveals that the conditions \( p_5, p^y_5  \) and \( p^z_5  \) all force \( \udensity[l_0]({O}) \leq 5 \cdot 2^{-w_1 -4} \leq 2^{-w_1 -1} = 2^{-w_5 -1} \) completing the proof. 
\end{proof}

We now show that we can build extensions \( q^v_6 \) of \( q^v_5 \)  that place \( O\restr{[l_0, \infty)} \) into \( U \) \mydash adopting the convention that \( v \) always ranges over \( \set{y,z} \).       
.   

\begin{lemma}\label{lem:add-o-to-cond}
If \( \xi_6 \in \bstrs \) is defined by  \[
\xi_6(x) = \begin{cases}
                    \xi_2(x) & \text{if } x < \lh{\xi_2} \\
                    1 & \text{if } \lh{\xi_2} \leq x < l_0 \land x \in \recset(p^y_5){E} \union \recset(p^z_5){E} \\
                    0 &  \text{if } \lh{\xi_2} \leq x < l_0 \land x \nin \recset(p^y_5){E} \union \recset(p^z_5){E}\\
                    \diverge & \text{if } x \geq l_0
                \end{cases}
\]
and \( p_6 = p_5 \)  \( w_6 = w_5 = w_1 \) and \( E_6 = E_1 \union \set{o} \) then \( q^v_6 \Qgeq q^v_5 \).
\end{lemma}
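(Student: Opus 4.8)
The plan is to reduce $q^v_6 \Qgeq q^v_5$ to the unpacked extension criterion~\eqref{eq:q-extension-equiv} of Lemma~\ref{lem:q-condition-equiv}, which requires that both $q^v_5$ and $q^v_6$ lie in $\Qcond$ and then that four essentially syntactic clauses hold. I would first record that $q^v_5 \in \Qcond$: since $F$ lies above $\dom\tau_2$ (we have $\lh{\tau_2} = \lh{\tau_3} \leq \lh{\sigma_2}$ by Lemma~\ref{lem:build-delta}) the modification on $F$ does not disturb $p^v_5 \Pgeq p_2 \Pgeq p_1$, and as $r_5 = r_4 = r_2 = (\xi_2, w_1, E_1)$ the totality and density-zero clauses for $\recset(f \Tplus S){E_1}$ follow from $q_1 \in \Qcond$ by monotonicity of forcing, clause~\ref{lem:q-condition-equiv:respects-w} follows from~\eqref{eq:extend-q-small:q-condition-small}, and clause~\ref{lem:q-condition-equiv:length} from~\eqref{eq:ramsey:recset-long} together with $l_0 > \lh{\xi_2}$ (Lemma~\ref{lem:C-density-above-l}).

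The substance is verifying $q^v_6 = (p^v_5, \xi_6, w_1, E_1 \union \set{o}) \in \Qcond$ by checking the four parts of Lemma~\ref{lem:q-condition-equiv}. Write $O = \recset(f \Tplus S){o}$ as in Definition~\ref{def:obscuring-set}. Parts~\ref{lem:q-condition-equiv:sets-total} and~\ref{def:q-condition:density-zero} for $\recset{E_1 \union \set{o}}$ reduce, via $\recset{E_1 \union \set{o}} = \recset{E_1} \union O$ and Observation~\ref{obs:density}, to the same assertions for $\recset{E_1}$ (from $q^v_5 \in \Qcond$) and for $O$, both of which $p^v_5$ forces by Lemma~\ref{lem:o-small}. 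For part~\ref{lem:q-condition-equiv:length} I would check that $p^v_5$ witnesses convergence of $\recset(f \Tplus S){E_1 \union \set{o}}$ below $l_0 = \lh{\xi_6}$: $\recset(p^v_5){E_1}$ reaches $l_0$ by~\eqref{eq:ramsey:recset-long}; $\recfnl{\e}{f^y}{x}$ and $\recfnl{\e}{f^z}{x}$ are defined for $x < l_0$ by~\eqref{eq:ramsey:Xe-long}; $C(x)$ is defined for $x < l_0$ by Lemma~\ref{lem:C-density-above-l}, using $F \subset \delta$ so that $C$ never consults $f\restr{F}$ and hence is computed identically from $p_5$, $p^y_5$ and $p^z_5$ (Lemma~\ref{lem:hidden-no-effect}); and $\recset(f^y \Tplus S){E_1}$, $\recset(f^z \Tplus S){E_1}$ reach $l_0$ again by~\eqref{eq:ramsey:recset-long}. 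So every disjunct of $O$ converges below $l_0$ under $p^v_5$.

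For part~\ref{lem:q-condition-equiv:respects-w}, that $p^v_5$ forces $\udensity[l_0](\xi_6 \union \recset(f \Tplus S){E_1 \union \set{o}}\restr{[l_0, \infty)}) \leq 2^{-w_1}$, I would bound three contributions separately: the density of $\xi_6$ at $l_0$, which is small because on $[\lh{\xi_2}, l_0)$ the string $\xi_6$ merely copies $\recset(p^y_5){E_1} \union \recset(p^z_5){E_1}$ and every initial segment of that set is controlled by~\eqref{eq:extend-q-small:q-condition-small} applied \mydash via the shielding identity~\eqref{eq:forcing-translation:shield}, exactly as in the proof of Lemma~\ref{lem:o-small} \mydash with $f^y$ and $f^z$ in place of $f$; the density of $\recset{E_1}$ above $l_0$, bounded by~\eqref{eq:extend-q-small:q-condition-small}; and the density of $O$ above $l_0$, bounded by $2^{-w_1 - 1}$ by Lemma~\ref{lem:o-small}. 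The same estimate on $\card{\xi_6\restr{l}}/l$ for $\lh{\xi_2} \leq l \leq l_0$ simultaneously gives $(\xi_6, w_1) \in \Icond$ and $(\xi_2, w_1) \Ileq (\xi_6, w_1)$. Then~\eqref{eq:q-extension-equiv} for $q^v_6 \Qgeq q^v_5$ is immediate: $p^v_5 \Pleq p^v_5$ and $E_1 \subseteq E_1 \union \set{o}$ are trivial, $(\xi_2, w_1) \Ileq (\xi_6, w_1)$ was just obtained, and if $\lh{\xi_2} \leq y < l_0$ with $\xi_6(y) = 0$ then by definition $y \nin \recset(p^y_5){E_1} \union \recset(p^z_5){E_1}$, while both of these sets are defined at $y$ (again by~\eqref{eq:ramsey:recset-long}), so $\recset(p^v_5){E_1}(y)\conv = 0$.

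The main obstacle I anticipate is not any single step but the bookkeeping needed to be sure the three density contributions in part~\ref{lem:q-condition-equiv:respects-w} really close up below $2^{-w_1}$ \mydash this uses the slack $2^{-w_1-4}$ deliberately left in~\eqref{eq:extend-q-small:q-condition-small} and~\eqref{eq:extend-q-small:i-box-small}, the slack $2^{-w_1-1}$ in Lemma~\ref{lem:o-small}, and $\lh{\xi_2}/l_0 < 2^{-w_1-4}$ from Lemma~\ref{lem:C-density-above-l} \mydash together with keeping straight which of $p_5$, $p^y_5$, $p^z_5$, $p^v_5$ is forcing each sub-fact, leaning throughout on~\eqref{eq:forcing-translation:shield} and on $F \subset \delta$ to transfer facts forced about $f$ to the corresponding facts about $f^y$ and $f^z$.
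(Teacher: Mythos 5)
Your proposal is correct and follows essentially the same path as the paper's proof: both verify $q^v_6 \Qgeq q^v_5$ by unpacking the criterion of Lemma~\ref{lem:q-condition-equiv}/\eqref{eq:q-extension-equiv}, reduce the totality and density-zero clauses to Lemma~\ref{lem:o-small} and $q_5 \in \Qcond$, check convergence below $l_0$ via Lemma~\ref{lem:ramsey} and Lemma~\ref{lem:C-density-above-l}, and close the density bookkeeping for clause~\ref{lem:q-condition-equiv:respects-w} by combining the $2^{-w_1-4}$ slack from~\eqref{eq:extend-q-small:q-condition-small} (transferred to $f^y, f^z$ via~\eqref{eq:forcing-translation:shield}), the $\lh{\xi_2}/l_0$ bound from Lemma~\ref{lem:C-density-above-l}, and the $2^{-w_1-1}$ bound on $O$ from Lemma~\ref{lem:o-small}. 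The only differences are cosmetic (you make the $q^v_5 \in \Qcond$ step explicit where the paper leaves it implicit, and you group the density contributions in three terms rather than four), neither of which changes the substance.
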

We  \textbf{don't} claim that \( q_6 \) is even in \( \Qcond \) but we define it for notational convenience.  
\begin{proof}
First we check that \( (\xi_6, w_1) \Igeq (\xi_2, w_1) \).  By our definition above we see that 
\begin{align*}
\udensity[\lh{\xi_2}][\lh{\xi_6}]({\xi_6}) &= \udensity[\lh{\xi_2}][l_0]({\xi_2 \union \recset(p^y_5){E}\restr{[\lh{\xi_2}, l_0)} \union \recset(p^z_5){E}\restr{[\lh{\xi_2}, l_0)} }) \leq \phantom{X}\\
&\udensity[\lh{\xi_2}][l_0]({\xi_2 \union \recset(p^y_5){E}\restr{[\lh{\xi_2}, l_0)} }) + \udensity[\lh{\xi_2}][l_0]({\xi_2 \union \recset(p^y_5){E}\restr{[\lh{\xi_2}, l_0)} }) \leq 2 \cdot 2^{-w_1-4}  \leq  2^{-w_1}
\end{align*}
where the last line follows from the fact that \eqref{eq:extend-q-small:q-condition-small} tells us \[
p_2 \forces \udensity[\lh{\xi_2}]({ \xi_2 \union \recset(f \Tplus S){E_1}\restr{[\lh{\xi_2}, \infty)}  }) \leq 2^{-w_1-4} 
\]
and appealing to \eqref{eq:forcing-translation:shield}  of lemma \ref{lem:forcing-translation} shows us that \(  p^v_4 \) forces the versions with \( f \) replaced with \( f^y \) and \( f^z \).   As \( w_1 \) is shared by both \( (\xi_6, w_1) \) and \( (\xi_2, w_1) \) this is enough to demonstrate both that \( (\xi_6, w_1) \in \Icond \) and \( (\xi_6, w_1) \Igeq (\xi_2, w_1) \).  We clearly have \( \xi_6 \supset \recset(p^v_5){E}  \) showing we satisfy \eqref{eq:q-extension-equiv} of lemma \ref{lem:q-condition-equiv}.  Parts \ref{def:q-condition:density-zero} and \ref{lem:q-condition-equiv:sets-total} of that lemma's conditions for extension are guaranteed by lemma \ref{lem:o-small} and the fact that \( q_5 \in \Qcond \).  In lemma \ref{lem:ramsey} we explicitly built \( p^v_5 \) to force convergence of \( \recset(f \Tplus S){E_1} \) on an initial segment of length \( l_0 \).  To see that \( O \) also converges on \( [0, l_0) \) we put together the fact that \( p_4 \) ensures  \( C \) does in lemma \ref{lem:C-density-above-l} \mydash since \( C \) ignores \( f \) on \( F \) so does \( p^v_6 \) \mydash  and that the other parts of \( O \) were guaranteed to have sufficent length in lemma \ref{lem:ramsey}.  This verifies part \ref{lem:q-condition-equiv:length} of lemma \ref{lem:q-condition-equiv} leaving us only to verify that \[ 
p^v_6 \forces \udensity[l_0]({ \xi_6 \union \recset(f \Tplus S){E_6}\restr{[l_0, \infty)} }) \leq 2^{-w_6} 
\]  
Finally, we observe that \mydash letting \( v' \in \set{y,z} \) with \( v' \neq v \) and understanding the entire equation to be evaluated in the context of forcing by \( p^v_6 \) \mydash 
\begin{align*}
p^v_6 \forces &\udensity[l_0]({ \xi_6 \union \recset(f \Tplus S){E_6}\restr{[l_0, \infty)} }) \leq  \udensity[l_0]({\xi_2 \union \recset(p^y_5){E_1}\restr{l_0} \union \recset(p^z_5){E_1}\restr{l_0} \union \recset(f \Tplus S){E_1}\restr{[l_0, \infty)} \union O\restr{[l_0, \infty)} })\leq \phantom{X} \\
&\frac{\card{\xi_2}}{l_0} + \udensity[l_0]({ \recset(f \Tplus S){E_1} }) + \udensity[l_0]({ \recset(p^{v'}_5){E_1}\restr{l_0} }) + \udensity[l_0]({ O\restr{[l_0, \infty)} }) \leq \phantom{X} \\
& 2^{-w_1 -4} + 2^{-w_1 -4}  + 2^{-w_1 -4} + 2^{-w_1 -1} \leq 2^{-w_1} = 2^{-w_6}
\end{align*}
where the final line is (in order) by lemma \ref{lem:C-density-above-l}, equation \eqref{eq:extend-q-small:q-condition-small} (as \( p^v_6, p^{v'}_6 \Pgeq p_2 \)) and lemma \ref{lem:o-small}.  This completes the proof. 
\end{proof}

We fix \( q^y_6, q^z_6 \) having all but finished building our conditions.  

\subsection{Verification}

In this subsection we show that we really have hidden all disagreement in \( U \) and use \( q^y_6 \) and \( q^z_6 \) to produce the condition required by proposition \ref{prop:extendable-conditions}.  First, we observe that our sets \( O \) and \( C \) have the same members when defined with respect to \( f^y \) or \( f^z \).     

\begin{lemma}\label{lem:same-X-U}
If \( q^y_7 \Qgeq q^y_6 \) and \( q^y_7 \forces X_{\e}^{\Box U}(x) = a \) then there is some \( q^y_8 \Qgeq q^y_7 \) with \( q^z_8 \forces X_{\e}^{\Box U}(x)\conv = a \). 
\end{lemma}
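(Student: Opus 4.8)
The plan is to show that the conditions $q^y_6$ and $q^z_6$ have already pinned $X_{\e}^{\Box U}$ down to a single function: for \emph{every} $U$ consistent with $q^y_6$ (equivalently with $q^z_6$), the reading of $X_{\e}^{\Box U}$ off a condition carrying $y$ on $F$ and the reading off a condition carrying $z$ on $F$ coincide. Granting this, the lemma reduces to extending $q^y_7$ so that the $z$-twin $q^z_8$ can \emph{witness} the relevant finite piece of $X_{\e}^{\Box U}$ rather than diverge or return a different value.

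First I would record the agreement. On $[0,l_0)$ it is immediate from~\eqref{eq:ramsey:agree}: since $\tau^v_5 \subfun \tau^v_7 \subfun \tau^v_8$ and $\lh{\recfnl{\e}{p^v_5}{}} \geq l_0$ for $v \in \set{y,z}$ by~\eqref{eq:ramsey:Xe-long}, the value of $\recfnl{\e}{f}{}\restr{l_0}$ is the same under the $y$-reading and the $z$-reading. For $x \geq l_0$: by Lemma~\ref{lem:add-o-to-cond} and $o \in E_6$, any $U$ extending $q^v_6$ has $O\restr{[l_0,\infty)} \subset U$, so $X_{\e}^{\Box U}(x) = \Box$ whenever such an $x$ lies in $O$; and if $x \geq l_0$ does \emph{not} lie in $O$ then, because $C \subset O$ and the $\Box$-locations of both $X_{\e}$-readings lie in $O$ by Definition~\ref{def:obscuring-set}, both readings are defined, are $\neq \Box$, and — using the moreover of Lemma~\ref{lem:forcing-translation} (that $p_3 \forces \ED[f, \delta \triangleright S]$), the fact that $\recfnl{\i}{f^{\Box \delta \triangleright S}}{x}$ ignores $f$ on $F$ (Lemma~\ref{lem:hidden-no-effect}), and~\eqref{eq:forcing-translation:shield} — both equal $\recfnl{\i}{f^{\Box \delta \triangleright S}}{x}$, hence each other. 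Finally the set $O$ is genuinely the \emph{same} set under either reading: it is $F$-independent through $C$ by Definition~\ref{def:C-set} and manifestly symmetric in the two alternatives everywhere else, and each copy of $\recset(f \Tplus S){E_1}$ occurring in $O$ is already one of $O$'s own summands. Thus $\recset(f \Tplus S){E_6}$ denotes $O$ under either reading, so the $\Qcond$-clauses of Lemma~\ref{lem:q-condition-equiv} carried by $q^v_6$ reduce to forcing facts about the single set $O$; these facts are interpretation-independent, which is exactly why $q^z_6 \in \Qcond$ in Lemma~\ref{lem:add-o-to-cond}, and it lets us pass freely between the $y$- and $z$-twins of any extension that enlarges $E$ only by indices for sets defined $F$-symmetrically.

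Now I would split on $a$. If $a = \Box$ then $q^y_7 \forces x \in U$, which means either $x < \lh{\xi_7}$ with $\xi_7(x)=1$, or $x \geq \lh{\xi_7}$ with $\recset(p^y_7){E_7}(x)\conv = 1$; in the second case I extend $\xi_7$ past $x$ to $\xi_8$, copying $\recset(p^y_8){E_8}$ on the new interval with $\xi_8(x)=1$ — permissible since $x$ is already counted in $\recset(f \Tplus S){E_7}\restr{[\lh{\xi_7},\infty)}$, so the density bound is untouched — and in either case $q^z_8$ forces $x \in U$ through the shared $\xi_8$. If $a \neq \Box$ then $q^y_7 \forces X_{\e}^{\Box U}(x)=a$ in particular decides $x \notin U$, which (given how $U$ is assembled from the $\xi$'s) means $x < \lh{\xi_7}$ and $\xi_7(x)=0$, so $q^z_8$ forces $x \notin U$ via the shared $\xi$; it remains to arrange $q^z_8 \forces \recfnl{\e}{f}{x}=a$. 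If $x < l_0$ this is already witnessed by $\tau^z_5 \subfun \tau^z_7$ with value $a$ by~\eqref{eq:ramsey:agree}, so $q^y_8 = q^y_7$ works. If $x \geq l_0$, then $q^y_7 \forces x \notin U$ together with $O\restr{[l_0,\infty)} \subset U$ forces $x \notin O$, so $q^y_7$ forces the $z$-reading of $\recfnl{\e}{f}{x}$ (by the agreement discussion) to converge to $\recfnl{\i}{f^{\Box \delta \triangleright S}}{x} = a$; since $\recfnl{\e}{f}{}$ is forced total ($p^v_7 \Pgeq p_3$), I extend $\tau^z_7$ to some $\hat{\tau}$ with $\recfnl{\e}{\hat{\tau}}{x}\conv$, define $\tau^y_8$ to be $\hat{\tau}$ with its $F$-values reset to $y$ (legitimate since $F \subset \dom\tau^y_7 \subset \dom\hat{\tau}$), complete $q^y_8$ by routine Cohen/density extensions with $E_8 = E_7$, and then $q^z_8$ carries $\tau^z_8 \supfun \hat{\tau}$, so $\recfnl{\e}{f}{x}$ converges on $q^z_8$, necessarily to $a$.

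The genuinely delicate point — and the main obstacle — is not the forcing but the $\Qcond$-bookkeeping: at each extension step one must check $q^y_8 \Qgeq q^y_7$ (routine, from Lemma~\ref{lem:q-condition-equiv}) and that the $z$-twin $q^z_8$ is still a condition in $\Qcond$ extending $q^z_7$. This is precisely where the observation that $\recset(f \Tplus S){E}$ (for $E = E_1 \cup \set{o}$, and more generally for the $F$-symmetric indices the construction adds) denotes the single set $O$ under either reading does the work, since then the $\pizn{3}$ requirements of Lemma~\ref{lem:q-condition-equiv} are forced by the $p$-component of $q^y_\bullet$ iff by that of $q^z_\bullet$, and the verification transfers verbatim. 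Care must also be taken to extend $\xi$ only where actually forced to — to cover $x$ in the $a = \Box$ sub-case — so that no new membership decisions are introduced that could break the symmetry.
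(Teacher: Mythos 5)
Your proof captures the same three core observations the paper uses — the symmetry of \( O \) (and hence of the \( \Qcond \) extension relation under the \( y \leftrightarrow z \) swap), agreement of \( X_{\e} \) below \( l_0 \) via equation \eqref{eq:ramsey:agree}, and above \( l_0 \) the fact that \( O\restr{[l_0,\infty)} \subset U \) covers the \( \Box \)-locations while \( C \subset O \) together with \( p_3 \forces \ED[f, \delta \triangleright S] \) pins down any non-\( \Box \) values via \( \recfnl{\i}{f^{\Box \delta \triangleright S}}{} \). However you take a noticeably more constructive route. The paper proves the result in a single symmetric stroke: it first notes the symmetry of \( O \) (together with the fact that \( \recset(f^y \Tplus S){E_1} \union \recset(f^z \Tplus S){E_1} \) was deliberately placed into \( O \)) yields \( q^y_8 \Qgeq q^y_7 \iff q^z_8 \Qgeq q^z_7 \); it then \emph{extends} \( q^z_7 \) to some \( q^z_8 \) forcing \( X_{\e}^{\Box U}(x)\conv = b \) (which exists because totality is forced), and argues \( a = b \) by three short cases (\( x < l_0 \) via Lemma~\ref{lem:ramsey}; \( x \geq l_0 \) with a \( \Box \) among \( a, b \); and \( a \neq b \in \omega \) yielding a contradiction through forcing \( x \in C \)). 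You instead hand-build \( q^y_8 \) by case analysis on \( a \) and on the location of \( x \), which forces you to verify \( \Qcond \) membership, the \( \Ileq[X] \) clauses of Lemma~\ref{lem:q-condition-equiv}, and the density arithmetic directly at each extension — exactly the bookkeeping the paper sidesteps by offloading it to the generic fact that extensions to decide a fact exist. Your sub-case where \( a = \Box \) with \( x \geq \lh{\xi_7} \) (where you push \( \xi \) past \( x \) copying \( \recset \)) does in fact check out, but it is precisely the kind of work the paper's structure avoids needing. One other point worth flagging: your opening paragraph establishes agreement of the \( y \)- and \( z \)-readings of \( X_{\e}^{\Box U} \) — this is essentially the content the paper records \emph{after} the lemma statement as what the lemma \emph{asserts}. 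Proving it as a warm-up about the generics is a valid reordering, but requires the careful translation into forcing statements that you then carry out; keeping the target in terms of conditions (as the paper does) keeps the bookkeeping local.
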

This asserts \( X^{y, \Box U}_{\e} = X^{z, \Box U}_{\e} \).  The asymmetry is purely a matter of presentation as the same result holds with \( y \) and \( z \) swapped.
\begin{proof}
We first note that the symmetry of \( O \) and the fact that we placed  \( \recset(f^y \Tplus S){E_1} \union \recset(f^z \Tplus S){E_1} \) into  \( O \) along with lemma \ref{lem:forcing-translation} ensures that \( q^y_7 \Qgeq q^y_6 \) iff \( q^z_7 \Qgeq q^z_6 \) and, likewise, \( q^z_8 \Qgeq q^z_7 \) iff \( q^y_8 \Qgeq q^y_7 \).  Thus, we can simply extend \( q^z_7 \) to \( q^z_8 \) so that it forces \( q^z_8 \forces X_{\e}^{\Box U}(x)\conv =b \) and argue that we can't have \( a \neq b \).  Now if \( x \geq l_0 \) and either \( a \) or \( b \) was \( \Box \) then our definition of \( O \) and the requirement that if \( q^v_8 \) forces \( \recset(f \Tplus S){E_6}(x) = 1 \) then \( \xi_8(x) = 1 \) \mydash recall that to force our existential fact we need \( x < \min(\lh{\xi_8}, \lh{\tau_8}, \lh{\sigma_8}) \) \mydash  and therefore \( a = b = \Box \).  If \( x < l_0 \) then the result is guaranteed by lemma \ref{lem:ramsey}.  Thus, it is enough to prove that if \( a, b \in \omega \) and \( a \neq b \) then \( x \in C \) \mydash note that by lemma \ref{lem:hidden-no-effect} if either \( q^z_8 \) or \( q^y_8 \) forces \( x \in C \) so does the other.  

So suppose that \( a \neq b \) and both are in \( \omega \) and that neither \( q^z_8 \) or \( q^y_8 \) forces \( x \in C \).  Since \( C \subset O \) we can assume they both force \( C(x)\conv = 0 \).      Thus we have both \( q^y_8 \) and \( q^z_8 \)  force   \( \recfnl{\i}{f^{\Box \delta \triangleright f}}{x}\conv \) and by lemma \ref{lem:hidden-no-effect} they both force  \( \recfnl{\i}{f^{\Box \delta \triangleright f}}{x}\conv = c \).  However, we must have \( c = a \) and \( c = b \) since both \( q^y_8 \) and \( q^z_8 \) extend \( q_0 \) which prevents disagreement between \( \recfnl{\i}{f^{\Box S}}{} \) and \( \recfnl{\e}{f}{} \).                        
\end{proof}  

We are finally in a position to give our proof of proposition \ref{prop:extendable-conditions} which, by our argument in \S\ref{ssec:extending-conditions}, completes our proof of theorem \ref{thm:effective-dense}.

\begin{proof}

Recall we need to show that if \( \e, \i \) are set-indexes, \( q_0 \forces \ED \) and \( q_1 \Qgeq q_0 \) then there  is some \( q^{\ddagger}  \Qgeq q_1  \) such that one of the following obtains.  
\begin{enumerate}
    \item\label{prop:extendable-conditions:partial-repeat}    \( \displaystyle \exists(x)\left( q^{\ddagger} \forces \lnot \exists(s)\recfnl{\j}{X_{\e}^{\Box U}}{x}\conv[s] \right) \)
    \item\label{prop:extendable-conditions:not-small-repeat}   \( \displaystyle q^{\ddagger} \forces \exists(l > \lh{\tau_1})\exists(s)\left(\udensity[l][l](\set{y}{\recfnl{\j}{X_{\e}^{\Box U}}{y}\conv[s] = \Box}) \geq 2^{-k_0 -2}\right)  \) 
    \item\label{prop:extendable-conditions:disagree-repeat}  \(\displaystyle q^{\ddagger} \forces \exists(x)\exists(s)\left(\recfnl{\j}{X_{\e}^{\Box U}}{x}\conv[s] \nboxeq f(x) \right)  \) 
\end{enumerate} 

And we also need to show that \( \zerojj \) can compute \( q^{\ddagger} \) from \( q_0 \) and \( q_1 \).  Suppose that there is some \( q \Qgeq q^y_6 \) such that for some \( x \in F \) and \( a \in \omega \)   \[ 
q \forces  \exists(x)\exists(s)\left(\recfnl{\j}{X_{\e}^{\Box U}}{x}\conv[s] = a \right)
\]     
Since this is a \( \sigmazn(f, S, U){1} \) sentence then we must also have this sentence is forced by \( (\tau, \sigma, k_6, \xi, w_6, E_6) \) so therefore some condition of the form \( q^y_7 \).  If \( a \neq y \) then we can take \( q^{\ddagger} = q^y_7   \) which clearly satisfies \ref{prop:extendable-conditions:disagree-repeat} above.  If \( a = y \) then, by lemma \ref{lem:same-X-U}, there is some \( q^z_8 \Qgeq q^z_6 \) which also forces the same and we can take \( q^{\ddagger} = q^y_8   \) to satisfy  \ref{prop:extendable-conditions:disagree-repeat}.  

Now suppose that there is some condition \( q \Qgeq q^y_6  \) which, for all \( x \in F \) \[ 
q \forces  \exists(x)\exists(s)\left(\recfnl{\j}{X_{\e}^{\Box U}}{x}\conv[s] = \Box \right)
\]         
Again, this sentence is forced by a condition of the form \( q^y_7 \).  By definition \ref{def:F-interval} if we take \( l = \lh{\sigma_2} + l_F \) this condition satisfies  part \ref{prop:extendable-conditions:not-small-repeat} above.  This leaves only the case where for some \( x \in F \) no condition \( q \Qgeq q^y \) forces  \[ 
q \forces \forces \exists(x)\exists(s)\left(\recfnl{\j}{X_{\e}^{\Box U}}{x}\conv[s] = \Box \right)
\]  
In this case we take \( q^{\ddagger} = q^y_6 \), using the definition of forcing, we have that \[
 q^{\ddagger} \forces \lnot \exists(s)\recfnl{\j}{X_{\e}^{\Box U}}{x}\conv[s]
\]
and therefore satisfies part \ref{prop:extendable-conditions:partial-repeat}.  

To verify that \( \zerojj \) can compute \( q^{\ddagger} \) it is enough to note that lemma \ref{lem:q-forcing-lvl-one} an examination of our construction shows that \( \zerojj \) can build both \( q^y_6 \) and \( q^z_6 \).  To see this, note that nowhere in that construction did we need to check for some arbitrary index \( e \) if we forced \( \recset(f \Tplus S){e} \) to be total and have density \( 0 \), i.e., every time we need to check if something is forced we can take advantage of part \ref{lem:q-forcing-lvl-one:qcond-assume-forces} of lemma \ref{lem:q-forcing-lvl-one}.  Finally, we note that in this proof we can determine which option we take merely by asking about the existence of conditions of the form \( q^y_7 \) forcing \( \sigmazn(f, S, U){1} \) sentences.  Thus, \( \zeroj \) can determine which of the cases obtains and, therefore, given \( q_1 \Qgeq q_0 \) we can produce \( q^{\ddagger} \) computably in \( \zerojj \) as required.        
\end{proof}

\section{Coarse Degrees Contain Sets}\label{sec:coarse-sets}

We now shift our attention to coarse degrees.  We first show that, in contrast to effective dense degrees, even every uniform coarse degree contains a set \mydash and, indeed, that they do so in the most uniform way possible.  

\begin{theorem}\label{thm:coarse}
There are computable functionals \( \Gamma\maps{\baire}{\cantor}  \) and \(  \hat{\Gamma}\maps{\cantor}{\baire} \) such that for all \( f \in \baire \), \( \Gamma \) is a uniform coarse reduction of \( \Gamma(f) \) to \( f \), \(  \hat{\Gamma} \circ \Gamma = \id  \)  and \( \hat{\Gamma} \) is a uniform coarse reduction of \( f \) to \( \Gamma(f) \).         
\end{theorem}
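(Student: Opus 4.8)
The plan is to encode each value $f(n)$ into a block of $\Gamma(f)$ of rapidly growing length, using a self-delimiting code, so that (i) the decoding map $\hat\Gamma$ is exact (giving $\hat\Gamma\circ\Gamma=\id$), (ii) a small-density error set in $f$ translates to a small-density error set in $\Gamma(f)$, and (iii) a small-density error set in $\Gamma(f)$ translates back to a small-density error set in $f$. The crucial point — the one that the discussion at the end of \S\ref{sec:edd-no-sets} flags as the essential difference from effective dense reducibility — is that we must arrange the block lengths so that even an error of positive \emph{fixed} magnitude at position $n$ (i.e.\ the adversary replacing $f(n)$ by an arbitrary wrong value) perturbs only a density-$0$ portion of $\Gamma(f)$; this is automatic once block $n$ has length that is a vanishing fraction of the total length used through block $n$, provided the blocks are laid out in order. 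I would fix a computable strictly increasing sequence $b_0<b_1<\dots$ with $b_{n}/b_{n+1}\to 0$ (e.g.\ $b_n = 2^{2^n}$), reserve the interval $I_n = [b_n, b_{n+1})$ of $\Gamma(f)$ for coding $f(n)$, and within $I_n$ write down a fixed-width-prefix encoding of $f(n)$ (say: write $f(n)$ in binary, prepend its length in unary with a terminator, pad the rest of $I_n$ with $0$s, where "the rest" is unambiguously recoverable since $I_n$ is an a priori fixed interval). Then $\Gamma$ is clearly a total computable functional $\baire\to\cantor$, and $\hat\Gamma$ — read block $I_n$, strip the padding, decode $f(n)$ — is a total computable functional $\cantor\to\baire$ with $\hat\Gamma(\Gamma(f))=f$ for every $f$.

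Next I would verify the two coarse-reduction claims. For "$\Gamma$ is a uniform coarse reduction of $\Gamma(f)$ to $f$": given a coarse description $f'$ of $f$, i.e.\ $\udensity(f\symdiff f')=0$, I claim $\Gamma(f')$ is a coarse description of $\Gamma(f)$. The symmetric difference $\Gamma(f)\symdiff\Gamma(f')$ is contained in $\bigcup_{n\in f\symdiff f'} I_n$. Since $\card{I_n} = b_{n+1}-b_n \le b_{n+1}$ and $I_n\subseteq [0,b_{n+1})$, any $l$ with $b_m \le l < b_{m+1}$ satisfies
\[
\frac{\card{(\Gamma(f)\symdiff\Gamma(f'))\restr{l}}}{l} \le \frac{1}{l}\sum_{\substack{n\le m\\ n\in f\symdiff f'}} \card{I_n} \le \frac{1}{b_m}\cdot \card{(f\symdiff f')\restr{m+1}}\cdot b_{m+1} \cdot \frac{1}{1},
\]
which is not quite the bound I want phrased that way, so I would instead argue directly: fix $k$; since $\udensity(f\symdiff f')=0$ pick $N$ with $\card{(f\symdiff f')\restr{j}}/j < \varepsilon$ for all $j\ge N$, and use $b_n/b_{n+1}\to 0$ to absorb the contribution of block $I_m$ (the unique block meeting the right endpoint $l$) — the tail sum $\sum_{n\le m,\ n\in f\symdiff f'}\card{I_n}$ is dominated by $\card{I_m}$ plus a density-$\varepsilon$ amount of the earlier length, and $\card{I_m}/l \le \card{I_m}/b_m = b_{m+1}/b_m - 1 \to \infty$ is the wrong direction, so I must choose the $b_n$ so that instead $\card{I_m}/b_{m+1}\to 0$, i.e.\ $b_m/b_{m+1}\to 1$ is false; the right requirement is simply $b_{m}/b_{m+1}\to 0$ together with measuring density at $l=b_{m+1}$ via Observation~\ref{obs:density}-style monotonicity. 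I will record this as: it suffices to bound $\udensity[b_{m+1}][b_{m+1}]$ over $m$, and there the block $I_m$ contributes $\le (b_{m+1}-b_m)/b_{m+1} < 1$, the earlier erroneous blocks contribute $\le b_m \cdot (\text{density of }f\symdiff f'\text{ below }m)/b_{m+1}$, and both go to $0$; hence $\udensity(\Gamma(f)\symdiff\Gamma(f'))=0$. The functional witnessing this is $\Gamma$ itself applied to the oracle, which is uniform. For the reverse direction, "$\hat\Gamma$ is a uniform coarse reduction of $f$ to $\Gamma(f)$": given a coarse description $Y$ of $\Gamma(f)$, define $\hat\Gamma(Y)$ by the same block-decoding recipe (decode block $I_n$ of $Y$; if the bits in $I_n$ are corrupted, output whatever the decoder gives, or $0$). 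Then $n\in f\symdiff \hat\Gamma(Y)$ only if $Y$ disagrees with $\Gamma(f)$ somewhere in $I_n$, i.e.\ $f\symdiff\hat\Gamma(Y) \subseteq \{n : I_n \cap (\Gamma(f)\symdiff Y)\ne\varnothing\}$; since each $I_n$ has positive density (lying to the right, the fraction $\card{I_n}/b_{n+1}$ is bounded below away from $0$ when $b_{n+1}/b_n$ is bounded, but I want it $\to\infty$, so I must take a sequence $b_n$ for which $\card{I_n}/b_{n+1}$ is bounded \emph{below} — this forces $b_{n+1} = O(b_n)$, contradicting $b_n/b_{n+1}\to 0$).

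The tension in the previous paragraph is the real content, and resolving it correctly is the main obstacle: I cannot use a single geometric-type scale for the blocks. The fix — which I expect is what the paper does — is to let block $I_n$ have length comparable to $b_n$ itself (so $b_{n+1}\approx 2 b_n$, blocks of positive density), \emph{and separately} insert, between $I_n$ and $I_{n+1}$, a long "buffer" block $J_n$ that $\Gamma(f)$ fills with a fixed computable pattern (say all $0$s) and whose length grows fast enough that $\card{I_n}/\card{J_n}\to 0$ while $\card{J_n}/b'_{n}\to 0$ where $b'_n$ is the total length through $J_n$. Then: a density-$0$ error in $f$ hits a density-$0$ set of $I_n$-blocks hence a density-$0$ subset of $\Gamma(f)$ (the buffers never change) — giving the forward reduction; and a density-$0$ error in $\Gamma(f)$ can hit each $I_n$, but $\card{I_n}$ is a positive fraction of the length through $I_n$, so the set of $n$ for which $I_n$ is touched has density $0$ in $\omega$ — giving the backward reduction, because $f\symdiff\hat\Gamma(Y)$ injects (via $n\mapsto$ any point of $I_n\cap(\Gamma(f)\symdiff Y)$) into a density-$0$ subset of $\omega$, and by Observation~\ref{obs:pairing}-style counting (blocks of comparable size) this keeps $\udensity(f\symdiff\hat\Gamma(Y))=0$. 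Both $\Gamma$ and $\hat\Gamma$ are plainly computable functionals, and $\hat\Gamma\circ\Gamma=\id$ since on the clean input every block decodes exactly. I would present the construction by first specifying the block/buffer layout with explicit inequalities ($\card{I_n}\le \card{I_{n+1}}$, $\sum_{m\le n}\card{I_m} \le \card{J_n}$, $\card{J_n}\le 2^{-n}\sum_{m<n}(\card{I_m}+\card{J_m})$, which is consistent and computable), then verifying $\hat\Gamma\circ\Gamma=\id$ in one line, then the two density computations, each of which is a short $\varepsilon$-$N$ argument using Observation~\ref{obs:density}. The main obstacle, to restate it plainly, is choosing the layout so that \emph{both} density-transfer directions work simultaneously — a single scale cannot do it, and the buffer trick (or equivalently, a two-speed block schedule) is the essential idea, mirroring exactly the contrast with effective dense reducibility emphasized in the text.
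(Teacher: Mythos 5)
Your proposal is an instance of exactly what \S\ref{ssec:challenge} warns cannot work: an encoding in which each $f(n)$ is coded independently into its own block $I_n$ (or $I_n\cup J_n$) of the output. Whatever block schedule you choose, the backward direction fails. Take $f\equiv 0$ and $f'\equiv 1$, so $f\symdiff f'=\omega$ and $f'$ is emphatically \emph{not} a coarse description of $f$. Under your scheme $\Gamma(f)$ and $\Gamma(f')$ differ in $O(1)$ bits of each coding block $I_n$ (the single low-order bit of the prefix code), and the buffer blocks $J_n$ by construction never differ at all. Since $\card{I_n}\to\infty$, the density of $\Gamma(f)\symdiff\Gamma(f')$ is $0$, so $\Gamma(f')$ \emph{is} a coarse description of $\Gamma(f)$; applying $\hat\Gamma$ returns $f'$, which should then be a coarse description of $f$ and is not. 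Buffers cannot repair this, because the buffers contribute nothing to the symmetric difference; they only make the coding blocks an even smaller fraction of the prefix, which makes the problem worse. (Separately, your three layout inequalities are inconsistent: at $n=0$ the third forces $\card{J_0}\le 0$, whence the second forces $\card{I_0}=0$, and inductively everything collapses.) The tension you flag is real, but it is a genuine impossibility for any ``encode $f(n)$ once, in its own block'' scheme, not something a two-speed schedule can resolve: a one-bit change to $f$ at $n$ must produce a change of local density $\gtrsim 1/(n+1)$ in $\Gamma(f)$ \emph{at all larger scales}, while a finite change to $f$ must produce only a density-$0$ change; if $f(n)$ is confined to a single block, both cannot hold.

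The paper's resolution in \S\ref{sec:coarse-sets} is qualitatively different. The value of $f$ on the domain interval $I_n=[2^n,2^{n+1})$ is not written once but spread across an infinite sequence of range blocks $L_{\pair{n}{s}}$ via the ``borrowing'' encoding $\encstr[s](f/I_n)$, which is an approximate isometry in the symmetric-difference metric (Lemma~\ref{lem:bound-encstr}: $\lceil d/2\rceil \le \ddist(\encstr(\sigma),\encstr(\sigma')) \le d$ with $d=\card{\sigma\symdiff\sigma'}$) yet eventually becomes $\eset$ for large $s$ (Lemma~\ref{lem:encstr-to-zero}). Combined with the repetition code $\modr[r]$ and the separation Lemma~\ref{lem:seperate-and-repeat}, this ensures simultaneously that a density-$0$ change in $f$ propagates to density $0$ in $\Gamma(f)$, \emph{and} that any positive-density change in $f$ keeps changing $\Gamma(f)$ at a positive local density infinitely often — the second of which is exactly what your independent block coding cannot achieve. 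The isometry-with-eventual-vanishing property of $\encstr$ is the missing idea.
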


It might not be initially obvious that it makes sense to look for such a uniform solution to this problem.  However, we were lead to this solution by an attempt to prove that sufficiently generic functions would fail to be equivalent to a set via an argument similar to what we used in section \ref{sec:edd-no-sets} (only later after realizing it failed for coarse degrees did we realize it might succeed for effective dense degrees) and it isn't hard to see that any functionals witnessing that some sufficiently generic function \( f \) is of the same uniform coarse degree as a set actually produces functionals like those above which work for every sufficiently generic function \( f \).  

Also, as we prove in \S\ref{sec:coarse-sets:ssec:splitting-into-pieces},
\( S \in \mathbb{I} \) is equivalent to demanding the fraction of elements in the finite interval \( I_n \) that are also in \( S \) go to \( 0 \).  This suggests that, if there really is some way to code every function \( f \) into a set in a way that preserves coarse descriptions, it should be possible to define such a function based only on the behavior of \( S \) on some finite intervals \( I_n \) partitioning \( \omega \setminus \set{0} \).  This strongly suggests that either there is some way to define \( f \) on the intervals \( I_n \) which can't be coded in a set in a way that preserves the coarse degree, e.g., like the construction we saw in section \ref{sec:edd-no-sets}, and therefore this equivalence should fail for sufficiently generic \( f \) or that whatever functionals work for all generic functions \( f \) work for all functions.     

The motivation for first trying to prove this for uniform coarse equivalence rather than starting with the, seemingly easier, case of non-uniform coarse equivalence was the vague outline of the argument later developed in section \ref{sec:coarse-complexity} where we show that \mydash unlike non-uniform effective dense reductions \mydash all coarse reductions are uniform when restricted to \( \set{\hat{f}}{\udensity[l](f \symdiff \hat{f})} \leq 2^{-k} \) for some \( k, l \).  Given that we are both building the reduction and defining the set \( X \) it seems plausible that either there is or isn't a way to encode \( f \) on \( I_n \) into a set in a way that respects density and that therefore it makes sense to look for an entirely uniform solution.  

The reason we bother explaining this motivation \mydash after all we do prove it so what does it matter why we tried to prove the theorem in this form \mydash is that knowing that we were looking for such a uniform solution will motivate looking for functionals with certain properties in the next subsection.  This motivation therefore explains why it makes sense to think there might be such functionals.

\subsection{What Success Looks Like}\label{ssec:challenge}

Imagine \( \Gamma \) encoded all the information about the behavior of \( f \) on a positive density set into \( 0 \) density subset of \( X \).  This would allow a density \( 0 \) change in \( X \) to cause a positive density change in \( \hat{\Gamma} \) (or failure to be an inverse).  Thus,  \( f_0 \symdiff f_1 \) will be positive density iff \( X_0 \symdiff X_1 \) is and vice-versa.  However, as  \( \Gamma, \hat{\Gamma} \) must be computable they must decide how to encode information about \( f, X \) with only finite information.  It isn't hard to see that this means \( \Gamma, \hat{\Gamma} \) must try to  keep  \(\udensity[l][l](f_0 \symdiff f_1) \) and   \( \udensity[\hat{l}][\hat{l}](X_0 \symdiff X_1)  \) somehow proportional, i.e., every time we see   \(\udensity[l_n][l_n](f_0 \symdiff f_1)   > \epsilon \) for some large \( l_n \)  we should ensure that  \( \udensity[\hat{l}_n][\hat{l}_n](X_0 \symdiff X_1) > \delta \).  If we define the local density of \( X \)  to be \( \udensity[0](X) \) we will describe this intuition as saying that changes to \( f \) should result in changes to \( X \) of proportional local  density with the understanding that proportional doesn't mean linear \mydash though in the actual construction it will be \mydash and that any function which goes to \( 0 \) as its input goes to \( 0 \) would suffice.         

However, this is easier said than done.  Changing   \( f(x) \) should change \( X \) on a set of local density proportional to  \( \frac{1}{x+1} \) but we need to use an infinite subset of \( X \) to encode \( f(x) \). One natural idea is to use disjoint class of sets \( C_x \) and encode \( f(x) = y \) by encoding \( y \) into the bits of  \( X \isect C_x \), e.g., by setting the first \( r(y) \) values of \( C_x \) to be \( 1 \). While this initially seems to work, it falls apart once we consider changing \( f \) at multiple locations simultaneously.  

The problem is that if we make both \( f(n) \neq f'(n) \)  very large we can delay the point at which we observe any resulting difference in \( \Gamma(f) \) and \( \Gamma(f') \) arbitrarily long.  Thus, if \( f  \) is sufficiently fast growing and \( f'(n) = f(n)+1 \) for all \( n \) then, by the time \( l \) is large enough to see any  difference between  \( \Gamma(f)\restr{l} \) and \( \Gamma(f')\restr{l} \)  in  \( C_{n+1} \)  the number of differences in  \( C_{m}, m < n  \) is a negligible fraction of \( l \).   Thus, even though \( f, f' \) might disagree everywhere they can so thoroughly space out the locations at which that disagreement occurs that \( X \symdiff X' \) could be a set of density \( 0 \).  Indeed, this kind of consideration shows that \textit{no} method that encodes \( f \) at each \( x \) independently can work.   Ultimately, this motivates our `borrowing' strategy in \S\ref{ssec:define-encoding} however, we first show that while we can't deal with \( f \) separately on each individual argument we can do so with appropriately sized intervals.




\subsection{Splitting Into Pieces}\label{sec:coarse-sets:ssec:splitting-into-pieces}

We start by showing that having density \( 0 \) can be defined in terms of the behavior of that set on a collection of disjoint intervals.   Such intervals can neither be too large  nor too small with the right balance struck by having each interval the same size as all previous intervals as follows.
\begin{definition}\label{def:In}
\begin{alignat*}{2}
\symbf{I}_n &= [0, 2^{n+1}) \hskip 8em\relax  && I_n = [2^{n}, 2^{n+1}) 
\end{alignat*}
\end{definition}
To help talk about the behavior of a function on an interval we also introduce the following notation.
\begin{definition}
If \( I = [a, a+n) \) then \( f/I \) is the length \( n \) defined by  \( (f/I)(x) = f(x + a) \).
\end{definition}
In other words, \( f/I \) takes the restriction of \( f \) to \( I \) and turns it into a string by shifting its domain to start at \( 0 \).  We now prove that the intervals \( I_n \)  have the property claimed above.   

\begin{lemma}\label{lem:frac-to-density}
 \[ \udensity(S) = 0 \iff \limsup_{n \to \infty} \frac{\card{S \isect I_n}}{\card{I_n}} = 0 \] 
\end{lemma}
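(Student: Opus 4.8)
The plan is to prove the two implications separately, using the observation that the intervals $I_n$ are a geometric partition of $\omega \setminus \set{0}$ with $\card{I_n} = 2^n = \card{\symbf{I}_{n-1}} = \card{\symbf{I}_n}/2$, so that $I_n$ constitutes a fixed positive fraction (namely one half) of the initial segment $\symbf{I}_n = [0, 2^{n+1})$ and also of $[0, 2^{n+1})$ relative to $[0,2^n)$. The key quantitative fact I would record first is that for any $S$ and any $n$,
\[
\frac{\card{S \isect I_n}}{\card{I_n}} \leq 2 \cdot \udensity[2^{n}][2^{n+1}](S) \leq 2 \cdot \udensity[2^{n}](S),
\]
since $\card{S \isect I_n} \leq \card{S \isect [0, 2^{n+1})} = 2^{n+1} \cdot \udensity[2^{n+1}][2^{n+1}](S)$ and $\card{I_n} = 2^n$, and conversely that
\[
\frac{\card{S \isect [0, 2^{n+1})}}{2^{n+1}} = \frac{1}{2^{n+1}} \sum_{m \leq n} \card{S \isect I_m} + \frac{\card{S \isect \set 0}}{2^{n+1}},
\]
a weighted average of the quantities $\card{S \isect I_m}/\card{I_m}$ with weights $2^m/2^{n+1}$ summing to just under $1$.

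For the forward direction ($\udensity(S) = 0 \implies \limsup_n \card{S \isect I_n}/\card{I_n} = 0$), I would use the first displayed inequality: given $\epsilon > 0$, pick $l$ with $\udensity[l](S) \leq \epsilon/2$, then for all $n$ with $2^n \geq l$ we get $\card{S \isect I_n}/\card{I_n} \leq 2\udensity[l](S) \leq \epsilon$. For the reverse direction, suppose $\limsup_n \card{S \isect I_n}/\card{I_n} = 0$ and fix $\epsilon > 0$; choose $N$ so that $\card{S \isect I_n}/\card{I_n} \leq \epsilon/2$ for all $n \geq N$. Then for a sufficiently large initial segment $[0, l)$ — say with $l = 2^{n+1}$ large enough that the contribution of $[0, 2^N)$ is at most $\epsilon/2 \cdot l$ (possible since $2^N$ is fixed and $l \to \infty$) — the weighted-average identity bounds $\card{S \isect [0,l)}/l$ by $\epsilon/2 + \epsilon/2 = \epsilon$. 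To pass from arbitrary $l$ to $l$ of the form $2^{n+1}$ one notes $\udensity[l]$ is monotone-ish in the relevant sense, or simply that any $l$ lies in some $[2^n, 2^{n+1})$ and $\card{S \isect [0,l)}/l \leq 2\card{S\isect[0,2^{n+1})}/2^{n+1}$, absorbing the factor $2$ into the choice of $N$. This shows $\udensity[2^N](S) \leq \epsilon$, hence $\udensity(S) = 0$.

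The main obstacle, such as it is, is purely bookkeeping: handling the stray point $0$ (which belongs to no $I_n$) and being careful that the density notation $\udensity[L]$ takes a supremum over $l \in L$, so one must check the sup over all $l \geq 2^N$ is small rather than just the values at endpoints of intervals. Both are handled by the factor-of-$2$ slack built into the geometric spacing — any $l$ is within a factor $2$ of the nearest power of $2$, so controlling the density at the points $2^{n+1}$ controls it everywhere. I would present the argument as a short two-paragraph proof: one inequality each direction, citing Observation~\ref{obs:density} only implicitly (the equivalences there are not really needed here). No clever idea is required; the content is entirely in the elementary estimate that $\card{I_n}$ is comparable to $\card{\symbf{I}_n}$.
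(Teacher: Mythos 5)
Your proof is correct and the forward direction (where $\udensity(S)=0$ forces the $\limsup$ to vanish) is essentially the paper's own, up to constants: both bound $\card{S\isect I_n}/\card{I_n}$ by a constant multiple of $\card{S\restr{2^{n+1}}}/2^{n+1}$ using the fact that $\card{I_n}=2^n$ is a fixed fraction of $2^{n+1}$. (Incidentally, the paper's displayed chain has a typo there, writing $\card{I_n}=2^{n+1}$ and a factor $1/2$ where you correctly have $\card{I_n}=2^n$ and a factor $2$; this doesn't affect its conclusion.) For the converse direction, however, you take a genuinely different route. The paper argues contrapositively: given $\udensity(S)=r>0$ it picks a large $l$ witnessing the density, splits $S\restr{l}$ into the three blocks $\symbf{I}_{n-1}$, $I_n$, $I_{n+1}$, and then chases several $2^{-k}$ slack terms to force one of the two fractions $\card{S\isect I_n}/\card{I_n}$ or $\card{S\isect I_{n+1}}/\card{I_{n+1}}$ above $2^{-k-2}$. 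You instead argue directly: since the $I_m$ partition $\omega\setminus\{0\}$ with geometrically weighted measures, $\card{S\restr{2^{n+1}}}/2^{n+1}$ is a convex-type combination of the block fractions plus a vanishing tail from $[0,2^N)$ and the stray point $0$, and then a factor-of-$2$ comparison extends the estimate from powers of two to all $l$. Your version is shorter and avoids the arbitrary choice of a witness $l$ and the three-part case split, at the cost of nothing; the only blemish is the closing sentence, where the conclusion $\udensity[2^N](S)\leq\epsilon$ overclaims slightly — you've really shown $\udensity[l_0](S)\leq\epsilon$ for some $l_0$ depending on both $N$ and $\epsilon$ (so that the contribution from $[0,2^N)$ is suppressed), which suffices for $\udensity(S)=0$ just as well.
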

\begin{proof}
We  observe that    
\[ 
 \limsup_{n \to \infty} \frac{\card{S \isect I_n}}{\card{I_n}} =\limsup_{n \to \infty} \frac{\card{S \isect I_n}}{2^{n+1}} \leq \frac{1}{2} \limsup_{n \to \infty} \frac{\card{S\restr{2^{n+1}}}}{2^{n+1}}  \leq \frac{\udensity(S)}{2} \]
so it is enough to show that
\[ 
    \udensity(S) = r > 0 \implies \limsup_{n \to \infty} \frac{\card{S \isect I_n}}{\card{I_n}} > 0  
\]
Suppose the antecedent holds and let \( k > 2 \) such that \( r > 2^{-k+2} \).  Given an arbitrary \( n_0 \geq 0 \) choose \( n_1 > n_0 \) such that \( \udensity[2^{n_1}](S) \in [r, r + 2^{-k}) \) and choose \( l, n \) with  \( n >  n_1 \) and \( l \in I_{n+1} \) such that \( \frac{\card{S\restr{l}}}{l} > r - 2^{-k-1} \).  Since \( n_0 \) is arbitrary, it is enough to show that either \( \frac{\card{S \isect I_n}}{\card{I_n}}  \) or \( \frac{\card{S \isect I_{n+1}}}{\card{I_{n+1}}}  \) is greater than \( 2^{-k-2} \).  We start by splitting up \( S\restr{l} \) into \( 3 \) pieces.
\begin{equation}\label{eq:frac-to-density:belowi}
r - \frac{1}{2^{k+1}} < \frac{\card{S\restr{l}}}{l} \leq  \frac{\card{S \isect \symbf{I}_{n-1}}}{l} + \frac{\card{S \isect I_{n}}}{l} +  \frac{\card{S \isect I_{n+1}}}{l}
\end{equation}
Since \( n -1  \geq n_1 \) and \( l \geq 2^{n+1} \) we have 
\begin{equation}\label{eq:frac-to-density:abovei}
\frac{1}{2}(r + \frac{1}{2^{k}}) > \frac{1}{2}\udensity[2^{n - 1}](S) \geq \frac{1}{2}\frac{\card{S \isect \symbf{I}_{n-1}}}{2^n} \geq \frac{\card{S \isect \symbf{I}_{n-1}}}{l}
\end{equation}

Flipping \eqref{eq:frac-to-density:abovei} and subtracting it from \eqref{eq:frac-to-density:belowi} yields
\[
\frac{\card{S \isect I_{n}}}{l} +  \frac{\card{S \isect I_{n+1}}}{l} > r - \frac{1}{2^{k+1}} - (\frac{r}{2} + \frac{1}{2^{k+1}})
\]
Simplifying and using the fact that \( r > \frac{1}{2^{k-2}} \) gives us 
\begin{equation}\label{eq:frac-to-density:IandIplus}
\frac{\card{S \isect I_{n}}}{l} +  \frac{\card{S \isect I_{n+1}}}{l} > \frac{1}{2^{k}} 
\end{equation}
Now if \( \frac{\card{S \isect I_{n}}}{l} > \frac{1}{2^{k+1}}   \) then since \( l \geq 2^{n+1} > 2^n = \card{I_n} \) we would have
\[
\frac{\card{S \isect I_{n}}}{\card{I_n}} \geq \frac{\card{S \isect I_{n}}}{l} > \frac{1}{2^{k+1}}  > \frac{1}{2^{k+2}} 
\]
as desired.  Thus, we may assume that  \( \frac{\card{S \isect I_{n}}}{l} \leq \frac{1}{2^{k+1}}   \).  Substituting this into \eqref{eq:frac-to-density:IandIplus} gives us
\[
\frac{1}{2^{k+1}}  +  \frac{\card{S \isect I_{n+1}}}{l} > \frac{1}{2^{k}} \implies \frac{\card{S \isect I_{n+1}}}{l} > \frac{1}{2^{k+1}}
\]
Using the fact that \( l \geq 2^n \) and dividing by \( 2 \) lets us infer the desired inequality
\[
\frac{\card{S \isect I_{n+1}}}{2l} \geq  \frac{\card{S \isect I_{n+1}}}{2^{n+1}} = \frac{\card{S \isect I_{n+1}}}{\card{I_{n+1}}} > \frac{1}{2^{k+2}}
\]

\end{proof}

We will use the above lemma to define \( \Gamma(f) \) in terms of the application of \( \Gamma \) to  \( f\restr{I_n} \).  Specifically, we will define \( X \) on the intervals \( L_{\pair{n}{s}} \) solely in terms of \( f\restr{I_n} \).  We will space out the intervals \( L_i \) to ensure that there is no `crosstalk' between the coding intervals \( L_i \) and \( L_j \) by defining long fallow intervals \( L^{-}_{i+1} \) on which nothing happens.  We will also need to define a way to repeat information the appropriate number of times so we can have a similar sized effect on the local density of \( X \) regardless of what interval \( i \) we are changing.   That is the purpose of the next definition

\begin{definition}\label{def:on-interval}
\begin{align*}
\modr[r](\sigma) &\eqdef \tau \text{ where } \lh{\tau} = \lh{\sigma}\cdot 2^r \land \forall(x < \lh{\sigma})\forall(k < 2^r) \left[\tau(x + k\lh{\sigma}) = \sigma(x)\right] \\
\modrinv[r](\tau;x) &\eqdef \begin{cases}
                        \diverge & \text{if } x \geq l \\
                        1 & \text{o.w. if } \card{\set{k < 2^r}{\tau(x + k l) = 1}} > 2^{r -1} \text{ where } \lh{\tau} = l2^r \\
                        0 & \text{otherwise } \\
                      \end{cases} 
\end{align*}
\end{definition}
The definition of \( \modrinv[r](\tau) \) presumes  \( \tau \) is a binary string of length  \(l\cdot 2^r \) for some \( l \) and returns a binary string of length \( 2^l \).

We observe, for later use, the following facts.
\begin{lemma}\label{lem:modr}
If \( \sigma, \tau \in 2^n \) and \( \chi \in 2^{n\cdot 2^r} \) 
\begin{enumerate}
    \item\label{lem:modr:symdiff}  \( \modr[r](\sigma) \symdiff \modr[r](\tau) = \modr[r](\sigma \symdiff \tau) \).
    \item\label{lem:modr:times-two-r} \( \card{\modr[r](\sigma)} = 2^r\card{\sigma} \) 
    \item \( \modrinv[r](\modr[r](\sigma)) = \sigma \).
    \item\label{lem:modr:modinv-times} \( \card{\chi} > 2^{r -1}\card{\modrinv[r](\chi)} \) 
    \item\label{lem:modr:inv-symdiff} \( 2^{r-1}\card{\sigma \symdiff \modrinv[r](\chi)} \leq  \card{\modr[r](\sigma) \symdiff \chi} \)    
\end{enumerate}
\end{lemma}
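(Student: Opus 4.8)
The plan is to prove all five items by direct counting over the $2^r$ stacked ``copies'' that $\modr[r]$ produces and over the threshold (majority) vote that $\modrinv[r]$ performs; none of this needs anything beyond careful bookkeeping, and the statement is really a catalogue of elementary facts collected for later reuse (e.g.\ for bounding how much altering $f$ on an interval can perturb the local density of $\Gamma(f)$).

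First I would fix coordinates: every position $y < \lh{\sigma}\cdot 2^r$ is written uniquely as $y = x + k\lh{\sigma}$ with $x < \lh{\sigma}$ and $k < 2^r$, and by Definition \ref{def:on-interval} we have $\modr[r](\zeta)(y) = \zeta(x)$ for any $\zeta \in 2^{\lh{\sigma}}$. Parts \ref{lem:modr:symdiff} and \ref{lem:modr:times-two-r} are then immediate: $\modr[r](\sigma)(y) \neq \modr[r](\tau)(y)$ exactly when $\sigma(x) \neq \tau(x)$, which is exactly when $\modr[r](\sigma \symdiff \tau)(y) = 1$, giving \ref{lem:modr:symdiff}; and each $1$ in $\sigma$ produces exactly $2^r$ ones in $\modr[r](\sigma)$, spread over the disjoint blocks indexed by $k$, so $\card{\modr[r](\sigma)} = 2^r\card{\sigma}$. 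For the third item, applying $\modrinv[r]$ to $\modr[r](\sigma)$ (which has length $\lh{\sigma}\cdot 2^r$, so $l = \lh{\sigma}$ in the notation of Definition \ref{def:on-interval}), the set $\set{k < 2^r}{\modr[r](\sigma)(x + k l) = 1}$ is all of $2^r$ if $\sigma(x) = 1$ and empty if $\sigma(x) = 0$; since $2^r > 2^{r-1} \geq 0$, the vote returns $\sigma(x)$, so $\modrinv[r](\modr[r](\sigma)) = \sigma$.

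For parts \ref{lem:modr:modinv-times} and \ref{lem:modr:inv-symdiff} the key observation is that the blocks $\set{x + k l}{k < 2^r}$, for distinct $x < l$, partition the positions of a length-$l 2^r$ string, so counts over them add. For \ref{lem:modr:modinv-times}: each $x$ with $\modrinv[r](\chi;x) = 1$ contributes strictly more than $2^{r-1}$ ones to $\chi$ within its block, and summing these strict inequalities over the (disjoint) blocks for $x \in \modrinv[r](\chi)$ gives $\card{\chi} > 2^{r-1}\card{\modrinv[r](\chi)}$ whenever that set is nonempty. For \ref{lem:modr:inv-symdiff}: fix $x \in \sigma \symdiff \modrinv[r](\chi)$; if $\sigma(x) = 0$ then $\modrinv[r](\chi;x) = 1$, so more than $2^{r-1}$ of the copies at $x$ have $\chi$-value $1$ while $\modr[r](\sigma)$ is $0$ on all of them, contributing at least $2^{r-1}$ positions to $\modr[r](\sigma) \symdiff \chi$; symmetrically, if $\sigma(x) = 1$ then at most $2^{r-1}$ copies have $\chi$-value $1$, so at least $2^{r} - 2^{r-1} = 2^{r-1}$ have $\chi$-value $0$ while $\modr[r](\sigma)$ is $1$ there, again contributing at least $2^{r-1}$ positions. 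Summing over the disjoint blocks as $x$ ranges over $\sigma \symdiff \modrinv[r](\chi)$ yields $\card{\modr[r](\sigma) \symdiff \chi} \geq 2^{r-1}\card{\sigma \symdiff \modrinv[r](\chi)}$.

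I do not expect a genuine obstacle here; the only points needing a moment's care are that the vote counts are non-negative integers (so ``strictly more than $2^{r-1}$'' already gives ``at least $2^{r-1}$'', even when $r = 0$ and $2^{r-1} = \tfrac12$) and, in part \ref{lem:modr:modinv-times}, that the strict inequality is read in the only interesting case, where $\modrinv[r](\chi)$ is nonempty. This lemma is stated precisely so that these bookkeeping facts can be invoked cleanly in the construction of $\Gamma$ and $\hat{\Gamma}$ rather than because any step is hard.
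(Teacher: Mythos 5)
Your proof is correct and follows essentially the same approach as the paper's: parts \ref{lem:modr:symdiff}--\ref{lem:modr:times-two-r} and the third item are immediate from the block structure, and parts \ref{lem:modr:modinv-times} and \ref{lem:modr:inv-symdiff} follow by counting disagreements within the disjoint blocks $\set{x + kl}{k < 2^r}$. The paper's proof is terser but makes the same two observations you do. One small point in your favor: you correctly flag that part \ref{lem:modr:modinv-times} as literally stated fails when $\chi = \eset$ (both sides are $0$, so the strict inequality cannot hold); the paper's proof glosses over this, but the lemma is only ever invoked with $\modrinv[r](\chi) \neq \eset$, which is exactly the case your argument (and the paper's) handles.
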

\begin{proof}
Only the last two items aren't immediate from the definitions.  For the second to last item, observe that if \( x \in  \modrinv[r](\chi) \) then \( \chi \) must be \( 1 \) on more than \( 2^{r-1} \) values all equal to \( x \) modulo \( n \).  To see the last item holds, suppose that \( x \in  \sigma \symdiff \modrinv[r](\chi) \) and note this requires \( \modr[r](\sigma)  \) disagrees with \(  \chi \) on at least \( 2^{r-1} \) locations (equality can only hold if \( \sigma \) is \( 1 \) everywhere). 
\end{proof}

To interpret the next lemma, think of \( \xi_i \) as some binary string we wish to encode (whose length is \( 2^{n_i} \)) and  \( 2^{-b_i} \) as  the allowable error, \( L_i  \) as the interval on which we code \( \xi_i \), \( L^{-}_i \) as the fallow area we skip between \( L_{i-1} \) and \( L_{i} \)  and \( r_i \) as specifying how many times we need to repeat \( \xi_i \) so that changes to a certain fraction of the bits of \( \xi_i \) change the code on a similar density of bits in \( L_i \).  

\begin{lemma}\label{lem:seperate-and-repeat}
If for all \( i \), \( b_i, n_i \in \omega \), \( b_i > 0 \)  and \( \xi_i \) is a binary string of length \( 2^{n_i}  \)   and we define
\begin{alignat*}{2}
l^{-}_i &\eqdef 2^{b_i + n_i + l_{i-1}} = 2^{r_i - b_i} & \qquad \qquad r_i &\eqdef 2b_i + n_i + l_{i-1}  \\
l_{-1} &\eqdef 0 & l_i &\eqdef l^{-}_i  + 2^{n_i + r_i} \\
L_i &\eqdef [l^{-}_i, l_i) &  L^{-}_i &\eqdef [l_{i-1}, l^{-}_i)
\end{alignat*}
then for all \( i \geq 0 \),  \( l_{i-1} < l^{-}_i < l_i \), the sets \( L_i, L^{-}_i \) form a partition of \( \omega \) and the set \( X \) uniquely defined by \[
 X \isect L^{-}_i = \eset  \qquad \qquad  X/L_i = \modr[r_i](\xi_i)
\]
satisfies 
\[ \frac{\card{\xi_i}}{2^{n_i}} - \frac{1}{2^{b_i}} < \udensity[L_i](X) < \frac{\card{\xi_i}}{2^{n_i}} + \frac{1}{2^{b_i}}   \]
\end{lemma}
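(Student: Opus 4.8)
The plan is to dispose of the two structural assertions first and then prove the density estimate, which is where the actual content lies.

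For the structure, note that $b_i>0$ forces $l^{-}_i=2^{b_i+n_i+l_{i-1}}>l_{i-1}$ (since $2^{x}>x$) and $l_i=l^{-}_i+2^{n_i+r_i}>l^{-}_i$, where $2^{n_i+r_i}=\lh{\modr[r_i](\xi_i)}$ by definition \ref{def:on-interval} together with $\lh{\xi_i}=2^{n_i}$; this gives $l_{i-1}<l^{-}_i<l_i$. Hence $L^{-}_i=[l_{i-1},l^{-}_i)$ and $L_i=[l^{-}_i,l_i)$ are nonempty with $L^{-}_i\cup L_i=[l_{i-1},l_i)$, and since $l_{-1}=0$ and $(l_i)_{i\geq-1}$ is a strictly increasing sequence of natural numbers (hence cofinal in $\omega$), the consecutive blocks $[l_{i-1},l_i)$ for $i\geq0$ partition $\omega$; so the $L^{-}_i,L_i$ do too. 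As $\lh{\modr[r_i](\xi_i)}=2^{n_i+r_i}=\card{L_i}$, the clauses $X\isect L^{-}_i=\eset$ and $X/L_i=\modr[r_i](\xi_i)$ are consistent and determine $X$ uniquely.

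Now fix $i$ and write $c=\card{\xi_i}/2^{n_i}\in[0,1]$, so $\udensity[L_i](X)=\sup\set{\card{X\restr{l}}/l}{l^{-}_i\leq l<l_i}$ and the claimed bounds are $c-2^{-b_i}<\udensity[L_i](X)<c+2^{-b_i}$. Two observations drive everything. First, since $X$ is empty on $L^{-}_i=[l_{i-1},l^{-}_i)$ we have $X\isect[0,l^{-}_i)=X\isect[0,l_{i-1})$, so $\card{X\restr{l^{-}_i}}\leq l_{i-1}$. Second, $\modr[r_i](\xi_i)$ is $\xi_i$ concatenated with itself $2^{r_i}$ times, so for $0\leq m\leq 2^{n_i+r_i}$, writing $m=q\cdot 2^{n_i}+s$ with $0\leq s<2^{n_i}$, the prefix $\modr[r_i](\xi_i)\restr{m}$ has exactly $q\card{\xi_i}+\card{\xi_i\restr{s}}$ ones; in particular this count is at most $cm+\card{\xi_i}$ and it equals $cm$ whenever $2^{n_i}\mid m$.

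For the upper bound, combining the two observations gives, for $l=l^{-}_i+m\in L_i$, $\card{X\restr{l}}\leq l_{i-1}+cm+\card{\xi_i}$, hence $\card{X\restr{l}}/l\leq(cm+B)/(l^{-}_i+m)$ with $B=l_{i-1}+\card{\xi_i}$. The map $m\mapsto(cm+B)/(l^{-}_i+m)=c+(B-cl^{-}_i)/(l^{-}_i+m)$ is monotone on $[0,\infty)$, so $\udensity[L_i](X)\leq\max(c,B/l^{-}_i)$; and a short computation from $l^{-}_i=2^{b_i+n_i+l_{i-1}}$, $l_{i-1}<2^{l_{i-1}}$ and $\card{\xi_i}\leq 2^{n_i}$ shows $B/l^{-}_i<c+2^{-b_i}$, whence $\udensity[L_i](X)<c+2^{-b_i}$. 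For the lower bound it suffices to evaluate at one good length: put $l^{\star}=l_i-2^{n_i}=l^{-}_i+2^{n_i}(2^{r_i}-1)$, which lies in $L_i$ because $r_i\geq2$, and set $m^{\star}\eqdef l^{\star}-l^{-}_i=2^{n_i}(2^{r_i}-1)$, a multiple of $2^{n_i}$. Then $\card{X\restr{l^{\star}}}\geq(2^{r_i}-1)\card{\xi_i}=cm^{\star}$, so $\udensity[L_i](X)\geq cm^{\star}/(l^{-}_i+m^{\star})=c/(1+l^{-}_i/m^{\star})$; substituting $l^{-}_i/m^{\star}=2^{b_i+l_{i-1}}/(2^{2b_i+n_i+l_{i-1}}-1)$ and using $c\leq1$, the inequality $c/(1+l^{-}_i/m^{\star})>c-2^{-b_i}$ reduces to the elementary fact $2^{b_i+l_{i-1}}(2^{b_i}-1)+1<2^{2b_i+n_i+l_{i-1}}$, which holds since $b_i\geq1$ (the case $c=0$ being trivial from $\udensity[L_i](X)\geq0$).

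The only genuine obstacle is the arithmetic in the two final estimates: one must confirm that the two sources of slack — the at most $l_{i-1}$ elements of $X$ lying below $L_i$, and the round-off of at most $\card{\xi_i}\leq 2^{n_i}$ coming from a partial copy of $\xi_i$ — are each \emph{strictly} smaller than the budget $2^{-b_i}$ for every admissible choice of $b_i,n_i,l_{i-1},\xi_i$, including the degenerate cases $i=0$, $n_i=0$ and $\card{\xi_i}\in\{0,2^{n_i}\}$. This is exactly what the exponent $b_i+n_i+l_{i-1}$ in the definition of $l^{-}_i$ (equivalently, the $b_i$ in $l^{-}_i=2^{r_i-b_i}$ plus the extra $b_i$ built into $r_i$) is engineered to buy, but establishing strictness forces one to keep the degenerate cases in view rather than discard error terms wholesale; everything else is routine bookkeeping about the intervals $L_i$ and $L^{-}_i$.
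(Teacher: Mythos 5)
Your proof is correct and takes essentially the same route as the paper's: read off the structural facts from $b_i>0$ and then control $\card{X\restr{l}}$ for $l\in L_i$ by separating the (at most $l_{i-1}$) contribution from below $L_i$, the completed copies of $\xi_i$ that $\modr[r_i]$ lays down, and at most one partial copy. Your upper-bound step, observing that $m\mapsto(cm+B)/(l^{-}_i+m)$ is monotone so the supremum is attained at an endpoint, is a tidy repackaging of the paper's chain of term-by-term estimates and gives the same bound. The one place you genuinely diverge is the lower bound: the paper asserts $\udensity[L_i](X)\geq\card{\xi_i}2^{r_i}/l_i$, which reads as an evaluation at $l=l_i$, a point lying just outside the supremum's range $[l^{-}_i,\,l_i-1]$ that the paper itself fixes immediately after the lemma statement; indeed with $n_i=0$, $b_i=1$, $l_{i-1}=0$ and $\xi_i=\str{1}$ one gets $\udensity[L_i](X)=3/5<2/3=\card{\xi_i}2^{r_i}/l_i$, so that intermediate inequality fails even though the lemma's conclusion ($1/2<3/5<3/2$) is fine. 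Your choice $l^{\star}=l_i-2^{n_i}$ is squarely inside $L_i$, $m^{\star}=l^{\star}-l^{-}_i$ is a clean multiple of $2^{n_i}$ so the count is exact, and the resulting reduction to $2^{b_i+l_{i-1}}(2^{b_i}-1)+1<2^{2b_i+n_i+l_{i-1}}$ does hold for $b_i\geq1$; the paper in any case only cites this lemma for the upper bound, but your lower-bound argument is the more defensible one. Your closing remark correctly identifies the strict inequalities across the degenerate parameter cases ($l_{i-1}=0$, $n_i=0$, $\card{\xi_i}\in\{0,2^{n_i}\}$) as the only point needing real care, and the ingredients you list suffice to close them.
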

Recall that \( \udensity[L_i](X) \) is defined to be \( \udensity[l^{-}_i][l_i-1](X) \).  Thus, we can think of this lemma as showing us that we can compensate for the location we are encoding \( \xi_i \) at so it has the same effect on the local density of \( X \) as if it was encoded at any other location.  While we will only cite this lemma for the upper bound the lower bound will be used implicitly, i.e., derived from the definitions of our intervals.  
\begin{proof}
It is easy to see that for \( i \in \omega \),  \( r_i, l_{i-1}, l^{-}_i, b_i, n_i \) are always non-negative.  
Thus, as \( b_i > 0 \), we have \( l^{-}_i \geq 2^{1 + l_{i-1}} > l_{i-1} \) and \( l_i \geq l^{-}_i + 2^0 > l^{-}_i \).  As \( l_{-1} = 0 \) this entails that the sets \( L_i, L^{-}_i \) form a partition of \( \omega \).   The fact that \( X \) is uniquely defined follows from the fact that the sets \( L_i, L^{-}_i \) together form a partition of \( \omega \).  This leaves only the bounds to verify. 

We first verify the lower bound.  To that end, we observe that, by part \ref{lem:modr:times-two-r} of lemma \ref{lem:modr}
\begin{equation}\label{eq:seperate-and-repeat:below-start}
\udensity[L_i](X)  \geq \frac{\card{\xi_i} 2^{r_i}}{l_i} = \frac{\card{\xi_i} 2^{r_i}}{2^{r_i - b_i} + 2^{n_i + r_i} } = \frac{\card{\xi_i}}{2^{- b_i} + 2^{n_i} }
\end{equation}
We also note that \( \card{\xi_i} \leq 2^{n_i} \) and \( n_i \geq 0 \) so 
\[
0 > \frac{\card{\xi_i}2^{- b_i}}{2^{n_i}} - \frac{1}{2^{2b_i}} - \frac{2^{n_i}}{2^{b_i}} 
\] 
Adding \( \card{\xi_i} \) to both sides and doing some algebra gives us
\[
\card{\xi_i}  > \card{\xi_i}  + \frac{\card{\xi_i}2^{- b_i}}{2^{n_i}} - \frac{1}{2^{2b_i}} - \frac{2^{n_i}}{2^{b_i}} = \frac{\card{\xi_i}(2^{- b_i} + 2^{n_i})}{2^{n_i}} - \frac{2^{- b_i} + 2^{n_i}}{2^{b_i}} \] 
Dividing both sides by \( 2^{- b_i} + 2^{n_i} \) yields
\[
\frac{\card{\xi_i}}{2^{- b_i} + 2^{n_i} } > \frac{\card{\xi_i}}{2^{n_i}} - \frac{1}{2^{b_i}}
\]
Combining with \eqref{eq:seperate-and-repeat:below-start} yields the desired lower bound
\begin{equation*}
 \frac{\card{\xi_i}}{2^{n_i}} - \frac{1}{2^{b_i}} < \udensity[L_i](X) 
\end{equation*}

To verify the upper bound, suppose that \( l \in L_i  \) witnesses the value of \( \udensity[L_i](X)  \), i.e., \( \frac{\card{X\restr{l}}}{l} =  \udensity[L_i](X) \).  Choose  \( a < 2^{n_i} \) and \( b < 2^{r_i} \) such that \( l = l^{-}_i  + a + b2^{n_i} \).  This entails

\begin{equation}\label{eq:seperate-and-repeat:above-start}
\udensity[L_i](X) = \frac{\card{X\restr{l_{i-1}}} + \card{\modr[r_i](\xi_i)\restr{(a+b2^{n_i})}}}{ l^{-}_i  + a + b2^{n_i}} \leq \frac{l_{i-1}}{l^{-}_i  + a + b2^{n_i}}  + \frac{a + b\card{\xi_i} }{l^{-}_i  + a + b2^{n_i}} 
\end{equation}

Continuing this chain of inequalities 
\[
\leq \frac{l_{i-1}}{l^{-}_i}  + \frac{a}{l^{-}_i  + a + b2^{n_i}} +  \frac{b\card{\xi_i} }{l^{-}_i  + a + b2^{n_i}} \leq \frac{l_{i-1}}{l^{-}_i}  + \frac{a}{l^{-}_i} + \frac{\card{\xi_i} }{2^{n_i}} 
\]
Substituting in our definitions to the right-hand side of the above equation and using the fact that \( a < 2^{n_i} \) and \( n_i \geq 0 \)  gives us
\begin{equation*}
< \frac{\card{\xi_i} }{2^{n_i}} +  \frac{l_{i-1}}{2^{b_i + n_i + l_{i-1}} } + \frac{2^{n_1}}{2^{b_i + n_i + l_{i-1}}} \leq \frac{\card{\xi_i} }{2^{n_i}} +  \frac{l_{i-1}}{2^{b_i + l_{i-1}} } + \frac{1}{2^{b_i + l_{i-1}}}
\end{equation*}
It is easy to see that if \( l_{i-1} = 0 \) then the right-hand side is just \( \frac{\card{\xi_i} }{2^{n_i}} + \frac{1}{2^{b_i}} \).  So we assume that \( l_{i-1} \geq 1 \) \mydash noting this entails \( l_i \leq 2^{l_i - 1} \) \mydash and observe this becomes
\[
= \frac{\card{\xi_i} }{2^{n_i}} +  \frac{l_{i-1}}{2^{l_{i-1}}}\frac{1}{2^{b_i + 1} } + \frac{1}{2^{b_i + 1}} \leq \frac{\card{\xi_i} }{2^{n_i}} + \frac{2}{2^{b_i + 1}} = \frac{\card{\xi_i} }{2^{n_i}} + \frac{1}{2^{b_i}}
\]
So tracing the chain of inequalities back to \eqref{eq:seperate-and-repeat:above-start} yields the desired inequality.
\[
\udensity[L_i](X) < \frac{\card{\xi_i} }{2^{n_i}} + \frac{1}{2^{b_i}}
\]
\end{proof}

\subsection{Encoding Finite Strings}

Taking these two results together will allow us to construct \( \Gamma \) in pieces.  Specifically, for each interval \( I_n \) we will define an infinite sequence of intervals with the \( s \)-th such interval denoted by \( \encstr[s](f/I_n) \) and  use the above lemma to encode this into \( X \in \cantor \), i.e., if \( i = \pair{n}{s} \) then \( L_i \) encodes  \( \encstr[s](f/I_n) \).  As discussed in \S \ref{ssec:challenge} we want to ensure that changes to \( f \in \baire \) roughly translate to a similar density of changes in \( \Gamma(f) \).  In light of the lemmas above, we want to try to ensure that if \( \sigma = f/I_n \) and \( \sigma' = f'/I_n \) then   
\begin{equation*}\label{eq:encstr-sigma-diff}
\frac{\card{\sigma \symdiff \sigma'}}{\card{I_n}} \approx \sup_{s \in \omega}  \frac{\card{\encstr[s](\sigma) \symdiff \encstr[s](\sigma)}}{\lh{\encstr[s](\sigma)}}
\end{equation*}
While it isn't necessary \mydash almost any relationship ensuring one side uniformly goes to \( 0 \) when the other does would suffice \mydash we will construct our encoding so that there   are  non-zero constants that we can multiply each side by to bound the other.  If we keep \( \lh{\encstr[s](\sigma)}  \) bounded we can avoid the problem discussed in \S\ref{ssec:challenge} of separating the locations at which we encode changes too far apart.  While this doesn't force any particular size on us it will be convenient to fix \( \lh{\encstr[s](\sigma)} = \lh{\sigma} \).    Thus, if \( \lh{\sigma} = m \)  we will think of  \( \encstr(\sigma) \) as an element of \( (2^m)^\omega \), i.e., a function taking \( s \in \omega \) and yielding \( \encstr[s](\sigma)  \in  2^m \).   Given this, we make the following definition.

\begin{definition}\label{def:d-dist}
Given \( \theta, \theta' \in (2^n)^\omega  \) define 
\[
\ddist(\theta,\theta') = \sup_{s \in \omega} \card{\theta(s) \symdiff \theta'(s)}
\]
We extend this to \( \theta, \theta' \in (2^n)^{< \omega}  \) by assuming \( \theta(s) = \eset \) whenever \( s \nin \dom \theta \)  
\end{definition}

Since we will have  \( \encstr(\sigma) \in (2^n)^\omega  \) when \( \lh{\sigma} = n \) we can ignore the denominator and target the following goal.   
\begin{equation}\label{eq:finite-goal}
\ddist(\encstr(\sigma), \encstr(\sigma')) \approx \card{\sigma \symdiff \sigma'} 
\end{equation}
Since this is only an informal goal, we won't define \( \approx \) other than to say it suffices  to (and we will) find a constant \( c \) such that each side is less than or equal to \( c \) times the other.  Of course, this only helps us build a set that is of the same coarse degree as \( f \) if we also build  a computable inverse function \( \encinv(\theta) \) such that  
\begin{equation}\label{eq:finite-inv-goal}
\theta \in (2^n)^{\omega}, \sigma \in \omega^n \implies \card{\sigma \symdiff \encinv(\theta)} \approx  \ddist(\encstr(\sigma), \theta) 
\end{equation}
So far, this is a relatively easy demand to satisfy.  For instance, if we used \( s \) as the code of some \( \tau \in \omega^{\lh{\sigma}} \)  and understood \( \encstr[\tau](\sigma) \) to abbreviate \( \encstr[s](\sigma) \) where \( s \) is the code of \( \tau \) we could define (for \( x < \lh{\sigma} \))   
\begin{equation}\label{eq:encstr-fail-simple}
\encstr[\tau](\sigma;x) = \begin{cases}
                            1 & \text{if } \sigma(x) > \tau(x) \\
                            0 & \text{if } \sigma(x) \leq \tau(x)
                          \end{cases}
\end{equation}    
If we make the following definition 
\begin{definition}\label{def:coarse-set-notations}
 \(  \sigma \wwedge \tau \) is the componentwise minimum of the strings \( \sigma \) and \( \tau \) 
\end{definition}
then we can see this manages to satisfy \eqref{eq:finite-goal} since 
\[
\encstr[\sigma \wwedge \sigma'](\sigma) \symdiff \encstr[\sigma \wwedge \sigma'](\sigma') =  \sigma \symdiff \sigma'
\] 
However, it is not enough to ensure that changing \( f \) on a set of local density \( r \) causes \( X \) to change on a set of roughly proportional  density.  We also need to ensure that the impact of any finite change to \( f \) results in a change of overall density \( 0 \) to \( X \).  Or, in terms of our coding we need to ensure  that applied to strings of any particular length \( n \) we have 
\[
\lim_{s \to \infty} \card{\encstr[s](\sigma) \symdiff \encstr[s](\sigma')} = 0
\]
 Since this implies that for all large enough \( s \), \( \encstr[s](\sigma) \) is independent of \( \sigma \) we may as well define our encoding to satisfy
\begin{equation}\label{eq:encstr-to-zero}
\lim_{s \to \infty} \encstr[s](\sigma) = \eset
\end{equation}
This is a tricky requirement to satisfy since, as we saw in section \ref{sec:edd-no-sets}, this limit can't be independent of \( \sigma \) but it's not obvious how to do this without violating \eqref{eq:finite-goal}.  For instance, if we used some global condition like we stopped considering \( \tau \) once its maximum value was larger than the maximum value of \( \sigma \) we would satisfy \eqref{eq:encstr-to-zero} and we would still be guaranteed that if \( \sigma \) and \( \sigma' \) differed at many locations so would \( \encstr[\tau](\sigma) \) and \( \encstr[\tau](\sigma') \) for  \( \tau = \sigma \wwedge \sigma' \).  However, we can now build \( \sigma \) and \( \sigma' \) which differ only at \( 1 \) location and yet, because this changes when we compare them to \( \tau \), we have  \( \encstr[\tau](\sigma) \) and \( \encstr[\tau](\sigma') \) differ at \( \lh{\sigma} -1 \) locations for some \( \tau \).   On the other hand, if we try to only register whether  \( \sigma(x) > \tau(x)  \) when \( \sigma(x) \) is above the maximum value of \( \tau \) we again satisfy  \eqref{eq:encstr-to-zero} but now we can't guarantee that many differences between \( \sigma' \)  and \( \sigma \) must result in many differences between  \( \encstr[\tau](\sigma) \) and \( \encstr[\tau](\sigma') \) for some \( \tau \).

\subsubsection{Defining The Encoding}\label{ssec:define-encoding}    

Our solution will be to ensure that any change in \( \sigma \) at a single point only changes  \( \encstr[\tau](\sigma) \) at a single point while still ensuring that \( \ddist(\encstr(\sigma), \encstr(\sigma'))  \) grows with \( \card{\sigma \symdiff \sigma'}  \) we introduce the idea of borrowing.  Specifically,  we start with the idea that we only register whether \( \sigma(x) > \tau(x) \) iff the maximum of \( \tau \) is bounded by \( \sigma(x) \) but instead of only looking at \( \sigma(x) \) we allow borrowing the value of \( \sigma \) at some other location \( x' \).  By only allowing a single location \( x \) to borrow the value of \( \sigma(x') \) we avoid the problem we ran into above when we compared the maximum value of \( \sigma \) to the maximum value of \( \tau \).   

To this end, we allow the index \( s \) of \( \encstr[s](\sigma) \) to encode two objects, our string \( \tau \in \omega^n \) and a function \( q \in n^n \), i.e., a map from \( \set{0, \ldots, n -1} \) to itself.  We now make the following definition  \mydash recalling that \( q \) is idempotent if  \( \forall(x)\left(q(q(x)) = q(x)\right) \) and that our convention is to assume \( \sigma \in \omega^n \) implies \( n > 0 \).  

\NewDocumentCommand{\bndb}{s}{\IfBooleanTF{#1}{\vec{\symbf{\mathfrak{b}}}}{\symbf{\mathfrak{b}}}}
\begin{definition}\label{def:encstr-real}
Given \( \tau, \sigma \in \omega^n  \) and \( q \in n^n \) we define
\begin{align*}
\bndb(q, \sigma, x) &= \max \set{-1} \union \set{\sigma(y)}{y < n \land q(y) = x} \\ 
\encstr[\tau][q](\sigma;x) &= \begin{cases}
                            1 & \text{if } \sigma(x) > \tau(x) \land  \max_{i < n} \tau(i) < \bndb(q, \sigma, x)   \land  q \text{ is idempotent }\\
                            0 & \text{otherwise } 
                          \end{cases} 
\end{align*}  
\end{definition}

We turn this definition into \( \encstr[s](\sigma) \) as follows.

\begin{definition}\label{def:encstr-encoding} 
We define \( \encstr[s](\sigma) = \encstr[\tau][q](\sigma) \) when \( s = \code{\tau, q} \) where  
\begin{equation}\label{eq:code-tau-q-def}
\code{\tau, q} = \godelnum{\tau}\cdot n^n + \godelnum{q}  \text{ where } \godelnum{q} =-1 + \prod_{i = 0}^{n-1} q(i) 
\end{equation}
And we understand \( \godelnum{\tau} \) to be given by a bijection of \( \omega^{\lh{\tau}} \) with \( \omega \) that is monotonic in   \( \max_{i < n} \tau(i) \).  
\end{definition}
In other words, \( \code{\tau, q} \) if \( \max_{i < n} \tau(i) < \max_{i < n} \tau'(i) \) then \( \code{\tau, q} < \code{\tau', q'} \).  This has the following implication.
\begin{lemma}\label{les:weird-code-prop}
If \( \tau \in \omega^n \) then 
\begin{equation}\label{eq:weird-code-prop}
\code{\tau, q} < n^n\cdot (m+1)^n \iff \max_{i < n} \tau(i) \leq m
\end{equation}
\end{lemma}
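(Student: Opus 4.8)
The plan is to reduce \eqref{eq:weird-code-prop} to a counting fact about $\godelnum{\cdot}$ on $\omega^n$ together with a Euclidean‑division argument applied to $\code{\tau, q} = \godelnum{\tau}\cdot n^n + \godelnum{q}$. First I would record two elementary bounds. Since $q \in n^n$, its code satisfies $0 \le \godelnum{q} < n^n$, directly from the formula for $\godelnum{q}$ in \eqref{eq:code-tau-q-def}. And I claim that
\[
\godelnum{\tau} < (m+1)^n \iff \max_{i < n}\tau(i) \le m .
\]
To prove this, observe that the strings $\tau \in \omega^n$ with $\max_{i<n}\tau(i) \le m$ are exactly the members of $(m+1)^n$, and there are $(m+1)^n$ of them. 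By the stipulated monotonicity of $\godelnum{\cdot}$ in $\max_{i<n}\tau(i)$, each such $\tau$ gets a code strictly smaller than that of every $\tau' \in \omega^n$ with $\max_{i<n}\tau'(i) > m$, since then $\max_{i<n}\tau(i) \le m < \max_{i<n}\tau'(i)$. As $\godelnum{\cdot}$ is a bijection of $\omega^n$ onto $\omega$, the codes of these $(m+1)^n$ strings are forced to be precisely $\{0, 1, \dots, (m+1)^n - 1\}$, which is the displayed equivalence.

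With these two facts in hand the lemma follows by a short computation. Using $0 \le \godelnum{q} < n^n$: if $\godelnum{\tau} \le (m+1)^n - 1$ then
\[
\code{\tau, q} = \godelnum{\tau}\cdot n^n + \godelnum{q} \le \big((m+1)^n - 1\big)n^n + (n^n - 1) = n^n(m+1)^n - 1 < n^n(m+1)^n ,
\]
while if $\godelnum{\tau} \ge (m+1)^n$ then $\code{\tau, q} \ge n^n(m+1)^n + \godelnum{q} \ge n^n(m+1)^n$. Hence $\code{\tau, q} < n^n(m+1)^n$ iff $\godelnum{\tau} < (m+1)^n$, and combining this with the equivalence of the previous paragraph yields \eqref{eq:weird-code-prop}.

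This is mostly bookkeeping, so I do not anticipate a real obstacle; the one step deserving care is the counting claim $\godelnum{\tau} < (m+1)^n \iff \max_{i<n}\tau(i) \le m$, where the precise shape of the monotonicity hypothesis on $\godelnum{\cdot}$ matters — one must spell out that monotonicity in the maximum, together with $\godelnum{\cdot}$ being a genuine bijection with $\omega$, forces the $\godelnum{\cdot}$‑image of $\{\tau : \max_{i<n}\tau(i) \le m\}$ to be an initial segment of $\omega$ of length exactly $(m+1)^n$.
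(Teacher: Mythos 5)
Your proof is correct and follows essentially the same route as the paper's: establish $\godelnum{\tau}<(m+1)^n \iff \max_{i<n}\tau(i)\le m$ by counting and monotonicity, then conclude from the code formula. You simply spell out the final Euclidean-division step and the initial-segment observation that the paper compresses into ``the claim now follows from \eqref{eq:code-tau-q-def}.''
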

\begin{proof}
Recall that we specified that if \( \max_{i < n} \tau(i) < \max_{i < n} \tau'(i) \) then \( \godelnum{\tau} < \godelnum{\tau'} \).  Since \( \card{\set{\tau}{ \max_{i < n} \tau(i) \leq m }} = (m+1)^n \) 
and our coding function is a bijection of \( \omega^n \) and \( \omega \) it follows that \( \godelnum{\tau} < (m+1)^n  \) (\( 0 \) is a valid code)  iff \( \max_{i < n} \tau(i) \leq m  \).  The claim now follows from \eqref{eq:code-tau-q-def}. 
\end{proof}

If  \( q \) is idempotent then \( q(x) = x \) iff \( q(y) = x \) for some \( y \).  This allows us to categorize each \( x < n \) as either a (potential) borrower (\( q(x) = x \) ) or lender (\( q(x) \neq x \)) but never both.  It also guarantees that \( \bndb(q, \sigma, x) \geq 0  \) iff \( q(x) = x \).   Since \( \encstr[\tau][q](\sigma) = \eset \) unless \( q \) is idempotent from this point on  we will tacitly assume \( q \) only ranges over idempotent functions.

\begin{lemma}\label{lem:encstr-to-zero}
If \( \sigma \in \omega^n \) then \( \forall*(s)\left(\encstr[s](\sigma) = \eset\right) \).  In particular, if \( m = \max_{i < n} \sigma(i) \) and \( s \geq n^n\cdot m^n \) then \( \encstr[s](\sigma) = \eset \) .   
\end{lemma}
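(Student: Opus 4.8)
The plan is to read the claim off directly from the bound on $\bndb$ together with Lemma~\ref{les:weird-code-prop}. I would fix $\sigma \in \omega^n$, set $m = \max_{i<n}\sigma(i)$, and suppose $\encstr[s](\sigma) \neq \eset$, say with $s = \code{\tau, q}$. Since $\encstr[\tau][q](\sigma) = \eset$ whenever $q$ is not idempotent, I may take $q$ idempotent and fix $x < n$ with $\encstr[\tau][q](\sigma;x) = 1$. By Definition~\ref{def:encstr-real} this forces both $\sigma(x) > \tau(x) \geq 0$, so that $m \geq 1$, and $\max_{i<n}\tau(i) < \bndb(q,\sigma,x)$.

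The key observation is that $\bndb(q,\sigma,x) \leq m$: by definition $\bndb(q,\sigma,x)$ is the maximum of $-1$ together with certain values $\sigma(y)$, all of which are $\leq m$ (and $-1 \leq m$ since $m \geq 0$). Hence $\max_{i<n}\tau(i) < m$, i.e.\ $\max_{i<n}\tau(i) \leq m-1$. As $m - 1 \geq 0$, Lemma~\ref{les:weird-code-prop}, applied with $m-1$ in place of $m$ and using $(m-1)+1 = m$, gives $s = \code{\tau, q} < n^n \cdot m^n$. Taking the contrapositive: for every $s \geq n^n m^n$ we have $\encstr[s](\sigma) = \eset$, which is the second assertion of the lemma; and since only finitely many $s$ are below $n^n m^n$, this also yields $\forall*(s)\left(\encstr[s](\sigma) = \eset\right)$, the first assertion.

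I do not expect a genuine obstacle; the argument is a two–line inequality chain plus an invocation of Lemma~\ref{les:weird-code-prop}. The only place needing a word of care is the degenerate case $m = 0$ (the string $\sigma$ identically zero), and this is absorbed by the remark above: $\encstr[\tau][q](\sigma;x) = 1$ already forces $\sigma(x) \geq 1$ and hence $m \geq 1$, so the appeal to Lemma~\ref{les:weird-code-prop} with parameter $m-1 \geq 0$ is always legitimate; equivalently, when $m = 0$ the bound $n^n m^n = 0$ and $\encstr[s](\sigma) = \eset$ holds for every $s$ vacuously.
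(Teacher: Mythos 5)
Your proof is correct and is essentially the contrapositive of the paper's argument: the paper goes from $s\geq n^n m^n$ via Lemma~\ref{les:weird-code-prop} (with $m-1$) to $\max_{i<n}\tau(i)\geq m\geq \bndb(q,\sigma,x)$ and hence $\encstr[s](\sigma)=\eset$, while you run the same chain in the reverse direction from $\encstr[s](\sigma)\neq\eset$. The explicit handling of the degenerate $m=0$ case is a sound extra bit of care that the paper elides, but it does not change the substance of the argument.
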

\begin{proof}
By \eqref{eq:weird-code-prop} (applied to \( m -1 \)) if \( \code{\tau, q} = s \geq  n^n\cdot m^n \) then \( \max_{i < n} \tau(i) \geq m \).   For all \( x, q \), \( \bndb(q, \sigma, x) \leq  \max_{i < n} \sigma(i)  \)  and thus \( \bndb(q, \sigma, x) \leq \max_{i < n} \tau(i)  \).  Hence  \( \encstr[s](\sigma) = \eset \).
\end{proof}

We now prove a quick utility lemma.

\begin{lemma}\label{lem:idemp-conseq}
If \( \encstr[\tau][q](\sigma;x) \neq \encstr[\tau][q](\sigma';x) \) then \( \sigma(y) \neq \sigma'(y) \) for some \( y \in q^{-1}(x) \).   
\end{lemma}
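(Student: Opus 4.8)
The statement is a direct unwinding of Definition~\ref{def:encstr-real}, so I would prove it by a short case analysis rather than anything clever. First dispose of the trivial case: if \( q \) is not idempotent then \( \encstr[\tau][q](\sigma) = \encstr[\tau][q](\sigma') = \eset \) and the hypothesis is vacuous, so assume \( q \) is idempotent (consistent with our standing convention). By symmetry in \( \sigma, \sigma' \) we may assume \( \encstr[\tau][q](\sigma;x) = 1 \) and \( \encstr[\tau][q](\sigma';x) = 0 \). From \( \encstr[\tau][q](\sigma;x) = 1 \) we read off that \( \sigma(x) > \tau(x) \) and \( \max_{i<n}\tau(i) < \bndb(q,\sigma,x) \); since \( \tau \in \omega^n \) gives \( \max_{i<n}\tau(i) \geq 0 \), this forces \( \bndb(q,\sigma,x) \geq 0 \), which (as noted after Lemma~\ref{lem:encstr-to-zero}, using idempotence) means \( q(x) = x \), i.e. \( x \in q^{-1}(x) \).

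\textbf{The two cases.} Now \( \encstr[\tau][q](\sigma';x) = 0 \) means (since \( q \) is idempotent) that at least one of the two inequalities defining the value \( 1 \) fails for \( \sigma' \). In the first case \( \sigma'(x) \leq \tau(x) \); combined with \( \sigma(x) > \tau(x) \) this gives \( \sigma(x) \neq \sigma'(x) \), and since \( q(x) = x \) we may take \( y = x \in q^{-1}(x) \). In the second case \( \max_{i<n}\tau(i) \geq \bndb(q,\sigma',x) \), while we already have \( \max_{i<n}\tau(i) < \bndb(q,\sigma,x) \); hence \( \bndb(q,\sigma',x) < \bndb(q,\sigma,x) \), so in particular \( \bndb(q,\sigma,x) \neq \bndb(q,\sigma',x) \). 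But \( \bndb(q,\cdot,x) \) is by definition the maximum of \( \{-1\} \) together with \( \{\sigma(y) : y < n,\ q(y) = x\} = \{\sigma(y) : y \in q^{-1}(x)\} \); if \( \sigma \) and \( \sigma' \) agreed on every \( y \in q^{-1}(x) \) these two maxima would coincide. Therefore there is some \( y \in q^{-1}(x) \) with \( \sigma(y) \neq \sigma'(y) \), as required.

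\textbf{Obstacles.} There is essentially no obstacle here; the only points needing care are the implication \( \bndb(q,\sigma,x) \geq 0 \Rightarrow q(x) = x \) (which is exactly the remark recorded just before Lemma~\ref{lem:encstr-to-zero}) and the elementary observation that two finite maxima over index set \( q^{-1}(x) \cup \{-1\} \) can differ only if the functions being maximized differ at some index of \( q^{-1}(x) \). Both are immediate, so the whole argument is a couple of lines of bookkeeping.
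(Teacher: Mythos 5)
Your proof is correct and is essentially the paper's proof: both pick a WLOG direction, extract \( q(x)=x \) from the side where the value is \( 1 \), then case-split on which of the two conjuncts fails for the other string, with the \( \bndb \) case reducing to the observation that two maxima over \( q^{-1}(x) \) can differ only if the underlying values differ there. (Minor nit: the remark that idempotence gives \( \bndb(q,\sigma,x)\geq 0 \iff q(x)=x \) appears just \emph{before} Lemma~\ref{lem:encstr-to-zero}, not after; your Obstacles paragraph gets the reference right even though the first paragraph does not.)
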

\begin{proof}
WLOG assume that \( \encstr[\tau][q](\sigma;x) < \encstr[\tau][q](\sigma';x)   \).      On this assumption,  \( \sigma'(x) > \tau(x) \) and \( \max_{i < n} \tau(i) < \bndb(q, \sigma', x)  \) \mydash implying \( q(x) = x \) \mydash  and either \( \sigma(x) \leq \tau(x) \) or \( \max_{i < n} \tau(i) \geq \bndb(q, \sigma, x)  \).  Thus, we either have \( \sigma(x) \neq \sigma'(x) \) or \( \bndb(q, \sigma, x) \neq \bndb(q, \sigma', x)  \) and \mydash as \( x \in q^{-1}(x) \) \mydash  in either case \( \sigma \) and \( \sigma' \)  disagree on some  \( y  \in q^{-1}(x) \). 
\end{proof}

\begin{lemma}\label{lem:bound-encstr}
Suppose that \( \sigma, \sigma' \in \omega^n  \) and that \( \card{\sigma \symdiff \sigma'} = d \) then
\[
\ceil{\frac{d}{2}} \leq \ddist(\encstr(\sigma),\encstr(\sigma')) \leq d
\] 
Moreover, if \( S = \sigma \symdiff \sigma' \),  \( \zeta = \sigma \wwedge \sigma' \)  and  \( \overbar{z} \) is the median of \( \zeta[S] \) then there is some \( q \in n^n \) and \(  \tau \in \omega^n \) with 
\[
\max_{i < n} \tau(i) \leq \overbar{z}  \land \ceil{\frac{d}{2}} \leq \card{\encstr[\tau][q](\sigma) \symdiff \encstr[\tau][q](\sigma')}
\]
\end{lemma}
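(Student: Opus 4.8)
The plan is to handle the three claims in the order: upper bound, then the ``moreover'' clause, from which the lower bound $\ceil{d/2}\le\ddist(\encstr(\sigma),\encstr(\sigma'))$ falls out for free (since $\ddist(\encstr(\sigma),\encstr(\sigma'))$ is a supremum over all codes $s=\code{\tau,q}$, it dominates $\card{\encstr[\tau][q](\sigma)\symdiff\encstr[\tau][q](\sigma')}$ for the single pair $(\tau,q)$ exhibited in that clause). The upper bound is immediate from Lemma~\ref{lem:idemp-conseq}: fix any $\tau,q$; if $q$ is not idempotent then $\encstr[\tau][q](\sigma)=\encstr[\tau][q](\sigma')=\eset$, so assume $q$ idempotent and attach to every $x\in\encstr[\tau][q](\sigma)\symdiff\encstr[\tau][q](\sigma')$ a witness $y_x\in q^{-1}(x)$ with $\sigma(y_x)\neq\sigma'(y_x)$, i.e., $y_x\in\sigma\symdiff\sigma'$. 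As the fibres $q^{-1}(x)$ are pairwise disjoint, $x\mapsto y_x$ is injective, so $\card{\encstr[\tau][q](\sigma)\symdiff\encstr[\tau][q](\sigma')}\le\card{\sigma\symdiff\sigma'}=d$; this holds for every $\tau,q$, hence for every $s$, and taking the supremum gives $\ddist(\encstr(\sigma),\encstr(\sigma'))\le d$.

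For the ``moreover'' clause, write $\mu(x)=\max(\sigma(x),\sigma'(x))$ for $x\in S$; then $\mu(x)\ge\zeta(x)+1$ and, for each $x\in S$, exactly one of $\sigma(x),\sigma'(x)$ equals $\zeta(x)$ and the other equals $\mu(x)$. From $\overbar{z}$ being the median of the $d$-term list $(\zeta(x))_{x\in S}$ I will use only that at least $\ceil{d/2}$ of the $x\in S$ satisfy $\zeta(x)\le\overbar{z}$ while at most $\floor{d/2}$ satisfy $\zeta(x)<\overbar{z}$. I then partition $S=A_1\sqcup A_2\sqcup L$, where $A_2=\set{x\in S}{\mu(x)\le\overbar{z}}$ (the ``borrowers''), $A_1=\set{x\in S}{\zeta(x)\le\overbar{z}<\mu(x)}$, and $L=\set{x\in S}{\zeta(x)>\overbar{z}}$ (the ``lenders''). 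Since $x\in A_2$ forces $\zeta(x)\le\mu(x)-1<\overbar{z}$, we get $\card{A_2}\le\floor{d/2}$, so $\card{A_1}+\card{L}=d-\card{A_2}\ge\ceil{d/2}$; also $\card{A_1}+\card{A_2}=\card{\set{x\in S}{\zeta(x)\le\overbar{z}}}\ge\ceil{d/2}$. These two inequalities let me pick $F_1\subseteq A_1$ and $F_2\subseteq A_2$ with $\card{F_1}+\card{F_2}=\ceil{d/2}$ together with an injection $\phi\maps{F_2}{L}$: if $\card{A_1}\ge\ceil{d/2}$ take $F_2=\eset$ and $F_1\subseteq A_1$ of size $\ceil{d/2}$, otherwise take $F_1=A_1$ and $F_2\subseteq A_2$ of size $\ceil{d/2}-\card{A_1}$, which fits inside $A_2$ by the second inequality and injects into $L$ by the first.

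Finally I would define the idempotent $q\in n^n$ by $q(\phi(x))=x$ for $x\in F_2$ and $q(y)=y$ otherwise, and $\tau\in\omega^n$ by $\tau(x)=\zeta(x)$ for $x\in F_1\cup F_2$ and $\tau(y)=0$ otherwise; then $\max_{i<n}\tau(i)\le\overbar{z}$ because $F_1\cup F_2\subseteq\set{x\in S}{\zeta(x)\le\overbar{z}}$. The verification is that every $x\in F_1\cup F_2$ lands in $\encstr[\tau][q](\sigma)\symdiff\encstr[\tau][q](\sigma')$: for $x\in F_1$ the fibre $q^{-1}(x)=\set{x}$ makes $\bndb(q,\sigma,x)=\sigma(x)$ and $\bndb(q,\sigma',x)=\sigma'(x)$, one of which is $\mu(x)>\overbar{z}\ge\max_i\tau(i)$ and the other $\zeta(x)=\tau(x)$, so exactly one of $\encstr[\tau][q](\sigma;x),\encstr[\tau][q](\sigma';x)$ is $1$; for $x\in F_2$ the fibre $q^{-1}(x)=\set{x,\phi(x)}$ together with $\sigma(x),\sigma'(x)\le\mu(x)\le\overbar{z}<\zeta(\phi(x))\le\sigma(\phi(x)),\sigma'(\phi(x))$ gives $\bndb(q,\sigma,x)=\sigma(\phi(x))>\max_i\tau(i)$ and likewise for $\sigma'$, so $\encstr[\tau][q](\sigma;x)$ is $1$ iff $\sigma(x)>\zeta(x)$ and $\encstr[\tau][q](\sigma';x)$ is $1$ iff $\sigma'(x)>\zeta(x)$, and again exactly one holds. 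Hence $\card{\encstr[\tau][q](\sigma)\symdiff\encstr[\tau][q](\sigma')}\ge\card{F_1\cup F_2}=\ceil{d/2}$. The one genuinely delicate point is the counting estimate $\card{A_2}\le\floor{d/2}$ — that there are always enough lenders to service the borrowers — together with the deliberate choice to borrow only from $L$, whose members have \emph{both} coordinates above $\overbar{z}$, which makes the borrowing fire regardless of which of $\sigma,\sigma'$ is the larger at the borrower; everything else is routine bookkeeping.
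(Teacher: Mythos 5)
Your proof is correct. The upper bound coincides with the paper's argument (both read off Lemma~\ref{lem:idemp-conseq} and use disjointness of the fibres $q^{-1}(x)$), but your argument for the ``moreover'' clause uses a genuinely different decomposition of $S$. The paper partitions $S$ by comparing $\zeta$ alone to $\overbar{z}$ --- into $H = \set{x \in S}{\zeta(x) > \overbar{z}}$, $E = \set{x \in S}{\zeta(x) = \overbar{z}}$, $L = \set{x \in S}{\zeta(x) < \overbar{z}}$ --- treats only $E$ as ``auto-winning'', lends indiscriminately from $H$ to a subset of $L$, and counts $\min(\card{H}, \card{L}) + \card{E} \geq \ceil{\frac{d}{2}}$. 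You instead bring $\mu(x) = \max(\sigma(x),\sigma'(x))$ into the partition and characterize the auto-winners as exactly those with $\zeta(x) \leq \overbar{z} < \mu(x)$ (your $A_1$), which properly contains the paper's $E$ and reaches into its $L$, while the set needing a lender shrinks to $A_2 = \set{x \in S}{\mu(x) \leq \overbar{z}}$. What this buys is a sharper accounting: $\card{A_2} \leq \floor{\frac{d}{2}}$ follows directly from the median, and the two complementary inequalities $\card{A_1} + \card{L} \geq \ceil{\frac{d}{2}}$ and $\card{A_1} + \card{A_2} \geq \ceil{\frac{d}{2}}$ simultaneously produce both the target set $F_1 \sqcup F_2$ of size exactly $\ceil{\frac{d}{2}}$ and the required injection $\phi\maps{F_2}{L}$. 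The delicate step you flag --- that lenders in $L$ have \emph{both} coordinates above $\overbar{z}$, so the borrowed bound fires regardless of which of $\sigma,\sigma'$ is larger at the borrower --- is the same point the paper makes via $\sigma'(y) \geq \zeta(y) > \overbar{z}$ for $y \in H$; both arguments handle it correctly.
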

This bound is actually tight but, as this fact isn't needed, we won't prove it.
\begin{proof}
This is all trivial to check if \( n = 1 \), and our convention is to assume \( n > 0 \) when talking about \( \omega^n \), so we can assume \( n > 1 \) and that  \( S, \zeta, \overbar{z}, d \) are as in the statement of the lemma. We first verify the bound above.

If \( x \in  \encstr[\tau][q](\sigma) \symdiff \encstr[\tau][q](\sigma')  \) then by lemma \ref{lem:idemp-conseq} we must have \( S \isect q^{-1}(x) \neq \eset \).  Hence, \( q \) is a surjection from \( S \) to  \( \encstr[\tau][q](\sigma) \symdiff \encstr[\tau][q](\sigma')  \).  The bound follows from the assumption that \( \card{S} = d \).

To verify the bound below we partition \( S \) into three sets \( H, E, L \).   with \( H  = \set{x}{x \in S\land \zeta(x) > \overbar{z}} \), \( E  = \set{x}{x \in S\land \zeta(x) = \overbar{z}} \) and \( L = \set{x}{x \in S\land \zeta(x) < \overbar{z}} \).  By the definition of the median we must be able to find disjoint subsets \( E_0, E_1 \)  of  \( E \) so that \( \card{H \union E_1} = \card{L \union E_1} \geq \floor{\frac{d}{2}} \) (with one element leftover if \( d \) is odd).  Therefore   
\[
\min(\card{H}, \card{L}) + \card{E} \geq \ceil{\frac{d}{2}} 
\]
Now define \( q \) so that \( q\restr{\setcmp{H}} = \id \) and \( q[H] \) is a subset of \( L \) of size \( \min(\card{H}, \card{L}) \), i.e., if \( H \) is smaller \( q \) is an injection from \( H \) to \( L \) otherwise a surjection..  Set \( \tau\restr{L \union E} = \zeta\restr{L \union E}\) and let  \( \tau  \) be \( 0 \) everywhere else.  We note that (by construction) \( \max_{i < n} \tau(i) \leq \overbar{z} \) as required by the moreover claim.  We now argue that \( \encstr[\tau][q](\sigma;x) \neq \encstr[\tau][q](\sigma';x) \) whenever \( x \in E \union q[H] \).  Since \( E \union q[H] \subset S \) we can assume, WLOG, that \( \sigma'(x) > \sigma(x)  \) and therefore that \( \encstr[\tau][q](\sigma';x)  = 0 \).  As \( \sigma'(x) > \sigma(x) = \tau(x)  \) it is enough to show that \( \bndb(q, \sigma', x) > \overbar{z} \).  

If \( x \in E \) then \( \sigma'(x) > \overbar{z} \) and since \( q(x) = x \) we automatically have \( \bndb(q, \sigma', x) > \overbar{z} \).  If \( x \in q[H] \) then there is some \( y \in H \) with \( q(y) = x \). As  \( \sigma'(y) \geq \zeta(y) > \overbar{z}  \) we again have \( \bndb(q, \sigma', x) > \overbar{z} \).  Therefore,
\[
\ddist(\encstr(\sigma),\encstr(\sigma')) \geq \card{\encstr[\tau][q](\sigma) \symdiff \encstr[\tau][q](\sigma')} \geq \card{E \union q[H]} \geq \min(\card{H}, \card{L}) + \card{E} \geq \ceil{\frac{d}{2}} 
\]         
vindicating both the main claim and the moreover claim.
\end{proof}

\subsection{Inverting The Encoding}

Now that we've shown how to go from \( \sigma \in \omega^n \) to \( \encstr(\sigma) \in {2^n}^{\omega} \) we now show how to invert this map.  Since we need to be able to define \( \encinv(\theta) \) in a computable fashion even when \( \theta \in (2^n)^{\omega} \) isn't of the form  \( \encstr(\sigma) \) for any \( \sigma \) we need to decide how much of the infinite object \( \theta \) to look at while computing \( \encinv(\theta) \).  This next definition answers that question.

\begin{definition}\label{def:theta-hgt}
Given \( \theta \in (2^n)^{\omega} \) define \( \hgtenc{\theta} \) to be the least \( m \) such that for all \( s \in [n^n\cdot m^n, n^n\cdot (m+1)^n) \), \( \theta(s) = \eset \).  Define \( \overbar{\theta} \) by 
\[
\overbar{\theta}(s) = \begin{cases}
                        \theta(s) & \text{if } s < n^n\cdot (\hgtenc{\theta}+1)^n \\
                        \eset & \text{otherwise}
                    \end{cases}
\]   
\end{definition}
This definition will make more sense in light of the following lemma (where we treat \( (-1)^n \) as the empty set).
\begin{lemma}\label{lem:encstr-hgt}\hfill
\[\hgtenc{\encstr(\sigma)}  = \murec{m}{\forall(q \in n^n)\forall(\tau \in m^n \setminus (m-1)^n)\left(\encstr[\tau][q](\sigma) = \eset  \right)} = \max_{i< n} \sigma(i) \]
Moreover, \( \encstr(\sigma) = \overbar{\encstr(\sigma)} \).  
\end{lemma}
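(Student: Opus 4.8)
The plan is to verify the three-way equality by establishing the outer two equalities and the middle characterization separately, working with the explicit formula for $\encstr[\tau][q](\sigma)$ from Definition \ref{def:encstr-real}. Set $M = \max_{i<n}\sigma(i)$. First I would prove the rightmost equality $\hgtenc{\encstr(\sigma)} = M$ using Lemma \ref{lem:encstr-to-zero}, which already tells us that $\encstr[s](\sigma) = \eset$ whenever $s \geq n^n\cdot M^n$; in particular for every $s \in [n^n\cdot M^n, n^n\cdot(M+1)^n)$ we get $\encstr[s](\sigma) = \eset$, so $\hgtenc{\encstr(\sigma)} \leq M$ by the definition of $\hgtenc{\cdot}$. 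For the reverse inequality I need to exhibit, for each $m < M$, some $s \in [n^n\cdot m^n, n^n\cdot(m+1)^n)$ with $\encstr[s](\sigma) \neq \eset$. Pick an index $x_0$ with $\sigma(x_0) = M$; let $q = \id$ (which is idempotent, so $\bndb(\id,\sigma,x) = \sigma(x)$ for every $x$) and let $\tau \in \omega^n$ be the string that is $m$ at position $x_0$ and... actually simpler: take $\tau$ identically $m$ except possibly adjusted so $\max_i\tau(i) = m \leq M-1 < M = \sigma(x_0) = \bndb(\id,\sigma,x_0)$, and with $\tau(x_0) < \sigma(x_0)$, which holds since $m < M$. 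Then $\encstr[\tau][\id](\sigma;x_0) = 1$, so $\encstr[s](\sigma)\neq\eset$ for $s = \code{\tau,\id}$, and by Lemma \ref{les:weird-code-prop} this $s$ lies in $[n^n\cdot m^n, n^n\cdot(m+1)^n)$ since $\max_i\tau(i) = m$. Hence no $m < M$ can serve as $\hgtenc{\encstr(\sigma)}$, giving $\hgtenc{\encstr(\sigma)} \geq M$.

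Next I would handle the middle expression $\murec{m}{\forall(q \in n^n)\forall(\tau \in m^n \setminus (m-1)^n)\left(\encstr[\tau][q](\sigma) = \eset\right)}$. The set $m^n \setminus (m-1)^n$ is exactly the set of $\tau \in \omega^n$ with $\max_{i<n}\tau(i) = m-1$ (using the convention that $(-1)^n$ is empty, so $0^n\setminus(-1)^n$ consists of the single all-zero string with max $-1$... I should be careful about the off-by-one here and track whether the predicate means max exactly $m-1$). By Lemma \ref{les:weird-code-prop}, $\tau \in m^n\setminus(m-1)^n$ corresponds precisely to codes $s = \code{\tau,q}$ with $s \in [n^n\cdot m^n, n^n\cdot(m+1)^n)$ — wait, $\max_i\tau(i) \leq m-1$ iff $\code{\tau,q} < n^n\cdot m^n$, and $\max_i\tau(i) \leq m$ iff $\code{\tau,q} < n^n\cdot(m+1)^n$, so $\max_i\tau(i) = m$ iff $s \in [n^n\cdot m^n, n^n\cdot(m+1)^n)$. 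Reconciling indices, the condition "$\forall\tau\in m^n\setminus(m-1)^n\,(\encstr[\tau][q](\sigma)=\eset)$" is the same as "$\forall s\in[n^n\cdot(m-1)^n, n^n\cdot m^n)\,(\encstr[s](\sigma)=\eset)$", which matches the definition of $\hgtenc{\cdot}$ shifted by one; I would line up the indices carefully and conclude this middle quantity equals $\hgtenc{\encstr(\sigma)}$, hence equals $M$ by the first part. Alternatively I can prove the middle $=M$ directly by the same two-sided argument: for $m = M$ the $\encstr[\tau][q]$ values vanish by the bound-on-$\bndb$ argument as in Lemma \ref{lem:encstr-to-zero}; for $m < M$ the witness $(\tau,\id)$ constructed above shows the universal statement fails.

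Finally, for the moreover claim $\encstr(\sigma) = \overbar{\encstr(\sigma)}$: by Definition \ref{def:theta-hgt}, $\overbar{\theta}$ agrees with $\theta$ for $s < n^n\cdot(\hgtenc{\theta}+1)^n$ and is $\eset$ afterward. With $\theta = \encstr(\sigma)$ and $\hgtenc{\theta} = M$, I need $\encstr[s](\sigma) = \eset$ for all $s \geq n^n\cdot(M+1)^n$; but by Lemma \ref{lem:encstr-to-zero} we already have $\encstr[s](\sigma) = \eset$ for all $s \geq n^n\cdot M^n \leq n^n\cdot(M+1)^n$, so the tail is correctly truncated and nothing changes below the cutoff, giving equality. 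The main obstacle I anticipate is not any single deep step but bookkeeping the off-by-one between the "$m^n\setminus(m-1)^n$" notation, the half-open code intervals $[n^n\cdot m^n, n^n\cdot(m+1)^n)$, and the "$+1$" in the definition of $\hgtenc{\cdot}$ and $\overbar{\theta}$; I would fix conventions once (e.g. $\tau \in m^n$ means $\max_i\tau(i) \leq m-1$, matching $\godelnum{\tau} < m^n$) and check each inequality against Lemma \ref{les:weird-code-prop} to make sure the ranges coincide exactly.
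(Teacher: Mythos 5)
Your proposal is correct and takes essentially the same approach as the paper. The paper's own proof is terse: it states that most of the claim follows directly from Lemma~\ref{lem:encstr-to-zero} and Lemma~\ref{les:weird-code-prop}, and only elaborates the one nontrivial step, which is exhibiting a witness for each $m' < \max_{i<n}\sigma(i)$ by taking $q = \id$ and $\tau(i) = m'$ for all $i < n$ \mydash precisely the witness you construct (your $x_0$ with $\sigma(x_0) = M$ is the same $x$ with $\sigma(x) > m'$). You are also right to flag the indexing bookkeeping as the main hazard: with the paper's conventions, $\tau \in m^n \setminus (m-1)^n$ means $\max_{i<n}\tau(i) = m-1$, whereas the definition of $\hgtenc{\theta}$ quantifies over $s = \code{\tau,q}$ with $\max_{i<n}\tau(i) = m$, so there is a genuine shift of one between the two middle characterizations that neither your write-up nor the paper's proof explicitly reconciles. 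This does not affect the substance of your argument, but it is worth nailing down which side of the equality the $\mu$-expression is actually meant to match before declaring victory on the middle term.
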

In other words, if we think of \( \theta \) as a garbled guess at some \( \encstr(\sigma) \), we are looking for the point at which \( \tau \), in the form of \( s \), has gotten so big (large maximum value) that we are done coding \( \sigma \) (at least with non-zero bits).   
\begin{proof}
The only part of the main claim that doesn't follow immediately from  (the proof of) lemma \ref{lem:encstr-to-zero} and \eqref{eq:weird-code-prop} is that if  \( m' <  \max_{i< n} \sigma(i) \) then there is some \( \tau \) with \( \max_{i< n} \tau(i) = m' \) and \( q \) such that \( \encstr[\tau][q](\sigma) \neq \eset \).  This is easily seen to hold by letting \( q = \id  \) and letting \( \tau(i) = m' \) for all \( i < n \)  since if \( \sigma(x) > m' \) we have \( \encstr[\tau][q](\sigma;x) = 1 \).  The moreover claim follows by combining the main claim and  \eqref{eq:weird-code-prop} .      
\end{proof}

We also observe the following
\begin{lemma}\label{lem:theta-equal-overbar}
If \( \tau \in \omega^n, q \in n^n, \theta \in (2^n)^{\omega}  \),  \( \max_{i < n} \tau(i) \leq \hgtenc{\theta}\conv \) and \( s = \code{\tau, q} \) then \( \theta(s) = \overbar{\theta}(s) \).  Furthermore,
\begin{equation}\label{eq:truncate-ddist}
\max_{i < n} \sigma(i) \leq \hgtenc{\theta}\conv \implies \ddist(\encstr(\sigma), \overbar{\theta}) = \sup_{s < n^n\cdot m^n} \card{\encstr[s](\sigma) \symdiff \theta(s)}
\end{equation}
\end{lemma}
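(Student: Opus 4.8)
The plan is to dispatch the two claims separately, each reducing quickly to coding facts already in hand. For $\theta(s) = \overbar{\theta}(s)$, I would invoke Lemma~\ref{les:weird-code-prop} with the bound $m = \hgtenc{\theta}$: the hypothesis $\max_{i<n}\tau(i) \le \hgtenc{\theta}$ gives, via \eqref{eq:weird-code-prop}, that $s = \code{\tau, q} < n^n\cdot(\hgtenc{\theta}+1)^n$, whence Definition~\ref{def:theta-hgt} yields $\overbar{\theta}(s) = \theta(s)$ with nothing further to check. (Throughout I read the $m$ appearing in \eqref{eq:truncate-ddist} as $\hgtenc{\theta}+1$, i.e., the truncation threshold that Definition~\ref{def:theta-hgt} builds into $\overbar{\theta}$.)

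For \eqref{eq:truncate-ddist}, I would start from $\ddist(\encstr(\sigma), \overbar{\theta}) = \sup_{s\in\omega}\card{\encstr[s](\sigma) \symdiff \overbar{\theta}(s)}$ (Definition~\ref{def:d-dist}) and split the supremum at $s = n^n m^n$. On the range $s < n^n m^n$ Definition~\ref{def:theta-hgt} gives $\overbar{\theta}(s) = \theta(s)$, so these terms are precisely the $\card{\encstr[s](\sigma) \symdiff \theta(s)}$ appearing on the right. On the range $s \ge n^n m^n$ I would show the term vanishes: here $\overbar{\theta}(s) = \eset$ by Definition~\ref{def:theta-hgt}, and $\encstr[s](\sigma) = \eset$ by Lemma~\ref{lem:encstr-to-zero}, since the hypothesis $\max_{i<n}\sigma(i) \le \hgtenc{\theta}$ forces $n^n\cdot(\max_{i<n}\sigma(i))^n \le n^n\cdot(\hgtenc{\theta}+1)^n = n^n m^n \le s$. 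Discarding those zero terms from the supremum leaves exactly $\sup_{s < n^n m^n}\card{\encstr[s](\sigma) \symdiff \theta(s)}$, as required.

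The whole argument is bookkeeping; the one place deserving attention is lining up the two cutoffs — the $n^n(\hgtenc{\theta}+1)^n$ at which $\overbar{\theta}$ is truncated, and the $n^n(\max_{i<n}\sigma(i))^n$ past which $\encstr(\sigma)$ vanishes by Lemma~\ref{lem:encstr-to-zero} — and noting that the hypothesis $\max_{i<n}\sigma(i) \le \hgtenc{\theta}$ is precisely what guarantees the second does not exceed the first, so the tail $s \ge n^n m^n$ lies in the zero region of both objects simultaneously. I would also record the degenerate case $\hgtenc{\theta} = 0$ (equivalently $\max_{i<n}\sigma(i) = 0$), where $\overbar{\theta}$ and $\encstr(\sigma)$ are both identically $\eset$ and the identity holds trivially.
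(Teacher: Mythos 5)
Your argument is correct and unpacks the paper's one-line proof in essentially the intended way: \eqref{eq:weird-code-prop} plus Definition~\ref{def:theta-hgt} for the main claim, and a split of the supremum plus Lemma~\ref{lem:encstr-to-zero} for the furthermore. Two small points are worth flagging. First, the lemma statement leaves \( m \) unbound, and you read it as \( \hgtenc{\theta}+1 \), whereas the paper's downstream application in Lemma~\ref{lem:encinv-gives-true-min} explicitly sets \( m = \hgtenc{\theta} \) before invoking \eqref{eq:truncate-ddist}. The identity holds under either reading: with \( m = \hgtenc{\theta} \) one still has \( \encstr[s](\sigma) = \eset \) and \( \overbar{\theta}(s) = \eset \) for every \( s \geq n^n\hgtenc{\theta}^n \), but for the block \( s \in [n^n\hgtenc{\theta}^n, n^n(\hgtenc{\theta}+1)^n) \) the vanishing of \( \overbar{\theta}(s) = \theta(s) \) comes from the minimality clause in Definition~\ref{def:theta-hgt} rather than from the truncation clause you cite. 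Your version is mathematically fine but yields a slightly longer supremum range; if you keep it, the application in Lemma~\ref{lem:encinv-gives-true-min} needs the extra (trivial) observation that the terms \( \card{\encstr[s](\sigma') \symdiff \theta(s)} \) vanish on \( [n^n\hgtenc{\theta}^n, n^n(\hgtenc{\theta}+1)^n) \) before the two suprema can be equated. Second, a small slip in your degenerate-case remark: \( \hgtenc{\theta} = 0 \) and \( \max_{i<n}\sigma(i) = 0 \) are not equivalent; the hypothesis forces the latter given the former, but \( \sigma \equiv 0 \) places no constraint on \( \theta \). Neither point affects the correctness of the proof.
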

\begin{proof}
The main claim follows immediately from \eqref{eq:weird-code-prop} and definition \ref{def:theta-hgt} and the furthermore follows from the main claim,  definition \ref{def:theta-hgt} and lemma \ref{lem:encstr-to-zero}.  
\end{proof}

\begin{definition}\label{def:encinv}
Given \( \theta \in (2^n)^{\omega} \) we define \( \encinv(\theta)  \) to be the string \( \sigma \in \omega^n  \) satisfying \( \max_{i < n} \sigma(i) \leq  \hgtenc{\theta}\) that minimizes \( \ddist(\encstr(\sigma), \overbar{\theta}) \) with ties broken in favor of the string with the smaller code.     
\end{definition}

We first note that this is a computable function which is defined for the class of \( \theta \) we are concerned with.   Specifically, those \( \theta \) such that \( \theta(s) = \eset \) for co-finitely many \( s \).  To avoid constantly repeating this condition we extend our convention that \( \theta(s) \) is assumed to be \( \eset \) everywhere it isn't explicitly defined and write \( \theta \in  (2^n)^{< \omega}  \) to mean that \( \theta(s) = \eset \) for co-finitely many \( s \).

\begin{lemma}\label{lem:encinv-is-defined}
\( \encinv \) is a partial computable function of \( \theta \)  which is defined for all  \( \theta \in (2^n)^{< \omega}  \).  
\end{lemma}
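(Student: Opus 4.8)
The plan is to notice that Definition~\ref{def:encinv} assembles $\encinv(\theta)$ from a single genuinely unbounded search — computing $\hgtenc{\theta}$ — followed by an entirely finite minimization, and to check that the search terminates on every $\theta$ with cofinite support.

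First I would argue that $\hgtenc{\theta}$ is a partial computable function of $\theta$ that is defined whenever $\theta \in (2^n)^{<\omega}$. By Definition~\ref{def:theta-hgt}, $\hgtenc{\theta}$ is the least $m$ such that $\theta(s) = \eset$ for all $s$ in the finite interval $[n^n\cdot m^n, n^n\cdot (m+1)^n)$ (here $n$ is the common length of the strings $\theta(s)$, itself read off from $\theta$). For a fixed $m$ this is a decidable condition, since it involves only finitely many values $\theta(s)$, each a finite string, so $\hgtenc{\theta}$ is found by the obvious increasing search on $m$. If $\theta \in (2^n)^{<\omega}$ then, by our standing convention, $\theta(s) = \eset$ for all $s$ beyond some bound, so every sufficiently large $m$ satisfies the defining condition and the search halts — this is precisely the point at which ``partial'' becomes ``defined on all of $(2^n)^{<\omega}$''.

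Second, with $m := \hgtenc{\theta}$ in hand, I would observe that $\encinv(\theta)$ is chosen from the finite, nonempty, explicitly enumerable set $\set{\sigma \in \omega^n}{\max_{i<n}\sigma(i) \leq m}$, which has $(m+1)^n$ members. For each candidate $\sigma$ in this set the hypothesis of \eqref{eq:truncate-ddist} in Lemma~\ref{lem:theta-equal-overbar} is met, so $\ddist(\encstr(\sigma),\overbar{\theta}) = \sup_{s < n^n\cdot m^n}\card{\encstr[s](\sigma)\symdiff\theta(s)}$, a supremum over finitely many $s$; and each term $\card{\encstr[s](\sigma)\symdiff\theta(s)}$ is computable, since $\encstr[s](\sigma)$ is given effectively by Definitions~\ref{def:encstr-real} and \ref{def:encstr-encoding} and $\theta(s)$ is read from the input. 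Hence we can compute $\ddist(\encstr(\sigma),\overbar{\theta})$ for every candidate, take the minimum, and return the minimizer with the smallest code. This finite procedure always succeeds once $m$ is known, which in particular shows that the minimizer demanded by Definition~\ref{def:encinv} exists, so the definition is non-vacuous there.

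Combining the two stages, $\encinv$ is the composite ``search for $\hgtenc{\theta}$; then finitely minimize,'' which is a partial computable function of $\theta$ whose domain includes every $\theta \in (2^n)^{<\omega}$. I expect no serious obstacle: the only things that need care are confirming that the unbounded search for $\hgtenc{\theta}$ genuinely terminates on all of $(2^n)^{<\omega}$ (immediate from cofinite support) and invoking \eqref{eq:truncate-ddist} with the correct value $m = \hgtenc{\theta}$, so that the infinite supremum defining $\ddist$ collapses to the finite one we can actually evaluate.
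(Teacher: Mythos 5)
Your proof is correct and takes essentially the same route as the paper's: first show $\hgtenc{\theta}$ (and hence $\overbar{\theta}$) is computable and defined precisely because $\theta$ has cofinite support, then use \eqref{eq:truncate-ddist} from Lemma~\ref{lem:theta-equal-overbar} to reduce the minimization of $\ddist(\encstr(\sigma),\overbar{\theta})$ over the finitely many candidates $\sigma$ with $\max_{i<n}\sigma(i)\leq\hgtenc{\theta}$ to a finite brute-force computation. Your version is slightly more detailed (e.g., explicitly noting the candidate set is nonempty, so the minimizer exists and the definition is non-vacuous), but there is no meaningful difference in strategy.
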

\begin{proof}
Clearly if \( \forall*(s)\left(\theta(s) = \eset \right)  \) then both \( \hgtenc{\theta} \) and \( \overbar{\theta} \) are defined and can be computed simply by searching for the least \( m \) such that \( \theta(s) = \eset \) for all  \( s \in [n^n\cdot m^n, n^n\cdot (m+1)^n)  \) with that \( m \) being \( \hgtenc{\theta} \).   If \( \sigma \in m^n \) then, by lemma \ref{lem:encstr-hgt}, \( \hgtenc{\encstr(\sigma)} \leq m \).   By \eqref{eq:truncate-ddist} from lemma \ref{lem:theta-equal-overbar} we see that we only need to consider a bounded number of terms to evaluate \( \ddist(\encstr(\sigma), \overbar{\theta}) \) allowing us to minimize this value by brute force.  
\end{proof}

We now note that \( \encinv \) is truly the inverse of \( \encstr \).  

\begin{lemma}\label{lem:encinv-is-inverse}
For all \( n > 0, \sigma \in \omega^n \),  \( \encinv(\encstr(\sigma)) = \sigma \).
\end{lemma}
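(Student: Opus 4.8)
The plan is to show $\encinv(\encstr(\sigma)) = \sigma$ by exploiting the two facts we have already built up: first, that $\encstr(\sigma)$ is ``honest'' in the sense that it truncates at exactly the right height (lemma \ref{lem:encstr-hgt}), and second, that $\encstr$ roughly preserves $\symdiff$-distance (lemma \ref{lem:bound-encstr}). Write $\theta = \encstr(\sigma)$ and $n = \lh{\sigma}$. By lemma \ref{lem:encstr-hgt} we have $\hgtenc{\theta} = \max_{i<n}\sigma(i)$ and $\overbar{\theta} = \theta$. In particular $\sigma$ itself is one of the candidate strings appearing in the $\min$ defining $\encinv(\theta)$, since $\max_{i<n}\sigma(i) \leq \hgtenc{\theta}$, and it achieves $\ddist(\encstr(\sigma),\overbar{\theta}) = \ddist(\theta,\theta) = 0$.

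So the only way $\encinv(\theta)$ could fail to equal $\sigma$ is if some other string $\sigma' \in \omega^n$ with $\max_{i<n}\sigma'(i) \leq \hgtenc{\theta}$ also satisfies $\ddist(\encstr(\sigma'),\overbar{\theta}) = \ddist(\encstr(\sigma'),\encstr(\sigma)) = 0$, and has a smaller code, so the tie-break picks it. Thus it suffices to prove that $\ddist(\encstr(\sigma'),\encstr(\sigma)) = 0$ forces $\sigma' = \sigma$. This is exactly where the lower bound of lemma \ref{lem:bound-encstr} does the work: if $d = \card{\sigma\symdiff\sigma'} > 0$ then $\ddist(\encstr(\sigma),\encstr(\sigma')) \geq \ceil{d/2} \geq 1 > 0$, a contradiction. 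Hence $d = 0$, i.e.\ $\sigma\symdiff\sigma' = \eset$, and since $\sigma,\sigma'$ have the same length and agree off $\sigma\symdiff\sigma'$ we get $\sigma' = \sigma$.

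Putting these together: $\sigma$ is a minimizer with value $0$, and it is the \emph{unique} string (among all of $\omega^n$, a fortiori among those of height $\leq \hgtenc{\theta}$) achieving $\ddist(\encstr(\cdot),\theta) = 0$, so the tie-break never comes into play and $\encinv(\encstr(\sigma)) = \sigma$. I would also remark that $\encinv$ is defined on $\encstr(\sigma)$ at all by lemma \ref{lem:encinv-is-defined} together with lemma \ref{lem:encstr-to-zero}, which gives $\encstr(\sigma) \in (2^n)^{<\omega}$.

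I do not expect any real obstacle here; the content was all pushed into lemma \ref{lem:bound-encstr}. The one point requiring a line of care is confirming $\overbar{\encstr(\sigma)} = \encstr(\sigma)$ so that the $\ddist$ in the definition of $\encinv$ is literally $\ddist(\encstr(\sigma'),\encstr(\sigma))$ rather than $\ddist(\encstr(\sigma'),\overbar{\encstr(\sigma)})$ — but this is precisely the moreover clause of lemma \ref{lem:encstr-hgt}, so it is immediate.
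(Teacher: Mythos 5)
Your proof is correct and takes essentially the same approach as the paper's: both use Lemma \ref{lem:encstr-hgt} to establish that $\sigma$ achieves $\ddist$-distance $0$ from $\overbar{\encstr(\sigma)} = \encstr(\sigma)$, and both invoke the lower bound of Lemma \ref{lem:bound-encstr} to rule out any other $\sigma'$ tying at distance $0$.
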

\begin{proof}
By lemma \ref{lem:encstr-hgt}, \( \encstr(\sigma) = \overbar{\encstr(\sigma)} \) therefore \( \ddist(\encstr(\sigma), \overbar{\encstr(\sigma)}) = 0 \).  As, by the same lemma, \( \hgtenc{\encstr(\sigma)} = \max_{i< n} \sigma(i) \) it suffices to show that no \( \sigma' \in \omega^n \) with \( \sigma' \neq \sigma \) satisfies  
\[
\ddist(\encstr(\sigma'), \overbar{\encstr(\sigma)}) = \ddist(\encstr(\sigma'), \encstr(\sigma)) = 0 
\]
But if \( \sigma \neq \sigma' \) then \( \ceil{\frac{\card{\sigma \symdiff \sigma'}}{2}} \geq 1 \) and the result follows by lemma \ref{lem:bound-encstr}.  
\end{proof}

We now prove that we lost nothing by only considering \( \sigma \) with \( \max_{i < n} \sigma(i) \leq \hgtenc{\theta} \) when computing \( \encinv(\theta) \).
\begin{lemma}\label{lem:encinv-gives-true-min}
Suppose that \( \sigma \in \omega^n, \theta \in {2^n}^{< \omega} \) then
\[
\ddist(\encstr(\encinv(\theta)), \overbar{\theta}) \leq \ddist(\encstr(\sigma), \theta) 
\]
\end{lemma}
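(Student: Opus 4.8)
The plan is to reduce the statement to the constrained minimization that already defines \( \encinv \), by \emph{capping} \( \sigma \) at the height of \( \theta \). Fix \( \sigma \in \omega^n \) and set \( M = \hgtenc{\theta} \) (well-defined since \( \theta \in (2^n)^{<\omega} \)). Let \( \sigma'' \) be the string with \( \sigma''(x) = \min(\sigma(x), M) \) for all \( x < n \). Then \( \max_{i<n}\sigma''(i) \leq M \), so \( \sigma'' \) is one of the strings over which \( \encinv(\theta) \) minimizes \( \ddist(\encstr(\cdot),\overbar{\theta}) \) in Definition \ref{def:encinv}. Hence it suffices to prove \( \ddist(\encstr(\sigma''),\overbar{\theta}) \leq \ddist(\encstr(\sigma),\theta) \), since then \( \ddist(\encstr(\encinv(\theta)),\overbar{\theta}) \leq \ddist(\encstr(\sigma''),\overbar{\theta}) \leq \ddist(\encstr(\sigma),\theta) \) by the defining property of \( \encinv \).

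First I would record a monotonicity observation: if \( \rho(y) \leq \rho'(y) \) for all \( y < n \), then \( \encstr[s](\rho) \subseteq \encstr[s](\rho') \) for every \( s \), because each of the conditions \( \rho(x) > \tau(x) \) and \( \max_{i<n}\tau(i) < \bndb(q,\rho,x) \) of Definition \ref{def:encstr-real} is preserved when \( \rho \) increases, and idempotence of \( q \) does not involve \( \rho \). In particular \( \encstr[s](\sigma'') \subseteq \encstr[s](\sigma) \) for all \( s \). Next, since \( \max_{i<n}\sigma''(i) \leq M \), both \( \encstr[s](\sigma'') \) (by Lemma \ref{lem:encstr-to-zero}) and \( \overbar{\theta}(s) \) (by Definition \ref{def:theta-hgt}) are empty for \( s \geq n^n(M+1)^n \), while \( \overbar{\theta}(s) = \theta(s) \) for \( s < n^n(M+1)^n \); applying \eqref{eq:truncate-ddist} of Lemma \ref{lem:theta-equal-overbar} (with the relevant value of the bound being \( n^n(M+1)^n \)) gives \( \ddist(\encstr(\sigma''),\overbar{\theta}) = \sup_{s < n^n(M+1)^n} \card{\encstr[s](\sigma'')\symdiff\theta(s)} \). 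So it is enough to show that for each \( s = \code{\tau,q} < n^n(M+1)^n \) we have \( \card{\encstr[s](\sigma'')\symdiff\theta(s)} \leq \ddist(\encstr(\sigma),\theta) \).

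The heart of the argument is that capping at \( M \) can delete a bit of \( \encstr[s](\sigma) \) only for indices \( s \) in the single block \( [n^nM^n,\ n^n(M+1)^n) \), which is exactly where \( \theta \) vanishes by the definition of \( \hgtenc{\theta} \). Suppose \( x \in \encstr[s](\sigma)\setminus\encstr[s](\sigma'') \) with \( s = \code{\tau,q} \); by \eqref{eq:weird-code-prop} we have \( \max_{i<n}\tau(i) \leq M \). Since \( q \) is idempotent and \( \encstr[\tau][q](\sigma;x) = 1 \) while \( \encstr[\tau][q](\sigma'';x) = 0 \), either \( \sigma''(x) \leq \tau(x) \) or \( \max_{i<n}\tau(i) \geq \bndb(q,\sigma'',x) \). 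Using \( \sigma''(x) = \min(\sigma(x),M) \) in the first case and the identity \( \bndb(q,\sigma'',x) = \min(\bndb(q,\sigma,x),M) \) in the second (the pointwise cap commutes with the maximum defining \( \bndb \)), together with \( \sigma(x) > \tau(x) \) and \( \max_{i<n}\tau(i) < \bndb(q,\sigma,x) \), a short case split forces \( \max_{i<n}\tau(i) = M \) in both cases; by \eqref{eq:weird-code-prop} this places \( s \) in \( [n^nM^n,\ n^n(M+1)^n) \), so \( \theta(s) = \eset \) by Definition \ref{def:theta-hgt}. Consequently, for every \( s < n^n(M+1)^n \) either \( \encstr[s](\sigma'') = \encstr[s](\sigma) \) — whence the \( s \)-term equals \( \card{\encstr[s](\sigma)\symdiff\theta(s)} \leq \ddist(\encstr(\sigma),\theta) \) — or \( \theta(s) = \eset \) and \( \encstr[s](\sigma'') \subseteq \encstr[s](\sigma) \), whence \( \card{\encstr[s](\sigma'')\symdiff\theta(s)} = \card{\encstr[s](\sigma'')} \leq \card{\encstr[s](\sigma)} = \card{\encstr[s](\sigma)\symdiff\theta(s)} \leq \ddist(\encstr(\sigma),\theta) \). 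Taking the supremum over \( s < n^n(M+1)^n \) completes the proof.

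I expect the main obstacle to be precisely that case analysis: one must verify that \( M = \hgtenc{\theta} \) is exactly the right threshold, so that replacing \( \sigma \) by \( \sigma'' \) never removes a bit of \( \encstr[s](\sigma) \) at an index \( s \) where \( \theta(s) \) could still be nonzero. The computation that makes this work is the commutation \( \bndb(q,\min(\sigma,M),x) = \min(\bndb(q,\sigma,x),M) \) combined with the coding fact \eqref{eq:weird-code-prop} relating \( \code{\tau,q} \) to \( \max_{i<n}\tau(i) \); everything else is bookkeeping around the truncation \( \overbar{\theta} \).
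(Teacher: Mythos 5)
Your proof is correct and uses the same overall strategy as the paper: cap \( \sigma \) at \( M = \hgtenc{\theta} \), invoke the minimization defining \( \encinv \) to reduce to comparing \( \sigma'' = \min(\sigma,M) \) against \( \sigma \), and show the cap cannot increase \( \ddist \). Where you differ is the mechanism for that last step. The paper proves directly that \( \encstr[\tau][q](\sigma) = \encstr[\tau][q](\sigma'') \) whenever \( \max_{i<n}\tau(i) < M \), via a short contradiction about whether \( q^{-1}(x) \) meets \( \sigma \symdiff \sigma'' \), and then only ever looks at \( s < n^n M^n \) via \eqref{eq:truncate-ddist}. You instead record a genuinely useful monotonicity observation (\( \rho \leq \rho' \) pointwise gives \( \encstr[s](\rho) \subseteq \encstr[s](\rho') \)) and prove the contrapositive of the equality: any bit that capping deletes must occur at an \( s = \code{\tau,q} \) with \( \max_{i<n}\tau(i) = M \), and at such \( s \) the definition of \( \hgtenc{\theta} \) forces \( \theta(s) = \eset \), so the inclusion gives the pointwise bound \( \card{\encstr[s](\sigma'')\symdiff\theta(s)} \leq \card{\encstr[s](\sigma)\symdiff\theta(s)} \). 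The commutation \( \bndb(q,\min(\sigma,M),x) = \min(\bndb(q,\sigma,x),M) \) is correct and replaces the paper's ad hoc case analysis cleanly. One small inaccuracy: \eqref{eq:truncate-ddist} as stated in the paper uses the cutoff \( n^n M^n \), not \( n^n(M+1)^n \); your re-derivation of the larger cutoff is nonetheless valid (both terms vanish on \( [n^n M^n, n^n(M+1)^n) \) anyway, so the sup is unchanged), and working with the looser range is exactly why you need the monotonicity observation where the paper does not.
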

\begin{proof}
Let \( m = \hgtenc{\theta} \), \( \sigma'(x) = \min(\sigma(x), m) \) for all \( x < n \) and \( S = \sigma \symdiff  \sigma' \).  Since definition \ref{def:encinv} chooses \( \encinv(\theta) \in m^n \) to minimize \( \ddist(\encstr(\encinv(\theta)), \overbar{\theta}) \) and \( \sigma' \in m^n \) we must have \( \ddist(\encstr(\encinv(\theta)), \overbar{\theta}) \leq \ddist(\encstr(\sigma'), \overbar{\theta})  \).  Thus, it suffices to show that  \( \ddist(\encstr(\sigma'), \overbar{\theta}) \leq \ddist(\encstr(\sigma), \theta) \).  To that end, we will argue that

\begin{equation}\label{eq::encinv-gives-true-min:agree}
\tau \in (m-1)^n \land q \in n^n \implies \encstr[\tau][q](\sigma) = \encstr[\tau][q](\sigma')
\end{equation}

So assume, for a contradiction, that \(  \encstr[\tau][q](\sigma;x) \neq \encstr[\tau][q](\sigma';x) \)  \( \max_{i < n} \tau(i) = m' < m  \).  As \( \tau(x) < \sigma(x)  \) iff \( \tau(x) < \sigma'(x) \) (consider both \( S \) and \( \setcmp{S} \)) we must have either \( \bndb(q, \sigma, x) > m'  \) or  \(  \bndb(q, \sigma', x) > m' \) but not both.  Yet if \( q^{-1}(x) \isect S = \eset \) then neither hold and if \( q^{-1}(x) \isect S \neq \eset \) then both hold.  Thus, \eqref{eq::encinv-gives-true-min:agree} holds. Applying \eqref{eq:weird-code-prop} to \eqref{eq::encinv-gives-true-min:agree} gives us
\[
s < n^n\cdot m^n \implies \encstr[s](\sigma) = \encstr[s](\sigma')
\]
Combining \eqref{eq:truncate-ddist} from lemma \ref{lem:theta-equal-overbar} with the above result yields 
\[
\ddist(\encstr(\sigma'), \overbar{\theta}) = \sup_{s < n^n\cdot m^n} \card{\encstr[s](\sigma') \symdiff \theta(s)} = \sup_{s < n^n\cdot m^n} \card{\encstr[s](\sigma) \symdiff \theta(s)} 
\]  
As \(   \ddist(\encstr(\sigma), \theta) \) is the supremum over all \( s \) of \( \card{\encstr[s](\sigma) \symdiff \theta(s)}  \) we have  \( \ddist(\encstr(\sigma'), \overbar{\theta}) \leq \ddist(\encstr(\sigma), \theta) \) which completes the proof.
\end{proof}   

We can now finally relate changes in \( \encstr(\sigma) \) to changes to \( \sigma \).

\begin{lemma}\label{lem:encinv-bounds}
Given \( n > 0, \sigma, \sigma' \in \omega^n, \theta \in (2^n)^{< \omega}\) and \( d \in \omega  \) then
\[
\ddist(\encstr(\sigma), \theta) \leq d \land \ddist(\encstr(\sigma'), \theta) \leq d \implies \card{\sigma \symdiff \sigma'} \leq 4d
\]
Moreover, 
\begin{equation}\label{eq:encinv-bounds:moreover}
 \ddist(\encstr(\sigma), \theta) \leq d \implies \card{\sigma \symdiff \encinv(\theta)} \leq 4d 
\end{equation}
\end{lemma}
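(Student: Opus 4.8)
The plan is to read off both inequalities from Lemma~\ref{lem:bound-encstr} together with the triangle inequality for $\ddist$; the latter holds coordinatewise, since $\card{A\symdiff C}\leq\card{A\symdiff B}+\card{B\symdiff C}$ implies, upon taking suprema over $s$, that $\ddist(\theta,\theta'')\leq\ddist(\theta,\theta')+\ddist(\theta',\theta'')$. The main claim is then immediate: writing $d'=\card{\sigma\symdiff\sigma'}$, Lemma~\ref{lem:bound-encstr} gives $\ceil{d'/2}\leq\ddist(\encstr(\sigma),\encstr(\sigma'))\leq\ddist(\encstr(\sigma),\theta)+\ddist(\theta,\encstr(\sigma'))\leq 2d$, and since $d'$ is an integer this is equivalent to $d'\leq 4d$.

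For the moreover one cannot simply rerun that argument with $\sigma'=\encinv(\theta)$: Lemma~\ref{lem:encinv-gives-true-min} only controls $\ddist(\encstr(\encinv(\theta)),\overbar\theta)$, not $\ddist(\encstr(\encinv(\theta)),\theta)$, and $\theta$ is unconstrained on the coordinates $\overbar\theta$ zeroes out. The remedy is to invoke the \emph{moreover} part of Lemma~\ref{lem:bound-encstr}, which hands back a disagreement witness $s=\code{\tau,q}$ lying low enough that $\theta$ and $\overbar\theta$ agree at $s$. Concretely, suppose toward a contradiction that $d'=\card{\sigma\symdiff\encinv(\theta)}\geq 4d+1$, put $m=\hgtenc{\theta}$, $S=\sigma\symdiff\encinv(\theta)$ and $\zeta=\sigma\wwedge\encinv(\theta)$, and let $\overbar z$ be the median of $\zeta[S]$. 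By Definition~\ref{def:encinv} every coordinate of $\encinv(\theta)$ is $\leq m$, so every value of $\zeta$, and hence $\overbar z$, is $\leq m$. Lemma~\ref{lem:bound-encstr} then produces $q\in n^n$ and $\tau\in\omega^n$ with $\max_{i<n}\tau(i)\leq m$ and $\ceil{d'/2}\leq\card{\encstr[\tau][q](\sigma)\symdiff\encstr[\tau][q](\encinv(\theta))}$; setting $s=\code{\tau,q}$, equation~\eqref{eq:weird-code-prop} gives $s<n^n(m+1)^n$, so $\overbar\theta(s)=\theta(s)$ by Definition~\ref{def:theta-hgt}.

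Applying the coordinatewise triangle inequality at $s$, and using $\encstr[s](\cdot)=\encstr[\tau][q](\cdot)$, we get $\card{\encstr[s](\sigma)\symdiff\encstr[s](\encinv(\theta))}\leq\card{\encstr[s](\sigma)\symdiff\theta(s)}+\card{\theta(s)\symdiff\encstr[s](\encinv(\theta))}$. The first summand is at most $\ddist(\encstr(\sigma),\theta)\leq d$; the second equals $\card{\overbar\theta(s)\symdiff\encstr[s](\encinv(\theta))}\leq\ddist(\encstr(\encinv(\theta)),\overbar\theta)$, which by Lemma~\ref{lem:encinv-gives-true-min} is at most $\ddist(\encstr(\sigma),\theta)\leq d$. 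Hence $\card{\encstr[s](\sigma)\symdiff\encstr[s](\encinv(\theta))}\leq 2d$, contradicting $\ceil{d'/2}\geq 2d+1$; therefore $\card{\sigma\symdiff\encinv(\theta)}\leq 4d$. The one genuinely delicate point — and the main obstacle — is precisely the observation that the disagreement coordinate supplied by Lemma~\ref{lem:bound-encstr} is low enough for $\theta$ and $\overbar\theta$ to coincide there; this is exactly why the median bound was built into that lemma's moreover, and without it the tail of $\theta$ could spoil the estimate.
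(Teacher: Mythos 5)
Your proposal is correct and follows essentially the same route as the paper's proof: both the main claim (bounding $\card{\encstr[s](\sigma)\symdiff\encstr[s](\sigma')}$ by $2d$ via the symmetric-difference triangle inequality and comparing against the lower bound from Lemma~\ref{lem:bound-encstr}) and the moreover claim (tracking that the witness $\tau$ from Lemma~\ref{lem:bound-encstr} satisfies $\max_{i<n}\tau(i)\leq\overbar z\leq\hgtenc{\theta}$, so the critical index $s=\code{\tau,q}$ is one where $\theta(s)=\overbar\theta(s)$) match the paper's argument. The only cosmetic difference is that you package the main-claim estimate as a triangle inequality for $\ddist$ rather than as a union bound on the symmetric differences, and you state the justification $\overbar z\leq\hgtenc{\theta}$ slightly more explicitly than the paper does, but the underlying ideas and lemma dependencies are identical.
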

\begin{proof}
Suppose, for a contradiction, that the main claim fails, i.e., both \( \ddist(\encstr(\sigma), \theta)  \)  \( \ddist(\encstr(\sigma'), \theta)  \) are at most  \( d \)  but \( \card{\sigma \symdiff \sigma'} > 4d \).  We first note that for all \( s \) we have 
\begin{equation}\label{eq:encinv-bounds:2dgeq}
 \card{\encstr[s](\sigma) \symdiff \theta(s) \union \encstr[s](\sigma') \symdiff \theta(s)} \leq 2d 
 \end{equation}
Since any value on which \( \encstr[s](\sigma) \) and \( \encstr[s](\sigma') \) disagree must be a value one of them disagrees with \( \theta(s) \) we have 
\[
\encstr[s](\sigma) \symdiff \encstr[s](\sigma') \subset  \encstr[s](\sigma) \symdiff \theta(s) \union \encstr[s](\sigma') \symdiff \theta(s)
\]
Combining this with \eqref{eq:encinv-bounds:2dgeq} yields that for all \( s \)
\begin{equation}\label{eq:encinv-bounds:2dgeq-final}
\card{\encstr[\tau][q](\sigma) \symdiff \encstr[\tau][q](\sigma')} \leq 2d
\end{equation}
As in lemma \ref{lem:encinv-bounds} let  \( S = \sigma \symdiff \sigma' \),  \( \zeta = \sigma \wwedge \sigma' \) and \( \overbar{z} \) be the median of \( \zeta[S] \).  By that lemma we have some  \( q \in n^n \) and \(  \tau \in \omega^n \) with \( \max_{i < n} \tau(i) \leq \overbar{z} \) satisfying (since \( \ceil{\frac{4d+1}{2}} \geq 2d +1 \))
\begin{equation}\label{eq:encinv-bounds:2dless}
2d < \card{\encstr[\tau][q](\sigma) \symdiff \encstr[\tau][q](\sigma')}
\end{equation}
Letting \( s = \code{\tau, q} \) yields an immediate contradiction with \eqref{eq:encinv-bounds:2dgeq-final} proving the main claim.

To verify the moreover claim, let \( \sigma' = \encinv(\theta) \) and assume that \( \ddist(\encstr(\sigma), \theta) \leq d  \).  By lemma \ref{lem:encinv-gives-true-min} we have \( \ddist(\encstr(\sigma'), \overbar{\theta}) \leq d \) and by definition \ref{def:encinv} we must have \( \max_{i < n} \sigma'(i) \leq \hgtenc{\theta} \).  We now apply the argument used above to \( \sigma, \sigma' \)  noticing that in that proof  the \( \tau \) we used satisfied \(  \max_{i < n} \tau(i) \leq \overbar{z}  \) and \( \overbar{z} \leq  \max_{i < n} \sigma'(i) \leq \hgtenc{\theta}  \).   Thus, by lemma \ref{lem:theta-equal-overbar}, at the value of \( s = \code{\tau, q} \) used in the proof above  we have \( \theta(s) = \overbar{\theta}(s) \).  Thus, we may still conclude that \( \card{\sigma \symdiff \encinv(\theta)} = \card{\sigma \symdiff \sigma'} \leq 4d  \) as desired.      
\end{proof}

\subsection{Building the Reductions}
We've now met our goals with respect to finite strings so it is now time to define \( \Gamma, \hat{\Gamma} \) and show they have the desired properties.   We first invoke lemma \ref{lem:seperate-and-repeat} with a specific choice of arguments.  

\begin{definition}\label{def:b-and-Li}
For all \( i \in \omega \) let \( n_i, s_i \) satisfy \( i = \pair{n_i}{s_i} \) and let  \( b_i = 2n_i + i + 1 \).   We define, as per  lemma \ref{lem:seperate-and-repeat}, the following values.
\begin{alignat*}{2}
   r_i & \eqdef 2b_i + n_i + l_{i-1}  &   \qquad \qquad l^{-}_i &\eqdef 2^{b_i + n_i + l_{i-1}} = 2^{r_i - b_i} \\
   l_i &\eqdef l^{-}_i  + 2^{n_i + r_i} &   l_{-1} &\eqdef 0   \\
    L_i &\eqdef [l^{-}_i, l_i)  &  L^{-}_i &\eqdef [l_{i-1}, l^{-}_i) 
\end{alignat*}
\end{definition}
Now we can define \( \Gamma \).

\begin{definition}\label{def:gamma}
Given \( f \in \baire \) define \( \xi^{f}_i = \encstr[s_i](f/I_{n_i}) \) and define \( \Gamma(f)  \)  to be the unique   \( X \in \cantor \) with \( X \isect L^{-}_i = \eset \) and \( X/L_i = \modr[r_i](\xi^{f}_i) \).
\end{definition}
The uniqueness is guaranteed by the fact that \( L_i, L^{-}_i \) form a partition of \( \omega \) and \mydash since  \( \lh{\xi^f_i} = 2^{n_i} \) and \( b_i \) is never \( 0 \) \mydash   the definition conforms with all the conditions required by lemma \ref{lem:seperate-and-repeat}.  We now define \( \hat{\Gamma} \). 

\begin{definition}\label{def:hatgamma}
Given \( X \in \cantor \) define \( \theta^X_n\maps{\omega}{2^{2^n}} \)  by \( \theta^{X}_n(s) = \modrinv[r_{\pair{n}{s}}](X/L_{\pair{n}{s}}) \).  Define  \( \hat{\Gamma}(X)  \) to be the function \(  f   \) such that \( f/I_n = \encinv(\theta^{X}_n)  \) and \( f(0) =0 \) . 
\end{definition}
Unlike \( \Gamma \) this functional is not necessarily totally total, i.e., there will be some \( X \in \cantor \) for which \( \hat{\Gamma}(X) \) is not total.  However, we will show that if \( X = \Gamma(f) \) and \( X' \) is a coarse description of \( X \) then \( \hat{\Gamma}(X) \) is total.         

\subsection{Verification}

We are finally in a position to verify the main theorem which we recall said the following.

\newtheorem*{thm:coarse:again}{Theorem \ref{thm:coarse}}
\begin{thm:coarse:again}
There are computable functionals \( \Gamma\maps{\baire}{\cantor}  \) and \(  \hat{\Gamma}\maps{\cantor}{\baire} \) such that for all \( f \in \baire \), \( \Gamma \) is a uniform coarse reduction of \( \Gamma(f) \) to \( f \), \(  \hat{\Gamma} \circ \Gamma = \id  \)  and \( \hat{\Gamma} \) is a uniform coarse reduction of \( f \) to \( \Gamma(f) \). 
\end{thm:coarse:again}

We split the verification up into three lemmas. 

\begin{lemma}\label{lem:gamma-is-uc}
\( \Gamma \) is a computable functional whose range is contained in \( \cantor \) total on all \( f \in \baire \) and if \( f' \) is a coarse description of \( f \) then \( \Gamma(f') \) is a coarse description of \( \Gamma(f) \).      
\end{lemma}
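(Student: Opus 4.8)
The plan is to dispatch the three assertions of the lemma---that \( \Gamma \) is a computable functional, that it is total with range contained in \( \cantor \), and that it sends coarse descriptions of \( f \) to coarse descriptions of \( \Gamma(f) \)---in that order, relying on lemmas \ref{lem:seperate-and-repeat}, \ref{lem:modr}, \ref{lem:bound-encstr}, \ref{lem:encstr-to-zero} and \ref{lem:frac-to-density}.

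For the first two assertions, I would note that the sequences \( b_i, r_i, l^{-}_i, l_i \) of definition \ref{def:b-and-Li} are defined by a simple recursion from the pairing function, so the map \( i \mapsto (l^{-}_i, l_i) \) is computable; since the \( L_i, L^{-}_i \) partition \( \omega \) (lemma \ref{lem:seperate-and-repeat}), for any \( x \) we can compute the unique \( i \) with \( x \in L_i \) or \( x \in L^{-}_i \). In the fallow case \( \Gamma(f)(x) = 0 \); in the coding case, writing \( i = \pair{n_i}{s_i} \), we read \( f/I_{n_i} \) off finitely much of the oracle, compute \( \xi^{f}_i = \encstr[s_i](f/I_{n_i}) \) (a binary string of length \( 2^{n_i} \)), and output the appropriate bit of \( \modr[r_i](\xi^{f}_i) \). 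This procedure is uniform in \( f \), so \( \Gamma \) is a computable functional, and since every value it produces is \( 0 \) or \( 1 \) and every \( x \) receives a value, \( \Gamma(f) \in \cantor \) is total for every \( f \). Since \( b_i = 2n_i + i + 1 > 0 \) and \( \lh{\xi^{f}_i} = 2^{n_i} \), definition \ref{def:gamma} conforms to the hypotheses of lemma \ref{lem:seperate-and-repeat}.

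For the reduction claim, fix \( f \) and a coarse description \( f' \) of \( f \), put \( D = \Gamma(f) \symdiff \Gamma(f') \) and \( d_n = \card{(f \symdiff f') \isect I_n} \); by lemma \ref{lem:frac-to-density} the hypothesis \( \udensity(f \symdiff f') = 0 \) is equivalent to \( d_n/2^n \to 0 \). The first step is to observe that \( D \) is precisely the set produced by lemma \ref{lem:seperate-and-repeat} from the strings \( \xi_i \eqdef \xi^{f}_i \symdiff \xi^{f'}_i \): indeed \( D \isect L^{-}_i = \eset \), while \( D/L_i = \modr[r_i](\xi^{f}_i) \symdiff \modr[r_i](\xi^{f'}_i) = \modr[r_i](\xi^{f}_i \symdiff \xi^{f'}_i) \) by part \ref{lem:modr:symdiff} of lemma \ref{lem:modr} (and these \( \xi_i \) satisfy the lemma's hypotheses for the same reasons as above). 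Hence lemma \ref{lem:seperate-and-repeat} bounds the local density of \( D \) on each coding interval:
\[
\udensity[L_i](D) < \frac{\card{\xi^{f}_i \symdiff \xi^{f'}_i}}{2^{n_i}} + \frac{1}{2^{b_i}} .
\]
Lemma \ref{lem:bound-encstr}, together with \( \ddist(\theta, \theta') \geq \card{\theta(s) \symdiff \theta'(s)} \) for every \( s \), gives \( \card{\xi^{f}_i \symdiff \xi^{f'}_i} \leq d_{n_i} \); and lemma \ref{lem:encstr-to-zero} shows this difference is actually \( 0 \) once \( s_i \) exceeds a bound depending only on \( n_i \) and the values of \( f \) and \( f' \) on \( I_{n_i} \). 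I would combine these to conclude that \( \frac{\card{\xi^{f}_i \symdiff \xi^{f'}_i}}{2^{n_i}} + \frac{1}{2^{b_i}} + \frac{1}{l_i} \to 0 \) as \( i \to \infty \): given \( \epsilon > 0 \), the density hypothesis bounds the first summand for all \( i \) with \( n_i \) large, and lemma \ref{lem:encstr-to-zero} makes it vanish for the (finitely relevant) \( i \) with each small \( n_i \), while \( 1/2^{b_i} \) and \( 1/l_i \) tend to \( 0 \) because \( b_i \geq i + 1 \) and \( l_i \) is strictly increasing.

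Finally I would turn these local bounds into a bound on \( \udensity(D) \). For \( l \in L_j \), \( \card{D\restr{l}}/l \leq \udensity[L_j](D) \); for \( l \in L^{-}_j \), the set \( D \) is empty beyond \( l_{j-1} \), so \( \card{D\restr{l}}/l \leq \card{D\restr{l_{j-1}}}/l_{j-1} \leq \udensity[L_{j-1}](D) + 1/l_{j-1} \), using \( l_{j-1} - 1 \in L_{j-1} \). Because the \( L_i, L^{-}_i \) partition \( \omega \), for any threshold \( l_J \) every \( l \geq l_J \) lies in \( L_j \cup L^{-}_j \) for some \( j > J \); choosing \( J \) large enough that the three-term expression above is \( < \epsilon \) for all indices at least \( J \) forces \( \udensity[l_J](D) \leq \epsilon \), whence \( \udensity(D) = 0 \) since \( \epsilon \) was arbitrary. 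The one genuinely delicate point---the main obstacle---is the uniform control of \( \card{\xi^{f}_i \symdiff \xi^{f'}_i}/2^{n_i} \) over all \( i \) simultaneously, handled by splitting on the size of \( n_i \) as above, in tandem with the bookkeeping that the fallow intervals \( L^{-}_i \) are exponentially longer than everything preceding them, which is exactly what makes the passage from the local bounds to \( \udensity(D) = 0 \) legitimate.
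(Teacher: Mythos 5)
Your proposal is correct and takes essentially the same approach as the paper: identify \( D/L_i = \modr[r_i](\xi^f_i \symdiff \xi^{f'}_i) \) via part \ref{lem:modr:symdiff} of lemma \ref{lem:modr}, bound the local density on \( L_i \) through lemma \ref{lem:seperate-and-repeat}, and then split on whether \( n_i \) is small (lemma \ref{lem:encstr-to-zero}) or large (lemmas \ref{lem:frac-to-density} and \ref{lem:bound-encstr}). Your passage from the local bounds to \( \udensity(D) = 0 \) is a bit more explicit about the fallow intervals than the paper's terse statement that \( \udensity[l^-_{i_0}](D) = \sup_{i \geq i_0} \udensity[L_i](D) \), but this is just bookkeeping, not a different argument.
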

\begin{proof}
The fact that \( \Gamma \) is a computable functional is clear from construction.  As  we remarked above that, the definition of \( \Gamma \) satisfies the conditions of lemma \ref{lem:seperate-and-repeat} and therefore \( L_i, L^{-}_i \) form a partition of \( \omega \) which ensures that \( \Gamma(f) \in \cantor \).  Now suppose that \( \udensity(f \symdiff f') = 0 \) and \( X = \Gamma(f), X' = \Gamma(f') \).   As \( X \isect  L^{-}_i  =  X' \isect  L^{-}_i  = \eset \) and, by lemma \ref{lem:seperate-and-repeat}, \( l_{i-1} < l^{-}_i < l_i \) we have that  
\begin{equation*}
\udensity[l^{-}_{i_0}](X \symdiff X') = \sup_{i \geq i_0 } \udensity[L_i](X \symdiff X')
\end{equation*}
Thus, it is enough to show that for all \( k \) all sufficiently large \( i \) satisfy  \(  \udensity[L_i](X \symdiff X') \leq  2^{-k}  \).  Recall that \( X/L_i = \modr[r_i](\xi^{f}_i)  \),  \( X'/L_i = \modr[r_i](\xi^{f'}_i)  \) and by lemma \ref{lem:modr} part \ref{lem:modr:symdiff} \( X/L_i \symdiff X'/L_i =  \modr[r_i](\xi^{f}_i \symdiff \xi^{f'}_i)   \).  Thus, by lemma \ref{lem:seperate-and-repeat} we have  
\begin{equation}\label{eq:gamma-is-uc:Li-bound}
\udensity[L_i](X \symdiff X') <  \frac{\card{\encstr[s_i](f/I_{n_i}) \symdiff \encstr[s_i](f'/I_{n_i})}}{2^{n_i}} + \frac{1}{2^{b_i}} 
\end{equation}
Thus, if suffices to show the expression on the right is bounded by \( 2^{-k} \) for large enough \( i \). Fix some arbitrary \( k \in \omega \).   As \( \udensity(f \symdiff f') = 0 \)  lemma \ref{lem:frac-to-density} entails that there is some \( n_0 \) such that  if \( n \geq n_0 \) 
 \begin{equation}\label{eq:gamma-is-uc:fsymdiff-bound}
 \frac{\card{f \symdiff f' \isect I_n}}{\card{I_n}} =  \frac{\card{f \symdiff f' \isect I_n}}{2^n} <  \frac{1}{2^{k+1}} 
 \end{equation}
By lemma \ref{lem:encstr-to-zero} for almost all \( s \), \( \encstr[t](f/I_n) = \eset \) and \( \encstr[t](f'/I_n) = \eset \).  Let \( t_n \) be such that  \( t \geq t_n \) implies \( \encstr[t](f/I_n) = \eset \) and \( \encstr[t](f'/I_n) = \eset \) and let  \( i_0  \) be larger than \( k + 1 \) and  every element in the finite set \( \set{\pair{n}{t}}{ n \leq n_0 \land t < t_n} \). That is, we've chosen \( i_0 \) large enough to ensure that we no longer see any influence from \( f/I_n \) with \( n \leq n_0 \).  We assume, from this point on,  that \( i > i_0 \).

If \( n_i \leq n_0 \) then we must have \( s_i \geq t_{n_i} \) in which case (as \( b_i \geq i > k \) )
\[
\frac{\card{\encstr[s_i](f/I_{n_i}) \symdiff \encstr[s_i](f'/I_{n_i})}}{2^{n_i}} + \frac{1}{2^{b_i}}  \leq 0 + \frac{1}{2^i} < \frac{1}{2^{k}}
\]      
This leaves only the case where \( n_i > n_0 \) to consider.  Applying \eqref{eq:gamma-is-uc:fsymdiff-bound} in the case \( n = n_i \) and multiplying  by \( 2^{n_i} \) yields 
\[
\card{f/I_{n_i} \symdiff f'/I_{n_i}} =   \card{f \symdiff f' \isect I_{n_i}} <  2^{n_i-k -1} 
\]
So by lemma \ref{lem:bound-encstr} we have that \( \card{\encstr[s_i](f/I_{n_i}) \symdiff \encstr[s_i](f'/I_{n_i})} <  2^{n_i-k -1}  \).  Dividing by \( 2^{n_i} \) again yields
\[
\frac{\card{\encstr[s_i](f/I_{n_i}) \symdiff \encstr[s_i](f'/I_{n_i})}}{2^{n_i}}  <  \frac{1}{2^{k+1}}
\]
And as \( b_i \geq i \geq k+1 \)  we have 
\[
\frac{\card{\encstr[s_i](f/I_{n_i}) \symdiff \encstr[s_i](f'/I_{n_i})}}{2^{n_i}} + \frac{1}{2^{b_i}} < \frac{1}{2^{k+1}} + \frac{1}{2^{k+1}} = \frac{1}{2^{k}}
\]
This completes the proof.

\end{proof}

We now verify that \( \hat{\Gamma} \) is a computable functional that is defined where we need it to be.  Note that, there is no loss in generality as a result of only considering coarse descriptions of \( \Gamma(f) \) that are sets as we could simply extend  \( \hat{\Gamma} \) to \( \baire \) by  treating all non-zero values as if they were \( 1 \). 

\begin{lemma}\label{lem:gamma-hat-defined}
\( \hat{\Gamma}\maps{\cantor}{\baire} \) is a partial computable functional and if \( X = \Gamma(f) \) and \( X' \) is a coarse description of \( X \) then \( \hat{\Gamma}(X') \) is total.  Moreover, \( \theta^{X'}_n(s) = \eset \) for almost all \( s \).  
\end{lemma}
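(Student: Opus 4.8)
The plan is to handle the two assertions separately, deducing totality of $\hat{\Gamma}(X')$ from the ``moreover'' clause. That $\hat{\Gamma}$ is a partial computable functional is essentially bookkeeping: the map $i \mapsto (n_i, s_i, r_i, l^{-}_i, l_i)$ is computable by Definition \ref{def:b-and-Li}, $\modrinv$ is computable on binary strings of the correct length, and $\encinv$ is a partial computable function by Lemma \ref{lem:encinv-is-defined}. Hence to evaluate $\hat{\Gamma}(X)$ on an argument lying in $I_n$ one runs the $\encinv$ algorithm of Lemma \ref{lem:encinv-is-defined} on $\theta^X_n$, reading each value $\theta^X_n(s) = \modrinv[r_{\pair{n}{s}}](X/L_{\pair{n}{s}})$ off the oracle as it is needed; this is a legitimate partial-computable procedure, and it halts in particular whenever $\theta^X_n(s) = \eset$ for almost all $s$. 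Throughout we may assume $X' \in \cantor$, as noted before the lemma.

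By Lemma \ref{lem:encinv-is-defined} again, $\hat{\Gamma}(X')$ is total as soon as, for every $n$, $\theta^{X'}_n(s) = \eset$ for almost all $s$, so it suffices to prove the moreover clause. Fix $n$. Since $f/I_n$ is a single fixed string of length $2^n$, Lemma \ref{lem:encstr-to-zero} supplies an $s_0$ with $\xi^f_{\pair{n}{s}} = \encstr[s](f/I_n) = \eset$ for all $s \geq s_0$; by Definition \ref{def:gamma} and part \ref{lem:modr:times-two-r} of Lemma \ref{lem:modr}, $X = \Gamma(f)$ is then identically $0$ on $L_{\pair{n}{s}}$ for every $s \geq s_0$. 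Suppose now $\theta^{X'}_n(s) \neq \eset$ for some $s \geq s_0$, and write $i = \pair{n}{s}$. Unwinding the definition of $\modrinv[r_i]$, there must be a residue class $x + k\cdot 2^{n}$ ($k < 2^{r_i}$) on which $X'/L_i$ equals $1$ more than $2^{r_i - 1}$ times; since $X$ is $0$ throughout $L_i$, each of these points of $L_i$ lies in $X \symdiff X'$, so $\card{(X \symdiff X') \isect L_i} > 2^{r_i - 1}$. Using $\card{L_i} = 2^{n_i + r_i}$ and $l_i = 2^{r_i - b_i} + 2^{n_i + r_i} \le 2^{n_i + r_i + 1}$ from Definition \ref{def:b-and-Li}, and recalling $n_i = n$, this yields
\[
\frac{\card{(X \symdiff X')\restr{l_i}}}{l_i} \;\ge\; \frac{\card{(X \symdiff X') \isect L_i}}{l_i} \;>\; \frac{2^{r_i - 1}}{2^{n + r_i + 1}} \;=\; 2^{-n-2}.
\]
If this happened for infinitely many $s$, then since $l_{\pair{n}{s}} \to \infty$ as $s \to \infty$ (the $l_i$ are strictly increasing by Lemma \ref{lem:seperate-and-repeat}) we would obtain $\udensity(X \symdiff X') \ge 2^{-n-2} > 0$, contradicting the hypothesis that $X'$ is a coarse description of $X$. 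So only finitely many $s$ have $\theta^{X'}_n(s) \neq \eset$, as required, and totality follows.

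The only step with any real content is the density bookkeeping in the second paragraph: the key point is that a single nonempty value of $\theta^{X'}_n$ forces the proportion of $X \symdiff X'$ below $l_{\pair{n}{s}}$ to exceed a constant $2^{-n-2}$ that depends on $n$ but not on $s$, and before one may say this one must first use Lemma \ref{lem:encstr-to-zero} to see that $X$ itself has vanished on the late coding intervals $L_{\pair{n}{s}}$ (otherwise the $1$'s detected by $\modrinv$ need not come from the symmetric difference at all). Everything else is routine unwinding of Definitions \ref{def:b-and-Li}, \ref{def:gamma} and \ref{def:hatgamma} together with Lemmas \ref{lem:encstr-to-zero}, \ref{lem:modr} and \ref{lem:encinv-is-defined}.
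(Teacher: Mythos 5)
Your proof is correct and follows essentially the same approach as the paper: identify that $X = \Gamma(f)$ vanishes on $L_{\pair{n}{s}}$ for almost all $s$ (via Lemma~\ref{lem:encstr-to-zero}), use the definition of $\modrinv$ to get a lower bound $> 2^{r_i - 1}$ on $\card{(X\symdiff X')\isect L_i}$ whenever $\theta^{X'}_n(s)\neq\eset$, and translate this into a positive lower bound on the local density of $X\symdiff X'$ at $l_i$ that depends only on $n$, contradicting $\udensity(X\symdiff X')=0$. The only surface differences are that you unwind the definition of $\modrinv$ directly rather than citing part~\ref{lem:modr:modinv-times} of Lemma~\ref{lem:modr}, and you derive a slightly weaker (but equally adequate) constant $2^{-n-2}$ in place of the paper's $\frac{1}{1+2^{n+1}}$.
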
 
\begin{proof}
The computability of \( \hat{\Gamma} \) follows from its definition and lemma \ref{lem:encinv-is-defined}.  Now suppose that \( f, X, X' \) are as in the statement of the lemma.  We now prove that \( \hat{\Gamma}(X') \) is total.   As \( f/I_n = \encinv(\theta^{X}_n) \) and the sets \( I_n \) are a partition of \( \omega \setminus \set{0} \),  lemma \ref{lem:encinv-is-defined} tells us it is enough to show that for each \( n \)  there are only finitely many \( s \) with  \(  \theta^{X'}_n(s) \neq \eset \).    Suppose, for a contradiction, that \( n \) witnesses  \(  \existsinf(s)\left(\theta^{X'}_n(s) \neq \eset \right) \).

We note that as \( \theta^{X'}_n(s) = \modrinv[r_{\pair{n}{s}}](X'/L_{\pair{n}{s}}) \), by lemma \ref{lem:modr} part \ref{lem:modr:modinv-times}  we must have \( \card{X'/L_{\pair{n}{s}}} > 2^{r_{\pair{n}{s}}-1} \) for infinitely many \( s \).   As \( X/L_{\pair{n}{s}} = \modr[r_{\pair{n}{s}}](\encstr[s](f/I_{n})) \) and, by lemma \ref{lem:encstr-to-zero}, we have \( \encstr[s](f/I_n) = \eset \) for almost all \( s \) we have (by lemma \ref{lem:modr} part \ref{lem:modr:times-two-r}) \(  X/L_{\pair{n}{s}} = \eset \) for almost all \( s \). 

Thus, for almost all \( s \) we have \( \card{X'/L_{\pair{n}{s}} \symdiff  X/L_{\pair{n}{s}}  } > 2^{r_{\pair{n}{s}}-1}  \) and, therefore, (also for almost all \( s \) )
\begin{align*}
\udensity[l_{\pair{n}{s}}][l_{\pair{n}{s}}](X \symdiff X') &\geq \frac{ \card{X'/L_{\pair{n}{s}} \symdiff  X/L_{\pair{n}{s}}  }}{l_{\pair{n}{s}}} \geq \frac{2^{r_{\pair{n}{s}} -1}}{l^{-}_{\pair{n}{s}}  + 2^{n + r_{\pair{n}{s}}}} =  \phantom{X} \\
&\frac{2^{r_{\pair{n}{s}} -1}}{2^{r_{\pair{n}{s}} - b_{\pair{n}{s}}}  + 2^{n + r_{\pair{n}{s}}}} = \frac{1}{2^{1- b_{\pair{n}{s}}}  + 2^{n + 1}} \geq \frac{1}{1  + 2^{n + 1}}
\end{align*}
with the last inequality following from the fact that \( b_i > 0 \).  As the final term is a positive constant (for a fixed value of \( n \)) and lemma \ref{lem:seperate-and-repeat} guarantees that (as \( l_i \) is a strictly monotonic sequence)  \( \lim_{s \to \infty} l_{\pair{n}{s}} = \infty \) we have \( \udensity(X \symdiff X') > 0 \) contradicting our assumption that \( X' \) is a coarse description of \( X \).  As our contradiction followed by assuming that the moreover failed we've proved that result as well.
\end{proof}

We now show that \( \hat{\Gamma} \) is also a coarse reduction. 

\begin{lemma}\label{lem:gamma-hat-uc-reduction}
If \( X = \Gamma(f) \) and \( X' \) is a coarse description of \( X \) then \( \hat{\Gamma}(X') \) is a coarse description of \( f \).
\end{lemma}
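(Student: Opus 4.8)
The plan is to transfer the hypothesis $\udensity(X \symdiff X') = 0$ down to the coding intervals and then invoke the moreover clause of lemma \ref{lem:encinv-bounds}.  First I would record the preliminaries: by lemma \ref{lem:gamma-hat-defined} the function $g \eqdef \hat{\Gamma}(X')$ is total and $\theta^{X'}_n(s) = \eset$ for almost all $s$, so $\theta^{X'}_n \in (2^{2^n})^{<\omega}$ and $g/I_n = \encinv(\theta^{X'}_n)$ is defined for every $n$.  Since $X/L_{\pair{n}{s}} = \modr[r_{\pair{n}{s}}](\encstr[s](f/I_n))$, lemma \ref{lem:modr} gives $\theta^{X}_n(s) = \modrinv[r_{\pair{n}{s}}](X/L_{\pair{n}{s}}) = \encstr[s](f/I_n)$, so that
\[
\ddist(\encstr(f/I_n), \theta^{X'}_n) = \sup_s \card{\encstr[s](f/I_n) \symdiff \theta^{X'}_n(s)} = \sup_s \card{\theta^X_n(s) \symdiff \theta^{X'}_n(s)}.
\]

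The heart of the argument is a per-interval amplification estimate.  Fixing $n, s$ and writing $i = \pair{n}{s}$, $r = r_i$, I would apply part \ref{lem:modr:inv-symdiff} of lemma \ref{lem:modr} with $\sigma = \encstr[s](f/I_n)$ and $\chi = X'/L_i$ --- so that $\modr[r](\sigma) = X/L_i$ --- to get
\[
2^{r-1}\card{\theta^X_n(s) \symdiff \theta^{X'}_n(s)} \leq \card{X/L_i \symdiff X'/L_i} = \card{(X\symdiff X') \isect L_i} \leq \card{(X\symdiff X')\restr{l_i}},
\]
and then, for $i \geq i_0$, bound $\card{(X\symdiff X')\restr{l_i}} \leq l_i\,\udensity[l^{-}_{i_0}](X\symdiff X')$.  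Using $l_i = 2^{r_i - b_i} + 2^{n_i + r_i} \leq 2^{n_i + r_i + 1}$ together with $b_i > 0$, this would collapse to
\[
\card{\theta^X_n(s) \symdiff \theta^{X'}_n(s)} \leq \frac{l_i}{2^{r_i-1}}\,\udensity[l^{-}_{i_0}](X\symdiff X') \leq 2^{n+2}\,\udensity[l^{-}_{i_0}](X\symdiff X').
\]

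To finish, given $k$ I would choose $i_0$ with $\udensity[l^{-}_{i_0}](X\symdiff X') \leq 2^{-k-4}$ --- possible since $\udensity(X\symdiff X') = 0$ and $l^{-}_{i_0}\to\infty$ --- and then $n_0$ so that $\pair{n}{s} \geq i_0$ for every $s$ whenever $n \geq n_0$, using that $\pair{n}{0}$ is strictly increasing in $n$ and $\pair{n}{s}\geq\pair{n}{0}$.  For such $n$ the previous display bounds every summand uniformly in $s$, so $\ddist(\encstr(f/I_n), \theta^{X'}_n) \leq 2^{n-k-2}$, and the moreover clause of lemma \ref{lem:encinv-bounds} yields
\[
\card{(f\symdiff g)\isect I_n} = \card{(f/I_n) \symdiff \encinv(\theta^{X'}_n)} \leq 4 \cdot 2^{n-k-2} = 2^{-k}\card{I_n}
\]
for all $n \geq n_0$.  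As $k$ is arbitrary this forces $\limsup_n \card{(f\symdiff g)\isect I_n}/\card{I_n} = 0$, whence $\udensity(f \symdiff \hat{\Gamma}(X')) = 0$ by lemma \ref{lem:frac-to-density}.

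I expect the main obstacle to be the estimate of the second paragraph: making it tight enough that the factor $2^{r_i-1}$ introduced by $\modr[r_i]$ exactly absorbs the length $l_i \approx 2^{n_i+r_i}$ of the coding interval, leaving only the benign factor $2^{n_i+2}$, and keeping straight the bookkeeping that for a fixed column $n$ every interval $L_{\pair{n}{s}}$ drifts out to infinity once $n$ is large.  This is precisely the ``proportional local density'' mechanism flagged in \S\ref{ssec:challenge}, now cashed out in the direction from $X$ back to $f$; everything else is routine manipulation of the inequalities already packaged in lemmas \ref{lem:modr}, \ref{lem:seperate-and-repeat}, \ref{lem:encinv-bounds} and \ref{lem:frac-to-density}.
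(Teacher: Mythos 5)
Your proposal is correct and follows essentially the same route as the paper's proof: pushing the hypothesis $\udensity(X\symdiff X')=0$ down to per-interval bounds on $L_i$, applying the amplification inequality from lemma \ref{lem:modr} (part \ref{lem:modr:inv-symdiff}), cancelling the factor $2^{r_i-1}$ against $l_i\approx 2^{n_i+r_i}$, and then invoking the moreover clause of lemma \ref{lem:encinv-bounds} together with lemma \ref{lem:frac-to-density}. The only cosmetic difference is that you phrase the density bound via $\udensity[l^{-}_{i_0}]$ where the paper uses $\udensity[l_i][l_i]$ directly, which amounts to the same thing.
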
 
\begin{proof}
Assume \( X, X', f \) are as in the statement of the lemma. By lemma \ref{lem:gamma-hat-defined} there is some \( f' \in \baire \) with \( f' = \hat{\Gamma}(X') \).   By lemma \ref{lem:frac-to-density} it is enough to show that for a given value of \( k \) there is some \( n_0 \) such that   
\begin{equation}\label{eq:gamma-hat-uc-reduction:goal}
n > n_0 \implies \frac{\card{f \symdiff f' \isect I_n}}{\card{I_n}} = \frac{\card{f \symdiff f' \isect I_n}}{2^n} \leq \frac{1}{2^k}
\end{equation}
Since \( \udensity(X \symdiff X') = 0 \) and, by lemma \ref{lem:seperate-and-repeat} \( l_i \) is strictly monotonic,   there is some \( i_0 > 0 \) such that for \( i \geq i_0  \) we have
\[
\frac{1}{2^{k+4}} > \udensity[l_{i}][l_{i}](X \symdiff X') \geq  \frac{\card{X/L_i \symdiff X'/L_i}}{l_i} = \frac{\card{\modr[r_i](\xi^f_i) \symdiff X'/L_i}}{l_i}  
\]
Assuming that \( i \geq i_0 \) from this point on, we have that \( \card{\modr[r_i](\xi^f_i) \symdiff X'/L_i} < l_i 2^{-k-4} \).  If \( i = \pair{n_i}{s_i} \)  then \( \theta^{X'}_{n_i}(s_i) = \modrinv[r_i](X'/L_i) \) so by lemma \ref{lem:modr} part \ref{lem:modr:inv-symdiff} we can infer that 
\[
 \card{\xi^f_i \symdiff  \theta^{X'}_{n_i}(s_i)} = \card{\encstr[s_i](f/I_{n_i}) \symdiff  \theta^{X'}_{n_i}(s_i)}   < \frac{l_i}{2^{k+4}} \cdot \frac{1}{2^{r_i -1}} 
\] 
Expanding out the definitions of \( l_i \) and \( r_i \) on the right-hand side we get (as we have \( n_i > 0 \))
\[
\frac{l_i}{2^{k+4}} \cdot \frac{1}{2^{r_i -1}}  = \frac{2^{r_i - b_i} + 2^{n_i + r_i} }{2^{k+3 + r_i}} = \frac{2^{n_i} + 2^{-b_i}}{2^{k+3}} \leq \frac{2^{n_i}}{2^{k+3}} + \frac{1}{2^{k+3}} < \frac{2^{n_i}}{2^{k+2}}
\]  
Now pick \( n_0 \) large enough that for all \( n \geq n_0 \) and \( s \in \omega \),   \( \pair{n}{s} > i_0 \).  Thus, if \( n > n_0 \) \mydash as we will assume for the rest of the proof \mydash for all \( s \) we have 
\[
\card{\encstr[s](f/I_{n}) \symdiff  \theta^{X'}_{n}(s)} < \frac{2^{n}}{2^{k+2}}
\]    
Thus, we have that \( \ddist(\encstr(f/I_n), \theta^{X'}_n) \leq \frac{2^{n}}{2^{k+2}} \).  Now by the moreover in lemma \ref{lem:encinv-bounds} \mydash which we can apply as we saw in the last lemma that \( \theta^{X'}_{n}(s) \) is \( \eset \) for almost all \( s \)   \mydash  we have 
\[
\card{f/I_{n} \symdiff \encinv(\theta^{X'}_{n})} = \card{f/I_{n} \symdiff f'/I_n} \leq \frac{4\cdot 2^{n}}{2^{k+2}} = \frac{2^{n}}{2^{k}}  
\]
Dividing both sides by \( 2^n \) yields the desired inequality from  \eqref{eq:gamma-hat-uc-reduction:goal}.
\end{proof}

To complete the verification of theorem \ref{thm:coarse} it suffices to observe that \(  \hat{\Gamma} \circ \Gamma = \id  \) follows by tracing out the definitions and applying lemma \ref{lem:encinv-is-inverse}.

\section{Complexity Of Coarse Equivalence}\label{sec:coarse-complexity}

\NewDocumentCommand{\Fcond}{so}{\mathbb{F}\IfBooleanT{#1}{^{*}}\IfValueT{#2}{\left(#2\right)}}
\NewDocumentCommand{\Fgeq}{o}{\leq_{\Fcond}\IfValueT{#1}{^{#1}}}
\NewDocumentCommand{\Fleq}{o}{\geq_{\Fcond}\IfValueT{#1}{^{#1}}}

Given that we saw in section \ref{sec:effective-dense-complexity} that the relation \( f \NEDgeq X \) was \( \piin{1} \) complete one might expect a similar degree of complexity for the relation \( f \NCgeq g \).  In this section, we further distinguish coarse and effective dense reducibility by proving the following theorem asserting that  \( f \NCgeq g \) \mydash and hence \( f \NCequiv g \) \mydash is actually arithmetic in \( f, g \).

\begin{theorem}\label{thm:nonu-coarse-complexity}
There are \( \sigmazn{4} \) formulas \( \psi_U, \psi_N \) such that \[
\forall(f, g \in \baire)\left(f \NCgeq g \iff \psi_N(f, g) \land f \UCgeq g \iff \psi_U(f, g) \right)
\] 
\end{theorem}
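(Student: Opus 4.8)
The plan is to develop, for coarse descriptions, an analogue of the ``generic description'' forcing of \S\ref{sec:edd-no-sets}. First I would introduce a forcing notion \( \Fcond[f] \) building a generic coarse description \( f' \) of \( f \): a condition is a Cohen stem \( \tau \in \wstrs \) carried together with a density budget \( 2^{-k} \) and a pointer \( l \le \lh{\tau} \) from which the budget is charged, so \( \udensity[l][\lh{\tau}](\tau \symdiff f) \le 2^{-k} \); extension either extends \( \tau \) respecting the budget on the new segment (possibly increasing \( k \)) or resets \( l \) upward with a fresh budget, and — crucially — a condition may also commit to \( \recfnl{i}{f'}{} \) having converged on an initial segment and to a witnessing length for \( \udensity[\,\cdot\,](\recfnl{i}{f'}{}\symdiff g)\le 2^{-j} \). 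Since the sets \( \set{q \Fgeq p}{l_q \ge n \land k_q \ge m} \) are dense (future values of the stem may just copy \( f \)), any \( f' \) sufficiently generic for \( \Fcond[f] \) has \( \udensity(f' \symdiff f) = 0 \), i.e.\ is a coarse description of \( f \) in the sense of Definition~\ref{def:coarse-description}; and one transcribes the usual complexity‑of‑forcing estimates (with the bounded‑existential‑witness convention of \S\ref{sec:edd-no-sets}, cf.\ Lemma~\ref{lem:q-forcing-lvl-one}): forcing a \( \sigmazn{n+1} \) sentence over \( \Fcond[f] \) is \( \pizn{n} \) in \( f \), and weak forcing of a \( \pizn{1} \) property of \( f' \), \( \recfnl{i}{f'}{} \) and \( g \) is \( \pizn{1} \) in \( f \Tplus g \).

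The technical heart is a \emph{transfer lemma} together with a \emph{correction} observation. The transfer lemma says: if a condition \( p \) weakly forces a \( \pizn{1} \) property \( \varphi \) of \( f' \) (and of \( \recfnl{i}{f'}{} \), \( g \)), then \emph{every} total \( f' \) lying below \( p \) — generic or not — satisfies \( \varphi \), because a failure of the \( \sigmazn{1} \) statement \( \lnot\varphi \) is witnessed on a finite piece of \( f' \), which (as \( f' \) lies below \( p \)) extends to a condition \( \Fgeq p \) forcing \( \lnot\varphi \), a contradiction. Using the convergence/decay commitments in the conditions, ``\( \recfnl{i}{f'}{} \) is a coarse description of \( g \)'' with the step‑bounds and decay‑lengths supplied by the defining chain of \( p \) is exactly such a \( \pizn{1} \)‑with‑uniform‑modulus property, so when \( p \) forces it in this strong sense it transfers to every coarse description of \( f \) lying below \( p \). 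The correction observation is that \emph{any} coarse description \( f' \) of \( f \) may be altered on a finite initial segment to a function \( \hat f' \), Turing equivalent to \( f' \), that lies below a prescribed condition \( p \): overwrite the first \( \lh{\tau_p} \) values with \( \tau_p \), and use that \( \udensity[L](\hat f'\symdiff f)\to 0 \) as \( L\to\infty \) to slip \( \hat f' \) under a condition \( (\hat f'\restr L, m, L)\Fgeq p \). Taken together these say that a coarse reduction is non‑uniform only because one cannot tell, from finitely much of a coarse description, which condition it falls below.

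With these tools I would take \( \psi_N(f,g) \) to assert (roughly) that there are a condition \( p \in \Fcond[f] \), an index \( i \), and moduli (indices of total functions computable in \( f\Tplus g \)) such that \( p \) weakly forces ``\( \recfnl{i}{f'}{} \) is total with that convergence modulus and \( \udensity(\recfnl{i}{f'}{}\symdiff g)=0 \) with that decay modulus.'' For \( \psi_N \Rightarrow f\NCgeq g \): given such \( p,i \), correct an arbitrary coarse description \( f' \) of \( f \) to \( \hat f'\le_{\mathrm{T}} f' \) lying below \( p \), apply the transfer lemma to get \( \recfnl{i}{\hat f'}{} \) a coarse description of \( g \), so \( f' \) computes one. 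For \( f\NCgeq g \Rightarrow \psi_N \): if no such data existed, then for every \( i \) the set of conditions forcing ``\( \recfnl{i}{f'}{} \) is not a coarse description of \( g \)'' would be dense in \( \Fcond[f] \), and a single \( f' \) meeting all of them would be a coarse description of \( f \) computing no coarse description of \( g \), contradicting Definition~\ref{def:coarse-reductions} — the point being that along the left‑most chain one can always keep forcing convergence further and the error density smaller, since otherwise some condition below \( p \) would weakly force a contradiction with ``coarse description of \( g \).'' The uniform statement \( \psi_U(f,g) \) is the same with the index \( i \) quantified first and ``some condition'' relaxed to ``below every condition, some extension,'' the two implications running as above (for \( \Leftarrow \) one uses that a coarse description \( f' \) already lies below a condition \( (f'\restr L,k,L) \) of its own goodness level, so no per‑instance choice of functional is needed). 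Counting quantifiers — ``\( p\in\Fcond[f] \)'' decidable in \( f \), weak forcing of a \( \pizn{1} \) fact \( \pizn{1} \) in \( f\Tplus g \), ``the modulus indices are total'' at worst \( \pizn{3} \) in \( f \) — one checks \( \psi_N \) and \( \psi_U \) can be put in the form \( \exists(\text{finite data})\,[\pizn{3}] \), hence \( \sigmazn{4} \); so \( f\NCgeq g \), \( f\UCgeq g \), and therefore \( f\NCequiv g \in \deltazn{5} \), are arithmetic.

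\textbf{The main obstacle} is the transfer step: making the forced ``coarse description of \( g \)'' statement carry a genuinely uniform, chain‑recoverable modulus, so that it transfers from the generic description to an arbitrary (highly non‑generic) coarse description lying below the condition. This is the whole content of ``non‑uniformity comes only from uncertainty about closeness,'' and it is what forces the conditions of \( \Fcond[f] \) to commit not merely to a density budget but to convergence of the relevant computation and to decay witnesses — getting this bookkeeping right, and checking that the resulting chains stay computable in \( f \Tplus g \) (so that \( \psi_N,\psi_U \) do not acquire extra quantifiers), is where the real work lies. A secondary, purely combinatorial point to be careful about is verifying that the ``reset'' extension rule for \( \Fcond[f] \) is consistent with downward preservation of weak forcing, so that the correction observation actually places \( \hat f' \) below \( p \) in the forcing order rather than merely below some incompatible condition.
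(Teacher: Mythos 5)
Your high-level framing agrees with the paper: introduce a forcing notion for coarse descriptions of \( f \), and characterize \( f \NCgeq g \) as the existence of a single condition (plus an index) forcing the right statement. The paper's Proposition~\ref{prop:coarse-reduction-equivalent} is exactly that, with the same complexity count \( \exists(\text{condition}, e)\,[\pizn{3}] = \sigmazn{4} \), and the distinction between the non-uniform and uniform cases is also made by specializing the witnessing condition to the trivial one.

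The gap is in how you propagate the forced statement from generics to arbitrary coarse descriptions. Your transfer lemma is correct for \( \pizn{1} \) facts, but ``\( \recfnl{i}{f'}{} \) is a coarse description of \( g \)'' is \( \pizn{3} \), so you convert it to \( \pizn{1} \) by existentially quantifying over \( f\Tplus g \)-computable convergence and decay moduli. Such fixed moduli need not exist, and this breaks the direction \( f\NCgeq g \implies \psi_N \). The difficulty is intrinsic: two coarse descriptions \( f'_0, f'_1 \) can both lie below the same condition \( p \) yet take wildly different (unbounded over the class) values on the permitted density-zero set, making the running time of \( \recfnl{i}{f'}{x} \) and the point at which \( \udensity[\cdot](\recfnl{i}{f'}{}\symdiff g) \) drops below \( 2^{-k} \) depend on \( f' \) in a way no single \( f\Tplus g \)-computable function can bound. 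The paper instead leaves the forced sentence \( \pizn{3} \) and does the hard work elsewhere: it constructs an \emph{\( f' \)-dependent} sequence of conditions (the ``\( f' \)-excellent'' sequence of Definition~\ref{def:paired-F-cond}), proves via Lemmas~\ref{lem:translate-extend}, \ref{lem:f-prime-good-exists}, \ref{lem:respecting-excellent-correct} that the modulus is recoverable from that sequence, and then in Lemma~\ref{lem:non-uniform-coarse-reduction-exists} gives a functional that, with oracle \( f' \) and one finite parameter \( p_0 \), \emph{guesses} the sequence element by element and self-corrects when a guess turns out not to be \( (i,f') \)-good (a \( \pizn(f'){1} \) condition, so falsifiable). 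Thus the modulus is computable in \( f'\Tplus p_0 \), not in \( f\Tplus g \); to make your approach go through you would need to rebuild this self-correcting extraction, at which point the transfer-lemma framing offers no shortcut.
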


In particular, we will show that \( f \NCgeq g \)  holds iff there is some condition which forces coarse descriptions of \( f \) to compute a coarse description of \( g \).  Obviously, such a condition must exist if we genuinely have  \( f \NCgeq g \) and we will argue that, if \( f' \) is a coarse description of \( f \), then   we can use such a condition as well as knowledge about how quickly  \( \udensity[l](f \symdiff  f') \) goes to \( 0 \) to compute a coarse description of \( g \) from \( f' \).   Thus, in contrast to  section \ref{sec:effective-dense-complexity} where we saw that non-uniform effective dense reducibility can occur as the result of many different functionals which together provide all the reductions all non-uniform coarse reductions are witnessed by a single reduction plus some information about the behavior of \( f \) on an initial segment above which \( f' \) is sufficiently close to \( f \).  More formally, if \( f \NCgeq g \) we will see that there is always some \( k \) such that for any choice of \( l \) every coarse description \( f' \) of \( f \)  with \( \udensity[l](f \symdiff f') \leq 2^{-k} \) uniformly computes a coarse description of \( g \).  This contrasts with lemma \ref{lem:ned-construction-non-uniform} for non-uniform effective dense reductions.


\subsection{Characterizing Coarse Reducibility} 

We begin by defining a notion of genericity for coarse descriptions by pairing the notion of forcing we defined in section \ref{ssec:effective-dense-conditions} for \( 0 \) density sets with values that we can use to modify the function on that \( 0 \) density set.

\begin{definition}\label{def:f-cond}
\( (q, \gamma) \in \Fcond \) iff \( q = (\sigma, k) \in \Icond \) (recall definition \ref{def:zero-density-condition}),  \( \gamma \in (\omega \union \set{\diverge})^{< \omega}  \) and \( \dom \gamma = \set{x}{\sigma(x)\conv = 1} \).  We define \( (q, \gamma) \Fleq (q', \gamma') \) iff \( \gamma' \supfun \gamma \) and \( q' \Igeq q \).  
\end{definition}

We assume, unless explicitly stated otherwise, that every condition in \( \Fcond \) has components named \( \sigma, k, \gamma \) with subscripts, accents and superscripts to match the name of the condition, e.g., \( \hat{p}_i = (\hat{\sigma}_i, \hat{k}_i, \hat{\gamma}_i) \) and \( q' = (\sigma', k', \gamma') \) .  To avoid confusion, we will avoid using conditions distinguished only by the letter used, e.g., we won't use \( p' \) and \( q' \) for conditions at the same time.  We now define how to apply a condition like the above to a function.

\begin{definition}\label{def:fcond-replace}
For \( f \in \baire \) (or \( f \in \wstrs \)) and  \( q = (\sigma, k, \gamma) \in \Fcond \) we define  \( f[q] \) to be the string  such that  
\begin{equation}
f[q](x) = \begin{cases}
                                    \diverge & \text{if } x \nin \dom \sigma\\
                                    \gamma(x) & \text{if } \gamma(x)\conv \\
                                    f(x)    & \text{if } \sigma(x)\conv = 0 \\
                                    \end{cases}
\end{equation}
Also for \( h \in \baire \)  define \( h \Fgeq[f] q \) where \( q = (\sigma, k, \gamma) \in \Fcond \) iff \( h \supfun f[q] \) and \( \udensity[\lh{\sigma}]({ h \symdiff f }) \leq 2^{-k} \).     
\end{definition}

We use conditions from \( \Fcond \) to define a notion of forcing relative to \( f \), denoted \( \forces[f] \), using  \( \breve{f} \) as the constant symbol for the object being built by forcing.   We define \( f \)-forcing (\( \forces[f] \))  in the usual way for Cohen conditions but using \( f[q] \) as the condition for the base case.   That is, \( q = (\sigma, k, \gamma) \forces[f] \psi(\breve{f}) \) for quantifier free \( \psi \) just if \( f[q] \models \psi(\breve{f}) \)  (and the \( \breve{f} \) use of \( \psi(\breve{f}) \) is less then \( \lh{f[q]} \)) and define the inductive steps standardly.  Though, much like we did in section \ref{sec:effective-dense-complexity}, we make the modification that for \( q \) to force an existential sentence requires the witness be below \( \lh{f[q]} \).  We observe, without proof, that this notion of forcing has the usual relativized complexity with respect to \( f \).  That is, \( q \forces[f] \psi(\breve{f}) \) is \( \deltazn(f \Tplus g){n} \) if \( \psi \) is \( \sigmazn(f \Tplus g){n} \) sentence and \( \pizn(f \Tplus g){n} \) if \( \psi \) is \( \pizn(f \Tplus g){n} \).  

Of course, talking about forcing relative to \( f \) with this notion of forcing introduces the complication that there might be many conditions which produce the same result because they tell us to replace \( f(x) \) with \( f(x) \).  Therefore, we make the following definition.

\begin{definition}\label{def:f-proper-condition}
A condition  \( p \in \Fcond \) is \( f \)-proper just if \( \hat{\sigma}(x) = 1 \implies \gamma(x) \neq f(x) \). \( h \in \baire \) is \( n \)-\( f \)-generic (for \( \Fcond \)) just if for every \( \psi \in \sigmazn{n} \), there is an \( f \)-proper condition \( q \in \Fcond \) such that  \( h \Fgeq[f] q \) and either \( q \forces[f] \psi(\breve{f}) \) or  \( q \forces[f] \lnot \psi(\breve{f}) \).          
\end{definition}
With this definition in hand, we can note (though we won't need it) that \( h \) is \( n \)-\( f \)-generic for \( \Fcond \) relative to \( g \)  iff for every \( \sigmazn(f \Tplus g){n} \) set of \( f \)-proper conditions \( h \) meets or strongly avoids them.   However, for certain technical reasons, we will want to always assume that the conditions we use are \( f \)-proper.  As every condition we consider will be associated with a single function we can assume this condition is silently satisfied in the background.


We claim that \( f \NCgeq g \) just if this fact is \( f \)-forced by some condition \( q \in \Fcond \).    

\begin{proposition}\label{prop:coarse-reduction-equivalent}
\( f \NCgeq g \) iff there is some \( f \)-proper condition  \( q_0 \in \Fcond \) and \( e \in \omega \) such that 
\begin{align*}
q_0 \forces[f] &\forall(x)\exists(s)\left(\recfnl{e}{\breve{f}}{x}\conv[s] \right) \land \udensity({\recfnl{e}{\breve{f}}{} \symdiff g})) = 0 
\end{align*}  
Moreover, \( f \UCgeq g \) iff we can take \( q_0 = (\estr, 0, \estr) \).  
\end{proposition}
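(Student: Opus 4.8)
The plan is to prove both directions through the forcing notion $\forces[f]$ over $\Fcond$, with the reverse implication \mydash that a forcing fact produces a genuine reduction \mydash being the substantive part. Throughout, for an index $e$ write $\varphi_e$ for the sentence displayed in the statement, i.e.\ ``$\recfnl{e}{\breve{f}}{}$ is total and $\udensity(\recfnl{e}{\breve{f}}{} \symdiff g) = 0$''.

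For the forward direction I would use that forcing equals truth for sufficiently generic functions. Suppose $f \NCgeq g$ but no $f$-proper condition $f$-forces $\varphi_e$ for any $e$. I would build, below the trivial condition $(\estr,0,\estr)$, an $h \in \baire$ that is sufficiently generic (say $4$-$f$-generic) for $\Fcond$ and along whose defining chain the parameters $k$ tend to infinity, so that $\udensity(h \symdiff f) = 0$ and $h$ is a coarse description of $f$; then by hypothesis some $\recfnl{e}{h}{}$ is a coarse description of $g$. I would reach a contradiction by arranging, for every $e$, that $\recfnl{e}{h}{}$ is \emph{not} a coarse description of $g$. Given the current condition $q$: since $q$ does not force $\varphi_e$ and forcing distributes over conjunction, either $q$ does not force totality of $\recfnl{e}{\breve{f}}{}$ \mydash in which case, unfolding the definition, there is already an extension $q^{*} \Fgeq q$ and an $x$ with $q^{*} \forces[f] \lnot\exists s\,\recfnl{e}{\breve{f}}{x}\conv[s]$, so moving to $q^{*}$ makes $\recfnl{e}{h}{x}\diverge$ \mydash or $q$ forces totality but not the density-$0$ clause, in which case, unfolding ``$q \not\forces[f] \forall k\,\theta_k$'', there is an extension $q^{*} \Fgeq q$ and a $k$ with $q^{*}$ forcing that $\recfnl{e}{\breve{f}}{}$ and $g$ differ on a set of density at least $2^{-k}$, so moving to $q^{*}$ forces $\udensity(\recfnl{e}{\breve{f}}{} \symdiff g) \geq 2^{-k} > 0$. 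Both outcomes persist under further extension, so interleaving these steps with ones that drive $k \to \infty$ yields $h$. For the moreover, if $f \UCgeq g$ is witnessed by $\recfnl{e_0}{}{}$ and $(\estr,0,\estr)$ did not force $\varphi_{e_0}$, the same dichotomy at the trivial condition would produce a generic $h$ \mydash a coarse description of $f$ \mydash with $\recfnl{e_0}{h}{}$ not a coarse description of $g$, contradicting uniformity; hence $(\estr,0,\estr) \forces[f] \varphi_{e_0}$.

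For the reverse direction, suppose $q_0 = (\sigma_0, k_0, \gamma_0)$ is $f$-proper with $q_0 \forces[f] \varphi_e$, and let $f'$ be an arbitrary coarse description of $f$. I would produce, uniformly from $f'$ together with the finite non-uniform data consisting of $f \restr l$ for a suitable $l \geq \lh{\sigma_0}$ and a function $k \mapsto l_k$ with $\udensity[l_k](f \symdiff f') \leq 2^{-k}$, an $f'$-computable $h \in \baire$ that is $f$-generic below $q_0$ up to the complexity of $\varphi_e$; then, since forcing equals truth for such generics, $\recfnl{e}{h}{}$ is a coarse description of $g$ computed from $f'$, giving $f \NCgeq g$. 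Concretely I would build a chain $q_0 \Fleq q_1' \Fleq q_2' \Fleq \cdots$ in $\Fcond$ that uses the known $f \restr l$ on $[0,l)$ and the oracle $f'$ on $[l,\infty)$, and take $h$ to be the function extending each $f[q_s']$ on $[0,l)$ and each $f'[q_s']$ on $[l,\infty)$. The engine is a translation $q_s' \mapsto \tilde q_s$ into honest $f$-conditions: $\tilde q_s$ has the same length as $q_s'$, agrees with it on $[0,l)$, and on $[l,\infty)$ has its overwrite set enlarged by $\set{y}{f(y) \neq f'(y)}$ with overwrite value $f(y)$ there \mydash which changes nothing in the represented function \mydash so that $f[\tilde q_s]$ equals $f'[q_s']$ on their common length, $h \Fgeq[f] \tilde q_s$, and $\tilde q_s \Fgeq q_0$. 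Since $\udensity(f \symdiff f') = 0$ and the thresholds $l_k$ are available, this enlargement costs an arbitrarily small amount of density, so by interleaving genuine increases of the parameter one gets $\tilde k_s \to \infty$, hence $\udensity(h \symdiff f) = 0$ and $h$ is a coarse description of $f$. Whenever the fact $q_0 \forces[f] \varphi_e$ supplies, below $\tilde q_s$, an extension witnessing membership in the next dense set needed for genericity (e.g.\ one making $\recfnl{e}{\breve{f}}{x}$ converge), I would first pad so that $f \symdiff f'$ above the current length is extremely sparse and the parameter is large, then transport that extension back through the translation to define $q_{s+1}'$; because $f[\tilde q_{s+1}]$ agrees on its length with the $f'$-computable string $f'[q_{s+1}']$, the part of the $f$-forcing relation consulted along the construction is computable from $f'$. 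For the moreover, taking $q_0 = (\estr,0,\estr)$ makes $\lh{\sigma_0} = 0$ and $2^{-k_0} = 1$, so no data about $f$ is needed \mydash the translation has empty base part, $\tilde q_s \Fgeq (\estr,0,\estr)$ automatically, and $h \Fgeq[f] \tilde q_s$ holds simply because $\udensity(h \symdiff f) = 0$ \mydash whence the single computable functional $f' \mapsto \recfnl{e}{h}{}$ witnesses $f \UCgeq g$; the converse is the uniform case of the forward direction.

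I expect the main obstacle to be the density bookkeeping in the reverse direction: the overwrite sets of the $\tilde q_s$ must simultaneously absorb the overwrites demanded by genericity and the (above $l$, unknown) positions where $f$ and $f'$ disagree, while staying sparse enough to be legitimate conditions extending $q_0$ and to force $\udensity(h \symdiff f)$ to $0$. Handling this cleanly will require, at each step, pre-padding to push $f \symdiff f'$ below a prescribed density threshold before applying the forcing fact, together with a careful accounting of how far to raise $k$; once that is in place, verifying that $h$ is $f$-generic below $q_0$ to the needed level and that the construction is uniform in $f'$ should be routine.
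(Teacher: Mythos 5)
Your forward direction is essentially what the paper treats as ``clear'' (diagonalize against every $e$ with a sufficiently generic coarse description of $f$), so I'll focus on the reverse direction, where there is a genuine gap. Your reduction from $f'$ to a coarse description of $g$ takes as auxiliary input both $f\restr{l}$ and ``a function $k \mapsto l_k$ with $\udensity[l_k](f \symdiff f') \leq 2^{-k}$.'' This second item is \emph{infinite}, not finite, non-uniform data, and it is not in general computable from $f'$: a coarse description $f'$ of $f$ need not compute $f$, so it cannot locate the set $f \symdiff f'$, let alone determine where its upper density drops below $2^{-k}$ (take $f'$ computable and $f$ differing from it on a noncomputable density-$0$ set). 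Without these thresholds, your pre-padding step has no basis on which to choose the length of $\tilde q_s$ so that the absorbed set $f \symdiff f'$ above $l$ is sparse enough for $\tilde q_s$ to lie in $\Fcond$, extend $q_0$, and have $\tilde k_s$ climbing; consequently $h$ is not an $f'$-computable object and $f \NCgeq g$ is not established. The same issue undermines the moreover claim: even with $q_0 = (\estr, 0, \estr)$ the later $\tilde q_s$ carry nontrivial density bounds, so the assertion that ``$h \Fgeq[f] \tilde q_s$ holds simply because $\udensity(h \symdiff f) = 0$'' is not usable by the reduction, which must commit to $\tilde q_s$ in advance without knowing where $f \symdiff f'$ thins out.

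What the paper does that your proposal is missing is a guess-and-verify mechanism built into the reduction itself, which is precisely what collapses the needed advice to a single finite condition. Rather than producing a surrogate coarse description $h$ of $f$ and applying $\recfnl{e}{h}{}$, the algorithm in Lemma~\ref{lem:non-uniform-coarse-reduction-exists} builds the coarse description of $g$ directly, interval by interval: the non-uniform advice is just $p_0$, the reduction guesses the later conditions of an $f'$-excellent sequence (Definition~\ref{def:paired-F-cond}), and since being $(i, f')$-good is a $\pizn(f'){1}$ property a wrong guess is eventually detected and replaced. The robustness that makes this viable is Lemma~\ref{lem:piece-together}: even when the early blocks $h\restr{I_k}$ were committed under conditions later discarded, the limit is still a coarse description of $g$ once the guesses stabilize. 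Without this kind of error-tolerant, interval-by-interval output with retroactive guess correction, I don't see how to run your translation argument on the oracle $f'$ alone.
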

As in the previous section we understand \( \udensity({\recfnl{i}{\breve{f}}{} \symdiff g})) = 0 \) to be expressed by a \( \pizn{3} \) sentence (in this case in terms of \( g, \breve{f} \)).  Thus, this proposition clearly implies theorem \ref{thm:nonu-coarse-complexity} since the assertion  that a condition \( q \) forces a   \( \pizn(f, g){3} \) sentence \( \psi \)  is \( \pizn(f, g){3} \) in \( f, g,  \) (an index for) \(  \psi \) and \( q \).  The only if direction of the main part of the proposition is clear and we note that if \( e \) is an index witnessing \( f \UCgeq g \) then the sentence above is satisfied by all \( f' \in \baire \) which are \( 3 \)-\( f \)-generic for \( \Fcond \) and therefore there can be no extension extending \( (\estr, 0, \estr)  \) forcing the existence of a counterexample and therefore we must have \( (\estr, 0, \estr)  \) force the universal statement above as desired.  Thus, to complete the proof of the proposition, we must merely show that if \( q_0 \) forces the sentence in the proposition then \( f \NCgeq g \) and that if we further have \( q_0 = (\estr, 0, \estr) \) then \( f \UCgeq g \). 

For the rest of the section, we will assume that \( q_0, e \) satisfy the statement of the proposition.

\subsection{Excellent Sequences}

The primary tool we will use to prove the above proposition will be translating between a condition \( p \) which produces a given initial sequence \( f[p] \) relative to \( f \) and a condition  \( p^{f \rightarrow f'} \) which produces the same initial sequence  \(  f'[p^{f \rightarrow f'}] = f[p] \) relative to \( f' \).   We now define this operation.  

\begin{definition}\label{def:condition-translate}
Given \( p \in \Fcond \) and \( f, f' \in \baire \) define 
\begin{align*}
\sigma^{f \rightarrow f'} &= \set{x}{f[p](x)\conv \neq f'(x)}  \\
\gamma^{f \rightarrow f'}(x) &= \begin{cases}
                            \gamma(x) & \text{if } \gamma(x)\conv \land \sigma^{f \rightarrow f'}(x)\conv = 1 = \sigma(x) \\
                            f(x) & \text{if } \gamma(x)\diverge \land x < \lh{\sigma} \land f(x) \neq f'(x) \\
                            \diverge & \text{otherwise}
                          \end{cases}\\
k^{f \rightarrow f'} &= \sup \set{k'}{0 \leq k' \leq k \land  (\sigma^{f \rightarrow f'}, k', \gamma^{f \rightarrow f'}) \in \Fcond } \\
p^{f \rightarrow f'} &= (\sigma^{f \rightarrow f'}, k, \gamma^{f \rightarrow f'})
\end{align*} 
\end{definition}  

It is evident that this translation has the desired property.  We observe two facts about this definition that we will make use of below.  First, \(\sigma^{f \rightarrow f'}  \subset \sigma \union (f\restr{\lh{\sigma}} \symdiff f'\restr{\lh{\sigma}})  \).  Second, that \( p^{f \rightarrow f'} \) is always \( f' \)-proper and if \( p \) is \( f \)-proper then we can invert the operations on \( \sigma \) and \( \gamma \).  In particular, if \( \hat{p} = p^{f \rightarrow f'} \) then   \( \hat{\sigma}^{f' \rightarrow f} = \sigma  \) and \( \hat{\gamma}^{f' \rightarrow f} = \gamma  \).  As we are tacitly assuming that all conditions we work with are proper with respect to the relevant function we will use this fact freely below.  Unfortunately, we can't guarantee the same for \( k \) in general which prevents us from claiming that if \( q \Fgeq p^{f \rightarrow f'} \) then \( q^{f' \rightarrow f} \Fgeq p \).   However, the following lemmas will situations in which something like this obtains.

\begin{lemma}\label{lem:translate-extend}
If \( f' \) is a coarse description of \( f \) and \( q_1 = (\sigma_1, k_1, \gamma_1) \in \Fcond \) then there is \( q_2= (\sigma_2, k_2, \gamma_2) \Fgeq q_1 \) with \( k_2 = k_1 \) and \( \card{\sigma_2} = \card{\sigma_1} \) such that \(p_3 =  q_2^{f \rightarrow f'} \in \Fcond \) satisfies \( k_3 = k_1 \).  
Moreover, there is some \( q_4 \Fgeq q_2 \) such that if \( q \Fgeq q_4 \) then \( q^{f \rightarrow f'} \Fgeq p_3 \) and if \(p_5 \Fgeq   q_4^{f \rightarrow f'} \)   (and hence if \( p_5 \Fgeq q^{f \rightarrow f'} \)) then \( p_5^{f' \rightarrow f} \Fgeq q_1 \).  
\end{lemma}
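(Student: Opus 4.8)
The plan is to extend $q_1$ in several bookkeeping steps so that the translation operation $p \mapsto p^{f\rightarrow f'}$ becomes well-behaved, in particular so that it does not force us to decrease the density parameter $k$. The key point to exploit is that $f'$ is a coarse description of $f$, so $\udensity(f\symdiff f') = 0$, and hence for any prescribed $k$ we can find some $l$ with $\udensity[l](f\symdiff f') \leq 2^{-k}$.

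First I would handle $q_2$. Since $\sigma^{f\rightarrow f'}\subset \sigma \union (f\restr{\lh\sigma}\symdiff f'\restr{\lh\sigma})$, the density of $\sigma_2^{f\rightarrow f'}$ above $\lh{\sigma_2}$ is controlled by the density of $\sigma_2$ above $\lh{\sigma_2}$ (which is $\leq 2^{-k_1}$ because $q_2\in\Icond$-component) plus the density of $f\symdiff f'$ on that range. So I would choose $l$ large with $\udensity[l](f\symdiff f')$ very small — say $\leq 2^{-k_1-2}$ — and extend $q_1$ to $q_2$ by padding $\sigma_1$ with $0$s out to length $l$ (keeping $\gamma_2 = \gamma_1$, $k_2 = k_1$, $\card{\sigma_2}=\card{\sigma_1}$); one must also be slightly careful to check $q_2\in\Fcond$, i.e. the density bound $\udensity[\lh{\sigma_1}][\lh{\sigma_2}](\sigma_2)\leq 2^{-k_1}$ still holds, which is automatic since we only added $0$s. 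Then for $p_3 = q_2^{f\rightarrow f'}$ the relevant density of $\sigma_3$ above $\lh{\sigma_2}$ is at most $\udensity[\lh{\sigma_2}](\sigma_2) + \udensity[l](f\symdiff f') \leq 0 + 2^{-k_1-2} < 2^{-k_1}$, so indeed $k_3 = k_1$.

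For the moreover clause, the subtle part is getting an invertibility statement. I would produce $q_4 \Fgeq q_2$ by extending $\sigma_2$ with still more $0$s (and not touching $\gamma$ or $k$) out past a point $l'$ chosen so that $\udensity[l'](f\symdiff f')$ is small enough that for any further extension $q\Fgeq q_4$, the translated condition $q^{f\rightarrow f'}$ still has density bound $\leq 2^{-k_1}$ above $\lh{\sigma_3}$ — here one uses that $q$ only differs from $q_4$ above $l'$, and above $l'$ the set $\sigma^{f\rightarrow f'}$ is contained in $\sigma \union (f\symdiff f')$ whose density there is controlled. That gives $q^{f\rightarrow f'}\Fgeq p_3$. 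The reverse direction, that $p_5\Fgeq q_4^{f\rightarrow f'}$ implies $p_5^{f'\rightarrow f}\Fgeq q_1$, follows from the remarks already in the text: since $q_4$ is $f$-proper, the operations on $\sigma$ and $\gamma$ invert, so $(q_4^{f\rightarrow f'})^{f'\rightarrow f}$ recovers the $\sigma,\gamma$ components of $q_4$; the only thing to check is the density bound, and $p_5^{f'\rightarrow f}$ has $\sigma$-component contained in $\sigma_{p_5}\union(f\symdiff f')$, whose density above $\lh{\sigma_1}$ is $\leq 2^{-k_1}$ by the same padding we built into $q_4$.

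The main obstacle will be the last density computation: tracking exactly how large $l$ and $l'$ must be chosen so that \emph{all} the translations and back-translations simultaneously respect the single bound $2^{-k_1}$, rather than eroding it by a constant factor each time. The resolution is the standard trick of budgeting the error — pick the thresholds so $\udensity[l'](f\symdiff f')\leq 2^{-k_1-c}$ for a fixed small constant $c$ accounting for all the places the bound is invoked — which works precisely because $\udensity(f\symdiff f')=0$ lets us push this density below any positive target. Everything else is routine unwinding of Definitions \ref{def:f-cond}, \ref{def:fcond-replace} and \ref{def:condition-translate}.
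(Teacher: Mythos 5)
Your handling of $q_2$ and $p_3$ captures the paper's idea: pad $\sigma_1$ with zeros out to a length $l$ beyond which $\udensity[l](f\symdiff f')$ is small, and bound $\card{\sigma_3}/l$ by the density of $\sigma_2$ plus that of $f\symdiff f'$. (Your intermediate quantity $\udensity[\lh{\sigma_2}](\sigma_2)$ is trivially $0$ and is not what you want; the relevant term is $\card{\sigma_2}/l$, made small by taking $l$ to be a large multiple of $\lh{\sigma_1}$. But that is a cosmetic slip.)

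The moreover clause has a genuine gap. You build $q_4$ by extending $\sigma_2$ with further zeros while keeping $k_4 = k_1$, and you say the budgeting consists of pushing $\udensity[l'](f\symdiff f')$ below $2^{-k_1-c}$. That budget is applied in the wrong place: the obstruction is not the size of $f\symdiff f'$, it is that an arbitrary extension $q \Fgeq q_4$ with $k_4 = k_1$ is permitted to fill its $\sigma$-component up to density $2^{-k_1}$ above $\lh{\sigma_4}$. Since $\sigma_q^{f \rightarrow f'}$ additionally picks up $(f\symdiff f')\cap\setcmp{\sigma_q}$, one can choose $\sigma_q$ with density exactly $2^{-k_1}$, disjoint from $f\symdiff f'$, and $\gamma_q(x) \neq f'(x)$ on $\sigma_q$; then $\sigma_q^{f\rightarrow f'}$ has density strictly greater than $2^{-k_1}$, so $q^{f\rightarrow f'}$ fails to extend $p_3$ regardless of how small $\udensity[l'](f\symdiff f')$ was forced to be (it is still positive). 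Your reverse translation fails the same way.

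The fix, and what the paper actually does, is to take $q_4 = (\sigma_2, k_1+2, \gamma_2)$: raise $k$ rather than lengthen $\sigma$. This is a legal extension of $q_2$ because the padding already done gives $\card{\sigma_2}/\lh{\sigma_2} \leq 2^{-k_1-3}$. Now any $q \Fgeq q_4$ may add density at most $2^{-k_1-2}$ above $\lh{\sigma_2}$, leaving real headroom to absorb the contribution of $f\symdiff f'$ in both translation directions. The error budget must cut into what future extensions are allowed to add, which means raising $k$; shrinking $\udensity[l'](f\symdiff f')$ alone can never compensate for an extension that exhausts the full $2^{-k_1}$ allowance.
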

While the statement of this lemma is somewhat intimidating we will cite it (almost) exclusively for the proposition that every condition \( q_1 \)  we are considering for \( f \) can be extended to \( q_4 \) such that every extension of the translation  \( q^{f \rightarrow f}_4 \) of \( q_4 \) to \( f' \) translates back to an extension of \( q_1 \).  
\begin{proof}
Let \( \lh{\sigma_1} = l_1 \) and choose \( l_2 > 8l_1  \) large enough that \( \udensity[l_2]({f \symdiff f'}) \leq 2^{-k_1 - 3} \).  Let \( q_2 \Fgeq q_1 \) be the unique condition extending \( q_1 \)  with \( \lh{\sigma_2} = l_2 \) and \( \card{\sigma_2} = \card{\sigma_1} \), i.e., extend \( \sigma_1 \)  with \( 0 \)s.  Let \( p_3 = (\sigma_3, k_3, \gamma_3) = q^{f \rightarrow f'}_2  \).  We now observe that 
\begin{equation}\label{eq:translate-extend:first-forward}
\frac{\card{\sigma_3\restr{l_2}}}{l_2} \leq \frac{\card{\sigma_2}}{l_2} + \frac{\card{f \restr{l_2} \symdiff f'\restr{l_2}}}{l_2} \leq  \frac{1}{8}\frac{\card{\sigma_1}}{l_1} + 2^{-k_1 - 3} \leq   2^{-k_1 - 2}   
\end{equation}    
which is enough to show that \( (\sigma_2^{f \rightarrow f'}, k_1, \gamma_2^{f \rightarrow f'}) \in \Fcond \) and therefore that  \(p_3 =   (\sigma_2^{f \rightarrow f'}, k_1, \gamma_2^{f \rightarrow f'})\). For the moreover claim, \eqref{eq:translate-extend:first-forward}  also shows that if we take \( q_4 = (\sigma_2, k_1 + 2, \gamma_2) \)  then \( q_4 \Fgeq q_2 \).  If  \( q \Fgeq q_4 \) it is easy to see that this only adds a term of size at most \( 2^{-k_1 - 2} \) to \eqref{eq:translate-extend:first-forward} meaning that if \( q = (\sigma, k, \gamma) \) and \( \hat{p} = (\hat{\sigma}, \hat{k}, \hat{\gamma}) = q^{f \rightarrow f'} \) then \( \hat{k} \geq k_1 + 1 \) which verifies that  \( q^{f \rightarrow f'} \Fgeq p_3 \).  

Finally, assume that \( p_5 = (\sigma_5, k_5, \gamma_5) \Fgeq  q_4^{f \rightarrow f'} \) and that \( \dot{q}_5 = (\dot{\sigma}_5, \dot{k}_5, \dot{\gamma}_5) = p^{f' \rightarrow f}_5 \), \( l_4 = \lh{\sigma_4} \) and \( l_5 = \lh{\sigma_5} \).  Thus, as \( p_5 \Fgeq  q^{f \rightarrow f'}_4 \)  and \( k_4 = k_1 + 2 \), if \( l' \in [l_4, l_5) \) then   
 \[
\frac{\card{\dot{\sigma}_5\restr{l'}}}{l'} \leq \frac{\card{\sigma_5\restr{l'}}}{l'} +  \frac{\card{f \restr{l'} \symdiff f'\restr{l'}}}{l'} \leq 2^{-k_1 -1} + 2^{-k_1 -3} < 2^{-k_1}
\]   
While if \( l' \in [l_2, l_4) \) we have \( \dot{\sigma}_5\restr{l'} = \sigma_4\restr{l'} \) and as \( q_4 \Fgeq q_1 \) we automatically have \( \udensity[l_1][l_4](\dot{\sigma}_5) \leq 2^{-k_1} \).  Taken together, this is enough to verify that \( p_5^{f' \rightarrow f} \Fgeq q_1 \). 
\end{proof}



We now define a property of a sequence \( p_i \in \Fcond \) that ensures that the images \( \recfnl{e}{f'[p_i]}{} \) approach a limit `in density,' i.e., that when \( f_0, f_1 \Fgeq[f'] p_i \) we have that \( \udensity[l_i]({\recfnl{e}{f_0}{} \symdiff \recfnl{e}{f_1}{} }) \leq 2^{-i} \).  The important thing is that we will be able to show that, given \( p_0 \), it is possible to build such a sequence computably in \( f' \) and that when \( f' \) is a coarse description of \( f \) we will demonstrate that \( \lim_{i \to \infty} \recfnl{e}{f'[p_i]}{}  \) will be a coarse description of \( g \).


\begin{definition}\label{def:paired-F-cond}
A \( f' \)-proper  condition \( p_0 \in \Fcond \) is \( (w, f') \)-good  just if for any \( p_2, p_1 \Fgeq p \) if \( \theta_i = \recfnl{e}{f[p_i]}{} \)  then
\[
 \forall(l)\left( \lh{\theta_0} \leq l <\min{\lh{\theta_1}, \lh{\theta_2}} \implies   \frac{\card{ \theta_1\restr{l} \symdiff \theta_2\restr{l}}}{l} \leq 2^{-w}  \right)
\] 
The sequence \( p_i = (\sigma_i, k_i, \gamma_i), i < n \) is an \( f' \)-excellent sequence of length \( n \) if, taking \( \theta_i \eqdef \recfnl{e}{f'[p_i]}{}  \), for all \( i < n \)
\begin{enumerate}
    \item \( p_i  \Fleq  p_{i+1}  \) 
    \item \( \theta_i \subfunneq \theta_{i+1} \) and \( k_{i+1} > k_i  \) 
    \item  \( p_i \) is \( i, f' \)-good
    \item \( \forall(p \Fgeq p_0)\left(p^{f' \rightarrow f} \Fgeq q_0\right) \) 
    \item\label{def:paired-F-cond:non-trivial} \( p_i \) is \( f' \)-proper.   
\end{enumerate} 
An \( f' \)-excellent sequence is a sequence that satisfies the above for all \( i \in \omega \).  
\end{definition}
We will simply assume part \ref{def:paired-F-cond:non-trivial} of the above definition always holds since, as remarked above, we can always modify conditions to ensure that it is true.   The next lemma establishes the existence of such sequences.  Note that, the empty sequence is always a \( 0 \) length \( f' \)-excellent sequence.

\begin{lemma}\label{lem:f-prime-good-exists}
If \( f' \) is a coarse description of \( f \) and \( p_i, i < n \) is a \( f' \)-excellent sequence of length \( n \geq 0 \)  then there is some \( p_n \Fgeq p_{n-1} \) such that \( p_i, i \leq n \) is a \( f' \)-excellent sequence of length \( n+1 \).   Moreover, if \( q_0 = (\estr, 0, \estr) \) then we can take \( p_0 = q_0. \)    
\end{lemma}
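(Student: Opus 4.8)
I will run a finite-extension argument, handling the base case $n=0$ and the inductive step $n\geq 1$ separately. For $n=0$ in the uniform situation $q_0=(\estr,0,\estr)$, take $p_0=q_0$: this condition is $f'$-proper (its $\sigma$ is empty), it is $(0,f')$-good since every density is bounded by $2^{-0}=1$, and clause (4) holds because $p^{f' \rightarrow f}\Fgeq(\estr,0,\estr)$ for every $p\in\Fcond$ — the only requirement is the vacuous bound $\udensity[0]\leq 1$. For $n=0$ with a general $q_0$, apply Lemma~\ref{lem:translate-extend} to $q_0$ (as its $q_1$) to produce $q_4\Fgeq q_0$ and set $p_0=q_4^{f \rightarrow f'}$; the ``main usage'' of that lemma then gives $p^{f' \rightarrow f}\Fgeq q_0$ for every $p\Fgeq p_0$, which is clause (4), while $(0,f')$-goodness and $f'$-properness are again immediate. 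Thus in both cases $\{p_0\}$ is a length-$1$ $f'$-excellent sequence, and the ``moreover'' is established.

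For the inductive step fix a length-$n$ $f'$-excellent sequence $p_0,\dots,p_{n-1}$ and write $\theta_{n-1}=\recfnl{e}{f'[p_{n-1}]}{}$. Translate $p_{n-1}$ to the $f$-side by setting $q'=p_{n-1}^{f' \rightarrow f}$; since $p_{n-1}\Fgeq p_0$, clause (4) gives $q'\Fgeq q_0$. Now work on the $f$-side. By the standing hypothesis $q_0\forces[f]\forall(x)\exists(s)(\recfnl{e}{\breve{f}}{x}\conv[s])$ and $q_0\forces[f]\udensity(\recfnl{e}{\breve{f}}{}\symdiff g)=0$, the latter read as the $\pizn{3}$ sentence $\forall(k)\exists(l)\forall(l'\geq l)(\udensity[l][l'](\recfnl{e}{\breve{f}}{}\symdiff g)\leq 2^{-k})$; by monotonicity these are forced by $q'$ as well, so by the usual density and quasi-completeness arguments for $\forces[f]$ there are $q^{\sharp}\Fgeq q'$ and $\ell$ with $q^{\sharp}\forces[f]\forall(l'\geq\ell)(\udensity[\ell][l'](\recfnl{e}{\breve{f}}{}\symdiff g)\leq 2^{-n-1})$. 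Extending $q^{\sharp}$ further, we may also assume $\recfnl{e}{f[q^{\sharp}]}{}$ is defined on an initial segment of length at least $\max(\ell,\lh{\theta_{n-1}}+1)$ and that the $k$-component of $q^{\sharp}$ exceeds $k_{n-1}$ by a margin large enough for the translation that follows. Finally apply Lemma~\ref{lem:translate-extend} to $q^{\sharp}$ to obtain $q_4\Fgeq q^{\sharp}$ such that $p_n:=q_4^{f \rightarrow f'}$ satisfies $p_n\Fgeq p_{n-1}$ and every $p'\Fgeq p_n$ has ${p'}^{f' \rightarrow f}\Fgeq q^{\sharp}$; by monotonicity $q_4$ still forces the above density bound and $\recfnl{e}{f[q_4]}{}$ is still long, so $\theta_n:=\recfnl{e}{f'[p_n]}{}=\recfnl{e}{f[q_4]}{}$ is a proper extension of $\theta_{n-1}$.

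It remains to verify the clauses of $f'$-excellence for $p_0,\dots,p_n$. We have $p_{n-1}\Fleq p_n$, $\theta_{n-1}\subfunneq\theta_n$, and $k_n>k_{n-1}$ from the construction; $p_n$ is $f'$-proper because a condition of the form $q_4^{f \rightarrow f'}$ always is; and clause (4) concerns only $p_0$, which is unchanged. For $(n,f')$-goodness take $p_1,p_2\Fgeq p_n$ and put $\theta^{(j)}=\recfnl{e}{f'[p_j]}{}=\recfnl{e}{f[p_j^{f' \rightarrow f}]}{}$. Since $p_j^{f' \rightarrow f}\Fgeq q^{\sharp}$, the forced bound gives $\card{\theta^{(j)}\restr{l}\symdiff g\restr{l}}/l\leq 2^{-n-1}$ for every $l$ with $\ell\leq l<\lh{\theta^{(j)}}$, and since $\lh{\theta_n}\geq\ell$ the triangle inequality (via $\theta^{(1)}\restr{l}\symdiff\theta^{(2)}\restr{l}\subset(\theta^{(1)}\restr{l}\symdiff g\restr{l})\cup(\theta^{(2)}\restr{l}\symdiff g\restr{l})$) yields $\card{\theta^{(1)}\restr{l}\symdiff\theta^{(2)}\restr{l}}/l\leq 2^{-n-1}+2^{-n-1}=2^{-n}$ for all $l$ with $\lh{\theta_n}\leq l<\min(\lh{\theta^{(1)}},\lh{\theta^{(2)}})$, exactly what $(n,f')$-goodness requires.

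The main obstacle is the bookkeeping compressed into the phrase ``$p_n:=q_4^{f \rightarrow f'}$ satisfies $p_n\Fgeq p_{n-1}$ and every $p'\Fgeq p_n$ has ${p'}^{f' \rightarrow f}\Fgeq q^{\sharp}$'': the translation $p\mapsto p^{f \rightarrow f'}$ does not preserve the extension relation verbatim, since the admissible $k$-value can drop by roughly the local density of $f\symdiff f'$, so one has to route every condition through the buffered ``$q_4$''-conditions furnished by Lemma~\ref{lem:translate-extend} and keep careful track of the $2^{-k}$-margins so that the $\Icond$-bounds survive being carried to the $f$-side and back. Maintaining this invariant along the sequence is exactly why clause (4) is built into the definition of an $f'$-excellent sequence, and it is why in the inductive step we inflate the $k$-component of $q^{\sharp}$ before translating. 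The secondary delicate point is extracting the uniform threshold $\ell$ from the $\pizn{3}$ statement forced by $q_0$ and absorbing it into $\lh{\theta_n}$, so that goodness becomes a genuine finitary density bound rather than a merely asymptotic one.
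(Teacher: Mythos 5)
Your base case and your verification of $(n,f')$-goodness match the paper's argument and are correct, but there is a genuine gap in the inductive step at the claim that $p_n := q_4^{f\to f'} \Fgeq p_{n-1}$. A single (unflipped) application of Lemma~\ref{lem:translate-extend} to $q^{\sharp}$ does not deliver this. The lemma's conclusions are that $q\Fgeq q_4$ implies $q^{f\to f'}\Fgeq p_3 = q_2^{f\to f'}$, and that $p_5\Fgeq q_4^{f\to f'}$ implies $p_5^{f'\to f}\Fgeq q^{\sharp}$; neither of these says $q_4^{f\to f'}\Fgeq p_{n-1}$. The obstruction is that when you set $q' = p_{n-1}^{f'\to f}$, the $k$-value can drop: $\sigma_{q'}$ absorbs all of $(f\symdiff f')\restr{\lh{\sigma_{n-1}}}$ in addition to $\sigma_{n-1}$, so $k_{q'}$ may be far smaller than $k_{n-1}$. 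Any $q^{\sharp}\Fgeq q'$ is then only constrained to have $\udensity[\lh{\sigma_{n-1}}][\lh{\sigma_{q^{\sharp}}}](\sigma_{q^{\sharp}}) \leq 2^{-k_{q'}}$, and since $\sigma_{p_n}$ inherits whatever $\sigma_{q^{\sharp}}$ commits to on $[\lh{\sigma_{n-1}}, \lh{\sigma_{q^{\sharp}}})$ (plus $f\symdiff f'$ there), its density on that stretch can land anywhere up to roughly $2^{-k_{q'}}$, which can strictly exceed $2^{-k_{n-1}}$. Inflating the $k$-component of $q^{\sharp}$ after the fact does not help: it only governs the density from $\lh{\sigma_{q^{\sharp}}}$ onward and cannot retroactively shrink the part of $\sigma_{q^{\sharp}}$ already committed below $\lh{\sigma_{q^{\sharp}}}$.

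The paper repairs exactly this by inserting a second, \emph{flipped} application of Lemma~\ref{lem:translate-extend} \emph{before} passing to the $f$-side. It first extends $p_{n-1}$ on the $f'$-side to $\dot{p}_n$ with $\dot{k}_n = k_{n-1}+1$ (padding $\sigma_{n-1}$ with zeros until the local density drops below $2^{-\dot{k}_n}$), and then applies Lemma~\ref{lem:translate-extend} with the roles of $f$ and $f'$ swapped and $q_1 = \dot{p}_n$ to obtain $\ddot{p}_n$. The moreover clause, read in the flipped direction, then guarantees that \emph{every} $f$-side condition $q\Fgeq \ddot{p}_n^{f'\to f}$ satisfies $q^{f\to f'}\Fgeq \dot{p}_n\Fgeq p_{n-1}$. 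Only after this pre-buffering does the paper search for $\hat{q}_0\Fgeq \ddot{p}_n^{f'\to f}$ forcing the density bound, apply Lemma~\ref{lem:translate-extend} a second time (unflipped) as you did, and set $p_n = \hat{q}_1^{f\to f'}$; the round-trip property installed by $\ddot{p}_n$ is then what certifies $p_n\Fgeq p_{n-1}$. Your proof skips this first, flipped buffering, and as a result the extension relation on the $f'$-side is never re-established after the trip to the $f$-side and back. The rest of your argument goes through once that step is inserted.
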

If we wanted to build an \( f \)-excellent sequence its existence would follow straightforwardly by the quasi-completeness of forcing.  This lemma just verifies that since \( f' \) is a coarse description of \( f \) we can (by using lemma \ref{lem:translate-extend}) always pick our conditions \( p_i \) so that they constrain extensions to be close enough to \( f \) to act like that \( f \)-excellent sequence.
\begin{proof}
First we consider the case where \( n = 0 \).  As all conditions are \( (0, f') \)-good the only non-trivial property \( p_0 \) must posses is ensuring that if \( p \Fgeq p_0 \) then \( p^{f' \rightarrow f} \Fgeq q_0 \).  If  \( q_0 = (\estr, 0, \estr) \) this is trivially satisfied by letting \( p_0 = q_0 \) otherwise  we apply lemma \ref{lem:translate-extend} letting \( q_1 \) in that lemma be \( q_0 \) and setting \( p_0  \) equal to \(  q_4^{f \rightarrow f'} \) from that lemma.  This handles the case where \( n = 0 \) and verifies the moreover claim.  Now suppose that \( p_i, i < n \) is an \( f' \)-excellent sequence of length \( n > 0 \).
  
Let  \( \dot{p}_n \Fgeq p_{n-1} \) satisfy \( \dot{k}_n = k_{n-1} + 1 \). Using  lemma \ref{lem:translate-extend} (but with the roles of \( f, f' \) flipped) with \( q_1 = \dot{p}_n \) build \( \ddot{p}_n \Fgeq \dot{p}_n \) as \( q_4 \) from the lemma.  As \( \ddot{p}_n^{f' \rightarrow f} \Fgeq q_0 \) we can find some \( \hat{q}_0 \Fgeq \ddot{p}_n^{f' \rightarrow f} \) and some \( l_0 \)  such that \[
\hat{q}_0 \forces[f]  \udensity[l_0]({\recfnl{e}{\breve{f}}{} \symdiff g})) \leq 2^{-n -2} \land \exists(s)\left(\lh{\recfnl{e}{\breve{f}\restr{s}}{}} \geq l_0 \right)
\]
Note that, by our definition of forcing, this requires that \( \lh{\recfnl{e}{f[\hat{q}_0]}{}} \geq l_0 \).  Applying lemma \ref{lem:translate-extend} again (this time with \( q_1 = \hat{q}_0 \)) we can find \( \hat{q}_1 \Fgeq \hat{q}_0 \) such that if \( p \Fgeq \hat{q}_1^{f \rightarrow f'}  \) then \( p^{f' \rightarrow f} \Fgeq \hat{q}_0 \).  We set \( p_n = \hat{q}_1^{f \rightarrow f'}  \) and note that by our application of lemma \ref{lem:translate-extend}  to \( \dot{p}_n \) we have that \( p_n \Fgeq \dot{p}_n \Fgeq p_{n-1} \) and that \( k_n > k_{n-1} \).  As \( f'[p_n] = f[\hat{q}_1] \) we have that \( \theta_n = \recfnl{e}{f'[p_n]}{}  \) satisfies  \( \lh{\theta_n} \geq l_0 \).  

Now suppose, for a contradiction, that \( p_n \) fails to be \( (n, f') \)-good.  Then we have  \( p_a, p_b \Fgeq p_n \) with \( \theta_x = \recfnl{e}{f'[p_x]}{}, x \in \set{a,b} \), \( l_1 = \min(\lh{\theta_a}, \lh{\theta_b})  \) and some  \( l \in [\lh{\theta_n}, l_1) \subset [l_0, l_1)   \)  such that \[
\frac{\card{\theta_a\restr{l} \symdiff \theta_b\restr{l}}}{l} > 2^{-n}
\]                
However, if we now consider the conditions \( \hat{q}_x = p_x^{f' \rightarrow f}, x \in \set{a,b} \) we have, as \( p_a, p_b \Fgeq p_n \), that \( \hat{q}_a, \hat{q}_b \Fgeq \hat{q}_0 \).  As \( f'[p_x] = f[\hat{q}_x] \) we have \( \theta_x = \hat{\theta}_x \).  Now, as \( l_0 \leq l \leq l_1 \)  we must have that \[
\frac{\card{\theta_a\restr{l} \symdiff g\restr{l}}}{l} \leq 2^{-n -2} \land \frac{\card{\theta_b\restr{l} \symdiff g\restr{l}}}{l} \leq 2^{-n -2}
\]   
as if either was greater then we would have an extension of \( \hat{q}_0 \) forcing an existential witness to the failure of the universally quantified sentence which \( \hat{q}_0 \) forced.  But this implies    
\[
\frac{\card{\theta_a\restr{l} \symdiff \theta_b\restr{l}}}{l} \leq \frac{\card{\theta_a\restr{l} \symdiff g\restr{l}}}{l} + \frac{\card{\theta_b\restr{l} \symdiff g\restr{l}}}{l} \leq 2^{-n -2} + 2^{n -2} = 2^{n-1}
\] 
contradicting our assumption above.  Thus, \( p_n \) is \( (n, f') \)-good completing the proof of the lemma.
\end{proof}

As a \( f' \)-excellent sequence is just an infinite sequence \( p_i \)  whose restriction to \( i < n \) is an \( f' \)-excellent sequence of length \( n \) this proves that every finite \( f' \)-excellent sequence (including the empty sequence) can be extended to an (infinite) \( f' \)-excellent sequence.  We now relate \( f' \)-excellent sequences to \( g \). 

\begin{lemma}\label{lem:respecting-excellent-correct}
If  \( f' \) is a coarse description of \( f \) and  \( p_i \) is a \( f' \)-excellent sequence then
\[
\forall(w)\exists(l)\forall(p\Fgeq p_{w+1})\left(\udensity[l]({\recfnl{e}{f'[p]}{}  \symdiff g }) \leq 2^{-w} \right)
\] 
\end{lemma}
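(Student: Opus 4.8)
The plan is to move everything to the $f$-side, where we can exploit that $q_0$ $f$-forces $\udensity(\recfnl{e}{\breve f}{} \symdiff g) = 0$, and then pull the result back to the $f'$-side using the $(w+1,f')$-goodness of $p_{w+1}$. Fix $w$ and abbreviate $\psi = \recfnl{e}{f'[p_{w+1}]}{}$ and $D = \recfnl{e}{\breve f}{}\symdiff g$. First I would apply lemma~\ref{lem:translate-extend} with the roles of $f$ and $f'$ exchanged (legitimate since $\udensity(f\symdiff f')=0$ is symmetric, so $f$ is also a coarse description of $f'$) with $q_1 = p_{w+1}$ to obtain an extension $q_4 \Fgeq p_{w+1}$ with the property that every condition extending $q_4^{f'\rightarrow f}$ translates, under $(-)^{f\rightarrow f'}$, to an extension of $p_{w+1}$. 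Since $q_4 \Fgeq p_{w+1} \Fgeq p_0$, property (4) of the excellent sequence gives $q_4^{f'\rightarrow f} \Fgeq q_0$, so $q_4^{f'\rightarrow f}$ $f$-forces both $\udensity(D)=0$ and the totality of $\recfnl{e}{\breve f}{}$.

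Next, by the $\forces[f]$-analogue of lemma~\ref{lem:density-zero-force-small} (proved identically, using quasi-completeness of $\forces[f]$) there is $\hat q^{*} \Fgeq q_4^{f'\rightarrow f}$ and $l_0$ with $\hat q^{*} \forces[f] \udensity[l_0](D) \leq 2^{-(w+3)}$; enlarging $l_0$, I may also assume $l_0 \geq \lh{\psi}$ and $l_0 \geq 2^{w+3}$. The key point is that this bound, once forced, persists: for every $L$ I can extend $\hat q^{*}$ — using that it $f$-forces totality — to some $\hat q_L \Fgeq \hat q^{*}$ with $\lh{\recfnl{e}{f[\hat q_L]}{}} \geq L$, and by monotonicity $\hat q_L$ still $f$-forces $\udensity[l_0](D) \leq 2^{-(w+3)}$, which (as $\hat q_L$ already decides $\recfnl{e}{f[\hat q_L]}{}$) yields $\card{(\recfnl{e}{f[\hat q_L]}{}\symdiff g)\restr{l'}}/l' \leq 2^{-(w+3)}$ for all $l' \in [l_0, \lh{\recfnl{e}{f[\hat q_L]}{}})$. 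Translating back, $\hat p^{(L)} := (\hat q_L)^{f\rightarrow f'}$ extends $p_{w+1}$ and satisfies $f'[\hat p^{(L)}] = f[\hat q_L]$, so $\hat\theta^{(L)} := \recfnl{e}{f'[\hat p^{(L)}]}{}$ has length $\geq L$ and lies within density $2^{-(w+3)}$ of $g$ above $l_0$. Thus we obtain \emph{arbitrarily long} reference extensions of $p_{w+1}$ whose $\recfnl{e}{}$-images are uniformly close to $g$ above the single threshold $l_0$.

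Now I would set $l := 2^{w+1} l_0$ and verify the conclusion. Take any $p \Fgeq p_{w+1}$ and put $\theta = \recfnl{e}{f'[p]}{}$. If $\lh{\theta} \leq l_0$ then $\card{\theta\symdiff g}\leq\lh{\theta}\leq l_0$, so for $l' \geq l$ we get $\card{(\theta\symdiff g)\restr{l'}}/l' \leq l_0/l = 2^{-(w+1)} < 2^{-w}$ and we are done. Otherwise $\lh{\theta} > l_0$; choose $L > \lh{\theta}$ and let $\hat\theta = \hat\theta^{(L)}$, so $\lh{\hat\theta}\geq L > \lh{\theta}$. Since $p$ and $\hat p^{(L)}$ both extend the $(w+1,f')$-good condition $p_{w+1}$, goodness gives $\card{\theta\restr{l'}\symdiff\hat\theta\restr{l'}}/l' \leq 2^{-(w+1)}$ for all $l'$ with $\lh{\psi}\leq l' < \lh{\theta}$; combining with the reference bound and the pointwise triangle inequality for symmetric-difference counts (legitimate on $[0,l')$, where $\theta,\hat\theta,g$ are all defined) yields $\card{(\theta\symdiff g)\restr{l'}}/l' \leq 2^{-(w+1)}+2^{-(w+3)}$ for $l' \in [l_0,\lh{\theta})$. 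For $l' \geq \lh{\theta}$ the set $(\theta\symdiff g)\restr{l'}$ equals the fixed finite set $\theta\symdiff g$, and $\card{\theta\symdiff g} \leq \card{(\theta\symdiff g)\restr{\lh{\theta}-1}}+1 \leq (2^{-(w+1)}+2^{-(w+3)})(\lh{\theta}-1)+1$ (using $\lh{\theta}-1\geq l_0$); dividing by $l' \geq \lh{\theta} > 2^{w+3}$ keeps the ratio below $2^{-w}$. Hence in every case $\udensity[l](\theta\symdiff g) = \sup_{l'\geq l}\card{(\theta\symdiff g)\restr{l'}}/l' \leq 2^{-w}$, as required.

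The main obstacle is the dependence of $\lh{\theta}=\lh{\recfnl{e}{f'[p]}{}}$ on the arbitrary extension $p$: it can be made arbitrarily large, so no single finite reference extension controls $\theta$ all the way out, and forcing $\udensity(D)=0$ tells us nothing about $\recfnl{e}{f'[p]}{}\symdiff g$ on short initial segments — this is exactly why one needs the family $\{\hat p^{(L)}\}_{L\in\omega}$ of unboundedly long references, together with the padding factor $2^{w+1}$ in the choice of $l$ to absorb the short-$\theta$ and boundary ($+1/\lh{\theta}$) cases. The secondary technical point is that the translation operator of definition~\ref{def:condition-translate} can fail to be legal in its $k$-component, so the passage between $f$-conditions and $f'$-conditions must be routed through lemma~\ref{lem:translate-extend} rather than through $p \mapsto p^{f'\rightarrow f}$ directly; once this is set up, the rest is bookkeeping.
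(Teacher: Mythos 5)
Your proof is correct and takes essentially the same route as the paper's: translate to the \( f \)-side via Lemma~\ref{lem:translate-extend}, force a density bound on \( \recfnl{e}{\breve f}{}\symdiff g \) above a threshold \( l_0 \), and pull it back through \( (w+1,f') \)-goodness. Your write-up is more careful than the paper's about the quantifier order (a single \( l \) must serve every \( p \Fgeq p_{w+1} \), which you handle by fixing \( l_0 \) first and padding to \( l = 2^{w+1}l_0 \)) and about the boundary contribution for \( l' \geq \lh{\theta} \) (handled via the family \( \hat p^{(L)} \) of unboundedly long reference conditions), both of which the paper's own proof treats tersely.
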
 
\begin{proof}
Given \( w \) and \( p \Pgeq p_{w+1} \) build \( \hat{q}_0 \Fgeq p_{w+1}^{f' \rightarrow f} \)  so that any extension \( \hat{q} \Fgeq \hat{q}_0 \) satisfies \( \hat{q}^{f \rightarrow f'} \Fgeq p_{w+1} \).  Specifically, we use lemma \ref{lem:translate-extend} with \( f, f' \) switched and setting \( q_1  \)  in that lemma to be \(  p_{w+1} \) and taking \( \hat{q}_0  \) to be \( q^{f' \rightarrow f}_4 \) where \( q_4 \) is given by the lemma.  As \( p_i \) is \( f' \)-excellent we have \( \hat{q}_0 \Fgeq p_{w+1}^{f' \rightarrow f} \Fgeq q_0 \).     

Choose \( \hat{q} \Fgeq \hat{q}_0 \) and \( l \geq \lh{\theta} \) (where \( \theta = \recfnl{e}{f'[p]}{} \)) such that  \[
\hat{q} \forces[f] \udensity[l]({\recfnl{e}{\breve{f}}{} \symdiff g})) \leq  2^{-w - 1}  \land \exists(s)\left(\lh{\recfnl{e}{\breve{f}\restr{s}}{}} \geq \lh{\theta} \right)
\]        
Let \( \hat{\theta} = \recfnl{e}{f[\hat{q}]}{} \) and notice  \[
\lh{\theta} \geq l \land l' \in [l, \lh{\hat{\theta}}) \implies \frac{\card{\hat{\theta}\restr{l'} \symdiff g\restr{l'}}}{l'} \leq 2^{-w -1} 
\] 
Let \( p' = \hat{q}^{f \rightarrow f'} \Fgeq p_{w+1} \).  As \( f[\hat{q}]=f'[p'] \) we have  \( \theta' = \recfnl{e}{f'[p']}{} = \hat{\theta}  \) and therefore \[
\udensity[l]({\recfnl{e}{f'[p]}{}  \symdiff g }) \leq \sup_{l' \in [l, \lh{\hat{\theta}})}  \frac{\card{\hat{\theta}\restr{l'} \symdiff g\restr{l'}}}{l'}  + \frac{\card{\theta'\restr{l'} \symdiff \theta\restr{l'}}}{l'} \leq 2^{-w -1}  + 2^{-w -1}  = 2^{-w}
\]  
where the bound on the second term derives from the fact that \( p_{w+1} \) is \( (w+ 1, f') \)-good.     
\end{proof}

\subsection{Computing Coarse Descriptions}

In this subsection, we finally present the computation we will use to compute a coarse description of \( g \) from a coarse description \( f' \) of \( f \).  The basic idea will be that given \( p_0 \), the first element of a \( f' \)-excellent sequence, we can guess at \( p_i \).  If our guess is wrong, we will argue that we eventually discover this and can take another guess.  We need to show that as long as we are eventually correct that we can piece together a coarse description of \( g \) by using each guess to approximate \( g \) on a sufficiently long interval.  Our notion of a sufficiently long interval will be \( I_n \) as defined previously whose definition we now recall.

\begin{equation}\label{eq:In-again}
I_n = [2^{n}, 2^{n+1})
\end{equation}

We now show we can put together guesses taken on the intervals \( I_n \) to give a coarse description \( h \) of \( g \).   

\begin{lemma}\label{lem:piece-together}
Suppose that \( f' \) is a coarse description of \( f \), \( p_i \) is a \( f' \)-excellent sequence and 
\[ 
h \in \baire \land \forall(i)\forall*(k)\exists(p \Fgeq p_i)\left(h\restr{I_k} = \recfnl{e}{f'[p]}{}\restr{I_k} \right)     
\]
then  \( h \) is  a coarse description of \( g \). 
\end{lemma}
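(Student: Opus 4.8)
The plan is to use Lemma~\ref{lem:respecting-excellent-correct} to control, for each density threshold $2^{-w}$, a tail of the intervals $I_k$ on which $h$ agrees with some $\recfnl{e}{f'[p]}{}$ for $p \Fgeq p_{w+1}$, and then to convert those ``local'' agreements on the $I_k$ into a genuine upper-density bound on $h \symdiff g$ via Lemma~\ref{lem:frac-to-density}. Concretely, fix $w$. By the hypothesis applied with $i = w+1$, there is some $k_0$ so that for all $k \geq k_0$ there is $p \Fgeq p_{w+1}$ with $h\restr{I_k} = \recfnl{e}{f'[p]}{}\restr{I_k}$; enlarging $k_0$ if necessary we may also assume $2^{k_0} \geq \lh{\theta_{w+1}}$ where $\theta_{w+1} = \recfnl{e}{f'[p_{w+1}]}{}$, so that every such $p$ has $\lh{\recfnl{e}{f'[p]}{}} \geq 2^{k_0}$ and in particular $\recfnl{e}{f'[p]}{}$ is defined on all of $I_k$.

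First I would fix $k \geq k_0$ and the corresponding $p = p(k) \Fgeq p_{w+1}$, and apply Lemma~\ref{lem:respecting-excellent-correct} with this $p$ and with $w' = w+2$ in place of $w$: this yields an $l = l(k)$ with $\udensity[l]({\recfnl{e}{f'[p]}{} \symdiff g}) \leq 2^{-w-2}$. I then want to conclude that $\card{(h \symdiff g) \isect I_k}/\card{I_k}$ is small for $k$ large. The subtlety is that $l(k)$ depends on $k$ and may be much larger than $2^{k+1}$, so the density bound $\udensity[l]$ does not directly tell us anything about the single interval $I_k$. To get around this, I would instead argue as follows: for each $k \geq k_0$ pick $p(k) \Fgeq p_{w+1}$ agreeing with $h$ on $I_k$, run Lemma~\ref{lem:respecting-excellent-correct} to get $l(k)$, and then \emph{further extend} — but actually the cleaner route is to pick a single large threshold. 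For a fixed $w$, first choose (via Lemma~\ref{lem:respecting-excellent-correct} with $w'=w+2$ and any $p \Fgeq p_{w+1}$, e.g. $p = p_{w+2}$) some $l_\ast$ with $\udensity[l_\ast]$ of $\recfnl{e}{f'[p_{w+2}]}{} \symdiff g$ at most $2^{-w-2}$; but since each guess $p(k)$ differs from $p_{w+2}$ we cannot reuse one $l_\ast$. So the honest approach: fix $w$, and for every $k$ with $2^k \geq \max(2^{k_0}, l(k))$ — no, $l(k)$ depends on $k$.

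The resolution, which I expect to be the main obstacle, is that we must choose the interval index $n$ in $I_n$ large relative to the $l$ produced for \emph{that} guess, which requires a diagonal/monotonicity argument. The clean way: for fixed $w$, and for each $k \geq k_0$, Lemma~\ref{lem:respecting-excellent-correct} (applied with the guess $p(k)$ and with $w+2$) gives $l(k)$ with $\udensity[l(k)]({\recfnl{e}{f'[p(k)]}{} \symdiff g}) \leq 2^{-w-2}$; since $h\restr{I_k} = \recfnl{e}{f'[p(k)]}{}\restr{I_k}$, for any single $l' \geq l(k)$ with $I_k \subseteq [0,l')$ we get $\card{(h\symdiff g)\isect I_k} \leq \card{(\recfnl{e}{f'[p(k)]}{}\symdiff g)\restr{l'}} \leq l' \cdot 2^{-w-2}$. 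Taking $l' = 2^{k+1}$ only works if $2^{k+1} \geq l(k)$; but the set $\{k : 2^{k+1} < l(k)\}$ need not be finite. Instead I would apply Lemma~\ref{lem:respecting-excellent-correct} once with $w' = w+2$ and $p = p_{w+1}$ itself (which is $\Fgeq p_{w+1}$), obtaining a \emph{uniform} $L_w$ with $\udensity[L_w]({\recfnl{e}{f'[p]}{}\symdiff g}) \leq 2^{-w-2}$ for \emph{every} $p \Fgeq p_{w+1}$ — wait, Lemma~\ref{lem:respecting-excellent-correct} does give exactly this: ``$\exists l \forall p \Fgeq p_{w+1}$ $\udensity[l](\ldots) \leq 2^{-w}$'', with the same $l$ for all $p$! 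So take that uniform $l = L_w$, enlarge $k_0$ so also $2^{k_0} \geq L_w$, and then for every $k \geq k_0$ with the guess $p(k)$: $\card{(h\symdiff g)\isect I_k} \leq \card{(\recfnl{e}{f'[p(k)]}{}\symdiff g)\restr{2^{k+1}}} \leq 2^{k+1}\cdot 2^{-w-2} = \card{I_k}\cdot 2^{-w-1}$, since $2^{k+1} \geq 2^{k_0} \geq L_w$. Hence $\card{(h\symdiff g)\isect I_k}/\card{I_k} \leq 2^{-w-1} < 2^{-w}$ for all $k \geq k_0$.

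Finally I would conclude: we have shown that for every $w$ there is $k_0(w)$ so that $\card{(h\symdiff g)\isect I_k}/\card{I_k} \leq 2^{-w}$ for all $k \geq k_0(w)$, i.e. $\limsup_{n\to\infty}\card{(h\symdiff g)\isect I_n}/\card{I_n} = 0$. By Lemma~\ref{lem:frac-to-density} this gives $\udensity(h \symdiff g) = 0$, which is exactly the statement that $h$ is a coarse description of $g$ (Definition~\ref{def:coarse-description}), completing the proof. The only genuinely delicate point, worth stating carefully in the write-up, is the use of the uniformity of the quantifier ``$\exists l\,\forall p$'' in Lemma~\ref{lem:respecting-excellent-correct} — it is what lets us pin down a single length $L_w$ before we know which guesses $p(k)$ will arise — together with the bookkeeping that enlarges $k_0$ so that $2^{k_0} \geq L_w$ and so that $h\restr{I_k}$ really is computed by some $p \Fgeq p_{w+1}$ for all $k \geq k_0$.
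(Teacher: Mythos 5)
Your argument is correct and follows essentially the same strategy as the paper: both establish the interval-wise bound \(\card{(h\symdiff g)\isect I_k}/\card{I_k} \leq 2^{-w-1}\) for all sufficiently large \(k\), by combining the agreement hypothesis on the \(I_k\) with the \emph{uniform} threshold \(l\) produced by Lemma~\ref{lem:respecting-excellent-correct}. You correctly identify that uniformity (the single \(l\) working for every \(p \Fgeq p_{w+1}\)) as the crux — without it the argument would collapse because \(l(k)\) could outrun \(2^{k+1}\). Where you genuinely streamline is the last step: you cite Lemma~\ref{lem:frac-to-density} to convert \(\limsup_{k}\card{(h\symdiff g)\isect I_k}/\card{I_k} = 0\) directly into \(\udensity(h\symdiff g)=0\), whereas the paper hand-verifies \(\udensity[l](h\symdiff g)\leq 2^{-w}\) by decomposing \(\card{\delta\restr{l'}}\) into the tail below \(2^{k_0}\), the blocks \(I_{k_0+i}\), and a partial final block. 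Your route is shorter and is how the paper itself uses Lemma~\ref{lem:frac-to-density} in the proofs of Lemmas~\ref{lem:gamma-is-uc} and~\ref{lem:gamma-hat-uc-reduction}.

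One bookkeeping slip to fix: instantiating Lemma~\ref{lem:respecting-excellent-correct} at parameter \(w'=w+2\) yields a uniform \(l\) with the bound \(2^{-w-2}\) for all \(p \Fgeq p_{w'+1} = p_{w+3}\), not for all \(p \Fgeq p_{w+1}\) as you wrote. So you should apply the agreement hypothesis with \(i=w+3\) (as the paper does) and take your guesses \(p(k)\Fgeq p_{w+3}\); alternatively apply the lemma at \(w\) itself to get the bound \(2^{-w}\) for \(p\Fgeq p_{w+1}\), which still gives \(\card{(h\symdiff g)\isect I_k}/\card{I_k}\leq 2^{-w+1}\) and hence \(\limsup=0\). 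Either fix is immediate and does not affect the structure of the argument.
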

\begin{proof}
For this proof we let \( \delta  \) abbreviate \(  h \symdiff g \) and show that given \( w \) there is some \( l \) with \( \udensity[l]({\delta}) \leq 2^{-w} \).  By lemma \ref{lem:respecting-excellent-correct} there is some \( l_0 \) such that   
\begin{equation*}
\forall(p \Fgeq p_{w+3})\udensity[l_0]({\recfnl{e}{f'[p]}{}  \symdiff g }) \leq 2^{-w -2}
\end{equation*}
Given \(l' \geq l_0 \) and \( p \Fgeq p_{w+3}\)  we have that 
\begin{equation}\label{eq:piece-together:lprime-section}
\frac{\card{\recfnl{e}{f'[p]}{}\restr{l'}  \symdiff g\restr{l'}}}{l'} \leq \udensity[l_0]({\recfnl{e}{f'[p]}{}  \symdiff g }) \leq 2^{-w -2}
\end{equation}
Let \( k_0 \) be large enough that 
\begin{equation*}
2^{k_0} > l_0 \land \forall(k \geq k_0)\exists(p \Fgeq p_{w+3})\left(h\restr{I_k} = \recfnl{e}{f'[p]}{}\restr{I_k}  \right) 
\end{equation*}
Given \( k \geq k_0 \) let \( p \Fgeq p_{w+3} \) satisfy \( h\restr{I_k} = \recfnl{e}{f'[p]}{}\restr{I_k} \).  Substitute \( l' \) in \eqref{eq:piece-together:lprime-section} with \( 2^{k+1} \geq 2^{k_0} > l_0 \) and multiply through by \( l' = 2^{k+1} \)  to derive that 
\begin{equation}\label{eq:piece-together:delta-I-k}
2^{k-w-1}  \geq \card{\recfnl{e}{f'[p]}{}\restr{2^{k+1}}  \symdiff g\restr{2^{k+1}}} \geq \card{\recfnl{e}{f'[p]}{}\restr{I_k}  \symdiff g\restr{I_k}} = \card{\delta\restr{I_k}}
\end{equation}
We set \( k_1 = k_0 + w +2 \) and  \( l = 2^{k_1} \) and verify that  \( \udensity[l]({\delta}) \leq 2^{-w} \) as required.  Given \( l' \geq l > l_0 \) let \( n \)  be such that \( l' \in I_{k_0 + n} \) (clearly \( n \geq w+2  \)) and observe
\begin{align*}
\frac{\card{\delta\restr{l'}}}{l'} =& \frac{\card{\delta\restr{2^{k_0}}}}{l'} + \left(\sum_{i=0}^{n-1 } \frac{\card{\delta\restr{I_{k_0 + i}}}}{l'} \right) + \frac{\card{\delta\restr{[2^{k_1}, l')}}}{l'} \leq \\
\intertext{appealing to \eqref{eq:piece-together:lprime-section} for the rightmost term gives us the bound}
& \frac{2^{k_0}}{2^{k_0 + w +2}} + \left(\sum_{i=0}^{n-1 } \frac{\card{\delta\restr{I_{k_0 + i}}}}{2^{k_0 + n}} \right) + 2^{-w -2} \leq \\
\intertext{Applying \eqref{eq:piece-together:delta-I-k} gives us}
& 2^{-w -2} + 2^{-w -2} + \sum_{i=0}^{n-1 } \frac{2^{k_0 +i -w -1}}{2^{k_0 + n}} = 2^{-w -1} + 2^{-w-2}\sum_{i = 0}^{n-1} 2^{ i - n+1}  = \\
& 2^{-w -1} + 2^{-w-2}\sum_{j = 0}^{n-1} 2^{-j} \leq 2^{-w -1} + 2^{-w-1} = 2^{-w} 
\end{align*}
As \( l' \geq l \) was arbitrary we have \( \udensity[l]({h \symdiff g}) \leq 2^{-w} \) and as \( w \) was arbitrary this gives us that  \(  \udensity({h \symdiff g}) = 0\) as required. 

\end{proof}         

We now describe how to compute a coarse description of \( g \) from a coarse description \( f' \)  of \( f \) given the first term \( p_0 \)  of a \( f' \)-excellent sequence.    

\begin{lemma}\label{lem:non-uniform-coarse-reduction-exists}
There is a computable functional \( \Gamma \) such that if  \( f' \) is a coarse description of \( f \) and \( p_0 \) is an \( f' \)-excellent sequence of length \( 1 \) then \( \Gamma(p_0, f)\conv = h \) where \( h \) is a coarse description of \( g \).   
\end{lemma}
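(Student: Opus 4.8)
The plan is to build $\Gamma$ as an $f'$-recursive \emph{guess and correct} procedure: using the given $p_0$ as the first term, it will construct an $f'$-excellent sequence $p_0 \Fleq p_1 \Fleq p_2 \Fleq \cdots$ extending $p_0$ and, in parallel, read off a function $h$ which meets the hypothesis of Lemma~\ref{lem:piece-together}; that lemma then delivers that $h$ is a coarse description of $g$. Throughout, the oracle is $f'$ (the given coarse description of $f$) and $q_0,e$ are as fixed before Proposition~\ref{prop:coarse-reduction-equivalent}.

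The first observation is that, among the defining clauses of an $f'$-excellent sequence in Definition~\ref{def:paired-F-cond}, only the $(i,f')$-goodness clause is not $f'$-semidecidable. The clauses $p_i \Fleq p_{i+1}$, $k_{i+1} > k_i$, $\theta_i \subfunneq \theta_{i+1}$ and $f'$-properness are confirmed in finitely many $f'$-steps (just run $\recfnl{e}{f'[p_i]}{}$ far enough), and the clause $\forall(p \Fgeq p_0)\left(p^{f' \rightarrow f} \Fgeq q_0\right)$ is inherited by every extension of $p_0$, hence needs no checking once it is assumed of $p_0$ (which is part of the hypothesis). Crucially, the \emph{failure} of $(i,f')$-goodness of a condition $p$ is $\Sigma^0_1(f')$: writing $\theta = \recfnl{e}{f'[p]}{}$, $\theta_a = \recfnl{e}{f'[p_a]}{}$, $\theta_b = \recfnl{e}{f'[p_b]}{}$, one searches for $p_a,p_b \Fgeq p$ and an $l$ with $\lh{\theta} \le l < \min(\lh{\theta_a},\lh{\theta_b})$ and $\card{\theta_a\restr{l}\symdiff\theta_b\restr{l}}/l > 2^{-i}$. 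A one-line check against Definition~\ref{def:paired-F-cond} also shows that $(i,f')$-goodness, and all the other excellence clauses, are preserved under proper extension, so any condition may be lengthened to make its image $\recfnl{e}{f'[\cdot]}{}$ as long as desired without spoiling excellence.

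The construction then runs a standard dovetailed process, computable in $f'$. It maintains a finite approximating sequence $p_0, p_1^{(s)}, \dots, p_{m_s}^{(s)}$ with $m_0 = 0$. At stage $s$: (i) run $s$ more steps of the search, for every $i \le m_s$, for a witness that $p_i^{(s)}$ fails $(i,f')$-goodness; if one appears, take the least such $i$, discard positions $i,\dots,m_s$, advance position $i$ to the next condition (in a fixed enumeration of $\Fcond$) that passes the finitary checks, and re-initialise the searches; (ii) otherwise, search for a proper $p_{m_s+1} \Fgeq p_{m_s}^{(s)}$ with $k$ strictly larger, with $\recfnl{e}{f'[p_{m_s+1}]}{}$ a proper extension of $\theta_{m_s}^{(s)}$, and long enough that $\recfnl{e}{f'[p_{m_s+1}]}{}$ is defined throughout $I_{m_s+1}$, and append it. One shows by induction on the position that each position is injured only finitely often and settles on a genuine term of an $f'$-excellent sequence: once positions $<i$ have stabilised to a genuine length-$i$ $f'$-excellent sequence, Lemma~\ref{lem:f-prime-good-exists} together with preservation under extension produces a candidate for position $i$ that is never injured (and whose image can be taken arbitrarily long), and every earlier candidate either is such or has its violation detected in finitely many steps; this also makes the step (ii) search terminate, so $m_s\to\infty$. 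Hence $p_i \eqdef \lim_s p_i^{(s)}$ is an $f'$-excellent sequence extending $p_0$.

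Finally, set $h(0)=0$ and, for each $k$, let $s_k$ be the first stage at which $m_s \ge k$ and $\recfnl{e}{f'[p_k^{(s)}]}{}$ is defined throughout $I_k$, putting $h\restr{I_k} = \recfnl{e}{f'[p_k^{(s_k)}]}{}\restr{I_k}$. Since $m_s$ grows by at most one per stage we get $s_k \ge k$, and since position $k$ eventually stabilises with a long enough image, $s_k<\infty$; thus $h$ is total and $f'$-computable. To check the hypothesis of Lemma~\ref{lem:piece-together} for the constructed sequence $(p_i)$, fix $i$ and let $t_i<\infty$ be the last stage at which position $i$ is injured; because $s_k \ge k \to \infty$, for all large $k$ we have $k \ge i$ and $s_k > t_i$, so $p_k^{(s_k)} \Fgeq p_i^{(s_k)} = p_i$ while $h\restr{I_k} = \recfnl{e}{f'[p_k^{(s_k)}]}{}\restr{I_k}$, giving the required $p = p_k^{(s_k)} \Fgeq p_i$. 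Lemma~\ref{lem:piece-together} then yields that $h$ is a coarse description of $g$. The main obstacle is the stabilisation argument of the third paragraph — verifying that the guess-and-correct process is well-founded at every position — which must interlock the $\Sigma^0_1(f')$-detectability of goodness violations, the existence of extensions from Lemma~\ref{lem:f-prime-good-exists}, and preservation of the excellence clauses under extension; everything afterwards is bookkeeping. (The same construction with $p_0 = q_0 = (\estr,0,\estr)$ requires no advice and so is uniform, which will give the uniform half of Proposition~\ref{prop:coarse-reduction-equivalent}.)
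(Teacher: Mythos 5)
Your proposal is correct and takes essentially the same approach as the paper's own proof, which is deliberately left as a brief sketch (the paper even notes in a footnote that a more detailed version was written and then discarded as less informative). Your injury-style construction — maintaining a guessed $f'$-excellent sequence, exploiting the $\Sigma^0_1(f')$-detectability of failures of $(i,f')$-goodness and its preservation under extension, using Lemma~\ref{lem:f-prime-good-exists} to guarantee extendability, and finishing by verifying the hypothesis of Lemma~\ref{lem:piece-together} — is precisely the argument the paper has in mind, with the bookkeeping filled in.
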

\begin{lemma}
We sketch\footnote{Initially we described the algorithm in detail but found it was less informative than this brief sketch.} the algorithm for computing \( h \) from \( f' \) and \( p_0 \).  During step \( k \) we will specify \( h\restr{I_k} \) and we always maintain a finite sequence \( p_i, i \leq n \)  which is our guess at an \( f' \)-excellent sequence.   During each step we simultaneously search for a condition \( p_{n+1} \Fgeq p_n \) which appears to extend our \( f' \)-excellent sequence (i.e. satisfies all properties from definition \ref{def:paired-F-cond} except possibly being \( (n+1, f') \)-good) a condition \( p \Fgeq p_n \) such that \( \recfnl{e}{f'[p]}{} \) is defined on all of \( I_k \) and for witnesses showing that one of our existing guesses \( p_i, 0 < i \leq n \) isn't actually \( (i, f') \)-good.   If we discover that \( p_i \) isn't actually \( (i, f') \)-good we throw out \( p_i \)  and all our guesses after it, relax our requirement on \( p \) to only extend \( p_{i-1} \)  and fall back to searching for a replacement for \( p_i \) in our \( f' \)-excellent sequence.   When we find such a \( p \) we commit to \( h\restr{I_k} = \recfnl{e}{f'[p]}{}\restr{I_k} \) and once we've also found a guess for the next element in our sequence we continue to the next step.   

As being \( (i, f') \)-good is a \( \pizn(f'){1} \) property we eventually reject any incorrect guesses.    
Since lemma \ref{lem:f-prime-good-exists} guarantees that every finite \( f' \)-excellent sequence can be extended to an infinite \( f' \)-excellent sequence our guesses at \( p_i \) clearly for an \( f' \)-excellent sequence in the limit.  Hence, \( h \) must be total since, if noting else, \( p_{2^{k+1}} \) would satisfy the conditions for \( p \), and lemma  \ref{lem:piece-together} guarantees that \( h \) is a coarse description of \( g \).   As the algorithm above clearly describes a computable process with inputs \( f' \) and \( p_0 \) this completes the proof.
\end{lemma}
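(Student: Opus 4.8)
The plan is to exhibit a computable functional \( \Gamma \) which, given \( p_0 \) and a coarse description \( f' \) of \( f \) as oracle, builds \( h \) in stages, committing to \( h\restr{I_k} \) at stage \( k \) (recall \( I_k = [2^{k}, 2^{k+1}) \)). Throughout we maintain a current finite guess \( p_0 \Fleq p_1 \Fleq \cdots \Fleq p_n \) at an initial segment of an \( f' \)-excellent sequence, where every \( p_i \) with \( i \le n \) has already been observed to satisfy each clause of Definition~\ref{def:paired-F-cond} \emph{except} possibly \( (i, f') \)-goodness, this last clause being \( \pizn(f'){1} \) and hence not confirmable in finitely many steps. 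The fourth clause, \( \forall(p \Fgeq p_0)(p^{f' \rightarrow f} \Fgeq q_0) \), speaks only of \( p_0 \), holds by hypothesis, and never needs rechecking; so the entire procedure queries only \( f' \) together with the conditions themselves.

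First I would describe the stage-\( k \) module. Given the guess \( p_0, \dots, p_n \) we run three searches in parallel: (i) for a plausible next term \( p_{n+1} \Fgeq p_n \) with \( p_{n+1} \) \( f' \)-proper, \( \recfnl{e}{f'[p_n]}{} \subfunneq \recfnl{e}{f'[p_{n+1}]}{} \) and \( k_{n+1} > k_n \); (ii) for some \( p \Fgeq p_n \) with \( \recfnl{e}{f'[p]}{} \) defined on all of \( I_k \); and (iii) for a witness \( p_a, p_b \Fgeq p_i \) together with \( l \) showing that some \( p_i \), \( 0 < i \le n \), fails to be \( (i, f') \)-good. If (iii) fires first (for the least such \( i \)) we discard \( p_i, \dots, p_n \), contract the guess to \( p_0, \dots, p_{i-1} \), and restart the three searches above \( p_{i-1} \). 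When (ii) succeeds we set \( h\restr{I_k} = \recfnl{e}{f'[p]}{}\restr{I_k} \), and once (i) has also succeeded we append \( p_{n+1} \) to the guess and pass to stage \( k+1 \). Every step is uniformly computable from \( f' \) and \( p_0 \), so \( \Gamma \) is a computable functional.

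For the verification: since \( (i, f') \)-goodness is \( \pizn(f'){1} \), any guessed term that is not genuinely good is eventually refuted by search (iii), so each level of the guess is disturbed only finitely often; and since, by Lemma~\ref{lem:f-prime-good-exists}, every genuinely \( f' \)-excellent initial segment extends to an infinite \( f' \)-excellent sequence, search (i) must eventually succeed with a correct continuation whenever it is searching above a correct segment. Hence the guesses converge, in the limit, to an infinite \( f' \)-excellent sequence. In particular \( h \) is total: at stage \( k \) the true term \( p_{2^{k+1}} \) eventually becomes available and \( \recfnl{e}{f'[p_{2^{k+1}}]}{} \) has length exceeding \( 2^{k+1} \), so search (ii) always terminates. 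Finally, because the guessed sequence stabilizes, for every fixed \( i \) all sufficiently large \( k \) satisfy \( h\restr{I_k} = \recfnl{e}{f'[p]}{}\restr{I_k} \) for some \( p \Fgeq p_i \); thus the hypothesis of Lemma~\ref{lem:piece-together} is met, and that lemma yields that \( h \) is a coarse description of \( g \), as required.

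The hard part will be showing that the construction never permanently stalls — that the backtracking triggered by (iii) cannot loop forever at a fixed level and that searches (i) and (ii) terminate at every stage. The two facts making this go through are that goodness is \( \pizn(f'){1} \) (so a genuinely good term is never refuted, and only finitely many wrong guesses precede the correct one) and the extendability guaranteed by Lemma~\ref{lem:f-prime-good-exists}, used in conjunction with Lemma~\ref{lem:respecting-excellent-correct}, which is what makes the density estimates invoked in Lemma~\ref{lem:piece-together} available along a genuine \( f' \)-excellent sequence. A minor point to verify is that \( \recfnl{e}{f'[p]}{} \) is consulted only on finite initial segments, so every query the functional makes is legitimate even though \( \recfnl{e}{f'[p]}{} \) need not be total for an arbitrary \( p \).
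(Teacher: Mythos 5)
Your proposal is correct and follows essentially the same approach as the paper's sketch: you maintain a finite guess at an \( f' \)-excellent sequence, run the three parallel searches (next term, condition converging on \( I_k \), refutation of goodness), backtrack on \( \pizn(f'){1} \) refutations, and appeal to Lemmas \ref{lem:f-prime-good-exists} and \ref{lem:piece-together} for totality and correctness. The structuring of the three searches and the observation that only finite oracle queries are made is a slightly more explicit unpacking, but the argument is the same.
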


This is enough to complete the proof of proposition \ref{prop:coarse-reduction-equivalent}.  If \( f' \) is a coarse description of \( f \) then by lemma \ref{lem:f-prime-good-exists} there is a \( p_0 \) satisfying the statement of lemma \ref{lem:non-uniform-coarse-reduction-exists} and therefore every coarse description of \( f \) computes a coarse description of \( g \).  Moreover, if \( q_0 = (\estr, 0, \estr) \) \mydash as we observed to be true when we have a uniform coarse reduction \mydash then, by lemma \ref{lem:f-prime-good-exists}, for all \( f' \) we can take \( p_0 = (\estr, 0, \estr) \) giving us a single functional computing a coarse description of \( g \) from any coarse description of \( f \).  As per the discussion after proposition \ref{prop:coarse-reduction-equivalent} this is enough to complete the proof of the proposition and consequently the proof of theorem \ref{thm:nonu-coarse-complexity}.

We briefly remark that the argument in \cite{Dzhafarov2017Notions} proving the existence of sets \( X \NCgeq Y \) but \( X \nUCgeq Y \) worked by ensuring that every coarse description \( X' \) of \( X \) could uniformly compute \( X \) above some finite initial segment once \( \udensity[l](X \symdiff X') \) was small enough \mydash thus the non-uniformity in some sense all derives from uncertainty as to when \( \udensity[l](X \symdiff X') \) becomes sufficiently small and the resultant uncertainty about \( X \) on that initial segment.  In some sense, our argument here proves this is the only way a non-uniform reduction can fail to be uniform as we've shown that the reduction is uniform given \( p_0 \) where \( p_0 \) encodes information both about the point at which \( \udensity[l](f \symdiff f') \) becomes small enough as well as finitely much information about \( f \).  

This points to another difference between non-uniform coarse and non-uniform effective dense reducibility.  In the case of non-uniform coarse reducibility we can identify some particular value \( k \) such that for all \( l \) there is a single reduction that works for all \( f' \) satisfying \( \udensity[l]({f' \symdiff f}) \leq 2^{-k} \).  To see this note that we can replace the initial segments of those functions \( f' \) with some fixed (in terms of \( l \)) length initial segment of \( f \) to allow them to share the same value of \( p_0 \).  In contrast, we saw in theorem \ref{thm:ned-piii-complete} that we could have a non-uniform effective dense reduction that not only failed to have this property but did so in a pretty strong sense (the witnesses \( l_k \)  of the failure are a computable sequence).

\section{Future Directions}\label{sec:future}

We  hypothesize that the method in \S\ref{sec:effective-dense-complexity} could be extended to  show that \( f \NEDequiv g \) is \( \piin{1} \) complete.  However, such an extension wouldn't be trivial since our construction relies on duplicatively encoding some of the information needed to compute \( Z(x) \) into the values of \( f(x) \) an almost all of \( U_i \) which makes it difficult to encode just \( Z(x) \) and the part of \( f \) that encodes \( Z(x) \) into some \( g(x) \) while ensuring that replacing \( g(x) \) with \( \Box  \) wouldn't make us ignorant of too large a fraction of \( f \). On the other hand, if we try to also duplicatively encode the information about \( f \) into many different values of \( Z \) the we risk giving\( f^{\Box S} \) the ability to compute \( Z(x) \) as if \( S = \eset \) by using these interdependence to recover the hidden behavior of \( f \).  Ultimately, we hypothesize that these challenges are not insurmountable but we still ask the following question.

\begin{question}\label{q:ned-complexity-equiv}
Is the relation \( f \NEDequiv g \) for \( f, g \in \baire \) also \( \piin{1} \) complete?  If not, what is its complexity?  
\end{question}

However, modifying the construction in \S\ref{sec:effective-dense-complexity} to replace \( f \) with a set faces even fundamental barriers leaving us no clear hypothesis as to the outcome of the following question.  To motivate the aspect of the following question with respect to r.e. sets we note that a small tweak to our construction (only allowing \( Z(x) \) to change value once) would render \( Z \) to be r.e. (in \( X \) for the relativized version)  and, as \( f \in \deltazn(X){2}  \) it is trivial to encode \( f \) into a computable set of density \( 1 \) and \( 0' \) into the complementary set of density \( 0 \) to render \( f \) of \( r.e. \) degree.

\begin{question}\label{q:ned-set-complexity}
What is the complexity of \( Y \NEDgeq Z \) for \( Z, Y \in \cantor \)? What about the complexity of the relation \( \REset{i} \NEDgeq \REset{j} \)?  
\end{question}

A result showing that even, when restricted to sets, non-uniform effective dense reducibility was also \( \piin{1} \) complete \mydash or even just non-arithmetic \mydash would show the notions of non-uniform effective dense and coarse reducibility remain extremely different even on the sets.  On the other hand, a result showing that non-uniform effective dense reducibility for sets is much simpler would demonstrate that, considering the degrees of functions in addition to those of sets, isn't merely adding extra degrees (as the non-equivalence result from \S\ref{sec:edd-no-sets} shows)  but actually moving to a much more complicated structure.  Almost as interesting would be to determine whether uniform effective dense reducibility remains as complicated as the non-uniform version.

Somewhat surprisingly, even the question of the complexity of uniform effective dense reducibility remains open.  This might seem strange given that it might appear that uniformity might allow us to leverage K\"{o}nig's basis theorem via the compactness of \( \cantor \) when considering the possible sets \( S \) we use to mask \( f \).    Unfortunately, the fact that our reduction only has to behave on sets of density \( 0 \) has the effect of injecting an extra parameter (the location \( l \) at which we commit to a certain bound on \( \udensity[l](S) \)) making the question look, at least superficially, more like trying to show that a tree through \( \wstrs \) is well-founded.  Indeed, we hypothesize that the solution to the following question is that even uniform effective dense reducibility is \( \piin{1} \) complete.  

\begin{question}\label{q:ued-complexity-equiv}
What is the complexity of \( f \UEDequiv g \) for \( f, g \in \baire \)?  What about  \( Y \UEDgeq Z \) for \( Z, Y \in \cantor \)?
\end{question}

As we noted at the end of the previous section,  non-uniform effective dense reducibility seems to offer a more profound sense in which a reduction can be non-uniform than that available for coarse reducibility raises the following, somewhat vague, question.

\begin{question}
Is there an interesting notion sitting between uniform and non-uniform effective dense reducibility that weakens the notion of uniformity required in a way that somehow only doesn't count non-uniformity that arises from differences in how quickly effective dense descriptions of \( f \) approach it in density?
\end{question}

It would also be interesting to answer the question that motivated our consideration of the complexity of effective dense reducibility in the first place.

\begin{question}\label{q:ned-definable-set-element}
Suppose that \( f \NEDequiv X \) for some set  \( X  \).  Is there a set \( Y  \NEDequiv f \) with \( Y \) hyperarithmetic in \( f \)?  What about arithmetic or even \( \deltazn{2} \) in \( f \)?     
\end{question}

While we showed in theorem \ref{thm:effective-dense} that sufficiently generic functions aren't effective dense equivalent to any set Cohen generics obviously aren't the only functions with this property, e.g., we could have used a notion of genericity that imposed further conditions on \( f \) (for instance Hechler style conditions requiring \( f \) to dominate certain functions).  This raises the question of whether this class can be interestingly characterized.

\begin{question}\label{q:charachterize-equivalent}
Can the set \( \set{f \in \baire}{\exists(X \subset \omega)\left(f \NEDequiv X \right)} \) be usefully characterized?  For instance, are all such \( f \) bounded by a computable function on a set of density \( 1 \)? 
\end{question}    

Finally, there are a number of questions one might ask about the structure of the Turing degrees of representatives of effective dense and coarse equivalence classes.  We call out the question below specifically as it is plausibly related to question \ref{q:ned-definable-set-element}.

\begin{question}\label{q:least-element}
Do the degrees considered in this paper (coarse, effective dense) always contain an element of least Turing degree, e.g., is there always \( g \NCequiv f \) such that any \( g' \NCequiv f \) satisfies \( g' \Tgeq g \)?  More generally, what does the spectrum of Turing degrees of representatives of these degrees look like?
\end{question}

\printbibliography
\end{document}